\newtheorem{thm}{Theorem}[section]
\newtheorem{lemma}[thm]{Lemma}
\newtheorem{proposition}[thm]{Proposition}
\newtheorem{definition}[thm]{Definition}
\newtheorem{corollary}[thm]{Corollary}
\newtheorem{notation}[thm]{Notation}
\newtheorem{inductivehyp}[thm]{Inductive Hypothesis}
\newcommand{\p}{\mathbb{P}}
\newcommand{\q}{\mathbb{Q}}
\newcommand{\ot}{\mathrm{ot}}
\newcommand{\cf}{\mathrm{cf}}
\newcommand{\cof}{\mathrm{cof}}
\newcommand{\dom}{\mathrm{dom}}
\newcommand{\ran}{\mathrm{ran}}
\newcommand{\h}{\mathrm{ht}}
\newcommand{\coll}{\mathrm{Col}}
\begin{document}

\title{Club isomorphisms on higher Aronszajn trees}

\author{John Krueger}

\address{John Krueger \\ Department of Mathematics \\ 
University of North Texas \\
1155 Union Circle \#311430 \\
Denton, TX 76203}
\email{jkrueger@unt.edu}

\date{July 2017; revised May 2018}

\thanks{2010 \emph{Mathematics Subject Classification:} Primary 03E35, Secondary 03E05, 03E40.}

\thanks{\emph{Key words and phrases.} Aronszajn tree, Suslin's hypothesis, club isomorphism.}

\thanks{This material is based upon work supported by the National Science Foundation under Grant
No. DMS-1464859.}

\begin{abstract}
We prove the consistency, assuming an ineffable cardinal, of the statement that 
\textsf{CH} holds and any two 
normal countably closed $\omega_2$-Aronszajn trees are club isomorphic. 
This work generalizes to higher cardinals 
the property of Abraham-Shelah \cite{AS} 
that any two normal $\omega_1$-Aronszajn trees are club isomorphic, which 
follows from \textsf{PFA}. 
The statement that any two normal countably closed $\omega_2$-Aronszajn trees 
are club isomorphic implies that there are no $\omega_2$-Suslin trees, so our proof 
also expands on the method of Laver-Shelah \cite{LS} for obtaining the 
$\omega_2$-Suslin hypothesis. 
\end{abstract}

\maketitle

One of the earliest applications of iterated forcing was the theorem of 
Solovay and Tennenbaum \cite{ST} on the consistency of Suslin's hypothesis, which states 
that there does not exist an $\omega_1$-Suslin tree. 
Later Abraham and Shelah \cite{AS} formulated a strengthening of Suslin's hypothesis, namely, the 
statement that any two normal $\omega_1$-Aronszajn trees are club isomorphic, and proved 
its consistency. 
The former result on Suslin's hypothesis was proved using the method of finite support 
iterations of $\omega_1$-c.c.\ forcings, and the latter was established using Shelah's technique 
of countable support iterations of proper forcing. 
Suslin's hypothesis also follows from $\textsf{MA}_{\omega_1}$, whereas the Abraham-Shelah property 
follows from \textsf{PFA} but not from $\textsf{MA}_{\omega_1}$. 
Using a much more sophisticated argument than appeared in the original consistency proof on 
Suslin's hypothesis, Laver and Shelah \cite{LS} proved the consistency, assuming a weakly compact 
cardinal, of \textsf{CH} together with the $\omega_2$-Suslin hypothesis, which asserts the 
nonexistence of an $\omega_2$-Suslin tree.  

In this paper, we generalize the Abraham-Shelah result on club isomorphisms of 
normal $\omega_1$-Aronszajn trees to normal countably closed $\omega_2$-Aronszajn trees, 
showing that it is consistent with \textsf{CH}, assuming an ineffable cardinal, 
that any two normal countably closed 
$\omega_2$-Aronszajn trees are club isomorphic. 
We note that it is not consistent with \textsf{CH} that any two normal $\omega_2$-Aronszajn 
are club isomorphic, since club isomorphisms preserve the closure of levels of the tree, 
and under \textsf{CH} there are normal $\omega_2$-Aronszajn trees which are countably closed 
and ones which are not closed at any level. 
Nevertheless, our generalization of Abraham-Shelah's property to $\omega_2$ 
does indeed imply the nonexistence of an $\omega_2$-Suslin tree, and thus provides 
a natural strengthening of the $\omega_2$-Suslin hypothesis which is analogous to the 
situation on $\omega_1$.

The Laver-Shelah consistency proof involved a countable support iteration of specializing 
forcings, and it employed a weakly compact cardinal $\kappa$ which is collapsed to 
$\omega_2$ in order to obtain the $\kappa$-c.c.\ of the iteration. 
Similarly, we will use a countable support forcing iteration at an ineffable cardinal of forcings 
which add club isomorphisms between pairs of normal countably closed $\omega_2$-Aronszajn trees. 
The main complication, besides having to iterate a more complex forcing poset, is 
replacing the $\kappa$-c.c.\ property of the iteration, which appeared 
in the Laver-Shelah proof, with $\kappa$-properness. 
This complication is handled by forcing over a stronger large cardinal, namely an ineffable 
cardinal, as well as employing in the iteration a rudimentary kind of side condition. 
We propose that the ideas and methods used 
in our proof could be useful for generalizing other consequences of 
\textsf{PFA} from $\omega_1$ to higher cardinals.

\bigskip

The first three sections are mostly preliminary, handling strong genericity, the ineffability ideal, 
and trees respectively. 
In Section 4 we present a generalization of the Abraham-Shelah forcing for adding 
club isomorphisms between normal $\omega_1$-Aronszajn trees to countably closed 
normal $\omega_2$-Aronszajn trees. 
In Section 5 we develop a kind of countable support forcing iteration of the forcing from Section 4. 
The main difficulty is proving that the forcing iteration preserves $\omega_2$, and this is handled in 
Sections 6 and 7. We note that Sections 6 and 7 contain as a proper subset adaptations of all of the arguments 
from the Laver-Shelah forcing construction \cite{LS}.

I would like to thank Assaf Rinot for helpful discussions on the topics in this paper 
and the referee for many useful comments.

\section{Strong genericity}

In this section, we will develop the material on strongly generic 
conditions which we will need in the results from Sections 6 and 7, which form the heart of the 
consistency proof. 
Much of this material was originally developed by Mitchell \cite{mitchell}.

Fix a forcing poset $\p$ for the remainder of the section.

\begin{definition}
For any set $N$, 
a condition $p \in \p$ is said to be 
\emph{strongly $(N,\p)$-generic} if for any $D$ which is a dense 
subset of the forcing poset $N \cap \p$, $D$ is predense below $p$ in $\p$.
\end{definition}

Note that if $\p \in N \prec H(\theta)$ for some infinite cardinal $\theta$, 
if $p$ is strongly $(N,\p)$-generic, then $p$ is 
$(N,\p)$-generic in the sense that for any set $D \in N$ which is dense in $\p$, 
$D \cap N$ is predense below $p$.

\begin{definition}
Let $N$ be a set, $q \in \p$, and $s \in N \cap \p$. 
We will write $*_N^\p(q,s)$ to mean that for all $t \le s$ in $N \cap \p$, 
$q$ and $t$ are compatible in $\p$.
\end{definition}

Note that if $*_N^\p(q,s)$ holds, then for all $s_1 \le s$ in $N \cap \p$, 
$*_N^\p(q,s_1)$ also holds. 
And if $*_N^\p(q,s)$ holds and $q \le q_0$, then $*_N^\p(q_0,s)$ holds.

\begin{lemma}
Let $N$ be a set and $p \in \p$. 
Then $p$ is strongly $(N,\p)$-generic iff for all $q \le p$ there is $s \in N \cap \p$ 
such that $*_N^\p(q,s)$ holds.
\end{lemma}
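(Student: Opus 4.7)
The plan is to unpack both sides directly from the definitions; no obstacle of substance stands in the way, since the two statements are essentially dual formulations of ``every extension of $p$ meets every dense subset of $N\cap\p$.''

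For the forward direction, I would assume $p$ is strongly $(N,\p)$-generic and argue by contradiction: suppose there is some $q\le p$ for which no $s\in N\cap\p$ witnesses $*_N^\p(q,s)$. Then the set
\[
D=\{\, t\in N\cap\p : t\perp q\,\}
\]
is dense in $N\cap\p$: given any $s\in N\cap\p$, the failure of $*_N^\p(q,s)$ provides some $t\le s$ in $N\cap\p$ incompatible with $q$, and this $t$ lies in $D$ below $s$. By strong genericity, $D$ is predense below $p$, so the condition $q\le p$ must be compatible with some $t\in D$. But membership of $t$ in $D$ says precisely that $q$ and $t$ are incompatible, a contradiction.

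For the reverse direction, assume that for every $q\le p$ some $s\in N\cap\p$ satisfies $*_N^\p(q,s)$, and let $D$ be an arbitrary dense subset of the poset $N\cap\p$. To show $D$ is predense below $p$ in $\p$, take any $q\le p$. Choose $s\in N\cap\p$ with $*_N^\p(q,s)$, and use density of $D$ in $N\cap\p$ to pick $t\in D$ with $t\le s$. Then $t\in N\cap\p$ with $t\le s$, so $*_N^\p(q,s)$ yields that $q$ and $t$ are compatible in $\p$. Thus every extension of $p$ is compatible with an element of $D$, which is exactly predensity of $D$ below $p$.

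The only subtlety worth flagging is the asymmetry in the two formulations: strong genericity quantifies over dense subsets of $N\cap\p$ (which need not belong to $N$), while the criterion quantifies over individual elements $s\in N\cap\p$. The argument above handles this by turning an alleged counterexample $q$ into a concretely defined dense set $D$, and conversely by using the witness $s$ to locate a compatible element inside any given dense $D$ by refining through $s$.
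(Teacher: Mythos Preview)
Your proof is correct. Both directions are handled cleanly: the forward direction builds the dense set $D=\{t\in N\cap\p:t\perp q\}$ from a putative counterexample $q$, and the reverse direction uses the witness $s$ to extract a compatible element of any given dense set. The paper itself does not give a proof but merely cites \cite[Lemma 2.2]{jk26}; your argument is the standard one that appears there, so there is nothing to compare.
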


\begin{proof}
See \cite[Lemma 2.2]{jk26}.
\end{proof}

\begin{notation}
A set $N$ is said to be \emph{suitable for $\p$} if for some regular uncountable 
cardinal $\theta$ with $\p \in H(\theta)$, $\p \in N \prec H(\theta)$.
\end{notation}

\begin{notation}
Let $N$ be suitable for $\p$ and $p$ a strongly $(N,\p)$-generic condition. 
Let $H$ be a generic filter on $N \cap \p$. 
In $V[H]$, define a forcing poset $(\p / p) / H$ with underlying set 
the conditions $q \le p$ such that $q$ is compatible in $\p$ with every condition in $H$, and 
ordered the same as $\p$. 
\end{notation}

\begin{lemma}
Let $N$ be suitable for $\p$ and $p$ a strongly $(N,\p)$-generic condition. 
Then for all $q \le p$ and $s \in N \cap \p$, 
$*_N^\p(q,s)$ holds iff 
$s \Vdash_{N \cap \p} q \in (\p / p) / \dot G_{N \cap \p}$.
\end{lemma}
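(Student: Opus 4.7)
The plan is to unfold both sides of the equivalence against the definition of $(\p/p)/H$ and use the absoluteness of compatibility in $\p$ between $V$ and its forcing extensions by $N \cap \p$.

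For the forward direction, I would assume $*_N^\p(q,s)$ and let $H$ be a $V$-generic filter on $N \cap \p$ with $s \in H$; the goal is to verify $q \in (\p / p) / H$ in $V[H]$. Note $q \le p$ is part of the hypothesis. Given an arbitrary $h \in H$, I would use that $H$ is a filter containing both $h$ and $s$ to pick $h' \in H$ with $h' \le h$ and $h' \le s$. Then $h' \in N \cap \p$ and $h' \le s$, so $*_N^\p(q,s)$ (together with the remark right after Definition 1.2 about $*$ being preserved downward in $s$) gives that $q$ and $h'$ are compatible in $\p$; a common extension of $q$ and $h'$ is also a common extension of $q$ and $h$.

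For the converse, I would assume $s \Vdash_{N \cap \p} q \in (\p/p)/\dot G_{N \cap \p}$ and show $*_N^\p(q,s)$. Let $t \le s$ in $N \cap \p$. Since $t \le s$, the condition $t$ likewise forces $q \in (\p/p)/\dot G_{N \cap \p}$, which by definition forces that $q$ is compatible in $\p$ with every element of $\dot G_{N \cap \p}$. Because $t \Vdash_{N \cap \p} t \in \dot G_{N \cap \p}$, we conclude $t \Vdash_{N \cap \p}$ ``$q$ and $t$ are compatible in $\p$.'' But compatibility of two fixed conditions in $\p$ is an absolute statement between $V$ and $V[H]$ for any $V$-generic $H$ on $N \cap \p$ (a witnessing common extension in any such $V[H]$ lies in $\p \subseteq V$). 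Hence $q$ and $t$ are compatible in $\p$ in $V$, giving $*_N^\p(q,s)$.

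I do not expect a real obstacle; the only point that needs a line of care is the absoluteness of compatibility in the backward direction, and perhaps the minor observation that the $\dot G_{N \cap \p}$-formulation of membership in $(\p/p)/\dot G_{N \cap \p}$ unwinds to forcing the universally quantified compatibility statement, which is what lets us specialize to the generic element $t$ itself.
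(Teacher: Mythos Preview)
Your proof is correct. The underlying content matches the paper's, but the organization differs slightly: the paper argues both directions purely syntactically inside $V$ using the forcing relation, while you pass to a generic extension and invoke absoluteness. In the forward direction the paper argues by contradiction (if some $t \le s$ forces $q \notin (\p/p)/\dot G_{N\cap\p}$, chase a witness $v$ forced into the generic and extend to $u_2 \le s$ incompatible with $q$), whereas your filter argument is a bit cleaner. In the backward direction the paper proves the contrapositive (if $t \le s$ is incompatible with $q$ then $t$ forces $q$ out of the quotient), while you prove the direct implication using that compatibility of two fixed conditions is absolute. Both routes are standard and equally short; your explicit mention of absoluteness is the one point that deserves the line of care you already gave it.
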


\begin{proof}
Assume that $*_N^\p(q,s)$ holds. 
Suppose for a contradiction that there is $t \le s$ in $N \cap \p$ which forces that 
$q$ is not in $(\p / p) / \dot G_{N \cap \p}$. 
Then we can find $u_1 \le t$ in $N \cap \p$ 
and $v \in N \cap \p$ which is incompatible with $q$ in $\p$ 
such that $u_1 \Vdash_{N \cap \p} v \in \dot G_{N \cap \p}$. 
As $u_1$ and $v$ are compatible in $N \cap \p$, fix $u_2 \le u_1, v$ 
in $N \cap \p$. 
Since $u_2 \le v$, $u_2$ and $q$ are incompatible in $\p$. 
But $u_2 \le u_1 \le t \le s$, which contradicts $*_N^\p(q,s)$.

Now assume that $*_N^\p(q,s)$ fails. 
Then there is $t \le s$ in $N \cap \p$ which is incompatible with $q$ in $\p$. 
As $t \Vdash_{N \cap \p} t \in \dot G_{N \cap \p}$, we have that 
$t \Vdash_{N \cap \p} q \notin (\p / p) / \dot G_{N \cap \p}$. 
Hence, $s$ does not force in $N \cap \p$ that 
$q \in (\p / p) / \dot G_{N \cap \p}$.
\end{proof}

\begin{lemma}
Let $N$ be suitable for $\p$ and $p$ a strongly $(N,\p)$-generic condition. 
\begin{enumerate}
\item If $q \le p$, $s \in N \cap \p$, and $q$ and $s$ are compatible, then 
there is $t \le s$ in $N \cap \p$ such that $*_N^\p(q,t)$;
\item if $D \subseteq \p$ is dense below $p$, then $N \cap \p$ forces that 
$D \cap (\p / p) / \dot G_{N \cap \p}$ is dense in $(\p / p) / \dot G_{N \cap \p}$;
\item if $D \subseteq \p$ is dense below $p$, 
then $N \cap \p$ forces that whenever $q \in (\p / p) / \dot G_{N \cap \p}$ 
and $s \in \dot G_{N \cap \p}$, then there is 
$r \in D \cap (\p / p) / \dot G_{N \cap \p}$ 
such that $r \le q, s$.
\end{enumerate}
\end{lemma}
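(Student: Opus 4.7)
My plan is to derive all three parts from Lemmas 1.3 and 1.6 together with a density argument inside $N \cap \p$ that uses elementarity of $N$. For (1), I would begin by choosing a common extension $r \le q, s$. Since $r \le p$, Lemma 1.3 supplies $t_0 \in N \cap \p$ with $*_N^\p(r, t_0)$. Then $r$ and $t_0$ are compatible, and since $r \le s$, the conditions $s$ and $t_0$ are compatible in $\p$, hence by elementarity of $N$ also in $N \cap \p$; so I may pick $t \in N \cap \p$ with $t \le s, t_0$. The two monotonicity observations following Definition 1.4 then yield $*_N^\p(r, t)$ (from $t \le t_0$) and hence $*_N^\p(q, t)$ (from $r \le q$), completing (1).

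For (2) and (3), which I would treat uniformly by letting (2) be the case of (3) with no side condition imposed, I would work in $V[G]$ for a $V$-generic filter $G \subseteq N \cap \p$. Suppose $q \in (\p/p)/G$, and in case (3) also fix $s \in G$. By Lemma 1.6 and the forcing theorem, some $s_0 \in G$ satisfies $*_N^\p(q, s_0)$; in case (3), I replace $s_0$ by a common extension in $G$ of $s_0$ and $s$, so that additionally $s_0 \le s$. Define
\[
F = \{\, t \in N \cap \p : t \le s_0 \text{ and } \exists\, r \in D \text{ with } r \le q \text{ and } *_N^\p(r, t) \,\}.
\]
If $F$ is dense below $s_0$ in $N \cap \p$, then genericity of $G$ produces $t \in F \cap G$ together with some witness $r \in D$ satisfying $r \le q$ and $*_N^\p(r, t)$; Lemma 1.6 applied to $t \in G$ then yields $r \in (\p/p)/G$, while the construction of $r$ inside the density step will automatically force $r \le s_0 \le s$, handling (3).

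It remains to establish density of $F$ below $s_0$. Given $s' \le s_0$ in $N \cap \p$, the hypothesis $*_N^\p(q, s_0)$ shows $q$ and $s'$ are compatible in $\p$, so I can pick $r' \le q, s'$ and then refine, via density of $D$ below $p$, to $r \le r'$ with $r \in D$. Lemma 1.3 supplies $t_0 \in N \cap \p$ with $*_N^\p(r, t_0)$; since $r \le s'$ and $r$ is compatible with $t_0$, the conditions $s'$ and $t_0$ are compatible in $\p$ and hence, by elementarity, in $N \cap \p$, so one may take $t \in N \cap \p$ with $t \le s', t_0$. Downward-closure of $*_N^\p$ in its second coordinate then gives $*_N^\p(r, t)$, placing $t$ in $F$ with witness $r$. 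The main subtlety is coordinating two successive uses of Lemma 1.3 (first for $q$, then for the refined $r$) and invoking elementarity at just the right moment to pull a common extension computed in $\p$ back inside $N \cap \p$; once that shuffle is arranged, the rest is bookkeeping with the two monotonicity properties of $*_N^\p$ recorded after Definition 1.4.
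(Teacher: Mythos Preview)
Your arguments for (1) and (2) are correct and close to the paper's; for (1) you find a common extension of $s$ and $t_0$ directly by elementarity, whereas the paper extends $t_0$ in $N \cap \p$ to decide whether it lies below $s$, but both work. (The monotonicity observations you cite follow Definition~1.2, not 1.4.)

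There is a genuine gap in (3). Your set $F$ requires of a witness $r$ only that $r \in D$, $r \le q$, and $*_N^\p(r,t)$; it does \emph{not} require $r \le s_0$. Hence when genericity hands you some $t \in F \cap G$, the $r$ you extract is merely \emph{some} witness satisfying those three conditions, and nothing forces it below $s_0$. Your sentence ``the construction of $r$ inside the density step will automatically force $r \le s_0 \le s$'' conflates the $r$ built while verifying density (which does satisfy $r \le s' \le s_0$) with the $r$ witnessing membership of the eventual $t$ in $F$ (which need not). The repair is immediate: add the clause $r \le s_0$ to the definition of $F$; your density verification then goes through unchanged, and any witness for $t \in F \cap G$ satisfies $r \le s_0 \le s$. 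For comparison, the paper instead deduces (3) from (2) by first passing to a common extension $q_1 \le q, s$ and applying (2) to $q_1$; your unified approach is a legitimate alternative once $F$ is corrected.
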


\begin{proof}
(1) Fix $r \le q, s$ in $\p$. 
Then $r \le p$. 
Since $p$ is strongly $(N,\p)$-generic, 
there is $t \in N \cap \p$ such that $*_N^\p(r,t)$ holds. 
Moreover, by extending $t$ in $N \cap \p$ if necessary, we may assume 
that $t$ is either below $s$ or is incompatible with $s$. 
Since $*_N^\p(r,t)$ holds and $r \le q$, it follows that $*_N^\p(q,t)$ holds. 
We claim that $t \le s$, which completes the proof. 
If not, then $t$ is incompatible with $s$. 
But $t$ and $r$ are compatible and $r \le s$, so 
$t$ and $s$ are compatible.

(2) Suppose for a contradiction that $s \in N \cap \p$, $q \in \p$, 
and $s$ forces in $N \cap \p$ that $q	 \in (\p / p) / \dot G_{N \cap \p}$ 
but for all $r \le q$ in $D$, $r$ is not in $(\p / p) / \dot G_{N \cap \p}$. 
By Lemma 1.6, $*_N^\p(q,s)$ holds. 
In particular, $q$ and $s$ are compatible, so fix $r \le q, s$. 
Since $D$ is dense below $p$, by extending $r$ if necessary we may assume that $r \in D$. 
Since $r \le s$, in particular, $r$ and $s$ are compatible. 
So by (1), we can fix $t \le s$ in $N \cap \p$ such that $*_N^\p(r,t)$ holds. 
Then $t$ forces in $N \cap \p$ that $r \le q$ is in $D \cap (\p / p) / \dot G_{N \cap \p}$, 
which contradicts that $t \le s$.

(3) Suppose that $u \in N \cap \p$ forces that 
$q \in (\p / p) / \dot G_{N \cap \p}$ and $s \in \dot G_{N \cap \p}$. 
We will find $r \in D$ with $r \le q, s$ and $z \le u$ in $N \cap \p$ such that 
$z \Vdash_{N \cap \p} r \in (\p / p) / \dot G_{N \cap \p}$. 
Clearly $u$ and $s$ are compatible in $N \cap \p$, so fix $u_1 \le u, s$ in $N \cap \p$. 
Since $*_N^\p(q,u)$ holds by Lemma 1.6, also $*_N^\p(q,u_1)$ holds, so $u_1$ and $q$ are compatible. 
Fix $q_1 \le u_1, q$ in $\p$. 
Then $q_1$ and $u_1$ are obviously compatible, so by (1) fix 
$t \le u_1$ in $N \cap \p$ such that $*_N^\p(q_1,t)$ holds. 
By Lemma 1.6, $t$ forces in $N \cap \p$ that $q_1 \in (\p / p) / \dot G_{N \cap \p}$. 

Since $D$ is forced to be dense in $(\p / p) / \dot G_{N \cap \p}$ by (2), we can find 
$z \le t$ in $N \cap \p$ and $r \le q_1$ in $D$ such that 
$z$ forces in $N \cap \p$ that $r \in (\p / p) / \dot G_{N \cap \p}$. 
Then $r \le q_1 \le q$, $r \le q_1 \le u_1 \le s$, and $z \le t \le u_1 \le u$. 
So $r \in D$, $r \le q, s$, $z \le u$, and 
$z \Vdash_{N \cap \p} r \in (\p / p) / \dot G_{N \cap \p}$.
\end{proof}

\begin{proposition}
Let $N$ be suitable for $\p$ and $p$ a strongly $(N,\p)$-generic condition. 
Suppose that $G$ is a generic filter on $\p$ such that $p \in G$.
Let $H := N \cap G$. Then:
\begin{enumerate}
\item $H$ is a $V$-generic filter on $N \cap \p$;
\item $G' := G \cap (\p / p) / H$ is a $V[H]$-generic filter on $(\p / p) / H$;
\item $V[G] = V[H][G']$.
\end{enumerate}
\end{proposition}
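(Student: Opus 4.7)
The plan is to prove the three items in turn, with (1) and (3) being largely bookkeeping and (2) containing the real content.

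For (1), I will check that $H := N \cap G$ is a filter on $N \cap \p$ and meets every dense $D \in V$ of $N \cap \p$. Given such a $D$, strong $(N,\p)$-genericity of $p$ makes $D$ predense below $p$ in $\p$, so the set
\[
D^{\uparrow} := \{r \in \p : r \leq d \text{ for some } d \in D\} \cup \{r \in \p : r \perp p\}
\]
is dense in $\p$. By $V$-genericity of $G$, pick $r \in D^{\uparrow} \cap G$; since $p \in G$, $r$ cannot be incompatible with $p$, so $r \leq d$ for some $d \in D$, and upward closure of $G$ places $d \in G \cap N = H$. The filter properties of $H$ on $N \cap \p$ follow by similar density arguments combined with elementarity of $N$.

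For (2), I first observe that $G' = \{q \in G : q \leq p\}$, since any such $q$ is compatible in $\p$ with every element of $H \subseteq G$; hence $G'$ inherits its filter structure on $(\p/p)/H$ from $G$. For $V[H]$-genericity, fix a dense open subset $D \in V[H]$ of $(\p/p)/H$ and a name $\dot D \in V$ for $D$, chosen so that $1_{N \cap \p}$ forces $\dot D$ to be dense open in $(\p/p)/\dot G_{N \cap \p}$. Define in $V$:
\[
E := \{r \in \p : r \leq p,\ \exists s \in N \cap \p \text{ with } r \leq s \text{ and } s \Vdash_{N \cap \p} \check r \in \dot D\}.
\]
The central claim is that $E$ is dense below $p$. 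Given $q \leq p$, Lemma 1.5 produces $s_0 \in N \cap \p$ with $*_N^\p(q,s_0)$; Lemma 1.6 then gives $s_0 \Vdash_{N \cap \p} \check q \in (\p/p)/\dot G$, and the forced density of $\dot D$, combined with the maximal principle, yields $t_0 \leq s_0$ in $N \cap \p$ and $r_0 \leq q$ in $V$ with $t_0 \Vdash_{N \cap \p} \check r_0 \in \dot D$. Since $t_0$ forces $r_0 \in (\p/p)/\dot G$, another application of Lemma 1.6 gives $*_N^\p(r_0,t_0)$, making $r_0$ and $t_0$ compatible in $\p$. The closing step then combines Lemma 1.7(1) (which refines a witness while preserving $*_N^\p$) with the openness of $\dot D$ in the quotient (which propagates $\dot D$-membership to extensions lying in the quotient) to produce a final condition $r \leq q$ and witness $s \leq t_0$ in $N \cap \p$ with $r \leq s$ and $s \Vdash_{N \cap \p} \check r \in \dot D$, placing $r \in E$. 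Once $E$ is dense below $p$, $V$-genericity of $G$ yields $r \in E \cap G$ with witness $s$; since $r \leq s$ and $r \in G$, upward closure forces $s \in G$, so $s \in N \cap G = H$, and therefore $r \in \dot D_H = D$, giving $r \in G' \cap D$.

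For (3), $H$ and $G'$ are definable in $V[G]$ from $G$, $N$, and $p$, so $V[H][G'] \subseteq V[G]$. Conversely, $G$ is recovered as the upward closure of $G'$ in $\p$: any $q \in G$ is compatible with $p \in G$, and a common extension in $G$ lies in $G'$ and is below $q$. Hence $V[G] \subseteq V[G'] \subseteq V[H][G']$, and the three models coincide.

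The main obstacle is the density of $E$ in step (2), and specifically arranging that the witness $s$ satisfies $r \leq s$ so that membership of $s$ in $H$ follows automatically by upward closure of $G$. This is where the preceding machinery becomes essential: openness of $\dot D$ in the quotient is needed to transfer $\dot D$-membership from $r_0$ to its refinements, and Lemma 1.7(1) is the tool that lets us refine the witness $t_0$ in $N \cap \p$ while keeping the $*_N^\p$ relation intact. Without both ingredients, the witness produced by the forced density of $\dot D$ need not align with the condition in the way required to end up in $H$.
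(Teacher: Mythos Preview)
Your proposal is correct and follows essentially the same strategy as the paper. Two minor remarks: your reference to ``Lemma 1.5'' for producing $s_0$ with $*_N^\p(q,s_0)$ should be Lemma 1.3 (Notation 1.5 is the definition of the quotient); and your dense set $E$ is parametrized slightly differently from the paper's --- you take $E$ to consist of the target conditions $r$ themselves (requiring $r \le s$ with $s \Vdash \check r \in \dot D$), which forces you to invoke openness of $\dot D$ and Lemma 1.7(1) to close the argument, whereas the paper takes $E$ to consist of conditions $s$ lying below both a witness $u \in N \cap \p$ and a target $r$ with $u \Vdash \check r \in \dot D$, avoiding the extra openness step. Both versions work and the underlying idea is the same.
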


\begin{proof}
(1) If $D$ is a dense subset of $N \cap \p$, then $D$ is predense below $p$. 
Since $p \in G$, we can fix $z \in D \cap G$. 
But then $z \in G \cap N = H$. 
So $D \cap H \ne \emptyset$.

To see that $H$ is a filter, note that $H$ is obviously closed upwards in $N \cap \p$. 
Suppose that $u$ and $v$ are in $H$. 
Then $u$ and $v$ are in $N$. 
Let $E$ be the set of $w \in N \cap \p$ which are either incompatible in $\p$ 
with at least one of $u$ or $v$, or below them both. 
By the elementarity of $N$, it is easy to check that $E$ is 
a dense subset of $N \cap \p$. 
Using the previous paragraph, fix $w \in E \cap H$. 
Since $G$ is a filter and $u$ and $v$ are in $G$, $w$ is compatible with both 
$u$ and $v$, and hence by the definition of $E$, $w \le u, v$.

(2) To see that $G'$ is a filter, note that 
it is clearly upwards closed in $(\p / p) / H$. 
Let $u$ and $v$ be in $G'$. 
Since $G$ is a filter, there is $w \le u, v$ in $G$. 
So $w \le p$. 
As $H \subseteq G$, $w$ is compatible in $\p$ with every member of $H$. 
So $w \in G \cap (\p / p) / H = G'$.

Let $D$ be a dense open subset of $(\p / p) / H$ in $V[H]$, and we will show 
that $D \cap G' \ne \emptyset$. 
Fix an $(N \cap \p)$-name $\dot D$ for a dense open subset of 
$(\p / p) / \dot G_{N \cap \p}$ such that $D = \dot D^H$.

Let $E$ be the set of $s \le p$ such that for some $u \in N \cap \p$ and $r \in \p$, 
$s \le u, r$, and $u$ forces in $N \cap \p$ that $r \in \dot D$. 
We claim that $E$ is dense below $p$. 
So let $q \le p$, and we will find $s \le q$ in $E$.

Since $p$ is strongly $(N,\p)$-generic, fix $t \in N \cap \p$ such that 
$*_N^\p(q,t)$ holds. 
Then $t \Vdash_{N \cap \p} q \in (\p / p) / \dot H$. 
Since $\dot D$ is forced to be dense, fix $u \le t$ in $N \cap \p$ and 
$r \le q$ such that $u \Vdash_{N \cap \p} r \in \dot D$. 
Then $*_N^\p(r,u)$ holds. 
In particular, $r$ and $u$ are compatible. 
Fix $s \le r, u$. 
Then $s \le q$ and $s \in E$, as witnessed by $u$ and $r$.

Since $E$ is dense below $p$, fix $s \in E \cap G$. 
Fix $u \in N \cap \p$ and $r$ such that $s \le u, r$, and 
$u \Vdash_{N \cap \p} r \in \dot D$. 
Then $s \le p$ and $s \in G$ easily implies that $s \in (\p / p) / H$. 
So $s \in G'$. 
Since $s \le u$ and $s \in G$, $u \in N \cap G = H$. 
Hence, $r \in \dot D^H = D$. 
As $s \le r$ and $s \in G'$, $r \in G'$. 
Thus, $G' \cap D \ne \emptyset$.

(3) follows from the fact that $G$ is the upwards closure in $\p$ of 
$G'$, which is easy to check (in fact, $G'$ is just the set of members of $G$ which are 
less than or equal to $p$).
\end{proof}

\begin{notation}
We will write $*_N^\p(p,q,s)$ to mean the conjunction of 
$*_N^\p(p,s)$ and $*_N^\p(q,s)$.
\end{notation}

\begin{lemma}
Let $N$ be suitable for $\p$, $p$ a strongly $(N,\p)$-generic condition, 
and $D$ and $E$ dense below $p$. 
Suppose that $*_N^\p(q,r,s)$ holds, where $q$ and $r$ are below $p$. 
Then there is $(p',q',s') \le (p,q,s)$ such that $p' \in D$, $q' \in E$, 
$p'$ and $q'$ are below $s$, and $*_N^\p(p',q',s')$ 
holds.\footnote{Throughout the paper, we will adopt the following notation. 
For a forcing poset $\p$ and tuples of conditions 
$(p_0,\ldots,p_n)$ and $(q_0,\ldots,p_n)$ in $\p$, we will write 
$(q_0,\ldots,q_n) \le (p_0,\ldots,p_n)$ to mean that 
$q_i \le p_i$ for all $i = 0, \ldots, n$.}
\end{lemma}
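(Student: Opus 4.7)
The plan is to build the refinements in two rounds, alternating density of $D$ and $E$ with Lemma 1.7(1) so that the $*$-relation with $N \cap \p$ is updated after each step. Let me write the two refinements as $q' \le q$ lying in $D$ and $r' \le r$ lying in $E$, together with a refined witness $s' \le s$ in $N \cap \p$.

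In the first round, $*_N^\p(q,s)$ entails that $q$ and $s$ are compatible (taking $t = s$ in the definition), so a common lower bound exists in $\p$; refining it into $D$ via density below $p$ produces $q' \le q,s$ with $q' \in D$. Lemma 1.7(1) applied to $q' \le p$ and $s$ (compatible because $q' \le s$) then yields some $s_1 \le s$ in $N \cap \p$ with $*_N^\p(q',s_1)$.

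In the second round, I exploit that $s_1 \in N \cap \p$ lies below $s$: combined with $*_N^\p(r,s)$, this forces $r$ and $s_1$ to be compatible by the definition of $*$. A common lower bound, refined into $E$ by density below $p$, produces $r' \le r,s_1$ with $r' \in E$, and a second application of Lemma 1.7(1) delivers $s' \le s_1$ in $N \cap \p$ with $*_N^\p(r',s')$. Since $s' \le s_1$, the earlier relation $*_N^\p(q',s_1)$ descends to $*_N^\p(q',s')$, giving $*_N^\p(q',r',s')$, with $q',r' \le s_1 \le s$ as required.

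The main subtlety, which dictates the order of the two rounds, is that a single witness $s'$ must simultaneously support both $q'$ and $r'$. Choosing $r'$ before refining $s$ to $s_1$ would leave no guarantee that the witness subsequently produced for $r'$ would still lie below the one required for $q'$; refining $s$ to $s_1$ first, and only afterwards invoking density of $E$ below $s_1$, is what allows the second application of Lemma 1.7(1) to yield a coherent $s'$.
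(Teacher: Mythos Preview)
Your proof is correct and follows essentially the same two-round argument as the paper: extend the first condition below $s$ into $D$, apply Lemma~1.7(1) to refine $s$ to $s_1$, then extend the second condition below $s_1$ into $E$ and apply Lemma~1.7(1) again to obtain the common witness $s'$. The only difference is cosmetic: you use the variable names $q,r$ from the hypothesis (producing $q',r'$), whereas the paper's proof silently switches to $p,q$ to match the conclusion's notation---a harmless inconsistency in the original statement.
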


\begin{proof}
Since $*_N^\p(p,s)$ holds, $p$ and $s$ are compatible, so fix $p' \le p, s$ in $D$. 
Since $p' \le s$, $p'$ and $s$ are obviously compatible, 
so by Lemma 1.7(1), there is $s_0 \le s$ in 
$N \cap \p$ such that $*_N^\p(p',s_0)$. 
As $*_N^\p(q,s)$ holds and $s_0 \le s$ in $N \cap \p$, 
$s_0$ and $q$ are compatible, 
so fix $q' \le q, s_0$ which is in $E$. 
Then $q'$ and $s_0$ are obviously compatible, so by Lemma 1.7(1) fix $s' \le s_0$ in 
$N \cap \p$ such that $*_N^\p(q',s')$. 
As $*_N^\p(p',s_0)$ holds and $s' \le s_0$ is in 
$N \cap \p$, $*_N^\p(p',s')$ holds. 
To summarize, 
we have $(p',q',s') \le (p,q,s)$, $p' \in D$, $q' \in E$, 
$p'$ and $q'$ are below $s$, and $*_N^\p(p',q',s')$ holds.
\end{proof}

\section{Chains of models and ineffable cardinals}

Let $\kappa$ be a regular uncountable 
cardinal which is fixed for the remainder of this section. 
Recall that an \emph{ideal} $I$ on $\kappa$ is a collection of subsets of $\kappa$ 
which includes the emptyset, is closed under subsets, and closed under finite unions. 
Let $I^+$ denote the collection of subsets of $\kappa$ which are not in $I$. 
We say that $I$ is \emph{proper} if $I^+ \ne \emptyset$, or equivalently, 
$\kappa \notin I$. 
Let $I^*$ denote the dual filter $\{ \kappa \setminus A : A \in I \}$.

We will say that a proper ideal $I$ on $\kappa$ is \emph{normal} if it contains every 
bounded subset of $\kappa$, and whenever $S \in I^+$ and 
$f : S \to \kappa$ is a regressive function, then there is $T \subseteq S$ in $I^+$ 
on which $f$ is constant. 
By standard arguments, 
if $I$ is normal, then $I$ contains every nonstationary set, and is $\kappa$-complete, 
which means that any union of fewer than $\kappa$ many sets in $I$ is in $I$. 
In addition, if $I$ is normal then $I^*$ is closed under diagonal intersections, which 
means that whenever $\{ C_i : i < \kappa \} \subseteq I^*$, then 
$\triangle \{ C_i : i < \kappa \} := 
\{ \alpha < \kappa : \forall i < \alpha, \ \alpha \in C_i \}$ is in $I^*$.

We will use ideals in the context of increasing chains of models. 

\begin{definition}
Let us say that a sequence $\langle N_i : i \in S \rangle$ is \emph{suitable} if 
for some regular $\theta > \kappa$:
\begin{enumerate}
\item each $N_i$ is an elementary substructure of $H(\theta)$, $\kappa \in N_i$, 
$N_i \cap \kappa \in \kappa$, and $|N_i| = |N_i \cap \kappa|$;
\item $N_i \in N_j$ for all $i < j$ in $S$;
\item if $\delta \in S$ is a limit point of $S$, 
then $N_\delta = \bigcup \{ N_i : i \in S \cap \delta \}$.
\end{enumerate}
\end{definition}

Standard arguments show that if $\langle N_i : i \in S \rangle$ is suitable, 
then there is a club $C \subseteq \kappa$ such that for all $i \in C \cap S$, 
$N_i \cap \kappa = i$. 
In fact, this is true if (2) above is weakened to $N_i \cap \kappa \in N_j$.

\begin{lemma}
Let $I$ be a normal ideal on $\kappa$. 
Let $\langle N_i : i \in S \rangle$ be suitable, with union $N$, and $S \in I^+$. 
Suppose that $f$, $g$, and $h$ are functions with domain $S$ 
such that for all $i \in S$, $f(i) \in N_i$, 
$g(i) \in N \setminus N_i$, 
and $h(i)$ is a subset of $N \setminus N_i$ 
of size less than $\kappa$. 
Then there is $T \subseteq S$ in $I^+$ such that 
for all $i < j$ in $T$, $f(i) = f(j)$, $g(i) \in N_j$, and 
$h(i) \cap h(j) = \emptyset$.
\end{lemma}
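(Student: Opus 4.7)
The plan is to re-encode all three conclusions through the rank function
\[ r(x) := \min\{ j \in S : x \in N_j \}, \]
which is well-defined on $N$ and satisfies $r(x) \le j \iff x \in N_j$ by increasingness of the chain. The conclusion on $f$ will then follow from Fodor for $I$ combined with $\kappa$-completeness of $I$, while those on $g$ and $h$ will follow from a single closure-under-suprema club argument.

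I first pass to $S' := \{ i \in S : i \text{ is a limit point of } S \}$. The set $S \setminus S'$ is non-stationary, since $\delta \mapsto \max(S \cap \delta)$ is a regressive injection on it and Fodor for a normal ideal forbids such injections from $I^+$-sets; hence $S' \in I^+$. Clause (3) in the definition of a suitable sequence now yields $N_i = \bigcup_{j \in S \cap i} N_j$ for every $i \in S'$. Consequently, each $f(i)$ lies in some $N_j$ with $j \in S \cap i$, giving $r(f(i)) < i$. Fodor for $I$ then produces $T_0 \subseteq S'$ in $I^+$ on which $r(f(i)) = j_0$ is constant, so $f(i)$ takes values in the fixed set $N_{j_0}$ of size $|N_{j_0}| < \kappa$; $\kappa$-completeness of $I$ applied to the partition of $T_0$ by the possible values of $f$ then selects a single fiber $T_1 := f^{-1}(\{x_0\}) \cap T_0$ in $I^+$ on which $f$ is constant.

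For $g$ and $h$, set $r_g(i) := r(g(i))$ and $r_h(i) := \sup\{ r(x)+1 : x \in h(i) \}$; both are ordinals below $\kappa$, the latter by $|h(i)| < \kappa$ and the regularity of $\kappa$. The set
\[ C := \{ \delta < \kappa : \forall i \in T_1 \cap \delta,\ r_g(i) \le \delta \text{ and } r_h(i) \le \delta \} \]
is closed and, by the usual closure-under-sup recursion using the regularity of $\kappa$, unbounded, so $T := T_1 \cap C$ is in $I^+$. For $i < j$ in $T$, the membership $j \in C$ applied to $i \in T_1 \cap j$ gives $r_g(i) \le j$ and $r_h(i) \le j$, which translate to $g(i) \in N_j$ and $h(i) \subseteq N_j$; the latter inclusion, together with $h(j) \subseteq N \setminus N_j$, forces $h(i) \cap h(j) = \emptyset$. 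The one step that requires care is the preparatory passage from $S$ to its limit points, which converts the naive bound $r(f(i)) \le i$ into strict regressivity; once $r \circ f$ is genuinely regressive, the remainder is a routine combination of Fodor and closure under suprema.
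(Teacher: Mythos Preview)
Your proof is correct, and the overall architecture matches the paper's: a pressing-down argument handles $f$, and a closure club handles $g$ and $h$. The encoding, however, is different. The paper fixes a bijection $F:\kappa\to N$ and restricts to the club where $F[i]=N_i$; then $i\mapsto F^{-1}(f(i))$ is immediately a regressive function into $\kappa$, and a single application of normality makes $f$ itself constant. Your rank function $r$ only records the height at which an element first appears, so Fodor merely traps $f(i)$ inside the fixed set $N_{j_0}$, and you then need the separate $\kappa$-completeness step to pin down the value. Both routes are standard; the bijection trick is a bit slicker here, while your rank-function approach is more intrinsic and avoids choosing an enumeration. For $g$ and $h$ the two proofs are essentially identical.

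Two small points worth tightening. First, when you assert that $r(x)\le j\iff x\in N_j$ ``by increasingness of the chain,'' note that suitability only gives $N_i\in N_j$, not $N_i\subseteq N_j$; the inclusion follows because $|N_i|=|N_i\cap\kappa|<N_j\cap\kappa$ and $N_j\prec H(\theta)$, so $N_j$ contains an enumeration of $N_i$. Second, your argument that $S\setminus S'$ is nonstationary via ``$\delta\mapsto\max(S\cap\delta)$'' is not quite right as stated, since that maximum need not exist; but the conclusion is immediate anyway, since $S\setminus S'$ is disjoint from the club $\lim(S)$. Neither issue affects the validity of your proof.
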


\begin{proof}
Fix a bijection $F : \kappa \to N$. 
A standard argument shows that there is a club $C_0 \subseteq \kappa$ such that 
for all $i \in C_0 \cap S$, $F[i] = N_i$. 
Now the function which maps $i \in S \cap C_0$ to 
$F^{-1}(f(i))$ is regressive, and hence fixed 
on a set $S' \subseteq S \cap C_0$ in $I^+$. 
Therefore, if $i < j$ are in $S'$, then 
$F^{-1}(f(i)) = F^{-1}(f(j))$, and so applying $F$, $f(i) = f(j)$.

For each $i \in S'$, fix $\gamma_i < \kappa$ such that 
$h(i) \cup \{ g(i) \} \subseteq N_{\gamma_i}$. 
Let $C_1$ be the club set of $\xi < \kappa$ such that for all 
$i \in S' \cap \xi$, $\gamma_i < \xi$. 
Let $T := S' \cap C_1$, which is in $I^+$.

Consider $i < j$ in $T$. 
Then $i$ and $j$ are in $S'$, so $f(i) = f(j)$ as noted above. 
Also $\gamma_i < j$, so $h(i) \cup \{ g(i) \} \subseteq N_j$. 
Hence, $g(i) \in N_j$. 
By the choice of $h$, $h(j) \cap N_j = \emptyset$. 
As $h(i) \subseteq N_j$, we have that $h(i) \cap h(j) = \emptyset$.
\end{proof}

The cardinal $\kappa$ is said to be 
\emph{ineffable} if whenever 
$\langle A_\alpha : \alpha < \kappa \rangle$ is a sequence such that 
$A_\alpha \subseteq \alpha$ for all $\alpha < \kappa$, 
then there exists $A \subseteq \kappa$ such that the set 
$\{ \alpha < \kappa : A \cap \alpha = A_\alpha \}$ is stationary in $\kappa$. 
We recall that if $\kappa$ is ineffable, then $\kappa$ is weakly compact. 
See \cite{ineffable} for more information about ineffable cardinals.

Given a sequence $\vec A = \langle A_\alpha : \alpha < \kappa \rangle$ as above, 
let us say that a set $S \subseteq \kappa$ is \emph{coherent for $\vec A$} 
if for all $\alpha < \beta$ in $S$, $A_\alpha = A_\beta \cap \alpha$. 
Note that if $S$ is coherent for $\vec{A}$, then the set 
$A := \bigcup \{ A_\alpha : \alpha \in S \}$ satisfies that 
$A \cap \alpha = A_\alpha$ for all $\alpha \in S$. 
It easily follows that $S$ is coherent for $\vec A$ iff there is a set $A \subseteq \kappa$ 
such that for all $\alpha \in S$, $A \cap \alpha = A_\alpha$. 
And $\kappa$ is ineffable iff for any sequence $\vec A$ there is a stationary coherent set.

Define the \emph{ineffability ideal} on $\kappa$ to be 
the collection of all sets $S \subseteq \kappa$ such that for some sequence $\vec A$ 
as above, $S$ does not contain a stationary subset which is coherent for $\vec A$. 
It is easy to check that this collection is indeed an ideal which contains all 
nonstationary subsets of $\kappa$. 
Observe that $\kappa$ is ineffable iff the ineffability ideal on $\kappa$ is proper. 
It is also true that if $\kappa$ is ineffable, then the ineffability ideal on $\kappa$ 
is a normal ideal (see \cite[Theorem 2.4]{ineffable}).

Assume for the rest of this section 
that $\kappa$ is ineffable, and let $J$ denote the ineffability ideal on $\kappa$. 
Note that a set $S$ is in $J^+$ iff for any sequence $\vec A$, 
$S$ contains a stationary subset which is coherent for $\vec A$. 

We will use ineffability in the context of suitable sequences. 
This form of ineffability is described in the next lemma.

\begin{lemma}
Let $\langle N_\alpha : \alpha \in S \rangle$ be suitable, with union $N$, and $S \in J^+$. 
Let $\langle B_\alpha : \alpha \in S \rangle$ be a sequence such that 
for all $\alpha \in S$, $B_\alpha \subseteq N_\alpha$. 
Then there exists a set $B \subseteq N$ and a stationary set $T \subseteq S$ such that for all 
$\alpha \in T$, $B \cap N_\alpha = B_\alpha$.
\end{lemma}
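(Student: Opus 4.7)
The plan is to reduce the statement about coherent approximations living inside the $N_\alpha$ to the standard definition of the ineffability ideal via a bijection $F:\kappa\to N$, then pull back the sets $B_\alpha$ to subsets of $\alpha$ and apply $J$-positivity directly.

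First I would fix any bijection $F:\kappa\to N$. By the standard observation made immediately after Definition 2.1, there is a club $C\subseteq\kappa$ such that $F[\alpha]=N_\alpha$ for every $\alpha\in C\cap S$. Since $J$ is normal (and hence contains every nonstationary set) and $S\in J^+$, the set $S_0:=S\cap C$ is also in $J^+$: otherwise $S=S_0\cup(S\setminus C)$ would be a union of two sets in $J$, contradicting $S\in J^+$. For each $\alpha\in S_0$, the hypothesis $B_\alpha\subseteq N_\alpha=F[\alpha]$ allows us to set $A_\alpha:=F^{-1}[B_\alpha]\subseteq\alpha$. For $\alpha<\kappa$ outside $S_0$ we set $A_\alpha:=\emptyset$, so that $\vec A=\langle A_\alpha:\alpha<\kappa\rangle$ is a legitimate sequence with $A_\alpha\subseteq\alpha$ for all $\alpha$.

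Next I would invoke the characterization of $J^+$ recalled in the excerpt: since $S_0\in J^+$, the sequence $\vec A$ admits a stationary subset $T\subseteq S_0$ which is coherent for $\vec A$, meaning $A_\alpha=A_\beta\cap\alpha$ for all $\alpha<\beta$ in $T$. Put $A:=\bigcup\{A_\alpha:\alpha\in T\}$, so that $A\cap\alpha=A_\alpha$ for every $\alpha\in T$, and define $B:=F[A]\subseteq N$. Because $T\subseteq C$, we have $F[\alpha]=N_\alpha$ for $\alpha\in T$, and since $F$ is a bijection,
\[
B\cap N_\alpha=F[A]\cap F[\alpha]=F[A\cap\alpha]=F[A_\alpha]=B_\alpha
\]
for every $\alpha\in T$. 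This $T$ is stationary and contained in $S$, so $B$ and $T$ witness the conclusion.

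I do not expect any real obstacle here: the only things to keep straight are (i) that intersecting the $J^+$-set $S$ with the club $C$ stays $J^+$ (handled by normality) and (ii) that $F$ carries coherence of the $A_\alpha$ as subsets of ordinals to the desired equation $B\cap N_\alpha=B_\alpha$ (handled because $F$ restricts to a bijection $\alpha\to N_\alpha$ on the club $C$). The essence of the argument is simply that ineffability, originally formulated for subsets of ordinals, transfers mechanically to any suitable chain of models once one fixes a bijection between $\kappa$ and the union.
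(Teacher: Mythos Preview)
Your proof is correct and follows essentially the same route as the paper's: fix a bijection $F:\kappa\to N$, intersect $S$ with the club where $F[\alpha]=N_\alpha$, pull the $B_\alpha$ back to subsets $A_\alpha\subseteq\alpha$, apply the $J^+$ property to obtain a stationary coherent $T$, and push forward via $F$. The only cosmetic differences are that you spell out why $S\cap C\in J^+$ and pad the sequence $\vec A$ with $\emptyset$ outside $S_0$, whereas the paper leaves these implicit (and note that the club giving $F[\alpha]=N_\alpha$ is the one invoked inside the proofs of Lemmas 2.2 and 2.3, not the remark after Definition 2.1, which concerns $N_i\cap\kappa=i$).
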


\begin{proof}
Fix a bijection $F : \kappa \to N$. 
By standard arguments, there is a club 
$C \subseteq \kappa$ such that for all $\alpha \in S \cap C$, 
$F[\alpha] = N_\alpha$. 
Then $S \cap C \in J^+$.

Note that for all $\alpha \in S \cap C$, 
$A_\alpha := F^{-1}[B_\alpha] \subseteq F^{-1}[N_\alpha] = \alpha$. 
Applying the fact that $S \cap C \in J^+$ to the sequence 
$\langle A_\alpha : \alpha \in S \cap C \rangle$, there exists a set 
$A \subseteq \kappa$ and a stationary set $T \subseteq S \cap C$ 
such that for all $\alpha \in T$, $A \cap \alpha = A_\alpha$. 
Define $B := F[A]$. 
Then $B \subseteq N$, and for all $\alpha \in T$, 
$$
B \cap N_\alpha = F[A] \cap N_\alpha = F[A] \cap F[\alpha] = 
F[A \cap \alpha] = F[A_\alpha] = 
F[F^{-1}[B_\alpha]] = B_\alpha.
$$
\end{proof}

The next result describes a consequence of ineffability which will be used later in the paper. 
Since the proof involves some tedious arguments about names, in order to keep it as simple 
as possible we deal with names a bit loosely. 
The interested reader can fill in the complete details.

\begin{proposition}
Let $\langle N_\alpha : \alpha \in S \rangle$ be a suitable chain of elementary substructures 
of $H(\theta)$, with union $N$, and $S \in J^+$, 
such that for all $\alpha \in S$, $N_\alpha^\omega \subseteq N_\alpha$. 
Let $U$ be the set of $\alpha \in S$ satisfying the following property: 
for all countably closed forcing posets $\p \in N_\alpha$ and all nice 
$\p$-names $\dot T \in N_\alpha$, 
if $H(\theta)$ models that $\p$ forces that 
$\kappa = \omega_2$ and $\dot T$ is a tree with underlying set 
$\kappa$ with no chains of order type $\kappa$, 
then the forcing poset $N_\alpha \cap \p$ forces 
that $\alpha = \omega_2$ and $N_\alpha \cap \dot T$ is 
a tree with no chains of order type $\alpha$. 
Then $S \setminus U$ is in $J$.
\end{proposition}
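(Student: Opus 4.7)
My plan is to argue by contradiction, assuming $S \setminus U \in J^+$, and to use Lemma 2.2 and Lemma 2.3 in sequence to cohere local failures of the reflection property into a single global object which contradicts the hypothesis on $\p$ and $\dot T$.

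First, for each $\alpha \in S \setminus U$, by unwinding the definition of $U$ select a triple $(\p_\alpha, \dot T_\alpha, q_\alpha) \in N_\alpha$ consisting of a countably closed $\p_\alpha \in N_\alpha$, a nice $\p_\alpha$-name $\dot T_\alpha \in N_\alpha$ satisfying the hypothesis on $\p$ and $\dot T$, and a condition $q_\alpha \in N_\alpha \cap \p_\alpha$ which forces in $N_\alpha \cap \p_\alpha$ the negation of the conclusion. Apply Lemma 2.2 (coded via a bijection $F \colon \kappa \to N$) to the $N_\alpha$-valued map $\alpha \mapsto (\p_\alpha, \dot T_\alpha, q_\alpha)$ to obtain $T_0 \subseteq S \setminus U$ in $J^+$ on which this map is constantly $(\p, \dot T, q)$. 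By $\kappa$-completeness of $J$, we may further pass to a subset of $T_0$ in $J^+$ on which the same disjunct of the conclusion is forced to fail.

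For the chain case, where $q$ forces in $N_\alpha \cap \p$ that $N_\alpha \cap \dot T$ has a chain of order type $\alpha$ for every $\alpha \in T_0$, choose in $V$ a nice $N_\alpha \cap \p$-name $\dot b_\alpha \in N_\alpha$ for such a chain; since each pair in $\dot b_\alpha$ has the form $(x,r)$ with $x \in \alpha$ and $r \in N_\alpha \cap \p$, we may view $\dot b_\alpha \subseteq N_\alpha$. Lemma 2.3 then produces $B \subseteq N$ and a stationary $T_1 \subseteq T_0$ with $B \cap N_\alpha = \dot b_\alpha$ for all $\alpha \in T_1$. Construing $B$ as a $\p$-name, the coherence $\dot b_\alpha \subseteq \dot b_\beta$ for $\alpha < \beta$ in $T_1$ should force $B$ to be a chain in $\dot T$ whose length is at least $\sup T_1 = \kappa$, contradicting the assumption that $\p$ forces $\dot T$ to have no chain of order type $\kappa$. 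The collapse case, where $q$ forces $\alpha \ne \omega_2$, is handled analogously: the countable closure of $N_\alpha$ makes $N_\alpha \cap \p$ countably closed, which together with $|N_\alpha \cap \p| \le \alpha$ restricts the failure of $\alpha = \omega_2$ to take a form captured by a name $\dot f_\alpha \in N_\alpha$ of bounded type (e.g., a surjection from some $\beta < \alpha$ onto $\alpha$); cohering these via Lemma 2.3 yields a $\p$-name $F$ that $q$ forces to witness $\kappa \ne \omega_2$, contradicting the hypothesis on $\p$.

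The main obstacle is verifying that the cohered object $B$ (or $F$) genuinely functions as a $\p$-name with the claimed property. The subtlety is that $N_\alpha \cap \p$ is not in general a complete subforcing of $\p$, so a forcing statement about $\dot b_\alpha$ in $N_\alpha \cap \p$ does not automatically transfer to $\p$. I expect this verification to rely on the niceness of $\dot T$ (so that $N_\alpha \cap \dot T$ coincides with the restriction of $\dot T$ to pairs with conditions in $N_\alpha \cap \p$, and the tree relation between any two elements of $\alpha$ is decided by conditions in $N_\alpha \cap \p$), on the closure $N_\alpha^\omega \subseteq N_\alpha$ (which keeps the requisite names inside $N_\alpha$), and on tracing compatibility of conditions from $\p$ with fragments of $G \cap N_\alpha \cap \p$ carefully enough that chain-ness in $\dot T^G$ can be read off from the local chain-ness forced on each $\dot b_\alpha$. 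The collapse case similarly requires showing that the forced failure of $\alpha = \omega_2$ manifests in a name whose coherent union genuinely forces the corresponding failure at $\kappa$.
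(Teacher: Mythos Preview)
Your proposal is correct and follows essentially the same approach as the paper: argue by contradiction, use Lemma 2.2 to stabilize $(\p,\dot T,q)$ on a set in $J^+$, split on which conjunct fails, pick nice $(N_\alpha\cap\p)$-names $\dot b_\alpha\subseteq N_\alpha$ witnessing the failure, and apply Lemma 2.3 to cohere them into a single $\p$-name $b$ that yields a contradiction. The paper likewise treats the chain case in detail and declares the collapse case analogous.

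Your identification of the main obstacle is exactly right, and the paper resolves it by a mechanism slightly different from what you sketch. You suggest that niceness of $\dot T$ makes the tree relation between any two elements of $\alpha$ decided by conditions in $N_\alpha\cap\p$; this is not quite what happens. Instead, the paper argues as follows: given $q\le p$ forcing $x,y\in b$, the sets $D_x,D_y$ of conditions witnessing membership and the set $E$ of conditions deciding how $x,y$ compare in $\dot T$ are dense in $\p$; one then observes that for \emph{club-many} $\alpha$ (via a standard catch-up argument), the traces $N_\alpha\cap D_x$, $N_\alpha\cap D_y$, $N_\alpha\cap E$ are dense in $N_\alpha\cap\p$. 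Picking such an $\alpha$ in the stationary set $Z$, one extends $q$ inside $N_\alpha\cap\p$ to a condition $v$ that forces $x,y\in\dot b_\alpha$ and decides their comparison; since $p$ forces $\dot b_\alpha$ is a chain, $v$ forces them comparable in $N_\alpha\cap\dot T$, and elementarity of $N_\alpha$ transfers this to $\p$. The unboundedness of $b$ is handled by the same style of argument. So the transfer runs through density plus a club argument rather than through any direct absoluteness of decisions between $N_\alpha\cap\p$ and $\p$.
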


\begin{proof}
Since the set of inaccessibles below $\kappa$ is in $J^*$, we may assume 
without loss of generality that every member of $S$ is inaccessible.

Suppose for a contradiction that $S \setminus U$ is in $J^+$. 
For each $\alpha \in S \setminus U$, fix a counterexample $\p_\alpha$ and 
$\dot T_\alpha$ in $N_\alpha$. 
By Lemma 2.2, we can find $W \subseteq S \setminus U$ in $J^+$, 
$\p$, and $\dot T$ such that for all $\alpha \in W$, 
$\p_\alpha = \p$ and $\dot T_\alpha = \dot T$. 
For each $\alpha \in W$, 
since $\p$ is $\omega_1$-closed and $N_\alpha$ is countably closed, 
$N_\alpha \cap \p$ is countably closed, and 
obviously $N_\alpha \cap \dot T$ is an $(N_\alpha \cap \p)$-name. 
An easy argument using the elementarity of $N_\alpha$ shows that 
$N_\alpha \cap \dot T$ is an $(N_\alpha \cap \p)$-name for a tree.

For each $\alpha \in W$, either $N_\alpha \cap \p$ does not force that 
$\alpha = \omega_2$, or some condition in $N_\alpha \cap \p$ forces 
in $N_\alpha \cap \p$ that $N_\alpha \cap \dot T$ 
has a chain of order type $\alpha$. 
So we can find $W_1 \subseteq W$ in $J^+$ such that all $\alpha \in W_1$ satisfy the first possibility, 
or all $\alpha \in W_1$ satisfy the second possibility.

Let us assume that for all $\alpha \in W_1$, there is $p_\alpha \in N_\alpha \cap \p$ which forces 
in $N_\alpha \cap \p$ that $N_\alpha \cap \dot T$ has a chain of order type $\alpha$. 
We will omit the other case since the arguments are very similar to those in this case. 
By Lemma 2.2, 
we may assume without loss of generality that for some $p \in \p$, $p_\alpha = p$ for 
all $\alpha \in W_1$. 
For each $\alpha \in W_1$, fix a nice $(N_\alpha \cap \p)$-name $\dot b_\alpha$ 
for a subset of 
$N_\alpha \cap T$ which $p$ forces is a chain of order type $\alpha$.
Note that $\dot b_\alpha \subseteq N_\alpha$.

Applying Lemma 2.3, fix $b \subseteq N$ and a stationary set 
$Z \subseteq W_1$ such that for all $\alpha \in Z$, $b \cap N_\alpha = \dot b_\alpha$. 
Then $b$ is a $\p$-name for a subset of $\dot T$. 
We claim that $p$ forces that $b$ is a chain of $\dot T$ of order type $\kappa$, 
which is a contradiction.

First let us see that $p$ forces that $b$ is a chain. 
Let $q \le p$ and suppose that $q$ forces that $x$ and $y$ are in $b$. 
Then for each of $z \in \{ x, y \}$, 
the set $D_z$ of $r \le q$ such that $(r^+,\check z)$ is in $b$ for some $r \le r^+$ 
is dense below $q$. 
Let $E$ be the set of conditions below $q$ which decide in $\p$ if and how 
$x$ and $y$ compare in $\dot T$, and note that $E$ is dense below $q$.

It easily follows that there is a club $C \subseteq \kappa$ such that for all 
$\alpha \in C \cap Z$, the sets $N_\alpha \cap D_x$, $N_\alpha \cap D_y$, 
and $N_\alpha \cap E$ are dense below $q$ in $N_\alpha \cap \p$, and moreover,  
for $z \in \{ x, y \}$, each $r \in D_z \cap N_\alpha$ has a witness $r^+$ as described 
above also in $N_\alpha$. 
Fix $\alpha \in C \cap Z$ such that $q$, $x$, and $y$ are in $N_\alpha$. 
Then we can find $v \le q$ in $N_\alpha \cap \p$ such that $v$ is below members of 
these three dense subsets of $N_\alpha \cap \p$. 
Then clearly $v$ forces in $N_\alpha \cap \p$ that $x$ and $y$ are in 
$N_\alpha \cap b = \dot b_\alpha$. 
Since $v \le p$ and $p$ forces in $N_\alpha \cap \p$ that $\dot b_\alpha$ is a chain, 
$v$ forces in $N_\alpha \cap \p$ that $x$ and $y$ are comparable in $N_\alpha \cap \dot T$. 
By extending $v$ further in $N_\alpha \cap \p$ if necessary, we may assume that $v$ decides 
how $x$ and $y$ compare. 
Without loss of generality, assume that 
$v \Vdash_{N_\alpha \cap \p} \ x <_{N_\alpha \cap \dot T} y$.

We claim that $v \Vdash_\p x <_{\dot T} y$. 
If not, then by elementarity there is $w \le v$ in $N_\alpha \cap \p$ 
such that $w \Vdash_\p x \not <_{\dot T} y$. 
But since $w \le v$, $w \Vdash_{N_\alpha \cap \p} \ x <_{N_\alpha \cap \dot T} y$. 
In particular, we can find $w_1 \le w$ in $N_\alpha \cap \p$ 
such that $(w_1^+,(\check x,\check y)) \in N_\alpha \cap \dot T$ for some $w_1 \le w_1^+$. 
But then $w_1$ forces in $\p$ that $x <_{\dot T} y$, which is a contradiction.

To see that $p$ forces that $b$ has order type $\kappa$, it suffices to show that for all 
$\beta < \kappa$, $p$ forces that $b$ contains an ordinal above $\beta$. 
Let $q \le p$ and $\beta$ be given. 
Fix $\alpha \in Z$ such that $N_\alpha \cap \kappa = \alpha$ and 
$q$ and $\beta$ are in $N_\alpha$. 
Since $p$ forces in $N_\alpha \cap \p$ 
that $\dot b_\alpha = N_\alpha \cap b$ has order type $\alpha$, and 
$N_\alpha \cap \kappa = \alpha$ is inaccessible, 
we can find $\gamma > \beta$ in $N_\alpha$ and $r \le q$ such that 
$(r^+,\check \gamma) \in \dot b_\alpha = N_\alpha \cap b$ 
for some $r \le r^+$ in $N_\alpha \cap \p$. 
Then $(r^+,\check \gamma) \in b$ implies that $r$ forces in $\p$ 
that $\gamma > \beta$ is in $b$.
\end{proof}

\section{Trees}

We assume that the reader is familiar with the basic definitions and facts about trees. 
We introduce some notation. 
Let $(T,<_T)$ be a tree. 
A \emph{chain} in $T$ is a linearly ordered subset of $T$, 
and a \emph{branch} of $T$ is a maximal chain in $T$. 
For each $x \in T$, 
we let $\h_T(x)$ denote the height of $x$ in $T$. 
For each ordinal $\delta$, let 
$T(\delta) := \{ x \in T : \h_T(x) = \delta \}$ denote level $\delta$ of $T$, and 
$T \restriction \delta := \{ x \in T : \h_T(x) < \delta \}$. 
The \emph{height of $T$} is the least ordinal $\theta$ such that $T(\theta) = \emptyset$. 
A branch $b$ of $T$ is \emph{cofinal} if $b \cap T(\delta) \ne \emptyset$ for all $\delta$ less 
than the height of $T$.

For $x \in T$ and $\gamma \le \h_T(x)$, let $p_T(x,\gamma)$ denote the unique node $y$ of 
$T$ such that $y \le_T x$ and $\h_T(y) = \gamma$. 
A useful observation is that if $x$ and $y$ are in $T$, 
$\gamma \le \min(\h_T(x), \h_T(y))$, 
and $p_T(x,\gamma) \ne p_T(y,\gamma)$, then $x$ and $y$ are not comparable in $T$, 
and for all $\gamma < \xi \le \min(\h_T(x), \h_T(y))$, 
$p_T(x,\xi) \ne p_T(y,\xi)$.

Let $\kappa$ be a regular uncountable cardinal which is fixed for the remainder of the section. 
A \emph{$\kappa$-tree} is a tree of height $\kappa$ such that for $\delta < \kappa$, the level 
$T(\delta)$ has size less than $\kappa$. 
A $\kappa$-tree is a \emph{$\kappa$-Aronszajn tree} if it has no cofinal branch (or equivalently, 
no chain of order type $\kappa$). 
For a set $A \subseteq \kappa$, let $T \restriction A$ denote the subtree of $T$ consisting 
of all nodes $x$ such that $\h_T(x) \in A$.

A $\kappa$-tree $T$ is \emph{normal} if:
\begin{enumerate}
\item for every $x \in T$ and 
$\h_T(x) < \gamma < \kappa$, there is $y \in T$ with $x <_T y$ and $\h_T(y) = \gamma$;
\item if $x$ and $y$ are distinct nodes of $T$ 
with the same 
height $\delta$, where $\delta$ is a limit ordinal, 
then there is $\xi < \delta$ such that 
$p_T(x,\xi) \ne p_T(y,\xi)$;
\item for every node $x$, there are $y$ and $z$ with 
$x <_T y$, $x <_T z$, and such that $y$ and $z$ are not comparable in $T$.
\end{enumerate}

\begin{definition}
Let $T$ and $U$ be $\kappa$-trees. 
A function $f : T \to U$ is an \emph{isomorphism} if 
$f$ is a bijection and for all $x$ and $y$ in $T$, 
$x <_T y$ iff $f(x) <_U f(y)$. 
We say that $T$ and $U$ are \emph{isomorphic} if there 
exists an isomorphism from $T$ to $U$.  
\end{definition}

It is easy to verify that if $f : T \to U$ is an isomorphism, then for all $x \in T$, 
$\h_T(x) = \h_U(f(x))$.

\begin{lemma}
Let $T$ and $U$ be $\kappa$-trees. 
Suppose that $f : T \to U$ is a bijection satisfying that 
$\h_T(x) = \h_U(f(x))$ for all $x \in T$, and such that 
$x <_T y$ implies $f(x) <_U f(y)$ for all $x, y \in T$. 
Then $f$ is an isomorphism.
\end{lemma}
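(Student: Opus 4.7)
The plan is to establish the reverse implication: if $f(x) <_U f(y)$, then $x <_T y$. Given this, combined with the hypothesis $x <_T y \Rightarrow f(x) <_U f(y)$ and bijectivity, $f$ is an isomorphism.

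First I would observe that $f(x) <_U f(y)$ forces $\h_U(f(x)) < \h_U(f(y))$, and hence by the height-preserving hypothesis, $\h_T(x) < \h_T(y)$. Next I would introduce the unique node $z := p_T(y, \h_T(x))$; this is defined because $T$ is a tree and $\h_T(x) < \h_T(y)$. By construction $z <_T y$ and $\h_T(z) = \h_T(x)$. Applying the forward hypothesis to $z <_T y$ yields $f(z) <_U f(y)$, and the height-preservation gives $\h_U(f(z)) = \h_U(f(x))$.

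Now $f(x)$ and $f(z)$ are both $U$-predecessors of $f(y)$ of the same height; since $U$ is a tree, predecessors of $f(y)$ at any fixed height are unique, so $f(x) = f(z)$. Injectivity of $f$ then yields $x = z$, and therefore $x <_T y$, as desired.

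No step looks genuinely obstructive: the argument uses only the tree axiom that each node has a unique predecessor at each smaller height, together with height preservation and the injectivity half of bijectivity. In particular, surjectivity of $f$ is not actually needed for this direction; it is used implicitly only to guarantee that $f$ remains a bijection overall. The only mildly delicate point is remembering to invoke uniqueness of predecessors in $U$ (rather than trying to transport some structure back from $T$), which is why introducing $z = p_T(y,\h_T(x))$ up front makes the argument clean.
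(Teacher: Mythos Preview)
Your proof is correct and essentially identical to the paper's own argument: both introduce the predecessor $p_T(y,\h_T(x))$, push it forward via $f$, and use uniqueness of predecessors in $U$ together with injectivity of $f$ to conclude $x = p_T(y,\h_T(x))$. The only cosmetic difference is that the paper phrases this as a proof by contradiction (assuming $x \ne p_T(y,\gamma)$) while you do it directly.
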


\begin{proof}
Assume that $x$ and $y$ are in $T$ and $f(x) <_U f(y)$. 
We will show that $x <_T y$. 
Let $\gamma := \h_T(x)$. 
Then $\gamma = \h_T(x) = \h_U(f(x)) < \h_U(f(y)) = \h_T(y)$. 

Suppose for a contradiction that $x \not <_T y$. 
Let $x' := p_T(y,\gamma)$. 
Then $x' \ne x$. 
And $\h_U(f(x')) = \h_T(x') = \gamma$. 
But $x' <_T y$ implies that $f(x') <_U f(y)$. 
So $p_U(f(y),\gamma) = f(x')$. 
We are assuming that $f(x) <_U f(y)$, so 
$p_U(f(y),\gamma) = f(x)$. 
Hence, $f(x) = f(x')$, which contradicts that $f$ is injective.
\end{proof}

\begin{definition}
Let $T$ and $U$ be $\kappa$-trees. 
We say that $T$ and $U$ are \emph{club isomorphic} if 
there exists a club set $C \subseteq \kappa$ such that 
$T \restriction C$ and $U \restriction C$ are isomorphic.
\end{definition}

\begin{lemma}
Let $T$ and $U$ be normal $\kappa$-trees. 
Assume that there exists a cofinal set $A \subseteq \kappa$ 
such that $T \restriction A$ and $U \restriction A$ are isomorphic. 
Then $T$ and $U$ are club isomorphic.
\end{lemma}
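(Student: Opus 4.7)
The plan is to take $C$ to be the set of limit points of $A$ in $\kappa$; since $A$ is cofinal in $\kappa$ and $\kappa$ is regular uncountable, $C$ is a club. Let $f : T \restriction A \to U \restriction A$ be the given isomorphism. I will build an extension $F : T \restriction C \to U \restriction C$ level by level, and then verify it is an isomorphism by appealing to Lemma 3.2.

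For each $\delta \in C$ and each $x \in T(\delta)$, since $A \cap \delta$ is cofinal in $\delta$, the set $\{ f(p_T(x,\gamma)) : \gamma \in A \cap \delta \}$ is a $<_U$-chain meeting cofinally many levels of $U \restriction \delta$, and I want to define $F(x)$ to be its unique upper bound at level $\delta$ of $U$. Uniqueness of such an upper bound is immediate from normality condition (2) for $U$, since any two candidates would agree on the cofinally many levels indexed by $A \cap \delta$. The crux of the argument will be existence of such an upper bound: using normality condition (1) for $T$ together with the cofinality of $A$, I choose some $z \in T$ with $x \leq_T z$ and $\h_T(z) \in A$, and then set $F(x) := p_U(f(z),\delta)$. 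A short computation using $p_T(z,\gamma) = p_T(x,\gamma)$ for $\gamma \leq \delta$ shows that this node lies above every $f(p_T(x,\gamma))$ with $\gamma \in A \cap \delta$, and uniqueness ensures the definition is independent of the choice of $z$. When $\delta \in A$, the node $f(x)$ is itself an upper bound, so $F(x) = f(x)$ and $F$ extends $f$.

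It will remain to show that $F$ is an isomorphism. Heights are preserved by construction. To verify that $F$ preserves $<_T$ (which, once bijectivity is established, suffices by Lemma 3.2), I will assume $x_1 <_T x_2$ in $T \restriction C$ with $\h_T(x_i) = \delta_i$. Since $p_T(x_1,\gamma) = p_T(x_2,\gamma)$ for $\gamma \in A \cap \delta_1$, the node $p_U(F(x_2),\delta_1) \in U(\delta_1)$ lies above $f(p_T(x_1,\gamma))$ for each such $\gamma$; the uniqueness characterization of $F(x_1)$ then forces $F(x_1) = p_U(F(x_2),\delta_1)$, hence $F(x_1) <_U F(x_2)$. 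For bijectivity I will apply the symmetric construction to $f^{-1}$ to produce $G : U \restriction C \to T \restriction C$; the same uniqueness characterization forces $G \circ F$ and $F \circ G$ to equal the identity. Lemma 3.2 then completes the proof that $F$ is the desired isomorphism between $T \restriction C$ and $U \restriction C$.
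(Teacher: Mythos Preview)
Your argument is correct and follows the same core idea as the paper's proof: lift a node $x$ at a limit-of-$A$ level to some node at a level in $A$, apply $f$, and project back down in $U$. There are a few stylistic differences worth noting. The paper takes $C = A \cup \lim(A)$ and genuinely extends $f$ to $g$ on all of $C$, whereas you take $C = \lim(A)$ and only recover $f$ on $A \cap \lim(A)$; both choices yield a club, so this is immaterial. For well-definedness, the paper fixes the specific lift to level $\min(A \setminus \beta)$ and checks directly that two lifts give the same projection, while you instead characterize $F(x)$ intrinsically as the unique upper bound in $U(\delta)$ of the chain $\{f(p_T(x,\gamma)) : \gamma \in A \cap \delta\}$; your formulation is arguably cleaner and makes the later verifications more uniform. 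Finally, the paper proves injectivity and surjectivity of $g$ by separate direct arguments, whereas you exploit the symmetry of the situation by building $G$ from $f^{-1}$ and checking $G \circ F = \mathrm{id}$ via the same uniqueness characterization. Both routes are short; yours trades a little abstraction for avoiding the duplication of the injective/surjective computations.
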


\begin{proof}
Let $C := A \cup \lim(A)$. 
Then $C$ is a club subset of $\kappa$. 
Fix an isomorphism $f : T \restriction A \to U \restriction A$. 
We will show that there is an isomorphism $g : T \restriction C \to U \restriction C$ 
such that $f \subseteq g$.

Let $\beta \in C \setminus A$ and $x \in T_\beta$, and we define $g(x)$. 
Note that $\beta \in \lim(A) \setminus A$. 
Let $\gamma := \min(A \setminus \beta)$. 
So $\beta < \gamma$.  
Using the normality of $T$, 
fix $x' \in T(\gamma)$ such that $x <_T x'$. 
Then $f(x') \in U(\gamma)$. 
Define $g(x) := p_U(f(x'),\beta)$.

Let us prove that $g$ is well-defined. 
So consider another $x'' \in T(\gamma)$ such that $x <_T x''$. 
We claim that $p_U(f(x'),\beta) = p_U(f(x''),\beta)$. 
Suppose for a contradiction that $p_U(f(x'),\beta) \ne p_U(f(x''),\beta)$. 
Since $U$ is normal and $\beta$ is a limit ordinal, 
there is $\xi < \beta$ such that $p_U(f(x'),\xi) \ne p_U(f(x''),\xi)$. 
Moreover, as $\beta$ is a limit point of $A$, without loss of generality 
we can assume that $\xi \in A \cap \beta$.

Fix $y'$ and $y''$ in $T(\xi)$ such that 
$f(y') = p_U(f(x'),\xi)$ and $f(y'') = p_U(f(x''),\xi)$. 
Then $y' \ne y''$. 
As $f(y') <_U f(x')$ and $f(y'') <_U f(x'')$, it follows that 
$y' <_T x'$ and $y'' <_T x''$. 
But $x <_T x', x''$, and therefore 
$y' <_T x$ and $y'' <_T x$. 
This is impossible since $y'$ and $y''$ are both on the same level of $T$ 
and are different.

Now we prove that $g$ is injective. 
Since $f$ is injective and $g$ preserves the heights of nodes, 
it suffices to show that if $x_0 \ne x_1$ are in $T(\beta)$, where 
$\beta \in \lim(A) \setminus A$, then $g(x_0) \ne g(x_1)$. 
Let $\gamma = \min(A \setminus \beta)$. 
Fix $x_0'$ and $x_1'$ above $x_0$ and $x_1$ in $T$,  respectively, of height $\gamma$. 
Then $x_0' \ne x_1'$, and by the definition of $g$, 
$g(x_0) = p_U(f(x_0'),\gamma)$ and $g(x_1) = p_U(f(x_1'),\gamma)$. 
Since $f$ is injective, $f(x_0') \ne f(x_1')$. 
As $T$ is normal and $\beta$ is a limit ordinal, there is $\xi \in A \cap \beta$ 
such that $p_T(x_0,\xi) \ne p_T(x_1,\xi)$. 
Since $f$ is injective, $f(p_T(x_0,\xi)) \ne f(p_T(x_1,\xi))$. 
But $f(p_T(x_0,\xi)) = f(p_T(x_0',\xi)) = p_U(f(x_0'),\xi)$ and 
$f(p_T(x_1,\xi)) = f(p_T(x_1',\xi)) = p_U(f(x_1'),\xi)$. 
So $g(x_0) = p_U(f(x_0'),\beta)$ and 
$g(x_1) = p_U(f(x_1'),\beta)$ must be different.

To see that $g$ is surjective, let $\beta \in \lim(A) \setminus A$ 
and consider $y \in U(\beta)$. 
Let $\gamma := \min(A \setminus \beta)$. 
Using the normality of $U$, 
fix $y' \in U(\gamma)$ such that $y <_U y'$. 
Then $y' = f(x')$ for some $x' \in T(\gamma)$. 
Let $x := p_T(x',\beta)$. 
Then by the definition of $g$, $g(x) = y$.

So we have that $g : T \restriction C \to U \restriction C$ is a bijection which 
extends $f$, and it is easy to check by the definition of $g$ 
that $x <_T y$ in $T \restriction C$ implies that 
$g(x) <_U g(y)$ in $U \restriction C$. 
We also have by the definition of $g$ that $g$ preserves the heights of nodes. 
It follows by Lemma 3.2 that $g$ is an isomorphism.
\end{proof}

We now assume for the remainder of the section that $\kappa = \omega_2$. 
Let us say that an $\omega_2$-tree $T$ is \emph{standard} if:
\begin{enumerate}
\item the nodes of $T$ are ordinals in $\omega_2$;
\item  $x <_T y$ implies that $x < y$ as ordinals;
\item every level of $T$ has size $\omega_1$, and 
every node in $T$ has $\omega_1$ many successors on the next level.
\end{enumerate}
Standard arguments show that for every normal $\omega_2$-tree $T$, there is a club 
$C \subseteq \omega_2$ such that $T \restriction C$ is isomorphic to a standard 
normal $\omega_2$-tree.

An $\omega_2$-tree $T$ is said to be \emph{countably closed} if every countable 
chain in $T$ has an upper bound. 
For a limit ordinal $\delta < \omega_2$, we say that level $\delta$ of $T$ is \emph{closed} 
if every cofinal branch of $T \restriction \delta$ 
has an upper bound. 
Note that $T$ is countably closed iff for every limit ordinal $\delta < \omega_2$ with 
cofinality $\omega$, level $\delta$ of $T$ is closed. 

The goal of the paper is to prove the consistency of the statement that 
\textsf{CH} holds and any two normal countably closed 
$\omega_2$-Aronszajn trees are club isomorphic. 
Note that by the preceding comments it suffices to show the consistency that 
any two standard normal countably closed 
$\omega_2$-Aronszajn trees are club isomorphic.

An $\omega_2$-tree $T$ is said to be \emph{special} if there exists a 
function $f : T \to \omega_1$ such that $x <_T y$ implies that 
$f(x) \ne f(y)$, for all $x$ and $y$ in $T$. 
Observe that a special $\omega_2$-tree is an $\omega_2$-Aronszajn tree.

The next result is well-known. 
For example, assuming \textsf{CH}, 
the construction suggested by Exercise III.5.44 of \cite{kunen} can be used 
to build countably closed normal special $\omega_2$-trees, and also normal 
special $\omega_2$-trees which are not closed at any level.

\begin{proposition}
Assume \textsf{CH}. 
Then there exists a countably closed normal special $\omega_2$-tree. 
There also exists a normal special $\omega_2$-tree which is not closed at any level.
\end{proposition}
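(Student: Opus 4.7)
The plan is to carry out a transfinite recursion on $\delta < \omega_2$, building a normal $\omega_2$-tree $T$ together with a specializing function $f \colon T \to \omega_1$. Under \textsf{CH} we have $\omega_1^\omega = \omega_1$, so any subtree of size $\omega_1$ admits at most $\omega_1$ cofinal branches through a limit of cofinality $\omega$; this bound is what keeps level cardinalities under control during the recursion.

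For the countably closed tree, at a successor level $\delta = \gamma + 1$ I place $\omega_1$ many immediate successors above each node of $T(\gamma)$, which takes care of normality clauses (1) and (3) in one blow. At a limit $\delta$ of cofinality $\omega$, I place one new level-$\delta$ node above every cofinal branch of $T \restriction \delta$; by the count there are at most $\omega_1$ such branches, so $|T(\delta)| \le \omega_1$, and level $\delta$ becomes closed. At a limit $\delta$ of cofinality $\omega_1$, I fix a cofinal $\omega_1$-sequence in $\delta$ and, for each $x \in T \restriction \delta$, build a cofinal branch of $T \restriction \delta$ through $x$ along that sequence (using normality (1) to extend at successor waypoints and the $\omega$-closure of lower levels to pass through countable cofinality waypoints), then place one level-$\delta$ node above it; once more $|T(\delta)| \le \omega_1$.

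For the second tree I run exactly the same recursion with one modification: at each limit level $\delta$ I designate in advance a distinguished cofinal branch $b_\delta$ of $T \restriction \delta$ and refuse to extend it, while still adding extensions of enough other branches to preserve normality (1). The tree branches enough at successor stages that every node lies on uncountably many pairwise incomparable cofinal branches of $T \restriction \delta$, so omitting one branch per limit is harmless for the rest of normality, and $b_\delta$ itself witnesses that level $\delta$ is not closed.

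The main technical obstacle is extending the specializing function $f$ at high levels, since the predecessor set of a node $y$ at level $\ge \omega_1$ could in principle have $f$-values covering all of $\omega_1$. I would handle this via the standard \textsf{CH}-bookkeeping sketched in Exercise III.5.44 of \cite{kunen}: fix a partition $\omega_1 = \bigsqcup_{\alpha < \omega_1} A_\alpha$ into $\omega_1$ disjoint pieces each of size $\omega_1$, and arrange through the recursion that the set of $f$-values on every chain is disjoint from some $A_\alpha$, so that $A_\alpha$ provides a reservoir of legal values for any extension. With this invariant in place, specialization delivers the $\omega_2$-Aronszajn property, and the remaining verifications---that $T$ has height $\omega_2$ with levels of size $\omega_1$, and the prescribed closure behavior at limits---are routine induction.
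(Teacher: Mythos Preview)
The paper does not actually prove this proposition; it simply records the result as well-known and refers the reader to Exercise III.5.44 of Kunen. Your sketch is already more detailed than anything in the paper, and it follows the standard line of attack.

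That said, there is one genuine gap in your treatment of the second tree. You say that at a limit $\delta$ of cofinality $\omega_1$ you will build cofinal branches by using ``the $\omega$-closure of lower levels to pass through countable cofinality waypoints.'' But in the second construction the lower levels are \emph{not} countably closed: at each cofinality-$\omega$ ordinal $\gamma<\delta$ you deliberately omitted the branch $b_\gamma$. So when your partial branch $\langle y_j : j<i\rangle$ reaches a waypoint $\gamma_i$ of cofinality $\omega$, it may coincide with $b_{\gamma_i}$ and fail to extend. The fix is a straightforward diagonalization: at stage $\delta$ all the $b_\gamma$ for $\gamma<\delta$ are already known, and since each node has $\omega_1$ immediate successors while each $b_\gamma$ passes through only one of them, a bookkeeping over the successor stages of the branch construction lets you step off each relevant $b_{\gamma_i}$ before reaching $\gamma_i$. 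This is routine, but it is not the same argument as in the countably closed case and should be said explicitly.

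A smaller imprecision: the specialization invariant as you phrase it---that the $f$-values along each chain miss some fixed $A_\alpha$---is not self-maintaining, since assigning $f(y)\in A_\alpha$ immediately destroys disjointness from $A_\alpha$ for every chain through $y$. The workable invariant is that $\omega_1\setminus f[\{z:z<_T y\}]$ has size $\omega_1$ for every $y$, and one must build the branches at limit levels so that this complement stays large (this is where the real work in the Kunen exercise lies). Your reference to that exercise covers the point, but the invariant you wrote down is not the one that actually propagates.
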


It is easy to see that if $T$ and $U$ are $\omega_2$-trees, 
$C \subseteq \omega_2$ is club, and $T \restriction C$ and $U \restriction C$ 
are isomorphic, then for all $\delta \in \lim(C)$, level $\delta$ of $T$ is closed 
iff level $\delta$ of $U$ is closed.
Therefore, the two trees described in Proposition 3.5 are not club isomorphic.
Hence:

\begin{corollary}
Assuming \textsf{CH}, there are normal $\omega_2$-Aronszajn trees which are 
not club isomorphic.
\end{corollary}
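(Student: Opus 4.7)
The plan is to combine Proposition 3.5 with the closure-preservation observation stated immediately before the corollary. First, assuming \textsf{CH}, I would invoke Proposition 3.5 to fix a countably closed normal special $\omega_2$-tree $T$ together with a normal special $\omega_2$-tree $U$ that is not closed at any level. Since special $\omega_2$-trees are $\omega_2$-Aronszajn (as noted in the paragraph preceding Proposition 3.5), both $T$ and $U$ are normal $\omega_2$-Aronszajn trees.

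Next I would argue by contradiction that $T$ and $U$ are not club isomorphic. Suppose, toward a contradiction, that there exists a club $C \subseteq \omega_2$ such that $T \restriction C$ and $U \restriction C$ are isomorphic. Pick any $\delta \in \lim(C)$ with $\cf(\delta) = \omega$ (such $\delta$ exist since $\lim(C)$ is club in $\omega_2$ and hence meets the stationary set of points of countable cofinality). Because $T$ is countably closed and $\cf(\delta) = \omega$, level $\delta$ of $T$ is closed. By the closure-preservation observation recalled just before the corollary, level $\delta$ of $U$ is then also closed, contradicting the fact that $U$ is not closed at any level.

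The argument is entirely routine once Proposition 3.5 is in hand; there is no real obstacle, since all the tree-theoretic work has been packaged into the preceding results. The only thing to be a little careful about is verifying the existence of $\delta \in \lim(C)$ with $\cf(\delta) = \omega$, which is immediate from $\lim(C)$ being a club in $\omega_2$ together with the fact that $\{\delta < \omega_2 : \cf(\delta) = \omega\}$ is stationary.
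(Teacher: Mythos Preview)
Your proof is correct and follows essentially the same approach as the paper: invoke Proposition~3.5 to obtain the two trees and then use the closure-preservation observation stated immediately before the corollary to conclude they cannot be club isomorphic. You have merely made explicit the choice of $\delta \in \lim(C)$ with $\cf(\delta)=\omega$, which the paper leaves implicit.
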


Recall that an $\omega_2$-Suslin tree is an $\omega_2$-tree which has 
no chains or antichains of size $\omega_2$. 
Standard arguments show:
\begin{enumerate}
\item if $S$ is a normal $\omega_2$-tree, then $S$ is 
Suslin iff $S$ has no antichains of size $\omega_2$;
\item if $S$ is an $\omega_2$-Suslin tree, then $S$ is not special;
\item if there exists an $\omega_2$-Suslin tree, then there exists a normal 
$\omega_2$-Suslin tree.
\end{enumerate}

\begin{proposition}
Assume \textsf{CH}. 
Let $S$ be a normal $\omega_2$-Aronszajn tree. 
Then there exists a countably closed normal $\omega_2$-Aronszajn tree $U$ 
such that $S$ is a subtree of $U$ and for all $x \in S$, 
$\h_S(x) = \h_U(x)$.
\end{proposition}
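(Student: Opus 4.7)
The plan is to enlarge $S$ by attaching, above every cofinality-$\omega$ limit $\delta<\omega_2$ at which $S\restriction\delta$ has cofinal branches that are not extended in $S(\delta)$, a ``tagged'' copy of a prepared countably closed normal $\omega_2$-Aronszajn tree that will serve to close those branches. First I would apply Proposition 3.5 to fix a countably closed normal $\omega_2$-Aronszajn tree $T$ (every special $\omega_2$-tree is $\omega_2$-Aronszajn). For each cofinality-$\omega$ limit $\delta<\omega_2$, let $B_\delta$ be the collection of cofinal branches of $S\restriction\delta$ with no upper bound in $S(\delta)$; under \textsf{CH} we have $|B_\delta|\le\omega_1^\omega=\omega_1$. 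For each $b\in B_\delta$ fix an auxiliary node $y_b\in T(\delta)$. The underlying set of $U$ consists of all nodes of $S$ together with all ``tagged'' nodes $(b,y)$ with $b\in\bigcup_\delta B_\delta$ and $y\in T$ satisfying $y\ge_T y_b$; set $\h_U(x):=\h_S(x)$ for $x\in S$ and $\h_U((b,y)):=\h_T(y)$. Order $U$ by letting $<_U$ extend $<_S$, declaring $(b,y_1)<_U(b,y_2)$ whenever $y_1<_T y_2$, and declaring $x<_U(b,y)$ for every $x\in b$; tagged nodes carrying distinct tags are $<_U$-incomparable.

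Next I would verify that $U$ is a tree of height $\omega_2$ with $|U(\alpha)|\le\omega_1$ for every $\alpha<\omega_2$, using that the number of $b\in\bigcup_\delta B_\delta$ with $\delta_b\le\alpha$ is at most $\omega_1$ and each level of $T$ has size at most $\omega_1$. That $S$ is a subtree of $U$ with preserved heights is immediate. Normality of $U$ follows by inheritance: condition~(1) uses $S$-extensions of $S$-nodes and, on the tagged side, uses that $(b,y_b)$ is extended at every higher level by the tagged nodes coming from $T$-extensions of $y_b$; condition~(3) is analogous; condition~(2) for two distinct tagged nodes sharing a tag $b$ at a limit level $\alpha>\delta_b$ is obtained from $T$'s normality together with the useful observation recorded after the definition of normality, which allows the separating level $\xi$ to be chosen in $[\delta_b,\alpha)$, and the remaining sub-cases are handled similarly (for instance, an $S$-node $x\in S(\delta_b)$ and the tagged node $(b,y_b)$ differ at some $\xi<\delta_b$ because $x$ cannot extend $b$, by the definition of $B_{\delta_b}$). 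Countable closure of $U$ is then routine: a countable chain in $S$ whose height-supremum $\delta$ has cofinality $\omega$ either is extended in $S(\delta)$ or belongs to $B_\delta$, in which case $(b,y_b)$ is an upper bound; a chain that meets a tagged node is forced by the order to continue within the same tag, so its closure reduces to $T$'s closure applied above $y_b$.

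The main obstacle, and the reason the tagged copies were chosen from an Aronszajn tree, is verifying that $U$ has no cofinal branch. Suppose for contradiction that $c$ is a cofinal branch of $U$. The crucial structural observation is that once $c$ contains a tagged node $(b,y)$, every higher node of $c$ must also carry the tag $b$: indeed, the $<_U$-successors of $(b,y)$ at higher levels are exactly the nodes $(b,y')$ with $y<_T y'$, and no $S$-node or tagged node with a different tag can lie above $(b,y)$. Consequently, either $c$ is entirely contained in $S$, in which case $c$ is a cofinal branch of $S$ contradicting that $S$ is $\omega_2$-Aronszajn, or $c$ eventually lies within a single tagged copy, producing an unbounded chain in $T$ above $y_b$ whose downward closure is a cofinal branch of $T$, again a contradiction.
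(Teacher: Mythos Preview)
Your proof is correct and follows essentially the same idea as the paper's: above each branch of $S$ whose order type has countable cofinality, attach a copy of a fixed countably closed normal $\omega_2$-Aronszajn tree $T$, and verify that the resulting tree is normal, countably closed, and Aronszajn. The only cosmetic difference is that the paper grafts a full copy of $T$ above each such branch $b$ (so that $(b,y)$ has height $\mathrm{ot}(b)+\h_T(y)$), whereas you graft the cone of $T$ above a chosen node $y_b\in T(\delta_b)$ (so that $(b,y)$ has height $\h_T(y)$); both devices accomplish the same thing.
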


\begin{proof}
Using Proposition 3.5, fix a countably closed normal 
$\omega_2$-Aronszajn tree $T$. 
Without loss of generality, we assume that $T$ has a unique node of height $0$. 
For each branch $b$ of $S$ whose order type has countable cofinality, 
define $T_b := \{ (b,x) : x \in T \}$. 
(Note that since $b$ is a branch, it is maximal and hence 
it has no upper bound in $S$.) 
If there are no such branches, then $S$ itself is countably closed, and we are 
done letting $U := S$. 
So assume that there are.

Define a tree $U$ whose underlying set consists of the nodes of $S$ together with 
nodes of the form $(b,x)$, where $b$ is a branch of $S$ whose order type has 
countable cofinality and $x \in T$. 
Let $x <_U y$ if either:
\begin{enumerate}
\item $x$ and $y$ are both in $S$ and $x <_S y$, or 
\item $x = (b,x_1)$ and $y = (b,y_1)$ for some $b$ 
and $x_1 <_T y_1$, or 
\item $x \in b$ and $y = (b,y_1)$ for some $y_1 \in T$.
\end{enumerate}
In other words, we expand $S$ by placing a copy of $T$ above every 
branch $b$ as above.

It is straightforward to check that $U$ is a tree of height $\omega_2$. 
The nodes of $U$ of height $\alpha$ consist of the nodes of $S$ of height $\alpha$, 
together with nodes of the form $(b,y)$, where 
$\alpha = \ot(b) + \h_T(y)$. 
By \textsf{CH}, there are only $\omega_1$-many possibilities for $b$ such that 
$\ot(b) \le \alpha$. 
Thus, $U$ is an $\omega_2$-tree. 
It is easy to see that $U$ is normal. 
Clearly $S$ is a subtree of $U$, and for all $x \in S$, the predecessors of $x$ in $U$ are 
exactly the predecessors of $x$ in $S$. 
Thus, $\h_S(x) = \h_U(x)$ for all $x \in S$.

Any chain $c$ of $U$ either lies entirely in $S$, or has a tail consisting of nodes of the form 
$(b,y)$ for some fixed $b$. 
If the order type of $c$ has countable cofinality, then in the first case it has an upper bound 
by the definition of $U$, and in the second case it has an upper bound since $T$ 
is countably closed. 
Therefore, $U$ is countably closed. 
If $c$ is cofinal in $U$, then in the first case this contradicts that $S$ is an Aronszajn tree, and in 
the second case this contradicts that $T$ is an $\omega_2$-Aronszajn tree.
Hence, $U$ is an $\omega_2$-Aronszajn tree.
\end{proof}

Recall that the $\omega_2$-Suslin hypothesis is the statement that 
there does not exist an $\omega_2$-Suslin tree.

\begin{thm}
Assume that \textsf{CH} holds and 
any two countably closed normal $\omega_2$-Aronszajn trees 
are club isomorphic. 
Then the $\omega_2$-Suslin hypothesis holds.
In fact, any normal $\omega_2$-Aronszajn tree is special.
\end{thm}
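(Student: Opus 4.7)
My plan is to prove the stronger ``in fact'' conclusion that every normal $\omega_2$-Aronszajn tree $S$ is special; the $\omega_2$-Suslin hypothesis then follows immediately from facts (2) and (3) preceding the theorem, since any $\omega_2$-Suslin tree would yield a normal one that must be Aronszajn and hence (by the claim) special, contradicting (2). The strategy is to transfer a specializing function from an auxiliary countably closed special tree via the hypothesized club isomorphism, and then to extend it off the club.

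Given $S$, I would use Proposition 3.7 to embed it as a height-preserving subtree of a countably closed normal $\omega_2$-Aronszajn tree $U$, so that $<_S$ and $<_U$ agree on $S$. Using Proposition 3.5, fix a countably closed normal special $\omega_2$-tree $T$ with specializing function $g : T \to \omega_1$. The hypothesis applied to $T$ and $U$ yields a club $C \subseteq \omega_2$ and an isomorphism $f : T \restriction C \to U \restriction C$, so $h_0 := g \circ f^{-1}$ specializes $U \restriction C$; its restriction to $S \restriction C = S \cap (U \restriction C)$ specializes $S \restriction C$.

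The heart of the argument, and the step I expect to be the main obstacle, is extending $h_0 \restriction S \restriction C$ to a specializing function $h : S \to \omega_1$. Partition $\omega_1 = A_0 \sqcup A_1 \sqcup A_2$ into three disjoint pieces of size $\omega_1$, with fixed bijections $\psi_0 : \omega_1 \to A_0$ and $\psi_1 : \omega_1 \times \omega_1 \to A_1$. Decompose $S \setminus (S \restriction C)$ into the slabs $S^{\alpha,\beta} := \{x \in S : \alpha < \h_S(x) < \beta\}$, indexed by consecutive pairs $\alpha < \beta$ in $C$, plus an initial slab below $\min(C)$ when nonempty. Since $\beta - \alpha \le |\beta| \le \omega_1$ and each level of $S$ has size at most $\omega_1$, every slab contains at most $\omega_1$ nodes. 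Set $h(x) := \psi_0(h_0(x))$ for $x \in S \restriction C$; for $x \in S^{\alpha,\beta}$, let $u_x := p_S(x, \alpha) \in S \restriction C$ and set $h(x) := \psi_1(h_0(u_x), \lambda_{u_x}(x)) \in A_1$, where $\lambda_u$ is a fixed injection of the fiber $\{x \in S^{\alpha,\beta} : u_x = u\}$ into $\omega_1$; send the initial slab injectively into $A_2$.

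Verification proceeds by cases on $x_1 <_S x_2$. When the two nodes fall into different categories (in $C$, non-initial slab, initial slab), their $h$-values lie in distinct pieces $A_i$ and so differ automatically. Within $S \restriction C$, $h_0$ separates them; within a single non-initial slab, $u_{x_1} = u_{x_2}$ and the injectivity of $\lambda_{u_{x_1}}$ separates second coordinates; the initial slab is handled by the chosen injection. The delicate case is $x_1 <_S x_2$ lying in distinct non-initial slabs $S^{\alpha_1,\beta_1}$ and $S^{\alpha_2,\beta_2}$: then $\beta_1 \le \alpha_2$, so $u_{x_1} = p_S(x_2, \alpha_1) <_S p_S(x_2, \alpha_2) = u_{x_2}$ in $S \restriction C$, whence $h_0(u_{x_1}) \neq h_0(u_{x_2})$ and the first coordinates of $h(x_1), h(x_2)$ differ. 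The coding device $u_x$ thus reduces specialization of the non-$C$ part of $S$ back to the already-specialized $S \restriction C$.
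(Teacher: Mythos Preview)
Your proof is correct and follows essentially the same route as the paper: embed $S$ into a countably closed normal Aronszajn tree $U$ via Proposition 3.7, pull back a specializing map from the special tree $T$ of Proposition 3.5 along the club isomorphism, and conclude that $S$ is special. The only difference is that the paper stops once $S \restriction C$ is shown to be special and invokes the general fact that an $\omega_2$-tree special on a club of levels is special ``by a straightforward argument,'' whereas you write that argument out in full; your slab-and-coding construction is a perfectly valid way to do this.
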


\begin{proof}
Recall that if there exists an $\omega_2$-Suslin tree, then there exists a normal 
$\omega_2$-Suslin tree. 
So it suffices to prove that any normal $\omega_2$-Aronszajn tree $S$ is special. 
Applying Proposition 3.7, fix a countably closed normal $\omega_2$-Aronszajn tree $U$ 
such that $S$ is a subtree of $U$ and $\h_S(x) = \h_U(x)$ for all $x \in S$. 
Using Proposition 3.5, fix a countably closed normal special $\omega_2$-tree $T$. 
Let $h : T \to \omega_1$ be a function such that $x <_T y$ implies that $h(x) \ne h(y)$ for 
all $x$ and $y$ in $T$.

We have that $T$ and $U$ are both countably closed normal $\omega_2$-Aronszajn trees. 
So by assumption we can fix a club $C \subseteq \omega_2$ and an isomorphism 
$f : U \restriction C \to T \restriction C$. 
Note that $S \restriction C$ is a normal $\omega_2$-Aronszajn tree, 
and since the nodes of $S$ have the same height in $S$ and $U$, 
$S \restriction C \subseteq U \restriction C$. 
Define $h^* : S \restriction C \to \omega_1$ by 
$h^*(x) = h(f(x))$. 

Let $x$ and $y$ be in $S \restriction C$, and assume that $x <_S y$. 
Then $x <_U y$. 
As $f$ is an isomorphism, $f(x) <_T f(y)$. 
So $h^*(x) = h(f(x)) \ne h(f(y)) = h^*(y)$. 
Thus, $S \restriction C$ is special. 
Now in general, if an $\omega_2$-tree is special on club many levels, then it is special, 
by a straightforward argument. 
So $S$ is special.
\end{proof}

\section{The forcing poset}

Assume that \textsf{CH} holds and $T$ and $U$ are normal countably closed 
$\omega_2$-Aronszajn trees whose levels have size $\omega_1$. 
We will develop a forcing poset for adding a club isomorphism from $T$ to $U$. 
This forcing poset is a natural generalization of the one defined by 
Abraham and Shelah \cite[Section 5]{AS} for adding a 
club isomorphism between two normal $\omega_1$-Aronszajn trees.

Many of the lemmas and claims in this section have easy proofs, 
which we will sometimes omit.

\begin{definition}
The forcing poset $\p(T,U)$ consists of all pairs $(A,f)$ satisfying:
\begin{enumerate}
\item $A$ is a countable subset of $\omega_2 \cap \cof(\omega_1)$;
\item $f$ is an injective function;
\item $\dom(f)$ is a countable subset of $T \restriction A$ 
satisfying:
\begin{enumerate}
\item if $x \in \dom(f)$, then for all $\gamma < \h_T(x)$ in 
$A$, $p_T(x,\gamma) \in \dom(f)$, and 
\item if $x \in \dom(f)$ and 
$\h_T(x) < \beta \in A$, then there is $y \in \dom(f)$ with 
$\h_T(y) = \beta$ and $x <_T y$;
\end{enumerate}
\item for all $x \in \dom(f)$, $\h_T(x) = \h_U(f(x))$;
\item for all $x$ and $y$ in $\dom(f)$, 
if $x <_T y$ then $f(x) <_U f(y)$.
\end{enumerate}
Let $(B,g) \le (A,f)$ if $A \subseteq B$ and $f \subseteq g$.
\end{definition}

If $p \in \p(T,U)$, we will write $p = (A_p,f_p)$. 
If $B \subseteq A_p$, we will abbreviate $f_p \restriction (T \restriction B)$ as 
$f_p \restriction B$.

Observe that if $A \subseteq \omega_2 \cap \cof(\omega_1)$ is countable, then 
$(A,\emptyset) \in \p(T,U)$.

Note that if $p \in \p(T,U)$, then $\ran(f_p)$ is a countable subset 
of $U \restriction A_p$ satisfying (i) if $x \in \ran(f_p)$, then for all $\gamma < \h_U(x)$ in 
$A_p$, $p_U(x,\gamma) \in \ran(f_p)$, and (ii) if $x \in \ran(f_p)$ and 
$\h_U(x) < \beta \in A_p$, then there is $y \in \ran(f_p)$ with 
$\h_U(y) = \beta$ and $x <_U y$. 
As a consequence, one can easily check that 
the function which maps $p \in \p(T,U)$ 
to $(A_p,f_p^{-1})$ is an isomorphism from the forcing poset $\p(T,U)$ to $\p(U,T)$.

\begin{lemma}
The forcing poset $\p(T,U)$ is countably closed. 
In fact, if $\langle p_n : n < \omega \rangle$ is a descending sequence of conditions, 
then $(A,f)$ is the greatest lower bound, where 
$A := \bigcup \{ A_{p_n} : n < \omega \}$ and 
$f := \bigcup \{ f_{p_n} : n < \omega \}$.
\end{lemma}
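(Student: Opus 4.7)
The plan is to verify directly that the candidate pair $(A,f)$ meets every clause of Definition 4.1, then observe that $(A,f)$ is a lower bound for the chain and indeed the greatest such. Define $A := \bigcup_{n<\omega} A_{p_n}$ and $f := \bigcup_{n<\omega} f_{p_n}$. Note first that because $p_{n+1} \le p_n$, we have $A_{p_n} \subseteq A_{p_{n+1}}$ and $f_{p_n} \subseteq f_{p_{n+1}}$, so both sequences are $\subseteq$-increasing. This monotonicity is the engine of the whole verification: given any finite list of witnesses, a single $p_k$ with $k$ large enough contains them all, and we can appeal to the fact that $p_k$ itself is a condition.

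For clause (1), $A$ is a countable union of countable subsets of $\omega_2 \cap \cof(\omega_1)$, hence still countable and contained in that set. For (2), the union of a $\subseteq$-increasing chain of injective functions is injective. For (3), $\dom(f) = \bigcup_n \dom(f_{p_n})$ is a countable subset of $T \restriction A$. To check the closure clause (3)(a): if $x \in \dom(f)$ and $\gamma < \h_T(x)$ lies in $A$, pick $n$ with $x \in \dom(f_{p_n})$ and $m$ with $\gamma \in A_{p_m}$, let $k = \max(n,m)$, and use (3)(a) for $p_k$ to conclude $p_T(x,\gamma) \in \dom(f_{p_k}) \subseteq \dom(f)$. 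Clause (3)(b) is analogous: given $x \in \dom(f_{p_n})$ and $\beta \in A_{p_m}$ with $\h_T(x) < \beta$, apply (3)(b) for $p_k$ where $k = \max(n,m)$ to obtain a $y \in \dom(f_{p_k}) \subseteq \dom(f)$ of height $\beta$ above $x$. Clauses (4) and (5) transfer automatically because each pair $(x,f(x))$ and each comparison $x <_T y$ involving elements of $\dom(f)$ is already witnessed inside a single $p_k$.

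Having shown $(A,f) \in \p(T,U)$, it is immediate from the definition of the order that $(A,f) \le p_n$ for every $n$, since $A_{p_n} \subseteq A$ and $f_{p_n} \subseteq f$. Finally, if $(B,g)$ is any lower bound for the chain, then $A_{p_n} \subseteq B$ and $f_{p_n} \subseteq g$ for all $n$, so $A \subseteq B$ and $f \subseteq g$, giving $(B,g) \le (A,f)$. This proves that $(A,f)$ is the greatest lower bound, from which countable closure of $\p(T,U)$ follows at once.

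There is no real obstacle here; the argument is a routine bookkeeping exercise. The one point that benefits from being stated explicitly is the ``pick $k = \max(n,m)$'' move used for clauses (3)(a) and (3)(b), which makes essential use of the fact that we are taking the union of a chain rather than a general directed family, so that every finite collection of data comes from a single condition in the sequence.
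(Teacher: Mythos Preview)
Your proof is correct and complete. The paper omits the proof of this lemma entirely (it is one of the routine verifications flagged as having an easy proof), and what you have written is precisely the standard argument one would supply.
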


\begin{notation}
If $p$ is a condition and $\gamma < \omega_2$, let 
$p \restriction \gamma := 
(A_p \cap \gamma,f \restriction (A_p \cap \gamma))$.
\end{notation}

\begin{lemma}
For any $p \in \p(T,U)$ and $\gamma < \omega_2$, 
$p \restriction \gamma$ is in $\p(T,U)$ and $p \le p \restriction \gamma$.
\end{lemma}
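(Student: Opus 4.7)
The plan is straightforward: verify clauses (1)--(5) of Definition 4.1 for $p \restriction \gamma = (A_p \cap \gamma, f_p \restriction (A_p \cap \gamma))$, then observe that $A_p \cap \gamma \subseteq A_p$ and $f_p \restriction (A_p \cap \gamma) \subseteq f_p$ give $p \le p \restriction \gamma$ directly from the definition of the order. Nothing here is deep; it is a routine check that the notion of condition is closed under restriction to an initial segment of $\omega_2$.

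For clauses (1), (2), (4), and (5), the needed properties are inherited immediately from $p$: a subset of a countable set in $\omega_2 \cap \cof(\omega_1)$ is again such a set, a restriction of an injective function is injective, and the height and order preservation requirements are clearly preserved by restricting the function. The only clauses that need a moment of thought are (3)(a) and (3)(b), since they assert closure properties of $\dom(f_p)$ relative to $A_p$, and we must recheck them relative to $A_p \cap \gamma$.

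For (3)(a), suppose $x \in \dom(f_p \restriction (A_p \cap \gamma))$, so $\h_T(x) \in A_p \cap \gamma$, and let $\eta \in A_p \cap \gamma$ with $\eta < \h_T(x)$. Then $\eta \in A_p$ and $\eta < \h_T(x)$, so by clause (3)(a) applied to $p$ we get $p_T(x,\eta) \in \dom(f_p)$; since $\h_T(p_T(x,\eta)) = \eta \in A_p \cap \gamma$, this node lies in $\dom(f_p \restriction (A_p \cap \gamma))$. For (3)(b), if $x \in \dom(f_p \restriction (A_p \cap \gamma))$ and $\h_T(x) < \beta \in A_p \cap \gamma$, then $\beta \in A_p$, so clause (3)(b) for $p$ provides $y \in \dom(f_p)$ with $\h_T(y) = \beta$ and $x <_T y$; since $\beta < \gamma$, the node $y$ lies in $\dom(f_p \restriction (A_p \cap \gamma))$.

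There is no real obstacle; the worst that can happen is a slight asymmetry between ``$\eta < \gamma$'' and ``$\eta \in A_p \cap \gamma$,'' but both witnesses produced above automatically have heights in $A_p \cap \gamma$ because their heights equal $\eta$ or $\beta$, which were chosen from $A_p \cap \gamma$. Once (3) is verified, $p \restriction \gamma \in \p(T,U)$, and $p \le p \restriction \gamma$ follows from the trivial inclusions $A_p \cap \gamma \subseteq A_p$ and $f_p \restriction (A_p \cap \gamma) \subseteq f_p$.
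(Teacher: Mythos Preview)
Your proof is correct. The paper omits the proof of this lemma entirely (it remarks at the start of Section~4 that many of the lemmas have easy proofs which will sometimes be omitted), so there is no alternative approach to compare against; your verification of clauses (1)--(5) and the order relation is exactly the routine check the paper is leaving to the reader.
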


\begin{lemma}
Let $p \in \p(T,U)$. 
Suppose that $B$ is a countable subset of $\omega_2 \cap \cof(\omega_1)$ 
such that $\sup(A_p) < \min(B)$. 
Then there is $q \le p$ in $\p(T,U)$ satisfying:
\begin{enumerate}
\item $A_q = A_p \cup B$;
\item $f_q \restriction A_p = f_p$.
\end{enumerate}
\end{lemma}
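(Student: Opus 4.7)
The plan is to construct $q$ as the meet of an $\omega$-length descending sequence $p = p_0 \ge p_1 \ge \cdots$ in $\p(T,U)$, and invoke Lemma~4.2. Enumerate $B = \{\beta_n : n < \omega\}$ in increasing order. At stage $n+1$ I set $A_{p_{n+1}} := A_{p_n} \cup \{\beta_n\}$ and $\dom(f_{p_{n+1}}) := \dom(f_{p_n}) \cup Y_n$ for some countable $Y_n \subseteq T(\beta_n)$, requiring $f_{p_{n+1}} \restriction \dom(f_{p_n}) = f_{p_n}$. The meet $q$ then automatically satisfies $A_q = A_p \cup B$, and since $f_p$-values are never overwritten along the sequence, $f_q \restriction A_p = f_p$.

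For each $x \in \dom(f_{p_n})$ I will produce a node $y(x) \in T(\beta_n)$ with $x <_T y(x)$ whose full $A_{p_n}$-trace $\{p_T(y(x),\gamma) : \gamma \in A_{p_n},\ \gamma < \beta_n\}$ lies inside $\dom(f_{p_n})$, and set $Y_n := \{y(x) : x \in \dom(f_{p_n})\}$. Note that $\sup(A_{p_n} \cap \beta_n) < \beta_n$, since this supremum equals $\sup(A_p) < \beta_0$ when $n = 0$ and $\beta_{n-1} < \beta_n$ when $n > 0$. Using condition~(3)(b) of $p_n$ recursively, pick a chain $x = y_0 \le_T y_1 \le_T \cdots$ in $\dom(f_{p_n})$ whose heights enumerate a cofinal sequence in $A_{p_n} \cap [\h_T(x), \beta_n)$ (finite if that set has a maximum, of length $\omega$ otherwise). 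Countable closure of $T$ produces an upper bound of this chain in $T$, and normality of $T$ lifts the upper bound to $y(x) \in T(\beta_n)$. For $\gamma \in A_{p_n}$ with $\h_T(x) \le \gamma < \beta_n$, one has $p_T(y(x), \gamma) = p_T(y_k, \gamma)$ for any $k$ with $\h_T(y_k) \ge \gamma$, which lies in $\dom(f_{p_n})$ by condition~(3)(a) of $p_n$; for $\gamma < \h_T(x)$, the trace lies in $\dom(f_{p_n})$ by (3)(a) applied to $x$ itself.

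To define $f_{p_{n+1}}(y)$ for $y \in Y_n$, I run the parallel argument on the $U$-side: the set $\{f_{p_n}(p_T(y,\gamma)) : \gamma \in A_{p_n},\ \gamma < \beta_n\}$ is a chain in $U$ by condition~(5) of $p_n$, countable closure of $U$ produces an upper bound, and normality of $U$ extends this to a node at level $\beta_n$. Each node of $U$ has countably many pairwise incomparable descendants (iterate normality property~(3)), all of which extend to level $\beta_n$ by property~(1), so the countably many values needed across $y \in Y_n$ can be chosen distinctly, securing injectivity of $f_{p_{n+1}}$. Verification that $p_{n+1} \in \p(T,U)$ with $p_{n+1} \le p_n$ is then immediate: (3)(a) at level $\beta_n$ is what $y(x)$ was built for; (3)(b) at level $\beta_n$ is witnessed by $Y_n$, and at $A_p$-levels transfers from $p_n$ since no new nodes are added below $\min(B)$; (4) and (5) are built into the height- and order-preserving choices.

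The main obstacle is the limit step in constructing $y(x)$: the chain $\{y_k\}$ inside $\dom(f_{p_n})$ need not have any upper bound in $\dom(f_{p_n})$ at the supremum $\sigma$ of its heights (indeed $\sigma$ need not even lie in $A_{p_n}$), so the chain cannot be continued inside $\dom(f_{p_n})$ at that level. The key point that makes the construction work is that $A_{p_n}$ contains no elements between $\sigma$ and $\beta_n$, so one is free to take a $T$-upper bound of the chain outside $\dom(f_{p_n})$ and then lift by normality directly to $\beta_n$, without ever needing the intermediate upper bound to lie in $\dom(f_{p_n})$.
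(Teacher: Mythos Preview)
Your singleton step is essentially the paper's argument, and the iterative strategy is the right one. However, there is a genuine gap: you write ``Enumerate $B = \{\beta_n : n < \omega\}$ in increasing order,'' but a countable subset of $\omega_2 \cap \cof(\omega_1)$ need not have order type $\le \omega$. For instance $B$ could have order type $\omega + \omega$ or $\omega^2$, and then no increasing $\omega$-enumeration exists. Your construction genuinely uses the increasing order (the claim $\sup(A_{p_n} \cap \beta_n) = \beta_{n-1}$ relies on $\beta_n$ being the current maximum of $A_{p_{n+1}}$, and you only add nodes at the top level $\beta_n$), so you cannot simply switch to a non-monotone $\omega$-enumeration without reworking the step to handle inserting a level below already-added levels.

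The fix, which is what the paper does, is to run the induction transfinitely: enumerate $B$ increasingly as $\langle \beta_i : i < \delta \rangle$ for some $\delta < \omega_1$, and at limit stages $i$ take the greatest lower bound of $\langle q_j : j < i \rangle$ (the proof of Lemma~4.2 works verbatim for any countable descending sequence). The successor step is exactly your singleton argument. The one additional observation needed at limit $i$ is that $\sup\{\beta_j : j < i\} < \beta_i$; this holds because $\cf(\beta_i) = \omega_1$ while $\{\beta_j : j < i\}$ is countable, so the supremum cannot reach $\beta_i$. With that in hand your argument goes through unchanged at every stage.
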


\begin{proof}
We first prove the lemma in the case that $B = \{ \beta \}$ is a singleton. 
For each $x \in \dom(f_p)$, construct a cofinal branch $b_x$ in the tree 
$\dom(f)$ which contains $x$. 
Specifically, if $A_p$ has a maximum element, 
then just pick a node in $\dom(f)$ with height $\max(A_p)$ which is 
above $x$, and let $b_x$ be the set of nodes of $T \restriction A_p$ 
less than or equal to that node. 
If $A_p$ does not have a maximum element, then inductively build a chain of 
nodes above $x$ with order type $\omega$ and cofinal in $\dom(f)$, 
and let $b_x$ be the downward closure of that chain in $T \restriction A_p$.

Using the fact that $T$ is countably closed and each branch 
$b_x$ either has a top element or has an order type with countable cofinality, 
we can conclude that each $b_x$ has an upper bound in $T$. 
Since $T$ is normal, fix $m_x \in T$ which is an upper bound of $b_x$ 
with height $\beta$. 
Moreover, since $\sup(A_p) < \beta$, each chain $b_x$ has $\omega_1$ 
many upper bounds in $T_\beta$. 
Hence, we can arrange that if $x \ne y$, then $m_x \ne m_y$ (even if 
it happens that $b_x = b_y$, which is possible). 

For each $x \in \dom(f_p)$, let $c_x := f_p[b_x]$, which is a cofinal branch of $\ran(f_p)$ 
containing $f(x)$. 
Since $U$ is countably closed, each chain $c_x$ has an upper bound. 
As $\sup(A_p) < \beta$, each $c_x$ has $\omega_1$ many upper 
bounds of height $\beta$. 
So we can fix an upper bound $n_x$ of $c_x$ with height $\beta$ 
in such a way that $x \ne y$ implies $n_x \ne n_y$.

Now define $q$ by letting $A_q := A_p \cup \{ \beta \}$, 
$\dom(f_q) := \dom(f_p) \cup \{ m_x : x \in \dom(f_p) \}$, 
$f_q(x) := f_p(x)$ for all $x \in \dom(f_p)$, and 
$f_q(m_x) := n_x$ for all $x \in \dom(f_p)$.
It is easy to check that $q$ is as required.

To prove the lemma in general, enumerate $B$ in increasing order 
as $\langle \beta_i : i < \delta \rangle$, where $\delta < \omega_1$. 
Since $B$ is countable and 
consists of ordinals of cofinality $\omega_1$, for all $i < \delta$ 
we have that $\sup(A_p \cup \{ \beta_j : j < i \}) < \beta_i$. 

Define by induction a descending sequence of conditions 
$\langle q_i : i \le \delta \rangle$ as follows. 
Let $q_0 = p$. 
Suppose $q_i$ is given, where $i < \delta$ and 
$A_{q_i} = A_p \cup \{ \beta_j : j < i \}$. 
Then $\sup(A_{q_i}) < \beta_i$. 
By the case just proven, we can find $q_{i+1} \le q_i$ satisfying that 
$\dom(A_{q_{i+1}}) = \dom(A_{q_i}) \cup \{ \beta_i \}$ and 
$f_{q_{i+1}} \restriction A_{q_{i}} = f_{q_i}$.  

Suppose that $\delta_0 \le \delta$ is a limit ordinal and $q_i$ is defined 
for all $i < \delta_0$. 
Let $q_{\delta_0}$ be the greatest lower bound 
of $\langle q_i : i < \delta_0 \rangle$. 
This completes the construction. 
It is easy to check that $q := q_{\delta}$ is as required.
\end{proof}

\begin{lemma}
Let $p \in \p(T,U)$, $\beta \in A_p$, and $x \in T$ 
with $\h_T(x) = \beta$. 
Then there is $q \le p$ such that $x \in \dom(f_q)$.
\end{lemma}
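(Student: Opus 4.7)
The plan is to build $q$ by extending $\dom(f_p)$ with a chain of new nodes in $T$ (containing $x$, the predecessors of $x$ at levels of $A_p$ not yet in $\dom(f_p)$, and an extension of $x$ at each level of $A_p$ above $\beta$), together with a matching chain of new nodes in $U$. Enumerate $A_p$ in increasing order as $\{\alpha_i : i < \delta\}$ with $\alpha_{i_0} = \beta$; we may assume $x \notin \dom(f_p)$, else take $q := p$. By condition (3)(a) applied to $p$, the set $\{i \le i_0 : p_T(x,\alpha_i) \in \dom(f_p)\}$ is downward closed, say equal to $[0,i_1)$ with $i_1 \le i_0$.

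First, recursively choose $x_i$ for $i \in [i_1,\delta)$ to form a chain in $T$ with $\h_T(x_i) = \alpha_i$: set $x_i := p_T(x,\alpha_i)$ for $i_1 \le i \le i_0$ (so $x_{i_0} = x$), and for $i > i_0$ choose $x_i$ above the partial chain and outside $\dom(f_p)$. At each stage the partial chain is countable and its supremum is strictly below $\alpha_i$, since $\alpha_i$ has cofinality $\omega_1$; countable closedness of $T$ then yields an upper bound, and, exactly as in Lemma 4.5, there are $\omega_1$-many extensions to level $\alpha_i$, so a choice outside the countable set $\dom(f_p)$ is available. Next, let $c := f_p[\{p_T(x,\alpha_k) : k < i_1\}]$ and run the analogous recursion inside $U$ to produce $y_i \in U(\alpha_i)$ for $i \in [i_1,\delta)$ so that $c \cup \{y_k : i_1 \le k \le i\}$ is a chain and $y_i \notin \ran(f_p)$. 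Set $q := (A_p,\, f_p \cup \{(x_i,y_i) : i \in [i_1,\delta)\})$.

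The main verification is order preservation (condition (5)) between a new $x_i$ and some $w \in \dom(f_p)$. The critical observation is that $x_i <_T w$ is impossible, since (3)(a) for $p$ would force $x_i = p_T(w,\alpha_i) \in \dom(f_p)$, contradicting that $x_i$ is new; symmetrically, $y_i <_U f_p(w)$ would force $y_i = p_U(f_p(w),\alpha_i) \in \ran(f_p)$ via the analogous property of $\ran(f_p)$. So the only possibility is $w <_T x_i$, whence by the chain structure of our construction $w = p_T(x,\alpha_k)$ for some $k < i_1$, so $f_p(w) \in c$ and thus $f_p(w) <_U y_i$ by construction. The remaining clauses are routine: (3)(a) and (3)(b) at new nodes follow from the chain structure (with $x_j$ witnessing (3)(b) for $x_i$ whenever $\alpha_j > \alpha_i$ in $A_p$), and injectivity of $f_q$ follows from $y_i \notin \ran(f_p)$ and the $y_i$ lying on distinct levels. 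The main technical ingredient is the $\omega_1$-many-extensions fact used in both recursions, which is the same tool already deployed in Lemma 4.5.
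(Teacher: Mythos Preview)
Your proof is correct. It differs from the paper's in organization rather than in spirit. The paper first invokes Lemma~4.5 to reduce to the case where $A_p$ has a maximum and $\h_T(x) = \max(A_p)$; then only predecessors of $x$ need to be added on the $T$-side, and on the $U$-side the new values are obtained all at once as projections $p_U(z,\cdot)$ of a single top node $z$ chosen above $c$. You instead treat the general $A_p$ directly, recursively extending $x$ upward in $T$ and building the $U$-chain step by step. The paper's reduction buys a cleaner one-shot definition of $f_q$ (no transfinite recursion on the $U$-side); your approach is more self-contained and makes the argument independent of Lemma~4.5. Both rest on the same facts: countable closure, normality, and that levels in $A_p$ have cofinality $\omega_1$, so countable chains are bounded strictly below them and there is room to avoid the countable set $\ran(f_p)$.

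One small remark: for the $x_i$ with $i > i_0$ you do not actually need the ``$\omega_1$-many choices'' argument to avoid $\dom(f_p)$. Any node above $x$ is automatically outside $\dom(f_p)$, since otherwise property~(3)(a) for $p$ would force $x = p_T(x_i,\beta) \in \dom(f_p)$. The freedom-of-choice argument is genuinely needed only on the $U$-side, to keep $y_i \notin \ran(f_p)$.
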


\begin{proof}
If $x \in \dom(f_p)$, then we are done, so assume not. 
Then we also have that for all $y \in \dom(f_p)$, 
$x \not <_T y$. 
Extending $p$ if necessary using Lemma 4.5, we may assume 
without loss of generality that $\max(A_p)$ exists. 
Let $\beta := \max(A_p)$. 
We may also assume that $\h_T(x) = \beta$. 
For otherwise fix $x' \in T$ with $\h_T(x') = \beta$ and $x <_T x'$, and 
note that if $q \le p$ satisfies that $x' \in \dom(f_q)$, then 
also $x \in \dom(f_q)$.

Define $b := \{ y \in \dom(f_p) : y <_T x \}$ and $c := f_p[b]$ (it is possible that 
$b$ and $c$ are empty). 
Let $\gamma$ be the least member of $A_p$ such that $p_T(x,\gamma) \notin \dom(f_p)$, 
which exists since $x \notin \dom(f_p)$. 
Fix an upper bound $z_0$ of $c$ in $U$ with height $\gamma$ which is not in $\ran(f_p)$. 
This is possible by the countable closure and normality of $U$, together with the 
fact that $\cf(\gamma) = \omega_1$ whereas $c$ is countable. 
Fix $z \in U$ with height $\beta$ which is above $z_0$ in $U$. 
Note that since $z_0 \notin \ran(f_p)$, 
the set $\{ y : z_0 \le_U y \le_U z \}$ is disjoint from $\ran(f_p)$.

Define $A_q := A_p$ and let  
$\dom(f_q)$ be the union of the disjoint sets 
$\dom(f_p)$ and $\{ y : y \le_T x, \ \h_T(y) \in A \setminus \gamma \}$. 
Note that every member of $\dom(f_q) \setminus \dom(f_p)$ is 
an upper bound of $b$, and also $x \in \dom(f_q)$. 
Define $f_q$ so that $f_q \restriction \dom(f_p) = f_p$ and 
for all $y \in \dom(f_q) \setminus \dom(f_p)$, $f_q(y) := p_U(z,\h_T(y))$.
It is straightforward to check that $q = (A_q,f_q)$ is as required.
\end{proof}

\begin{lemma}
Let $p \in \p(T,U)$, $\beta \in A_p$, and $y \in U$ 
with $\h_U(y) = \beta$. 
Then there is $q \le p$ such that $y \in \ran(f_q)$.
\end{lemma}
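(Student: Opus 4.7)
The plan is to exploit the symmetry between $\p(T,U)$ and $\p(U,T)$ that was pointed out right after Definition~4.1, rather than redoing the construction from scratch. Recall the observation: the map
\[
\Phi : \p(T,U) \to \p(U,T), \qquad \Phi(p) = (A_p, f_p^{-1}),
\]
is an isomorphism of forcing posets. Under $\Phi$, the range of $f_p$ becomes the domain of $f_{\Phi(p)}$, and nodes of $U$ at level $\beta \in A_p$ play exactly the role that nodes of $T$ at level $\beta$ played in Lemma~4.6.

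Given $p$, $\beta$, and $y$ as in the hypothesis, I would first set $p^* := \Phi(p) = (A_p, f_p^{-1}) \in \p(U,T)$. Note that $\beta \in A_{p^*} = A_p$ and $y \in U$ has $\h_U(y) = \beta$. Since $U$ and $T$ are both normal countably closed $\omega_2$-Aronszajn trees with levels of size $\omega_1$, the hypotheses of Section~4 are symmetric in $T$ and $U$, so Lemma~4.6 applies equally to $\p(U,T)$. Applying Lemma~4.6 (with the roles of $T$ and $U$ swapped) to $p^*$, $\beta$, and $y$, I obtain a condition $q^* \le p^*$ in $\p(U,T)$ such that $y \in \dom(f_{q^*})$.

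Now set $q := \Phi^{-1}(q^*) = (A_{q^*}, f_{q^*}^{-1}) \in \p(T,U)$. Since $\Phi$ is an order isomorphism and $q^* \le p^* = \Phi(p)$, we have $q \le p$ in $\p(T,U)$. Moreover, $y \in \dom(f_{q^*})$ translates to $y \in \ran(f_q)$, which is exactly what is required.

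There is no real obstacle here beyond invoking the symmetry correctly; the only thing to double check is that the symmetry observation genuinely applies, i.e.\ that every clause of Definition~4.1 (the conditions on $A_p$, the injectivity of $f_p$, the closure properties (3)(a) and (3)(b), the height-preservation (4), and the order-preservation (5)) is invariant under passing from $f_p$ to $f_p^{-1}$ and swapping the roles of $T$ and $U$. Each of these is immediate: injectivity is self-dual; the closure of $\dom(f_p)$ in $T \restriction A_p$ corresponds exactly to the closure of $\ran(f_p)$ in $U \restriction A_p$ that was noted right after Definition~4.1; height preservation is symmetric in $T$ and $U$; and $x <_T y \Leftrightarrow f_p(x) <_U f_p(y)$ for $x,y \in \dom(f_p)$ (the forward direction is clause (5), and the reverse direction follows from injectivity together with normality of $T$, by the same argument as in Lemma~3.2). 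Thus the symmetry is legitimate and the lemma reduces to Lemma~4.6.
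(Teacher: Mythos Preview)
Your proof is correct and essentially identical to the paper's: both pass to $(A_p,f_p^{-1})\in\p(U,T)$, apply Lemma~4.6 there, and then transport the result back via the isomorphism $\Phi$. The extra verification you supply at the end is not needed since the paper already recorded (right after Definition~4.1) that $\Phi$ is an isomorphism of forcing posets.
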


\begin{proof}
Consider $(A_p,f_p^{-1})$, which is in $\p(U,T)$. 
By the previous lemma applied in $\p(U,T)$, 
there is $r \le (A_p,f_p^{-1})$ with $y \in \dom(f_r)$. 
Then $(A_r,f_r^{-1})$ is below $p$ in $\p(T,U)$ and satisfies that 
$y \in \ran(f_r^{-1})$.
\end{proof}

Since $\p(T,U)$ is countably closed, it preserves $\omega_1$. 
The issue of whether $\p(T,U)$ preserves $\omega_2$ will be dealt 
with in Section 7.

\begin{proposition}
Assume that $\p(T,U)$ preserves $\omega_2$. 
Then $\p(T,U)$ forces that $T$ and $U$ are club isomorphic.
\end{proposition}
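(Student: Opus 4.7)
The plan is to extract a partial isomorphism from the generic filter, verify that its domain indexes a cofinal subset of $\omega_2$, and then appeal to Lemma 3.4 to upgrade it to a club isomorphism. Let $G$ be a $V$-generic filter on $\p(T,U)$ and set $A^* := \bigcup\{A_p : p \in G\}$ and $f^* := \bigcup\{f_p : p \in G\}$. Because $G$ is directed and the ordering of $\p(T,U)$ extends both the $A$-part and the $f$-part monotonically, $f^*$ is a well-defined injective function which preserves heights and the forward direction of the tree order on its domain. To see that $A^*$ is cofinal in $\omega_2$, observe that for each $\alpha < \omega_2$ and each $p \in \p(T,U)$, if $A_p \subseteq \alpha$ then picking any $\beta \in \omega_2 \cap \cof(\omega_1)$ above $\alpha$ and applying Lemma 4.5 with $B = \{\beta\}$ produces an extension $q \le p$ with $\beta \in A_q$. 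The set $\{q : A_q \not\subseteq \alpha\}$ is therefore dense, and since the hypothesis combined with countable closure gives $\omega_2^V = \omega_2^{V[G]}$, a standard density argument yields $\sup(A^*) = \omega_2$.

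Next I would verify that $f^*$ is an isomorphism from $T \restriction A^*$ onto $U \restriction A^*$. For any $x \in T$ with $\h_T(x) = \beta \in A^*$, fix $p_0 \in G$ with $\beta \in A_{p_0}$; Lemma 4.6 tells us that the set of $q \le p_0$ with $x \in \dom(f_q)$ is dense below $p_0$, so by genericity some such $q$ lies in $G$. Hence $\dom(f^*) = T \restriction A^*$, and the symmetric argument using Lemma 4.7 yields $\ran(f^*) = U \restriction A^*$. For the reverse implication of order preservation, suppose $f^*(x) <_U f^*(y)$ and set $\gamma := \h_T(x) \in A^*$; then $p_T(y,\gamma) \in T \restriction A^* = \dom(f^*)$, and it maps under $f^*$ to $p_U(f^*(y),\gamma) = f^*(x)$, so injectivity forces $x = p_T(y,\gamma) <_T y$. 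This is exactly the argument of Lemma 3.2 carried out inside the subtrees, and it shows that $f^*$ is an isomorphism.

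Since $T$ and $U$ are normal and $A^*$ is a cofinal subset of $\omega_2$, Lemma 3.4 now furnishes a club $C \subseteq \omega_2$ on which $T \restriction C$ and $U \restriction C$ are isomorphic, witnessing that $T$ and $U$ are club isomorphic in $V[G]$. None of these steps is difficult given the machinery already developed in this section; the genuine obstacle for the overall consistency proof is the $\omega_2$-preservation hypothesis itself, which is the entire subject of Sections 6 and 7.
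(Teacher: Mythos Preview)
Your proof is correct and follows essentially the same approach as the paper: define $A$ and $f$ as unions over the generic filter, use Lemma 4.5 for cofinality of $A$, Lemmas 4.6 and 4.7 for $\dom(f)$ and $\ran(f)$, the Lemma 3.2 argument for the isomorphism, and Lemma 3.4 to pass to a club. The only cosmetic difference is that you spell out the Lemma 3.2 computation explicitly rather than simply citing it, which is harmless.
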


\begin{proof}
Let $G$ be a generic filter on $\p(T,U)$. 
Define $A := \bigcup \{ A_p : p \in G \}$ and $f := \bigcup \{ f_p : p \in G \}$. 
An easy density argument using Lemma 4.5 shows that $A$ is unbounded in $\omega_2$, 
and similarly Lemmas 4.6 and 4.7 can be used to show that 
$\dom(f) = T \restriction A$ and $\ran(f) = U \restriction A$. 

The definition of the forcing poset $\p(T,U)$ implies that 
$f : T \restriction A \to U \restriction A$ is a bijection, 
$\h_T(x) = \h_U(f(x))$ for all $x \in \dom(f)$, 
and $x <_T y$ in $\dom(f)$ implies that $f(x) <_U f(y)$. 
By Lemma 3.2, $T \restriction A$ and $U \restriction A$ are isomorphic. 
By Lemma 3.4, $T$ and $U$ are club isomorphic.
\end{proof}

\begin{definition}
Let $p \in \p(T,U)$ and $\beta \in A_p$. 
We say that $p$ is \emph{injective on $\beta$} if for all distinct 
$x$ and $y$ in $\dom(f_p)$ with height $\beta$, 
there exists $\gamma \in A_p \cap \beta$ such that 
$p_T(x,\gamma) \ne p_T(y,\gamma)$.
\end{definition}

\begin{lemma}
Let $p \in \p(T,U)$ and $\beta \in A_p$ be a limit point of $\omega_2 \cap \cof(\omega_1)$. 
Then there is $q \le p$ such that $q$ is injective on $\beta$.
\end{lemma}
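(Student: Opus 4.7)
The plan is to produce $q \le p$ by adding a single new ordinal $\gamma \in \omega_2 \cap \cof(\omega_1)$, strictly below $\beta$, to $A_p$, and extending $\dom(f_p)$ by the projections $p_T(y,\gamma)$ of existing nodes $y$ at heights above $\gamma$. The intended definition on a new node is $f_q(p_T(y,\gamma)) := p_U(f_p(y),\gamma)$. The ordinal $\gamma$ must be chosen so that (i) it separates every pair of distinct nodes of $\dom(f_p) \cap T(\beta)$ at level $\gamma$, and (ii) the candidate formula for $f_q$ is well-defined and injective on the new level.

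For each pair of incomparable $y_1, y_2 \in \dom(f_p)$, let $\alpha_T(y_1,y_2) := \sup\{\alpha : p_T(y_1,\alpha) = p_T(y_2,\alpha)\}$, and let $\alpha_U(y_1,y_2)$ be defined analogously from $f_p(y_1), f_p(y_2)$ in $U$. The key lemma is that $\alpha_T(y_1,y_2) \ge \beta$ if and only if $\alpha_U(y_1,y_2) \ge \beta$. The forward direction uses that if $\alpha_T \ge \beta$, then the common predecessor $z := p_T(y_1,\beta) = p_T(y_2,\beta)$ has height $\beta \in A_p$, so $z \in \dom(f_p)$ by clause (3)(a); then order- and height-preservation force $f_p(z) = p_U(f_p(y_i),\beta)$ for both $i$, so $\alpha_U \ge \beta$. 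The converse applies the same argument in $\p(U,T)$ to $(A_p,f_p^{-1})$, using that $\ran(f_p)$ is downward closed at heights in $A_p$.

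Let $\gamma_0 := \sup\{\max(\alpha_T(y_1,y_2),\alpha_U(y_1,y_2)) : y_1, y_2 \in \dom(f_p) \text{ incomparable in } T,\; \alpha_T(y_1,y_2) < \beta\}$, a supremum of countably many ordinals strictly below $\beta$; since $\cf(\beta) = \omega_1$, we have $\gamma_0 < \beta$. Using that $\beta$ is a limit point of $\omega_2 \cap \cof(\omega_1)$, choose $\gamma \in (\omega_2 \cap \cof(\omega_1)) \setminus A_p$ with $\gamma_0 < \gamma < \beta$. Set $A_q := A_p \cup \{\gamma\}$, $\dom(f_q) := \dom(f_p) \cup \{p_T(y,\gamma) : y \in \dom(f_p),\; \h_T(y) > \gamma\}$, $f_q \restriction \dom(f_p) := f_p$, and $f_q(p_T(y,\gamma)) := p_U(f_p(y),\gamma)$. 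Well-definedness and injectivity of $f_q$ on the new level follow from the key lemma: if $p_T(y_1,\gamma) = p_T(y_2,\gamma)$ for incomparable $y_1, y_2$, then $\alpha_T \ge \gamma > \gamma_0$ forces $\alpha_T \ge \beta$ by the definition of $\gamma_0$, hence $\alpha_U \ge \beta > \gamma$ and the $U$-projections agree; injectivity is dual. The other clauses of Definition 4.1 are routine, the only nontrivial case being clause (3)(b) at the new level $\gamma$ for a node $x \in \dom(f_p)$ of height below $\gamma$: use clause (3)(b) for $p$ at $\beta \in A_p$ to find $x' \in \dom(f_p) \cap T(\beta)$ above $x$, and then $p_T(x',\gamma) \in \dom(f_q)$ is the required witness. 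Finally, injectivity of $q$ on $\beta$ is immediate: for distinct $y_1, y_2 \in \dom(f_p) \cap T(\beta) = \dom(f_q) \cap T(\beta)$, $\alpha_T(y_1,y_2) < \beta$ gives $\gamma > \gamma_0 \ge \alpha_T(y_1,y_2)$, so $p_T(y_1,\gamma) \ne p_T(y_2,\gamma)$ with $\gamma \in A_q \cap \beta$.

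The main obstacle is the key lemma, which exploits $\beta \in A_p$ and clause (3)(a) to force the $T$- and $U$-branching structures above level $\beta$ to agree. Without it, a pair of $T$-incomparable nodes in $\dom(f_p)$ above height $\beta$ could diverge below $\gamma$ on the $U$-side while agreeing on the $T$-side, rendering the natural definition of $f_q$ ill-defined and forcing us to add many new ordinals rather than just one.
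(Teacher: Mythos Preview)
Your proof is correct, but it takes a more circuitous route than the paper's, and there is a small gap in your proof of the key lemma that you should patch.

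\textbf{The gap.} Your forward direction of the key lemma reads: ``if $\alpha_T \ge \beta$, then the common predecessor $z := p_T(y_1,\beta) = p_T(y_2,\beta)$ \ldots''. But $\alpha_T(y_1,y_2)$, as you defined it, is the \emph{split level}: they agree strictly below $\alpha_T$ and disagree at $\alpha_T$. So $p_T(y_1,\beta) = p_T(y_2,\beta)$ requires $\alpha_T > \beta$, not merely $\alpha_T \ge \beta$. Your argument as written does not cover the boundary case $\alpha_T = \beta$. The fix is easy: since $\beta$ is a limit ordinal and $T$ is normal (clause (2) of normality), two distinct nodes at level $\beta$ must diverge strictly below $\beta$, so no incomparable pair in $\dom(f_p)$ can have split level exactly $\beta$; the same holds in $U$. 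Thus $\alpha_T \ge \beta$ is equivalent to $\alpha_T > \beta$, and your argument goes through. You should state this explicitly, since without it both the claim $\gamma_0 < \beta$ and the well-definedness argument have a hole.

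\textbf{Comparison with the paper.} The paper avoids your key lemma entirely by also demanding $\sup(A_p \cap \beta) < \gamma$. With this extra requirement, the only heights in $A_p$ above $\gamma$ are $\ge \beta$, so one need only add the projections $p_T(x,\gamma)$ for $x \in \dom(f_p)$ of height exactly $\beta$. The paper then chooses $\gamma$ large enough (using normality and $\cf(\beta)=\omega_1$) so that distinct level-$\beta$ nodes of $\dom(f_p)$ have distinct projections to level $\gamma$ in $T$, and likewise for $\ran(f_p)$ in $U$. Well-definedness and injectivity of $f_q$ are then immediate, with no analysis of split levels required. Your approach is more general in that it does not force $\gamma$ above $\sup(A_p\cap\beta)$, but this generality buys nothing here and costs you the key-lemma machinery; the paper's argument is a few lines.
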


\begin{proof}
Since $T$ and $U$ are normal, $\cf(\beta) = \omega_1$, and 
$\dom(f_p)$ and $\ran(f_p)$ are countable, we can find $\gamma < \beta$ 
with cofinality $\omega_1$ such that:
\begin{enumerate}
\item $\sup(A_p \cap \beta) < \gamma$;
\item for all $x \ne y$ in $\dom(f_p)$ with height $\beta$, 
$p_T(x,\gamma) \ne p_T(y,\gamma)$;
\item for all $x \ne y$ in $\ran(f_p)$ with height $\beta$, 
$p_U(x,\gamma) \ne p_U(y,\gamma)$.
\end{enumerate}

Now define $q$ by letting $A_q := A_p \cup \{ \gamma \}$, 
$$
\dom(f_q) := \dom(f_p) \cup \{ p_T(x,\gamma) : x \in \dom(f_p), \ \h_T(x) = \beta \},
$$
$f_q \restriction \dom(f_p) := f_p$, and for all $x \in \dom(f_p)$ with height $\beta$, 
$$
f_q(p_T(x,\gamma)) := p_U(f_p(x),\gamma).
$$
It is easy to check that $q$ is a condition, 
$q \le p$, and $q$ is injective on $\beta$.
\end{proof}

\begin{lemma}
Let $\langle p_n : n < \omega \rangle$ be a descending sequence of 
conditions in $\p(T,U)$ and let $\beta \in A_{p_0}$. 
Suppose that for all $n < \omega$, $p_n$ is injective on $\beta$. 
Then the greatest lower bound of $\langle p_n : n < \omega \rangle$ 
is injective on $\beta$.
\end{lemma}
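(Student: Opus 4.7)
The plan is direct and essentially bookkeeping on the definition of the greatest lower bound given by Lemma 4.2. Let $p$ denote the greatest lower bound of $\langle p_n : n < \omega \rangle$, so that $A_p = \bigcup_n A_{p_n}$ and $f_p = \bigcup_n f_{p_n}$.

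First I would fix distinct $x, y \in \dom(f_p)$ with $\h_T(x) = \h_T(y) = \beta$, with the goal of producing $\gamma \in A_p \cap \beta$ such that $p_T(x,\gamma) \ne p_T(y,\gamma)$. Since $\dom(f_p) = \bigcup_n \dom(f_{p_n})$ and the sequence is descending, there exist indices $n_x, n_y$ with $x \in \dom(f_{p_{n_x}})$ and $y \in \dom(f_{p_{n_y}})$; taking $n := \max(n_x, n_y)$, both $x$ and $y$ lie in $\dom(f_{p_n})$ with height $\beta$.

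Now I would invoke the hypothesis that $p_n$ is injective on $\beta$. This yields some $\gamma \in A_{p_n} \cap \beta$ with $p_T(x,\gamma) \ne p_T(y,\gamma)$. Since $A_{p_n} \subseteq A_p$, we have $\gamma \in A_p \cap \beta$, which is what was required. As $x, y$ were arbitrary, $p$ is injective on $\beta$.

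There is no real obstacle here; the lemma is essentially a direct unwinding of the definitions once one observes that any two nodes of height $\beta$ in $\dom(f_p)$ already appear at some common finite stage of the descending sequence. The only point to be careful about is that one cannot appeal to injectivity of a single $p_n$ chosen in advance, because $x$ and $y$ may enter the domain at different stages; but taking the maximum of the two entry stages resolves this immediately.
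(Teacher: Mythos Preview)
Your argument is correct and is exactly the routine verification one would expect; the paper omits the proof of this lemma entirely, treating it as immediate from the definitions.
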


\begin{lemma}
Let $p \in \p(T,U)$ and $\beta \in A_p$, and assume that $p$ is injective on $\beta$. 
Suppose that $\xi$ is an ordinal with cofinality $\omega_1$ such that 
$\sup(A_p \cap \beta) < \xi < \beta$. 
Then:
\begin{enumerate}
\item for all distinct $x$ and $y$ in $\dom(f_p)$ with height $\beta$, 
$p_T(x,\xi) \ne p_T(y,\xi)$;
\item for all distinct $x$ and $y$ in $\ran(f_p)$ with height $\beta$, 
$p_U(x,\xi) \ne p_U(y,\xi)$.
\end{enumerate}
\end{lemma}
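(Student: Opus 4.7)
The plan is to reduce both parts to the useful observation from Section 3, which says that once two nodes of a normal tree are separated at some level, they remain separated at every higher level up to their common height. Injectivity of $p$ on $\beta$ already gives us separation in $T$ at some $\gamma \in A_p \cap \beta$, and by the hypothesis on $\xi$ we have $\gamma \le \sup(A_p \cap \beta) < \xi < \beta$, so we can push the separation up to $\xi$. For part (2) the extra step is to transfer injectivity on $\beta$ from $\dom(f_p)$ to $\ran(f_p)$ using the fact that $f_p$ is an injection that preserves heights and the order.

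For (1), let $x \ne y$ be in $\dom(f_p)$ with $\h_T(x) = \h_T(y) = \beta$. By injectivity of $p$ on $\beta$, pick $\gamma \in A_p \cap \beta$ with $p_T(x,\gamma) \ne p_T(y,\gamma)$. Then $\gamma \le \sup(A_p \cap \beta) < \xi \le \beta = \min(\h_T(x), \h_T(y))$, so the observation quoted in Section 3 yields $p_T(x,\xi) \ne p_T(y,\xi)$, as required.

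For (2), let $x' \ne y'$ be in $\ran(f_p)$ with $\h_U(x') = \h_U(y') = \beta$. Since $f_p$ is injective and preserves heights, there are distinct $x, y \in \dom(f_p)$ of height $\beta$ with $f_p(x) = x'$ and $f_p(y) = y'$. By injectivity of $p$ on $\beta$, fix $\gamma \in A_p \cap \beta$ with $p_T(x,\gamma) \ne p_T(y,\gamma)$. By clause (3)(a) of the definition of $\p(T,U)$, both $p_T(x,\gamma)$ and $p_T(y,\gamma)$ lie in $\dom(f_p)$, and since $f_p$ preserves the order and heights, $f_p(p_T(x,\gamma)) = p_U(x',\gamma)$ and $f_p(p_T(y,\gamma)) = p_U(y',\gamma)$. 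These two $U$-nodes are distinct because $f_p$ is injective, so $p_U(x',\gamma) \ne p_U(y',\gamma)$. Applying the Section 3 observation in $U$ with $\gamma < \xi \le \beta$ gives $p_U(x',\xi) \ne p_U(y',\xi)$.

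There is no real obstacle here; the content is a bookkeeping check that the definition of ``injective on $\beta$'' transports through $f_p$ and that it propagates upward. The only point requiring a moment's care is that the witnessing $\gamma$ for part (2) must be produced in $T$ first and then transferred to $U$, which relies on (3)(a) to guarantee that the relevant initial segments of $x$ and $y$ actually live in $\dom(f_p)$.
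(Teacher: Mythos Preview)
Your proof is correct and follows essentially the same route as the paper's: for (1) you invoke the Section~3 observation directly, and for (2) you pull back to $\dom(f_p)$, obtain separation at some $\gamma \in A_p \cap \beta$, push it through $f_p$ to get separation in $U$ at level $\gamma$, and then propagate up to $\xi$. The paper's argument is the same, though it is slightly terser and does not explicitly cite clause (3)(a) when applying $f_p$ to $p_T(x,\gamma)$; your invocation of that clause and your verification that $f_p(p_T(x,\gamma)) = p_U(x',\gamma)$ make the argument more transparent.
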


\begin{proof}
(1) is almost immediate. 
For (2), fix $x'$ and $y'$ with $f_p(x') = x$ and $f_p(y') = y$. 
Then $x'$ and $y'$ are distinct nodes in $\dom(f)$ with height $\beta$. 
Since $p$ is injective on $\beta$, fix $\gamma \in A_p \cap \beta$ 
such that $p_T(x',\gamma) \ne p_T(y',\gamma)$. 
As $f_p$ is injective, 
$x_0 := f_p(p_T(x',\gamma))$ and $y_0 := f_p(p_T(y',\gamma))$ are distinct 
nodes in $U$ of height $\gamma$. 
Also, $x_0 <_U x$ and $y_0 <_U y$. 
Since $x_0 \ne y_0$, 
$x_0 <_U p_U(x,\xi) <_U x$, and $y_0 <_U p_U(y,\xi)$, 
clearly $p_U(x,\xi) \ne p_U(y,\xi)$.
\end{proof}

\begin{lemma}
Let $p \in \p(T,U)$, $\beta \in A_p$, and assume that $p$ is injective on $\beta$. 
Suppose that $X \subseteq \beta \cap \cof(\omega_1)$ is countable 
and $\sup(A_p \cap \beta) < \min(X)$. 
Then there exists $q \le p$ satisfying:
\begin{enumerate}
\item $A_q = A_p \cup X$;
\item $f_q \restriction A_p = f_p$.
\end{enumerate}
\end{lemma}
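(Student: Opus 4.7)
The plan is to define $q$ explicitly by extending $\dom(f_p)$ with the appropriate projections of the height-$\beta$ nodes of $\dom(f_p)$ down to the new levels in $X$, and then verify the five clauses of Definition 4.1 case by case, using Lemma 4.12 wherever a distinctness argument is required.

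Let $D_\beta := \{ x \in \dom(f_p) : \h_T(x) = \beta \}$. Define
$$
A_q := A_p \cup X, \qquad \dom(f_q) := \dom(f_p) \cup \{ p_T(x,\xi) : x \in D_\beta, \ \xi \in X \},
$$
with $f_q \restriction \dom(f_p) := f_p$ and $f_q(p_T(x,\xi)) := p_U(f_p(x),\xi)$ for $x \in D_\beta$ and $\xi \in X$. First I would observe that $X \cap A_p = \emptyset$, since every element of $X$ lies in the interval $(\sup(A_p \cap \beta), \beta)$, so in particular the heights of the newly added nodes are not in $A_p$ and hence the new nodes are disjoint from $\dom(f_p)$.

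Next I would verify that $f_q$ is well-defined and injective. If $p_T(x_1,\xi_1) = p_T(x_2,\xi_2)$ with $x_1,x_2 \in D_\beta$ and $\xi_1,\xi_2 \in X$, then $\xi_1 = \xi_2 =: \xi$, and Lemma 4.12(1) (applied with $\xi$, since $\sup(A_p \cap \beta) < \xi < \beta$ and $\cf(\xi) = \omega_1$) gives $x_1 = x_2$. The injectivity of $f_q$ across old and new nodes is immediate by heights, among old nodes is inherited from $f_p$, and among new nodes at a common level $\xi$ follows from Lemma 4.12(2) applied to the two distinct images $f_p(x_1), f_p(x_2) \in \ran(f_p)$ of height $\beta$.

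Then I would verify clauses (3)(a), (3)(b), and (5). The key point for both (3)(a) and (3)(b) is that every $x \in \dom(f_p)$ with $\h_T(x) \neq \beta$ either has height $<\beta$ (in which case $\h_T(x) \le \sup(A_p \cap \beta) < \min(X)$ puts it strictly below every new level, so only (3)(b) can be triggered, and it is witnessed by $p_T(x',\xi)$ where $x' \in D_\beta$ extends $x$ using (3)(b) for $p$), or has height $>\beta$ (in which case $p_T(x,\beta) \in D_\beta$ by (3)(a) for $p$ and all required projections at levels in $X$ are new nodes already in $\dom(f_q)$). For clause (5), the only nontrivial case is when $x <_T y$ with either $x$ or $y$ a newly added node; in each such case, the node $p_T(y,\beta)$ (or $y$ itself if $\h_T(y)=\beta$) lies in $D_\beta$ and, by injectivity on $\beta$ via Lemma 4.12(1), must coincide with the $D_\beta$-element used to generate the new node. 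This reduction allows me to compute $f_q(x)$ and $f_q(y)$ as projections of a common image $f_p(z)$ for some $z \in D_\beta$, and then the required relation $f_q(x) <_U f_q(y)$ follows from the corresponding inequality of heights. The main obstacle, and the only place the hypothesis that $p$ is injective on $\beta$ is genuinely used, is this step: ensuring that all the ``upward witnesses'' above a common new node $x = p_T(x_0,\xi)$ are forced to pass through the single element $x_0 \in D_\beta$, so that the assigned value $p_U(f_p(x_0),\xi)$ is compatible with every extension of $x$ already recorded in $\dom(f_p)$.
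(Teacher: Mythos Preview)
Your proof is correct; it differs from the paper's in that you construct $q$ in one step for all of $X$ simultaneously, whereas the paper first treats the singleton case $X=\{\xi\}$ and then inducts along an increasing enumeration $\langle \xi_i : i<\delta\rangle$ of $X$, noting that each step preserves injectivity on $\beta$ (Lemma 4.11 handles limits).

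The trade-off is this: your direct construction front-loads the case analysis, forcing you to check all combinations of old/new nodes for clauses (3)(a), (3)(b), and (5), whereas the paper's inductive approach reduces everything to adding a single level, so each step only needs the easy observation that the new nodes at level $\xi$ sit strictly between $\sup(A_p\cap\beta)$ and $\beta$ and inherit all required properties from the unique $D_\beta$-element above them via Lemma~4.12. On the other hand, the paper's approach has to carry along the inductive hypothesis that the partial extension remains injective on $\beta$, and must invoke Lemma~4.11 at limit stages; your direct approach avoids this bookkeeping entirely since you apply Lemma~4.12 only once, against the original $A_p$. Both arguments ultimately rest on exactly the same use of Lemma~4.12, and neither is significantly shorter than the other.
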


\begin{proof}
We first handle the case in which $X = \{ \xi \}$ is a singleton. 
Define $A_q := A_p \cup \{ \xi \}$ and 
$$
\dom(f_q) := 
\dom(f_p) \cup \{ p_T(x,\xi) : x \in \dom(f_p), \ \h_T(x) = \beta \}.
$$
Define $f_q \restriction \dom(f_p) := f_p$, and for all 
$x \in \dom(f_p)$ with height $\beta$, 
$f_q(p_T(x,\xi)) := p_U(f_p(x),\xi)$.

We claim that $f_q$ is well-defined and injective. 
If $x' \in \dom(f_q)$ with height $\xi$, then by Lemma 4.12(1) there is a 
unique $x \in \dom(f_p)$ with height $\beta$ such that $x' = p_T(x,\xi)$. 
Thus, $f_q$ is well-defined. 
If $x'$ and $y'$ are distinct nodes in $\dom(f_q)$ with height $\xi$, 
then let $x$ and $y$ be the unique 
nodes above $x'$ and $y'$, respectively, in $\dom(f_p)$ with height $\beta$. 
Then obviously $x \ne y$, so $f_p(x) \ne f_p(y)$ since $f_p$ is injective. 
By Lemma 4.12(2), $f_q(x') = p_U(f_p(x),\xi)$ and $f_q(y') = p_U(f_p(y),\xi)$ are distinct. 
Thus, $f_q$ is injective.
It easily follows that $q$ is as required. 
Also note that $q$ is injective on $\beta$.

To prove the lemma in general, 
enumerate $X$ in increasing order as $\langle \xi_i : i < \delta \rangle$, 
where $\delta < \omega_1$. 
Define a descending sequence $\langle q_i : i \le \delta \rangle$ 
as follows. 
Let $q_0 := p$. 
Let $i < \delta$ and assume that $q_i$ is defined so that 
$A_{q_i} = A_p \cup \{ \xi_j : j < i \}$ and $q_i$ is injective on $\beta$. 
Since $\xi_i$ has cofinality $\omega_1$, 
$\sup(A_{q_i} \cap \beta) < \xi_i$. 
So by the previous paragraph, we can find $q_{i+1} \le q_i$ injective on $\beta$ 
such that 
$\dom(q_{i+1}) = \dom(q_i) \cup \{ \xi_i \}$ and 
$f_{q_{i+1}} \restriction \dom(f_{q_i}) = f_{q_i}$.

Let $\delta_0 \le \delta$ be a limit ordinal, and assume that for all $i < \delta_0$, 
$q_i$ is defined so that 
$A_{q_i} = A_p \cup \{ \xi_j : j < i \}$ and $q_i$ is injective on $\beta$. 
Let $q_{\delta_0}$ be the greatest lower bound of $\langle q_i : i < \delta_0 \rangle$. 
Then $A_{q_{\delta_0}} = A_p \cup \{ \xi_i : i < \delta \}$ and 
$q_{\delta_0}$ is injective on $\beta$ by Lemma 4.11. 
This completes the construction. 
It is easy to see that $q_\delta$ is as required.
\end{proof}

\begin{proposition}
Let $(A,f)$ and $(B,g)$ be in $\p(T,U)$. 
Assume that $\gamma < \xi$, where $\gamma \in A$ and $\xi \in B$, 
and the following properties hold:
\begin{enumerate}
\item $A \subseteq \xi$;
\item $(A,f) \restriction \gamma = (B,g) \restriction \xi$;
\item $(A,f)$ is injective on $\gamma$ and $(B,g)$ is injective on $\xi$.
\end{enumerate}
Assume, moreover, that every node of $\dom(f)$ with height $\gamma$ is 
incomparable in $T$ with every node of $\dom(g)$ with height $\xi$, and 
every node of $\ran(f)$ with height $\gamma$ is incomparable in $U$ with 
every node in $\ran(g)$ with height $\xi$. 
Then $(f,A)$ and $(g,B)$ are compatible.
\end{proposition}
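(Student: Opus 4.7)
My plan is to exhibit a common extension $(C, h)$ of $(A, f)$ and $(B, g)$ in $\p(T, U)$, with $C := A \cup B$ and $h \supseteq f \cup g$, by adjoining to $\dom(f) \cup \dom(g)$ a countable set of carefully chosen ``bridging'' nodes. I would first record the following structural facts: since $\gamma \in A$ has cofinality $\omega_1$ and $A \cap \gamma = B \cap \xi$ is countable, $\sup(A \cap \gamma) < \gamma$; combined with $A \subseteq \xi$ this forces $B \cap [\gamma, \xi) = \emptyset$, so $C$ partitions as $(A \cap \gamma) \sqcup (A \cap [\gamma, \xi)) \sqcup (B \cap [\xi, \omega_2))$, with the first piece shared, the second exclusive to $(A, f)$, and the third exclusive to $(B, g)$.

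The bridging nodes come from two countable sets $Y$ and $Z$. Let $Y := \{p_T(z, \delta) : z \in \dom(g),\ \h_T(z) = \xi,\ \delta \in A \cap [\gamma, \xi)\}$, the predecessors needed so that nodes of $\dom(g)$ at heights $\geq \xi$ have their $A$-height predecessors in $\dom(h)$. For $Z$, for each $x \in \dom(f)$ with $\h_T(x) \in A \cap [\gamma, \xi)$ put $H_x := (A \cap (\h_T(x), \xi)) \cup (B \cap [\xi, \omega_2))$, and choose a strictly $<_T$-increasing chain $(y_{x,\delta})_{\delta \in H_x}$ above $x$ with $y_{x,\delta}$ at height $\delta$; in parallel, choose a chain $(z_{x,\delta})_{\delta \in H_x}$ above $f(x)$ in $U$. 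Such chains exist by countable closure of $T$ and $U$: because every height in $H_x$ has cofinality $\omega_1$, at each inductive stage the supremum of previously chosen heights lies strictly below the next, and each level offers $\omega_1$-many extensions, of which we must avoid only the countable sets $\dom(f) \cup \dom(g) \cup Y$ (on the $T$-side) and $\ran(f) \cup \ran(g) \cup \{p_U(g(z), \delta') : z \in \dom(g),\ \h_U(z) = \xi,\ \delta' \in A \cap [\gamma, \xi)\}$ (on the $U$-side), arranging also that distinct $x, x'$ give distinct values $y_{x,\delta} \ne y_{x',\delta}$ at each shared height. Set $Z := \{y_{x,\delta} : x, \delta\}$, $\dom(h) := \dom(f) \cup \dom(g) \cup Y \cup Z$, and define $h$ to be $f$ on $\dom(f)$, $g$ on $\dom(g)$, $h(p_T(z,\delta)) := p_U(g(z), \delta)$ on $Y$, and $h(y_{x,\delta}) := z_{x,\delta}$ on $Z$.

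The hard part is verifying that $(C, h)$ is a condition. Well-definedness and injectivity of $h$ on $Y$ follow from Lemma 4.12 applied to $(B, g)$ at $\xi$, which is applicable since $A \cap [\gamma, \xi) \subseteq (\sup(B \cap \xi), \xi) \cap \cof(\omega_1)$. The incomparability hypothesis, propagated via the predecessor closure 4.1(3a), yields the key structural consequence that no node of $\dom(f)$ at height $\geq \gamma$ is $T$-comparable with any node of $\dom(g)$ at height $\geq \xi$, and similarly for the $U$-ranges; this in particular forces $Y \cap \dom(f) = \emptyset$. From here, order preservation for mixed pairs is handled by case analysis: for instance, a putative $y <_T x$ with $y \in Y$ and $x \in \dom(f)$ would force $y = p_T(x, \h_T(y)) \in \dom(f)$ by 4.1(3a), contradicting $Y \cap \dom(f) = \emptyset$; the analogous $U$-side argument using the range version of incomparability prevents $h$-value collisions between the $Y$-image and $\ran(f)$; and the distinctness of the $y_{x,\delta}$ across different $x$ rules out cross-chain $T$-comparisons inside $Z$. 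The closure conditions 4.1(3a) and (3b) for $(C, h)$ then reduce to bookkeeping: for each of the four pieces of $\dom(h)$ and each subpiece of $C$, the required predecessor or successor lies in one of $\dom(f)$, $\dom(g)$, $Y$, or $Z$. Finally, $A, B \subseteq C$ and $f, g \subseteq h$ give $(C, h) \le (A, f)$ and $(C, h) \le (B, g)$.
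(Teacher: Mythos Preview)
Your proposal is correct and the overall strategy---build a common extension on $C = A \cup B$ by adjoining bridging nodes in the gap $[\gamma,\xi)$ and above $\xi$---is the same as the paper's. The execution, however, differs. The paper does not construct $Y$ and $Z$ by hand: instead it applies Lemma~4.5 to extend $(A,f)$ to $(A',f')$ with $A' = A \cup (B\setminus\xi)$ and $f'\restriction A = f$, and applies Lemma~4.13 (which uses injectivity on $\xi$) to extend $(B,g)$ to $(B',g')$ with $B' = B \cup (A\setminus\gamma)$ and $g'\restriction B = g$; then it sets $h := f' \cup g'$ and verifies four incomparability claims that make the union a condition. Your set $Y$ is precisely what Lemma~4.13 produces, so that part is the same. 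Your $Z$, on the other hand, differs from what Lemma~4.5 produces: in the paper the new $f'$-nodes at heights in $B\setminus\xi$ are chosen above cofinal branches of $\dom(f)$, so their predecessors at every $A$-height already lie in $\dom(f)$, whereas you pick fresh chains above each $x$ and must therefore include the intermediate $A$-heights in $H_x$ to satisfy~(3a). Both routes work; the paper's modular approach yields a shorter verification (the order-preservation and injectivity checks reduce to the four claims about $\dom(f')$ versus $\dom(g')$), while your direct construction avoids citing the earlier lemmas at the cost of a longer case analysis.
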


\begin{proof}
Since $A$ is countable and $\cf(\xi) = \omega_1$, 
$\sup(A) < \xi = \min(B \setminus \xi)$. 
By Lemma 4.5, we can find $(A',f') \le (A,f)$ such that 
$A' = A \cup (B \setminus \xi)$ and $f' \restriction A = f$. 
By property (2), $A \cap \gamma = B \cap \xi$, so $B \cap \xi \subseteq A$. 
Hence, $A' = A \cup B$.

Since $B \cap \xi \subseteq \gamma$, 
$B$ is countable, and $\cf(\gamma) = \omega_1$, 
it follows that $\sup(B \cap \xi) < \gamma$. 
So $A \setminus \gamma$ is a countable subset of $\xi \cap \cof(\omega_1)$ 
and $\sup(B \cap \xi) < \gamma = \min(A \setminus \gamma)$. 
As $(B,g)$ is injective on $\xi$, by Lemma 4.13 we can fix 
$(g',B') \le (g,B)$ such that $B' = B \cup (A \setminus \gamma)$ 
and $g' \restriction B = g$. 
By property (2), $A \cap \gamma = B \cap \xi \subseteq B$. 
Therefore, $B' = A \cup B$.

Claim 1: If $x \in \dom(f')$ has height at least $\xi$ and 
$y \in \dom(g')$ has height at least $\xi$, then $x$ and $y$ are 
incomparable in $T$. 
Suppose for a contradiction that $x$ and $y$ are comparable in $T$. 
Then $p_T(x,\gamma) = p_T(y,\gamma)$, and so $p_T(x,\gamma) <_T p_T(y,\xi)$. 
As $f' \restriction A = f$ and $\gamma \in A$, $p_T(x,\gamma) \in \dom(f)$. 
And as $g' \restriction B = g$ and $\xi \in B$, 
$p_T(y,\xi) \in \dom(g)$. 
So $p_T(x,\gamma) \in \dom(f)$ has height $\gamma$ 
and $p_T(y,\xi) \in \dom(g)$ has height $\xi$, and therefore these nodes are 
incomparable in $T$ by assumption, giving a contradiction.

Claim 2: If $x \in \ran(f')$ has height at least $\xi$ 
and $y \in \ran(g')$ has height at least $\xi$, then 
$x$ and $y$ are incomparable in $U$.
The proof is similar to that of Claim 1.

Claim 3: Every node $x$ in $\dom(f')$ 
with $\h_T(x) \in A \setminus \gamma$ 
is incomparable in $T$ with every node $y$ 
in $\dom(g')$ with $\h_T(y) \in A \setminus \gamma$. 
Fix $z$ in $\dom(g')$ with height $\xi$ such that $y <_T z$. 
Since $g' \restriction B = g$, $z \in \dom(g)$. 
Suppose for a contradiction that $x$ and $y$ are comparable in $T$. 
Then $p_T(x,\gamma) = p_T(y,\gamma) <_T y <_T z$. 
Since $f' \restriction A  = f$, $p_T(x,\gamma) \in \dom(f)$. 
So $p_T(x,\gamma) \in \dom(f)$ has height $\gamma$, 
$z \in \dom(g)$ has height $\xi$, so by assumption, 
$p_T(x,\gamma)$ and $z$ are incomparable in $T$, 
which is a contradiction.

Claim 4: Every node $x$ in $\ran(f')$ with $\h_U(x) \in A \setminus \gamma$ 
is incomparable in $U$ with every node $y$ 
in $\ran(g')$ with $\h_U(y) \in A \setminus \gamma$. 
The proof is similar to that of Claim 3.

Define $(C,h)$ by letting $C := A \cup B$ and $h := f' \cup g'$. 
We claim that $(C,h)$ is a condition which extends $(A,f)$ and $(B,g)$. 
It is easy to see that if $(C,h)$ is a condition, then it extends 
$(A,f)$ and $(B,g)$. 
Clearly $C$ is a countable subset of $\omega_2 \cap \cof(\omega_1)$. 
By property (2), 
$f \restriction (A \cap \gamma) = g \restriction (B \cap \xi)$. 
As $A \subseteq \xi$ by property (1), it easily follows that $f \cup g$ is a function, 
since any node in $\dom(f) \cap \dom(g)$ has height 
in $B \cap \xi = A \cap \gamma$, and $f$ and $g$ agree on such nodes. 

Since $g' \restriction B = g$ and $f' \restriction A = f$, 
Claims 1 and 3 imply that any node in $\dom(f') \cap \dom(g')$ is 
in $\dom(f) \cap \dom(g)$. 
Namely, any node in $\dom(f') \cap \dom(g')$ has height below $\xi$ by Claim 1, and 
therefore by Claim 3 and the fact that $A' \cap \xi = A$ 
has height below $\gamma$. 
Hence, any such node has height in $A' \cap \gamma \subseteq A$ and in 
$B' \cap \gamma \subseteq B$.  
Therefore, any such node is in $\dom(f' \restriction A) = \dom(f)$ and in 
$\dom(g' \restriction B) = \dom(g)$. 
So $h = f' \cup g'$ is a function.

Note that $\h_T(x) = \h_U(h(x))$ for all $x \in \dom(h)$, since this is true of $f'$ and $g'$. 
Suppose that $x$ and $y$ are in $\dom(h)$ and $h(x) = h(y)$. 
Then $\h_T(x) = \h_U(h(x)) = \h_U(h(y)) = \h_T(y)$, so $\h_T(x) = \h_T(y)$. 
We will show that $x = y$, which proves that $h$ is injective. 
If $x$ and $y$ are either both in $\dom(f')$ or both in $\dom(g')$, then we are done since 
$f'$ and $g'$ are injective.

Assume that $x \in \dom(f') \setminus \dom(g')$ and $y \in \dom(g') \in \dom(f')$. 
The case with $f'$ and $g'$ switched follows by a symmetric argument. 
Then $f'(x) = g'(y)$. 
Claims 2 and 4 imply that the height of $f'(x) = g'(y)$ is less than $\gamma$. 
So $\h_T(x) = \h_T(y) < \gamma$. 
But then $y \in \dom(g' \restriction \gamma) = \dom(f \restriction \gamma)$, 
which contradicts our assumption that $y \notin \dom(f')$. 

So $(C,h)$ satisfies properties (1), (2), and (4) of Definition 4.1. 
Since $\dom(h) = \dom(f') \cup \dom(g')$, 
property (3) follows from the fact that $(A',f')$ and $(B',g')$ are conditions.

It remains to prove property (5). 
Let $x <_T y$ in $\dom(h)$, and we will prove that $h(x) <_U h(y)$. 
If $x$ and $y$ are either both in $\dom(f')$ or both in $\dom(g')$, then we are done 
since $(A',f')$ and $(B',g')$ are conditions. 
There remains two cases, namely (I) $x \in \dom(f') \setminus \dom(g')$ and 
$y \in \dom(g') \setminus \dom(f')$, or (II) 
$x \in \dom(g') \setminus \dom(f')$ and $y \in \dom(f') \setminus \dom(g')$. 
We claim that neither case is possible.

Since $f' \restriction A = f$ and $A' = A \cup (B \setminus \xi)$, 
any node of $\dom(f')$ with height below $\gamma$ has height in $A \cap \gamma$ 
and is in $\dom(f \restriction \gamma) = \dom(g \restriction \gamma)$. 
And any node of $\dom(g')$ with height below $\gamma$ has height in 
$B' \cap \gamma \subseteq B$, and since $g' \restriction B = g$, is in 
$\dom(g \restriction \gamma) = \dom(f \restriction \gamma)$. 
It follows that in either case (I) or (II), $\h_T(x) \ge \gamma$. 
But $x <_T y$ implies that $p_T(x,\gamma) = p_T(y,\gamma)$. 
So $p_T(x,\gamma)$ is in both $\dom(f')$ and $\dom(g')$, 
and has height $\gamma \in A \setminus \gamma$, contradicting Claim 3.
\end{proof}

\section{The forcing iteration}

In this section we will develop the forcing iteration which we will use to prove the 
consistency of the statement that any two countably closed normal $\omega_2$-Aronszajn 
trees are club isomorphic. 
This forcing will be a kind of countable support iteration 
of the forcing poset developed in Section 4, together with some 
rudimentary kind of side conditions.

For the remainder of the paper we fix an inaccessible cardinal $\kappa$ such that 
$2^\kappa = \kappa^+$. 
We define by induction a sequence of forcing posets $\langle \p_i : i \le \kappa^+ \rangle$. 
We maintain as inductive hypotheses the assumptions that each $\p_i$ is 
countably closed, preserves $\kappa$, is $\kappa^+$-c.c., and forces that 
$\kappa = \omega_2$ if $i > 0$.  
The proof of the preservation of $\kappa$ is complex and will be handled in Section 7. 
Our definition of the forcing posets will depend on these inductive hypotheses together with 
a fixed bookkeeping function which is in the background.

For each $\gamma < \kappa$, let $I_\gamma$ denote the interval of ordinals 
$[\omega_1 \cdot \gamma, \omega_1 \cdot (\gamma+1))$. 
We will assume that for any tree $T$ we are working with, the nodes on 
level $\gamma$ of $T$ belong to $I_\gamma$.

\bigskip

We now begin the definition of the forcing iteration. 
We will let $\p_0$ denote the trivial forcing and $\p_1$ the Levy collapse 
$\coll(\omega_1,<\! \kappa)$. 
However, we will write conditions in $\p_0$ and $\p_1$ in a specific form so they fit 
in with the later definitions.

So let $\p_0$ denote the trivial forcing whose single element is the pair  
$(\emptyset,\emptyset)$. 
Let $\p_1$ be the forcing whose conditions are either $(\emptyset,\emptyset)$, or 
$(p,\emptyset)$, where $p$ is a function with domain $\{ 0 \}$ such that 
$p(0) \in \coll(\omega_1,<\! \kappa)$. 
Let $(q,\emptyset) \le (p,\emptyset)$ in $\p_1$ if either $p = \emptyset$, 
or else $p$ and $q$ are nonempty 
and $q(0) \le p(0)$ in $\coll(\omega_1,<\! \kappa)$. 
Note that $\p_1$ is countably closed, and since $\kappa$ is inaccessible, 
it is also $\kappa$-c.c.\! by a standard $\Delta$-system lemma argument. 
We also define, for all $(p,\emptyset) \in \p_1$, 
$(p,\emptyset) \restriction 0 := (\emptyset,\emptyset)$, which is in $\p_0$.

Let $\p_\alpha := \p_1$ for all $\alpha \le \kappa$. 
In other words, we do not force with anything between $1$ and $\kappa$ 
in the iteration.

\bigskip

Now assume that $\kappa \le \beta < \kappa^+$ and $\p_\beta$ is defined, and 
we will define $\p_{\beta+1}$. 
We assume that our bookkeeping function provides us with a pair of 
nice $\p_\beta$-names $\dot T_\beta$ and $\dot U_\beta$ for standard 
countably closed normal $\kappa$-Aronszajn trees. 
Without loss of generality, we will assume that for all $\gamma < \kappa$, $\p_\beta$ 
forces that the nodes of $\dot T_\beta$ on level $\gamma$ 
consist of all even ordinals in $I_\gamma$, 
and the nodes of $\dot U_\beta$ on level $\gamma$ consist of all 
odd ordinals in $I_\gamma$.

Define $\p_{\beta+1}$ to be the set of all pairs $p = (a_p,X_p)$ satisfying:
\begin{enumerate}
\item $a_p$ is a function whose domain is a countable subset of $\beta+1$;
\item $X_p$ is a function whose domain is a countable subset of $\beta+2$;
\item $p \restriction \beta := (a_p \restriction \beta, X_p \restriction (\beta+1)) \in \p_\beta$;
\item if $\beta \in \dom(a_p)$, then $a_p(\beta)$ is a 
$\p_\beta$-name such that 
$p \restriction \beta$ forces that 
$$
a_p(\beta) = (A_{a_{p(\beta)}}, f_{a_{p(\beta)}}) \in 
\p(\dot T_\beta,\dot U_\beta);\footnote{Since we 
would like our forcings to be in $H(\kappa^{++})$, we 
will implicitly assume that 
$A_{a_{p(\beta)}}$ and $f_{a_{p(\beta)}}$ are nice names, and $a_p(\beta)$ is a 
canonical name for their pair.}
$$
\item if $\beta+1 \in \dom(X_p)$, then $X_p(\beta+1)$ is a countable subset 
of 
$$
\{ M \in P_{\kappa}(\beta+1) : 
M \cap \kappa \in \kappa \cap \cof(>\! \omega) \};
$$
\item if $M \in X_p(\beta+1)$ and $\gamma \in M \cap \dom(a_p)$, 
then 
$$
p \restriction \gamma \Vdash_{\p_\gamma} M \cap \kappa \in A_{a_p(\gamma)}.
$$
\end{enumerate}
Let $q \le p$ in $\p_{\beta+1}$ if 
\begin{enumerate}
\item $q \restriction \beta \le p \restriction \beta$ in $\p_\beta$;
\item if $\beta \in \dom(a_p)$, then $\beta \in \dom(a_q)$ and 
$$
q \restriction \beta \Vdash_{\p_\beta} 
a_q(\beta) \le a_p(\beta) \ \textrm{in} \ \p(\dot T_\beta,\dot U_\beta);
$$
\item if $\beta+1 \in \dom(X_p)$, then 
$\beta+1 \in \dom(X_q)$ and $X_p(\beta+1) \subseteq X_q(\beta+1)$.
\end{enumerate}

\bigskip

Now assume that $\kappa < \alpha < \kappa^+$ is a limit ordinal, and 
$\p_\beta$ is defined for all $\beta < \alpha$. 
Define $\p_\alpha$ to be the set of all pairs $p = (a_p,X_p)$ satisfying:
\begin{enumerate}
\item $a_p$ is a function whose domain is a countable subset of $\alpha$;
\item $X_p$ is a function whose domain is a countable subset of $\alpha+1$;
\item for all $\beta < \alpha$, 
$p \restriction \beta := (a_p \restriction \beta, X_p \restriction (\beta+1))$ 
is in $\p_\beta$;
\item if $\alpha \in \dom(X_p)$, then $X_p(\alpha)$ is a countable subset of 
$$
\{ M \in P_{\kappa}(\alpha) : M \cap \kappa \in \kappa \cap \cof(>\! \omega) \};
$$
\item if $M \in X_p(\alpha)$ and $\gamma \in M \cap \dom(a_p)$, then 
$$
p \restriction \gamma \Vdash_{\p_\gamma} M \cap \kappa \in A_{a_p(\gamma)}.
$$
\end{enumerate}
Let $q \le p$ if for all $\beta < \alpha$, 
$q \restriction \beta \le p \restriction \beta$ in $\p_\beta$, 
and moreover, if $\alpha \in \dom(X_p)$, then $\alpha \in \dom(X_q)$ and 
$X_p(\alpha) \subseteq X_q(\alpha)$.

\bigskip

Finally, define $\p_{\kappa^+} := \bigcup \{ \p_\alpha : \alpha < \kappa^+ \}$, and let 
$q \le p$ in $\p_{\kappa^+}$ if for all large enough $\alpha < \kappa^+$, 
$q \le p$ in $\p_\alpha$.\footnote{Let $p = (a_p,X_p)$ be a condition in $\p_\alpha$. 
We will often find it convenient to write $a_p(\beta)$ or $X_p(\gamma)$ for $\beta < \alpha$ 
and $\gamma \le \alpha$ without necessarily knowing that $\beta \in \dom(a_p)$ or 
$\gamma \in \dom(X_p)$. In the case that they are not, 
by default $a_p(\beta)$ and $X_p(\gamma)$ will denote the empty set.}

\bigskip

We will denote the order on $\p_\alpha$ as $\le_\alpha$, for all $\alpha \le \kappa^+$.

The definition of the forcing iteration was by necessity an induction. 
The next lemma provides 
a useful non-inductive description of conditions in 
$\p_\alpha$. 
The proof is a straightforward induction on $\alpha$.

\begin{lemma}
Let $\alpha \le \kappa^+$. 
A pair $(a,X)$ is in $\p_\alpha$ iff:
\begin{enumerate}
\item $a$ is a function whose domain is a countable subset of $\alpha$;
\item $X$ is a function whose domain is a countable subset of $(\alpha+1) \cap \kappa^+$;
\item $(a \restriction \kappa,X \restriction (\kappa+1)) = 
(a \restriction 1,\emptyset) \in \p_1$;
\item for all nonzero $\beta \in \dom(a)$, $a(\beta)$ is a $\p_\beta$-name such that, 
assuming that 
$(a \restriction \beta,X \restriction (\beta+1))$ is in 
$\p_{\beta}$, then this condition forces in $\p_\beta$ that 
$a(\beta) \in \p(\dot T_\beta,\dot U_\beta)$;
\item for all $\gamma \in \dom(X)$, $X(\gamma)$ is a countable subset of 
$$
\{ M \in P_{\kappa}(\gamma) : M \cap \kappa \in \kappa \cap \cof(>\! \omega) \};
$$
\item for all $\gamma \in \dom(X)$, if $M \in X(\gamma)$ and 
$\beta \in M \cap \dom(a)$, then, assuming that 
$(a \restriction \beta,X \restriction (\beta+1))$ is in $\p_\beta$, this condition forces in 
$\p_\beta$ that $M \cap \kappa \in A_{a(\beta)}$.
\end{enumerate}
Also $(b,Y) \le_\alpha (a,X)$ iff
\begin{enumerate}
\item $\dom(a) \subseteq \dom(b)$ and $\dom(X) \subseteq \dom(Y)$;
\item for all $\gamma \in \dom(X)$, $X(\gamma) \subseteq Y(\gamma)$;
\item assuming $0 \in \dom(a)$, $a(0) \subseteq b(0)$;
\item for all nonzero $\beta \in \dom(a)$, 
$(b \restriction \beta,Y \restriction (\beta+1)) \Vdash_{\p_\beta} b(\beta) \le a(\beta)$ in 
$\p(\dot T_\beta,\dot U_\beta)$.
\end{enumerate} 
\end{lemma}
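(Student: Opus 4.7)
The plan is to prove the characterization by induction on $\alpha \le \kappa^+$, handling the membership and ordering parts in parallel. The base range $\alpha \le \kappa$ is essentially trivial: by the special form fixed for $\p_0$ and $\p_1$ (conditions $(\emptyset,\emptyset)$ or $(p,\emptyset)$ with $\dom(p) \subseteq \{0\}$), clauses (2), (4), (5), (6) are vacuous, clause (1) holds, and (3) is a restatement of the hypothesis.

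For the successor step $\alpha = \beta+1$ with $\kappa \le \beta < \kappa^+$, I would unpack the recursive definition of $\p_{\beta+1}$ and split its six clauses into two groups: those inherited from $p \restriction \beta \in \p_\beta$ (to which I apply the inductive hypothesis to obtain clauses (1)--(6) for indices below $\beta$), and those governing the new coordinate $\beta$ (the conditions on $a_p(\beta)$ and $X_p(\beta+1)$). The latter correspond precisely to the $\beta$-instance of clause (4) and to clauses (5)--(6) at $\beta+1$ in the non-inductive statement. The key observation here is that the rider ``assuming $(a \restriction \beta, X \restriction (\beta+1)) \in \p_\beta$'' in clauses (4) and (6) is automatically validated, in the forward direction by the definition of $\p_{\beta+1}$ and in the reverse direction by the inductive hypothesis at $\beta$.

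The limit step $\kappa < \alpha < \kappa^+$ is analogous: by the definition of $\p_\alpha$, $(a,X) \in \p_\alpha$ iff $(a \restriction \beta, X \restriction (\beta+1)) \in \p_\beta$ for every $\beta < \alpha$, together with the side-condition clauses at the coordinate $\alpha$. By induction each such restriction unfolds into clauses (1)--(6) for indices below $\beta$, and taking the union over $\beta < \alpha$ gives the full statement together with the top-level clauses (5)--(6) at $\alpha$. The case $\alpha = \kappa^+$ is then immediate from the definition of $\p_{\kappa^+}$ as a union, since clauses (1)--(6) each depend only on a countable set of indices below $\kappa^+$ and therefore transfer from some $\p_\alpha$ to $\p_{\kappa^+}$ without change. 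The non-inductive description of $\le_\alpha$ is verified by a parallel induction: $q \le_\alpha p$ unfolds via the conditions $q \restriction \beta \le_\beta p \restriction \beta$ into clauses (1)--(4), where clause (4) is the restatement of the name-level comparison in $\p(\dot T_\beta, \dot U_\beta)$ required by the recursive definition.

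The main (and really only) obstacle is bookkeeping the ``assuming $\dots \in \p_\beta$'' riders inside clauses (4) and (6): one must verify that, in both directions of the biconditional, the restriction $(a \restriction \beta, X \restriction (\beta+1))$ really is a condition in $\p_\beta$, so that the forcing assertions about $a(\beta) \in \p(\dot T_\beta,\dot U_\beta)$ and about $M \cap \kappa \in A_{a(\beta)}$ make sense. In the forward direction this is built into the recursive definition; in the reverse direction it is supplied precisely by the inductive hypothesis at $\beta$. Once this pattern is set up uniformly, the rest of the verification is a syntactic matching of clauses against the recursive definition, with no new ideas required.
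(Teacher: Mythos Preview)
Your proposal is correct and matches the paper's approach exactly: the paper simply states that ``the proof is a straightforward induction on $\alpha$'' and omits all details, so your case split into $\alpha \le \kappa$, successor $\alpha = \beta+1$, limit $\kappa < \alpha < \kappa^+$, and $\alpha = \kappa^+$ is precisely the intended argument. Your identification of the only real bookkeeping issue---that the ``assuming $(a \restriction \beta, X \restriction (\beta+1)) \in \p_\beta$'' riders in clauses (4) and (6) are justified in the forward direction by the recursive definition and in the reverse direction by the inductive hypothesis---is the one point worth making explicit, and you have done so.
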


Since $2^\kappa = \kappa^+$ and we are assuming 
as an inductive hypothesis that each $\p_\alpha$ is $\kappa^+$-c.c., 
it is straightforward to check that 
$\p_\alpha \in H(\kappa^{++})$ 
for all $\alpha \le \kappa^+$.

The proofs of the next two lemmas are straightforward.

\begin{lemma}
Let $\beta < \alpha \le \kappa^+$.
\begin{enumerate}
\item $\p_\beta$ is a suborder of $\p_\alpha$;
\item $p \le_\alpha p \restriction \beta$ for all $p \in \p_\alpha$;
\item if $p$ and $q$ are in $\p_\beta$ and $r \le_\alpha p, q$, 
then $r \restriction \beta \le_\beta p, q$.
\end{enumerate}
\end{lemma}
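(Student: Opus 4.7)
The plan is to derive all three parts directly from the non-inductive characterization of $\p_\alpha$ and $\le_\alpha$ given in Lemma 5.1. The key observation is that clauses (1)--(6) defining membership in $\p_\alpha$, and clauses (1)--(4) defining $\le_\alpha$, depend only on \emph{local} data: the values of $a$ and $X$ on their own countable domains, together with the sub-conditions $(a \restriction \beta', X \restriction (\beta'+1))$, which are themselves determined by $(a,X)$. The ambient ordinal $\alpha$ enters only as an upper bound for the domains of $a$ and $X$. Once this is observed, all three parts reduce to short verifications with no serious use of induction.

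For (1), if $(a,X) \in \p_\beta$ then $\dom(a) \subseteq \beta \subseteq \alpha$ and $\dom(X) \subseteq (\beta+1) \cap \kappa^+ \subseteq (\alpha+1) \cap \kappa^+$, and the remaining clauses (3)--(6) of Lemma 5.1 are stated without reference to $\alpha$; so $(a,X) \in \p_\alpha$. The same observation applied to the four order clauses shows that $\le_\alpha$ restricted to $\p_\beta \times \p_\beta$ agrees with $\le_\beta$. For (2), the inductive definitions at successor and limit stages explicitly require $p \restriction \beta \in \p_\beta$, hence also in $\p_\alpha$ by (1). To see $p \le_\alpha p \restriction \beta$, I check the four order clauses: the inclusions on domains and on each $X_p(\gamma)$ hold trivially because restriction only shrinks; clause (3) on $a_p(0)$ is immediate; and for each nonzero $\beta' \in \dom(a_{p \restriction \beta})$ the names $a_p(\beta')$ and $a_{p \restriction \beta}(\beta')$ are literally equal, so the required forcing statement is automatic.

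The only part requiring any care is (3). Given $p, q \in \p_\beta$ and $r \le_\alpha p, q$, I will show $r \restriction \beta \le_\beta p$; the argument for $q$ is identical. I check each order clause from Lemma 5.1. Clause (1) holds because $\dom(a_p) \subseteq \beta$ combined with $\dom(a_p) \subseteq \dom(a_r)$ gives $\dom(a_p) \subseteq \dom(a_r) \cap \beta = \dom(a_{r \restriction \beta})$, with the analogous argument for $X$. Clauses (2) and (3) transfer verbatim, since for indices below $\beta$ the entries of $a_{r \restriction \beta}$ and $X_{r \restriction \beta}$ agree with those of $a_r$ and $X_r$. For clause (4), a nonzero $\beta' \in \dom(a_p)$ satisfies $\beta' < \beta$, so $(a_{r \restriction \beta} \restriction \beta', X_{r \restriction \beta} \restriction (\beta'+1)) = (a_r \restriction \beta', X_r \restriction (\beta'+1))$ and $a_{r \restriction \beta}(\beta') = a_r(\beta')$; the desired forcing statement is then exactly the $\beta'$-instance of clause (4) supplied by the hypothesis $r \le_\alpha p$. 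The only potential pitfall is in clause (4), but since the restriction operation was designed precisely so that values at indices below the cut-off are preserved, the local nature of the clauses in Lemma 5.1 makes the verification go through with no further work.
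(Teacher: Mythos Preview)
Your proof is correct and is precisely the kind of routine verification the paper has in mind: the paper itself offers no proof beyond the remark that the result is straightforward, and your approach via the non-inductive characterization in Lemma~5.1 is the natural way to make ``straightforward'' explicit. The only minor looseness is that for a successor $\alpha = \gamma+1$ and $\beta < \gamma$, the clause $p \restriction \beta \in \p_\beta$ is not literally stated in the definition but follows by an immediate induction through $p \restriction \gamma$; this is harmless.
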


\begin{lemma}
For all $\alpha \le \kappa^+$, the 
forcing poset $\p_\alpha$ is countably closed. 
In fact, given a descending sequence of conditions 
$\langle q_n : n < \omega \rangle$, let $r = (a_r,X_r)$ satisfy:
\begin{enumerate}
\item $\dom(a_r) := \bigcup \{ \dom(a_{q_n}) : n < \omega \}$;
\item $\dom(X_r) := \bigcup \{ \dom(X_{q_n}) : n < \omega \}$;
\item for all $\beta \in \dom(a_r)$, $a_r(\beta)$ is a $\p_\beta$-name for 
the greatest lower bound in $\p(\dot T_\beta,\dot U_\beta)$ of 
$\langle a_{q_n}(\beta) : n < \omega \rangle$ as described in Lemma 4.2;
\item for all $\gamma \in \dom(X_r)$, 
$X_r(\gamma) := \bigcup \{ X_{q_n}(\gamma) : n < \omega \}$.
\end{enumerate}
Then $r$ is the greatest lower bound of $\langle q_n : n < \omega \rangle$.
\end{lemma}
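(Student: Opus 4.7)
My plan is to prove the statement by induction on $\alpha \le \kappa^+$, showing simultaneously that $\p_\alpha$ is countably closed and that the $r$ described in the statement is the greatest lower bound of $\langle q_n : n < \omega \rangle$. The base cases $\alpha \le \kappa$ reduce to $\p_0$ (trivial) and $\p_1 = \coll(\omega_1, <\!\kappa)$, which is countably closed with the obvious glb. For $\alpha = \kappa^+$, each $q_n$ has support contained in some $\p_{\beta_n}$ with $\beta_n < \kappa^+$; setting $\beta^* := \sup_n \beta_n < \kappa^+$, we have all $q_n \in \p_{\beta^*}$, and the result follows from the case $\alpha = \beta^*$.

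For the successor step $\alpha = \beta+1$, the inductive hypothesis gives that $r \restriction \beta$ (defined by items (1), (2) and (4) of the statement restricted to $\beta$, together with item (3) at coordinates below $\beta$) is the glb in $\p_\beta$ of $\langle q_n \restriction \beta : n < \omega \rangle$. If $\beta \in \dom(a_r)$, then some (hence cofinally many) $q_n$ have $\beta \in \dom(a_{q_n})$, and the sequence $\langle a_{q_n}(\beta) : n < \omega \rangle$ is forced by $r \restriction \beta$ to be descending in $\p(\dot T_\beta, \dot U_\beta)$; by Lemma 4.2 applied in the extension by $r \restriction \beta$, we obtain a $\p_\beta$-name $a_r(\beta)$ forced to be the greatest lower bound there. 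For the side condition at $\beta+1$, if $M \in X_r(\beta+1)$ then $M \in X_{q_n}(\beta+1)$ for some $n$; for any $\gamma \in M \cap \dom(a_r)$, choose $m \ge n$ with $\gamma \in \dom(a_{q_m})$. Then $q_m \restriction \gamma$ forces $M \cap \kappa \in A_{a_{q_m}(\gamma)}$, and since $r \restriction \gamma \le_\gamma q_m \restriction \gamma$ and $a_r(\gamma)$ is forced below $a_{q_m}(\gamma)$, we get $A_{a_{q_m}(\gamma)} \subseteq A_{a_r(\gamma)}$, hence $r \restriction \gamma$ forces $M \cap \kappa \in A_{a_r(\gamma)}$. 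This verifies clauses (5)(6) of Definition and shows $r \in \p_{\beta+1}$; the ordering clauses yielding $r \le_\alpha q_n$ are then immediate, and any lower bound $s$ of the $q_n$'s satisfies $s \le_\beta r \restriction \beta$ by IH and $s \restriction \beta \Vdash a_s(\beta) \le a_r(\beta)$ by Lemma 4.2.

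The limit step for $\alpha < \kappa^+$ is similar: for each $\beta < \alpha$ the inductive hypothesis gives that $r \restriction \beta$ is the glb in $\p_\beta$ of $\langle q_n \restriction \beta \rangle$; since by construction $r \restriction \beta$ agrees with the glb defined coordinatewise, clauses (1)--(3) of the definition of $\p_\alpha$ hold. Clauses (4) and (5) for the potential new coordinate $\alpha \in \dom(X_r)$ are handled exactly as in the successor step, using the monotonicity $A_{a_{q_n}(\beta)} \subseteq A_{a_r(\beta)}$ (forced) to transfer the side condition constraint from some witnessing $q_n$ to $r$.

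The main obstacle, and the one point that genuinely needs the inductive setup rather than a bare union argument, is verifying clause (6) for $r$: one must argue that the greatest lower bound in the tree-isomorphism forcing $\p(\dot T_\beta, \dot U_\beta)$ (whose existence is Lemma 4.2) has its $A$-coordinate equal to the union of the $A_{a_{q_n}(\beta)}$, so that the models in $X_r$ that were handled by some $q_n$ are still handled by $r$. Once this monotonicity of the $A$-coordinate under the glb operation on $\p(\dot T_\beta, \dot U_\beta)$ is in hand (and it is automatic from the explicit description in Lemma 4.2), the rest of the verification is bookkeeping.
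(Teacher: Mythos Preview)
Your proposal is correct and is precisely the straightforward inductive verification the paper has in mind; the paper omits the proof entirely, noting before Lemma 5.2 that ``the proofs of the next two lemmas are straightforward.'' Your handling of the side-condition clause (picking $m$ large enough that both $M \in X_{q_m}(\gamma)$ and the relevant coordinate lies in $\dom(a_{q_m})$, then using monotonicity of the $A$-component under the glb of Lemma 4.2) is exactly the point that needs checking, and you have it right.
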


\begin{definition}
A condition $p \in \p_\alpha$ is said to be \emph{determined} 
if for all nonzero $\gamma \in \dom(a_p)$, there is a pair $(A,f)$ in the ground model 
such that $a_p(\gamma)$ is the canonical $\p_\gamma$-name for $(A,f)$.
\end{definition}

Note that if $p$ is determined in $\p_\alpha$, then for all 
$\beta < \alpha$, $p \restriction \beta$ is determined in $\p_\beta$.

\begin{lemma}
The set of determined conditions is dense in $\p_\alpha$. 
Also, if $\langle r_n : n < \omega \rangle$ is a descending sequence of 
determined conditions, then the greatest lower bound as described in Lemma 5.3 
is also determined.
\end{lemma}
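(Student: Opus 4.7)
My plan is to prove both assertions by induction on $\alpha \le \kappa^+$, handling the greatest-lower-bound statement first since it requires almost no work. If $\langle r_n : n < \omega \rangle$ is a descending sequence of determined conditions with greatest lower bound $r$, then by Lemma 5.3, at each $\beta \in \dom(a_r)$ the name $a_r(\beta)$ is the greatest lower bound in $\p(\dot T_\beta, \dot U_\beta)$ of the eventually constant descending sequence of canonical names $\langle (\check A_n, \check f_n) : n \ge n_0 \rangle$ for ground model pairs; by Lemma 4.2 this greatest lower bound equals the canonical name for the ground model pair $(\bigcup_n A_n, \bigcup_n f_n)$, so $r$ is determined.

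For density, the base cases $\alpha \le \kappa$ are vacuous since then $\dom(a_p) \subseteq \{0\}$. For the successor step $\alpha = \beta + 1$ with $\beta \ge \kappa$, given $p \in \p_{\beta+1}$, I first pass to the enriched condition $p^* \in \p_\beta$ obtained from $p \restriction \beta$ by adjoining the restricted models $\{M \cap \beta : M \in X_p(\beta+1)\}$ to the side-condition component at level $\beta$; this is a legitimate $\p_\beta$-condition because $p$'s property (6) for $X_p(\beta+1)$ transfers directly, using the identity $(M \cap \beta) \cap \kappa = M \cap \kappa$ which holds since $\beta \ge \kappa$. If $\beta \in \dom(a_p)$, I next invoke the countable closure of $\p_\beta$ from Lemma 5.3 to find $s \le_\beta p^*$ deciding $a_p(\beta) = (\check A, \check f)$ for some ground model pair $(A, f)$, which is possible because $A$ and $f$ are each names for a countable object; otherwise let $s := p^*$. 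Applying the inductive hypothesis to $s$ yields a determined $r \le_\beta s$, and I set $q \in \p_{\beta+1}$ by $q \restriction \beta := r$, $a_q(\beta) := (\check A, \check f)$ as a canonical name when $\beta \in \dom(a_p)$, and $X_q(\beta+1) := X_p(\beta+1)$ when defined.

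The technical heart of the argument, and the main obstacle, is verifying property (6) for $q$ at level $\beta+1$ at those new coordinates $\xi \in M \cap \dom(a_r) \setminus \dom(a_p)$ that may have been introduced when finding $s$ or when inductively producing $r$. This is precisely what the enrichment step is designed to overcome: because $r$ is a $\p_\beta$-condition satisfying $M \cap \beta \in X_r(\beta)$, its own property (6) at level $\beta$ forces $r \restriction \xi \Vdash M \cap \kappa \in A_{a_r(\xi)}$, which is exactly the required forcing statement; without the enrichment there is no reason for this constraint to hold at new coordinates. The limit case with $\cf(\alpha) > \omega$ reduces immediately to the successor case applied to $p \restriction (\sup(\dom(a_p))+1)$, after an analogous enrichment using $X_p(\alpha)$ if $\alpha \in \dom(X_p)$. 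The case $\cf(\alpha) = \omega$ requires a genuine fusion in $\p_\alpha$: fix a cofinal $\omega$-sequence $\langle \alpha_n : n < \omega \rangle$ in $\alpha$ and build a descending sequence $p = p_0 \ge p_1 \ge \cdots$ such that each $p_n \restriction \alpha_n$ is determined in $\p_{\alpha_n}$, then take the greatest lower bound in $\p_\alpha$, which is determined by the greatest-lower-bound statement already established.
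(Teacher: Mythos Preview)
Your approach is correct, but it is substantially more elaborate than the paper's. The paper does \emph{not} induct on $\alpha$; instead it works directly in $\p_\alpha$ for arbitrary $\alpha$, building a single descending $\omega$-sequence $\langle q_n : n < \omega\rangle$ below the given condition by bookkeeping so that for each $n$ and each nonzero $\gamma \in \dom(a_{q_n})$ some later $q_m \restriction \gamma$ decides $a_{q_n}(\gamma)$ as a ground-model pair. Taking the greatest lower bound $r$ and replacing each $a_r(\gamma)$ by the canonical name for the union of the decided pairs yields an equivalent determined condition. This handles all $\alpha$ uniformly in one step.

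Your induction on $\alpha$ forces you to confront the interaction between side conditions recorded at level $\alpha$ (or $\beta+1$) and extensions made in $\p_\beta$; your enrichment trick (adding $\{M \cap \beta : M \in X_p(\beta+1)\}$ to $X_{p^*}(\beta)$) is exactly the content of Lemma~5.8 and what makes Lemma~5.10 go through, so you are essentially re-deriving that machinery here. This is fine, and makes an implicit dependency in the paper's ``standard bookkeeping argument'' explicit, but it costs considerable length. One small imprecision: in your $\cf(\alpha)=\omega$ case, the conditions $p_n$ are not themselves determined (only their restrictions $p_n \restriction \alpha_n$ are), so the second statement of the lemma does not literally apply to the sequence $\langle p_n\rangle$. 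You need the slightly stronger observation that for each fixed $\gamma$ the tail $\langle a_{p_n}(\gamma) : n \text{ large}\rangle$ consists of canonical names, and then argue coordinatewise; this is trivial but should be said.
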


\begin{proof}
By Lemma 5.3, $\p_\alpha$ does not add any new countable subsets of 
the ground model. 
Given a condition $p$, using a standard bookkeeping argument we can define by induction 
a descending sequence of conditions 
$\langle q_n : n < \omega \rangle$ below $p$ 
so that for all $n < \omega$ 
and all nonzero $\gamma \in \dom(a_{q_n})$, 
there is $m \ge n$ such that $q_m \restriction \gamma$ 
decides $a_{q_n}(\gamma)$ as $(A_{\gamma,n},f_{\gamma,n})$. 
For each $\gamma \in \bigcup \{ \dom(a_{q_n}) : n < \omega \}$, 
let $A_\gamma := \bigcup \{ A_{\gamma,n} : n < \omega \}$ and 
$f_\gamma := \bigcup \{ f_{\gamma,n} : n < \omega \}$. 

Let $r$ be the greatest lower bound of $\langle q_n : n < \omega \rangle$. 
Define $r'$ by letting $X_{r'} := X_r$, $\dom(a_{r'}) := \dom(a_r)$, 
$a_{r'}(0) := a_r(0)$, 
and for all nonzero $\gamma \in \dom(a_r)$, 
letting $a_{r'}(\gamma)$ be a canonical $\p_\gamma$-name for the pair 
$(A_\gamma,f_\gamma)$. 
Then easily $r \le_\alpha r' \le_\alpha r$ and $r'$ is determined. 
The second statement follows by a similar but easier argument.
\end{proof}

We will informally identify a determined condition $p$ with the 
object $(a,X)$ such that $\dom(a) = \dom(a_p)$, 
$X = X_p$, $a(0) = a_p(0)$, and for all nonzero $\gamma \in \dom(a)$, 
$a(\gamma) = (A,f)$, where $a_p(\gamma)$ is the canonical 
$\p_\gamma$-name for $(A,f)$.

Note that for all nonzero $\alpha < \kappa^+$, there are 
$\kappa$ many determined conditions in $\p_\alpha$. 
Thus, $\p_\alpha$ has a dense subset of size $\kappa$ and hence is $\kappa^+$-c.c.

\begin{lemma}
For all $\alpha \le \kappa^+$, 
$\p_\alpha$ is $\kappa^+$-c.c.
\end{lemma}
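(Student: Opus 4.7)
The case $\alpha < \kappa^+$ follows directly from the remark preceding the lemma: Lemma 5.5 provides a dense set of determined conditions in $\p_\alpha$ of size $\kappa$, so any antichain refines to one of size at most $\kappa$. So the content of the lemma is the case $\alpha = \kappa^+$, which I would handle by a $\Delta$-system argument. Given a family $\{p_\xi : \xi < \kappa^+\} \subseteq \p_{\kappa^+}$, by Lemma 5.5 I may assume each $p_\xi$ is determined. For each $\xi$, define the \emph{extended support}
$$
\hat s_\xi := \dom(a_{p_\xi}) \cup \dom(X_{p_\xi}) \cup \bigcup \bigl\{ M : \gamma \in \dom(X_{p_\xi}),\ M \in X_{p_\xi}(\gamma) \bigr\},
$$
a countable subset of $\kappa^+$. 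Including the side-condition models $M$ in $\hat s_\xi$ is the key design choice, needed later to verify clause (6) of Lemma 5.1 for the amalgamated condition.

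Since $\kappa$ is inaccessible, $\mu^\omega < \kappa^+$ for every $\mu < \kappa^+$, so the standard $\Delta$-system lemma applied to $\{\hat s_\xi : \xi < \kappa^+\}$ yields $Y \subseteq \kappa^+$ of size $\kappa^+$ and a countable root $\hat R$ with $\hat s_\xi \cap \hat s_\eta = \hat R$ for distinct $\xi, \eta \in Y$. By regularity of $\kappa^+$, fix $\beta < \kappa^+$ with $\hat R \subseteq \beta$. Since $\p_\beta$ has only $\kappa$ determined conditions, pigeonhole gives $Y' \subseteq Y$ of size $\kappa^+$ on which $p_\xi \restriction \beta$ is constant. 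Pick any distinct $\xi, \eta \in Y'$ and define $r$ by amalgamation: $\dom(a_r) := \dom(a_{p_\xi}) \cup \dom(a_{p_\eta})$, $\dom(X_r) := \dom(X_{p_\xi}) \cup \dom(X_{p_\eta})$, with values inherited from $p_\xi$ or $p_\eta$. Since the overlap of supports lies in $\hat R \subseteq \beta$ and $p_\xi \restriction \beta = p_\eta \restriction \beta$, the definition is consistent.

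It remains to prove by induction on $\delta \le \kappa^+$ that $r \restriction \delta \in \p_\delta$ and $r \restriction \delta \le_\delta p_\xi \restriction \delta,\ p_\eta \restriction \delta$, yielding compatibility of $p_\xi$ and $p_\eta$. The hard case, and the main obstacle, is clause (6) of Lemma 5.1: given $\gamma \in \dom(X_r)$ with $M \in X_{p_\xi}(\gamma)$ (by symmetry) and $\delta \in M \cap \dom(a_r)$, I claim $\delta \in \dom(a_{p_\xi})$. This is where the extended support pays off: if instead $\delta \in \dom(a_{p_\eta}) \setminus \dom(a_{p_\xi})$, then $\delta \in \hat s_\eta$, and since $M \subseteq \hat s_\xi$ we would have $\delta \in \hat s_\xi \cap \hat s_\eta = \hat R \subseteq \beta$; but $p_\xi \restriction \beta = p_\eta \restriction \beta$ forces $\dom(a_{p_\xi}) \cap \beta = \dom(a_{p_\eta}) \cap \beta$, contradicting $\delta \notin \dom(a_{p_\xi})$. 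Hence clause (6) for $p_\xi$ gives $p_\xi \restriction \delta \Vdash M \cap \kappa \in A_{a_{p_\xi}(\delta)}$, and combining the inductive hypothesis $r \restriction \delta \le_\delta p_\xi \restriction \delta$ with $a_r(\delta) = a_{p_\xi}(\delta)$ delivers the required forcing statement for $r$. The remaining clauses of Lemma 5.1 are straightforward verifications using that the two conditions agree on the overlap of their supports.
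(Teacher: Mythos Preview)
Your approach is essentially the same as the paper's: both use a $\Delta$-system argument on an extended support that includes the side-condition models $M$, thin so that the conditions agree below some $\beta$ containing the root, and then amalgamate, with the crucial verification of clause~(6) of Lemma~5.1 handled exactly as you do (the paper's set $Y_i := (\bigcup\bigcup\ran(X_{p_i})) \cup \dom(a_{p_i})$ plays the role of your $\hat s_\xi$). One minor correction: each $M \in X_{p_\xi}(\gamma)$ has size less than $\kappa$, not countable, so $\hat s_\xi$ is a subset of $\kappa^+$ of size less than $\kappa$ rather than a countable set; the $\Delta$-system lemma still applies since $\kappa^{<\kappa} = \kappa < \kappa^+$ by inaccessibility, but your stated hypothesis $\mu^\omega < \kappa^+$ should be $\mu^{<\kappa} < \kappa^+$, and the root $\hat R$ need not be countable.
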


\begin{proof}
By the preceding comments, it suffices to prove the statement for $\alpha = \kappa^+$. 
Suppose that $\langle p_i : i < \kappa^+ \rangle$ is 
a sequence of conditions in $\p_{\kappa^+}$. 
By extending these conditions if necessary, we may assume without 
loss of generality that each $p_i$ is determined.

A straightforward argument using the $\Delta$-system lemma shows that 
there is $Z \subseteq \kappa^+$ of size $\kappa^+$ and  
$\kappa < \beta < \kappa^+$ such that for all $i < j$ in $Z$:
\begin{enumerate}
\item $\dom(a_{p_i}) \cap \dom(a_{p_j}) \subseteq \beta$;
\item $\dom(X_{p_i}) \cap \dom(X_{p_j}) \subseteq \beta$;
\item $p_i \restriction \beta = p_j \restriction \beta$;
\item the intersection of the sets 
$$
Y_i := (\bigcup \bigcup \ran(X_{p_i})) \cup \dom(a_{p_i})
$$
and 
$$
Y_j := (\bigcup \bigcup \ran(X_{p_j})) \cup \dom(a_{p_j})
$$
is a subset of $\beta$.
\end{enumerate}

Let $i < j$ in $Z$, and we will prove that $p_i$ and $p_j$ are compatible. 
Define $q = (a,X)$ as follows:
\begin{enumerate}
\item[(a)] $\dom(a) := \dom(a_{p_i}) \cup \dom(a_{p_j})$;
\item[(b)] $\dom(X) := \dom(X_{p_i}) \cup \dom(X_{p_j})$;
\item[(c)] $a(\gamma) := a_{p_i}(\gamma)$ when $\gamma \in \dom(a_{p_i})$;
\item[(d)] $a(\gamma) := a_{p_j}(\gamma)$ when $\gamma \in \dom(a_{p_j})$;
\item[(e)] $X(\gamma) := X_{p_i}(\gamma)$ when $\gamma \in \dom(X_{p_i})$;
\item[(f)] $X(\gamma) := X_{p_j}(\gamma)$ when $\gamma \in \dom(X_{p_j})$.
\end{enumerate}
Note that (1), (2), and (3) above imply that $q$ is well-defined. 
Also, if $q$ is a condition, then easily $q \le_{\kappa^+} p_i, p_j$.

We prove that $q$ is a condition using Lemma 5.1. 
Note that for all $\xi < \kappa^+$, 
if $(a \restriction \xi,X \restriction (\xi+1))$ is in $\p_\xi$ then it 
is clearly below $p_i \restriction \xi$ and $p_j \restriction \xi$. 
Properties (1)--(5) of Lemma 5.1 are thereby trivial. 
So it remains to show that for all $\gamma \in \dom(X)$, $M \in X(\gamma)$, 
and $\xi \in M \cap \dom(a)$, assuming that 
$(a \restriction \xi,X \restriction (\xi+1))$ 
is a condition in $\p_\xi$, then this condition forces in $\p_\xi$ that 
$M \cap \kappa \in A_{a(\xi)}$. 

This statement is trivial if $M \in X_{p_i}(\gamma)$ 
and $\xi \in M \cap \dom(a_{p_i})$, or if 
$M \in X_{p_j}(\gamma)$ and $\xi \in M \cap \dom(a_{p_j})$, since 
$(a \restriction \xi, X \restriction (\xi+1))$ is below $p_i \restriction \xi$ 
and $p_j \restriction \xi$ assuming it is a condition. 
In particular, it is true if $\gamma < \beta$. 
Assume that $\gamma \ge \beta$. 
Then $\gamma$ is either in $\dom(X_{p_i})$ or $\dom(X_{p_j})$, but not both. 
Without loss of generality, assume that $\gamma \in \dom(X_{p_i})$. 
Then $M \in X_{p_i}(\gamma)$.

If $\xi \in \dom(a_{p_i})$ then we are done, so assume that 
$\xi \in \dom(a_{p_j}) \setminus \dom(a_{p_i})$. 
Since $\xi \in M$, we have that $\xi \in Y_i$, and since 
$\xi \in \dom(a_{p_j})$, we have that $\xi \in Y_j$. 
So $\xi \in Y_i \cap Y_j \subseteq \beta$. 
Hence, $\xi \in \dom(a_{p_j}) \cap \beta$. 
Since $p_i \restriction \beta = p_j \restriction \beta$, 
$a_{p_i} \restriction \beta = a_{p_j} \restriction \beta$. 
So $\dom(a_{p_i}) \cap \beta = \dom(a_{p_j}) \cap \beta$. 
It follows that $\beta \in \dom(a_{p_i})$, which is a contradiction.
\end{proof}

\begin{definition}
Let $\kappa < \beta < \alpha \le \kappa^+$. 
Define $D_{\beta,\alpha}$ as the set of $p \in \p_\alpha$ 
such that for all $\gamma \in \dom(X_p) \setminus \beta$, 
$$
\{ M \cap \beta : M \in X_p(\gamma) \} \subseteq X_p(\beta).
$$
\end{definition}

The next lemma is easy to prove.

\begin{lemma}
Let $\kappa < \beta < \alpha \le \kappa^+$. 
Then $D_{\beta,\alpha}$ is dense in $\p_\alpha$. 
In fact, let $p \in \p_\alpha$ and $x \subseteq \alpha \setminus (\kappa+1)$ be countable. 
Define $q$ by letting $a_q := a_p$, $\dom(X_q) := \dom(X_p) \cup x$, 
for all $\gamma \in \dom(X_p) \setminus x$, $X_q(\gamma) := X_p(\gamma)$, 
and for all $\gamma \in x$, 
$$
X_q(\gamma) := X_p(\gamma) \cup \{ M \cap \gamma : 
\exists \xi \in \dom(X_p) \setminus \gamma, \ M \in X_p(\xi) \}.
$$
Then $q \in \p_\alpha$, $q \le_\alpha p$, and 
for all $\beta \in x$, $q \in D_{\beta,\alpha}$.
\end{lemma}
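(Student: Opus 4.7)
The plan is to verify the explicit construction of $q$ directly against the non-inductive characterization of $\p_\alpha$ in Lemma 5.1; density of $D_{\beta,\alpha}$ then follows by applying the explicit form with $x := \{\beta\}$.

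First I would check $q \in \p_\alpha$. Clauses (1)--(4) of Lemma 5.1 are immediate, since $a_q = a_p$, $\dom(X_q) = \dom(X_p) \cup x \subseteq (\alpha+1) \cap \kappa^+$, and no new $a$-coordinates are introduced. For (5), given $\gamma \in x$ and a new member $M' = M \cap \gamma$ of $X_q(\gamma)$ with $M \in X_p(\xi)$ for some $\xi \in \dom(X_p) \setminus \gamma$, clearly $M' \in P_\kappa(\gamma)$ (as $|M'| \le |M| < \kappa$ and $M' \subseteq \gamma$), and since $\gamma > \kappa$ (because $x \subseteq \alpha \setminus (\kappa+1)$) we have $M' \cap \kappa = M \cap \kappa \in \kappa \cap \cof(>\omega)$.

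The delicate clause is (6). I would establish by induction on $\eta \le \alpha$ that $(a_q \restriction \eta, X_q \restriction (\eta+1)) \in \p_\eta$ and extends $(a_p \restriction \eta, X_p \restriction (\eta+1))$ in $\p_\eta$, so that any forcing statement witnessed by $p \restriction \eta$ passes to $q \restriction \eta$. Given $\gamma \in \dom(X_q)$, $M \in X_q(\gamma)$, and $\eta \in M \cap \dom(a_q)$, two cases arise. If $M \in X_p(\gamma)$, clause (6) for $p$ gives $(a_p \restriction \eta, X_p \restriction (\eta+1)) \Vdash_{\p_\eta} M \cap \kappa \in A_{a_p(\eta)}$, and this transfers to $q$. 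Otherwise $\gamma \in x$ and $M = M^* \cap \gamma$ for some $\xi^* \in \dom(X_p) \setminus \gamma$ and $M^* \in X_p(\xi^*)$; then $\eta \in M^* \cap \dom(a_p)$, so clause (6) for $p$ applied to the pair $(\xi^*, M^*)$ yields the forcing statement for $M^* \cap \kappa$, and since $\gamma > \kappa$ gives $M \cap \kappa = M^* \cap \kappa$ while $a_q(\eta) = a_p(\eta)$, this again transfers to $q$.

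Once $q \in \p_\alpha$ is established, $q \le_\alpha p$ is immediate from the definition of the order: $a$-coordinates are unchanged and $X$-values only grew. For the last clause, fix $\beta \in x$, $\gamma \in \dom(X_q) \setminus \beta$, and $M \in X_q(\gamma)$; we verify $M \cap \beta \in X_q(\beta)$. If $M \in X_p(\gamma)$, then $\gamma \in \dom(X_p) \setminus \beta$ and the construction of $X_q(\beta)$ places $M \cap \beta$ into $X_q(\beta)$ directly (take $\xi = \gamma$). Otherwise $M = M^* \cap \gamma$ with $M^* \in X_p(\xi^*)$ and $\xi^* \ge \gamma > \beta$, whence $M \cap \beta = M^* \cap \beta$ (since $\beta < \gamma$), and this too lies in $X_q(\beta)$ by the same clause (take $\xi = \xi^*$). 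The main technical step is the inductive verification of clause (6) of Lemma 5.1; everything else is straightforward bookkeeping.
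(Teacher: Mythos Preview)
Your proposal is correct and is exactly the natural verification the paper has in mind; the paper itself omits the proof entirely, saying only that it is ``easy to prove.'' Your use of the non-inductive characterization in Lemma~5.1, the reduction of clause~(6) for the new sets $M \cap \gamma$ to clause~(6) for $p$ at the witnessing pair $(\xi^*, M^*)$ via $M \cap \kappa = M^* \cap \kappa$ (using $\gamma > \kappa$), and the final case split showing $q \in D_{\beta,\alpha}$ are all sound. One cosmetic remark: in your last paragraph you write $\xi^* \ge \gamma > \beta$ and ``since $\beta < \gamma$,'' but strictly $\gamma \in \dom(X_q) \setminus \beta$ only gives $\gamma \ge \beta$; the case $\gamma = \beta$ is trivial (then $M \cap \beta = M \in X_q(\beta)$ already), so this does not affect the argument.
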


\begin{notation}
Let $\beta < \alpha \le \kappa^+$ and $p \in \p_\alpha$. 
Let $r \le_\beta p \restriction \beta$. 
Define $r +_{\beta,\alpha} p$ to be the pair $(a,X)$ satisfying:
\begin{enumerate}
\item $a \restriction \beta = a_r$ 
and $a \restriction [\beta,\alpha) = a_p \restriction [\beta,\alpha)$;
\item $X \restriction (\beta+1) = X_r$ and 
$X \restriction (\beta+1,\alpha] = X_p \restriction (\beta+1,\alpha]$.
\end{enumerate}
\end{notation}

When $\beta$ and $\alpha$ are understood from context, we will abbreviate 
$r +_{\beta,\alpha} p$ as $r + p$.

\begin{lemma}
Let $\beta < \alpha \le \kappa^+$, $p \in \p_\alpha$, and 
$r \le_\beta p \restriction \beta$. 
Suppose that $p \in D_{\beta,\alpha}$ in the case that $\kappa < \beta$. 
Then $r + p$ is in $\p_\alpha$ and is below $p$ and $r$ in $\p_\alpha$.
\end{lemma}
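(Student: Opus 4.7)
The plan is to verify $r + p \in \p_\alpha$ and $r + p \le_\alpha p$ by induction on $\alpha$, applying the non-inductive characterization of conditions supplied by Lemma 5.1. Write $(a,X) := r + p$. The structural clauses (1)--(5) of Lemma 5.1 transfer component-by-component from $r$ and $p$: countability of domains is clear; the initial segment below $1$ agrees with $r$; and for nonzero $\eta \in \dom(a)$, the name $a(\eta)$ is either $a_r(\eta)$ (when $\eta < \beta$) or $a_p(\eta)$ (when $\eta \ge \beta$), forced into $\p(\dot T_\eta, \dot U_\eta)$ by $r \restriction \eta$ or $p \restriction \eta$ respectively; by the inductive hypothesis $(a \restriction \eta, X \restriction (\eta+1))$ lies below the appropriate one of these in $\p_\eta$, so it forces the same statement.

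The substance is clause (6). Fix $\gamma \in \dom(X)$, $M \in X(\gamma)$, and $\xi \in M \cap \dom(a)$; the task is to show $(a \restriction \xi, X \restriction (\xi+1)) \Vdash_{\p_\xi} M \cap \kappa \in A_{a(\xi)}$. If $\gamma \le \beta$, then $M \in X_r(\gamma) \subseteq P_\kappa(\gamma)$, so $\xi \in M$ gives $\xi < \beta$ and $\xi \in \dom(a_r)$, and $(a \restriction \xi, X \restriction (\xi+1)) = r \restriction \xi$, so the claim is immediate from $r$ being a condition. If $\gamma > \beta$ and $\xi \ge \beta$, then $M \in X_p(\gamma)$, $a(\xi) = a_p(\xi)$, and $p \restriction \xi \Vdash M \cap \kappa \in A_{a_p(\xi)}$; the inductive hypothesis $(a \restriction \xi, X \restriction (\xi+1)) \le_\xi p \restriction \xi$ transfers this forcing statement.

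The crux is the remaining case $\gamma > \beta$, $\xi < \beta$, and this is precisely where the assumption $p \in D_{\beta,\alpha}$ is used. (This case can arise only when $\beta > \kappa$, since otherwise $\dom(a_r) \subseteq \{0\}$ and the forcing statement is vacuous.) By the definition of $D_{\beta,\alpha}$, $M \cap \beta \in X_p(\beta)$, and since $r \le_\beta p \restriction \beta$ we have $X_p(\beta) \subseteq X_r(\beta) = X(\beta)$. Thus $M \cap \beta \in X(\beta)$; noting that $\kappa \le \beta$ gives $(M \cap \beta) \cap \kappa = M \cap \kappa$, and that $\xi \in (M \cap \beta) \cap \dom(a)$, the first case applied at the index $\beta$ with $M \cap \beta$ in place of $M$ delivers exactly the required forcing statement. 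Finally, $r + p \le_\alpha p$ follows from $(r + p) \restriction \beta = r \le_\beta p \restriction \beta$ together with agreement at indices in $[\beta,\alpha]$, while $r + p \le_\alpha r$ is immediate since $r + p$ agrees with $r$ on the domain of $r$ and contributes no additional data at indices that $r$ mentions.
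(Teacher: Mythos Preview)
Your proof is correct and follows essentially the same approach as the paper: verify property (6) of Lemma 5.1 by splitting into the three cases $\gamma \le \beta$, $\xi \ge \beta$, and $\xi < \beta < \gamma$, with the last case handled via $p \in D_{\beta,\alpha}$ to pass from $M$ to $M \cap \beta \in X_r(\beta)$ and then invoke that $r$ is a condition. Your reduction of the third case to the first (applied at index $\beta$) is a slightly cleaner packaging of the same argument the paper gives directly.
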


\begin{proof}
We use Lemma 5.1 to show that $r + p$ is a condition. 
It is then easy to see that $r+p$ is below $p$ and $r$. 
In fact, for all $\xi \le \alpha$, assuming that 
$(r + p) \restriction \xi := 
(a_{r+p} \restriction \xi, X_{r+p} \restriction (\xi + 1))$ is in $\p_\xi$, it 
is easily seen to be below $p \restriction \xi$ and $r \restriction \xi$ in $\p_\xi$.

The only nontrivial property to check from Lemma 5.1 is (6). 
Suppose that $\gamma \in \dom(X_{r+p})$, 
$M \in X_{r+p}(\gamma)$, and $\xi \in M \cap \dom(a_{r+p})$ is nonzero.  
Assume that $(r + p) \restriction \xi := (a_{r+p} \restriction \xi,X_{r+p} \restriction (\xi+1))$ is a 
condition in $\p_\xi$ and is below $p \restriction \xi$ and $r \restriction \xi$. 
We claim that this condition forces in $\p_\xi$ that 
$M \cap \kappa \in A_{a_{r+p}(\xi)}$.

Assume first that $\gamma \le \beta$. 
Then $\xi < \beta$. 
And $X_{r+p}(\gamma) = X_r(\gamma)$ and $a_{r+p} \restriction \gamma = a_r \restriction \gamma$. 
So $M \in X_r(\gamma)$ and $\xi \in M \cap \dom(a_r)$. 
Since $r$ is a condition, it follows that 
$r \restriction \xi = (a_{r+p} \restriction \xi, X_{r+p} \restriction (\xi +1))$ forces that
$M \cap \kappa \in A_{a_{r}(\xi)} = A_{a_{r+p}(\xi)}$. 
In the second case that $\xi \ge \beta$, 
a similar argument works using the fact that $p$ is a condition. 

Thirdly, assume that $\xi < \beta < \gamma$. 
Note that since $\xi \ne 0$ and $\dom(a_p) \cap \kappa \subseteq \{ 0 \}$ by the definition 
of the forcing poset $\p_\alpha$, $\kappa \le \xi < \beta$. 
Since $p \in D_{\beta,\alpha}$, $r \le_\beta p \restriction \beta$, 
and $M \in X_{r+p}(\gamma) = X_p(\gamma)$, it follows that 
$M \cap \beta \in X_p(\beta) \subseteq X_r(\beta)$. 
Also, $\xi \in \dom(a_{r+p}) \cap \beta = \dom(a_r) \cap \beta$. 
As $r$ is a condition, $M \cap \beta \in X_r(\beta)$, 
and $\xi \in \dom(a_r) \cap (M \cap \beta)$, it follows that 
$r \restriction \xi = (r+p) \restriction \xi$ forces that 
$M \cap \kappa \in A_{a_r(\xi)} = A_{a_{r+p}(\xi)}$.
\end{proof}

\begin{proposition}
Let $\beta < \alpha \le \kappa^+$. 
Then $\p_\beta$ is a regular suborder of $\p_\alpha$.
\end{proposition}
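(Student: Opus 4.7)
The plan is to verify the three standard conditions for $\p_\beta$ to be a regular suborder of $\p_\alpha$: (a) $\p_\beta$ sits inside $\p_\alpha$ with the compatible ordering, (b) incompatibility in $\p_\beta$ implies incompatibility in $\p_\alpha$, and (c) every $p \in \p_\alpha$ admits a reduction to $\p_\beta$. Items (a) and (b) are immediate from Lemma 5.2: (a) is Lemma 5.2(1)--(2), and for (b), if $r \in \p_\alpha$ is a common extension of $p, q \in \p_\beta$, then by Lemma 5.2(3) the restriction $r \restriction \beta \in \p_\beta$ is a common extension of $p$ and $q$ in $\p_\beta$.

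The substance of the proof lies in (c). Given $p \in \p_\alpha$, I would split into two cases. When $\beta \le \kappa$, take the reduction to be $p \restriction \beta$: for any $r \le_\beta p \restriction \beta$, Lemma 5.10 (whose $D_{\beta,\alpha}$ hypothesis is vacuous in this range) directly produces $r +_{\beta,\alpha} p \in \p_\alpha$ as a common extension of $r$ and $p$, so $r$ and $p$ are compatible in $\p_\alpha$.

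When $\kappa < \beta$, the restriction $p \restriction \beta$ need not work as a reduction, because models $M \in X_p(\gamma)$ for $\gamma > \beta$ need not have their traces $M \cap \beta$ recorded in $X_p(\beta)$ --- which is precisely the information Lemma 5.10 demands through the hypothesis $p \in D_{\beta,\alpha}$. The remedy is first to apply Lemma 5.8 with $x = \{\beta\}$ to produce $p' \le_\alpha p$ lying in $D_{\beta,\alpha}$; by the explicit construction of Lemma 5.8, $a_{p'} = a_p$, and $X_{p'}$ differs from $X_p$ only by enlarging $X_p(\beta)$ with the appropriate intersections $M \cap \beta$ for $M \in X_p(\xi)$, $\xi > \beta$. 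The proposed reduction is then $p' \restriction \beta \in \p_\beta$: given any $r \le_\beta p' \restriction \beta$, Lemma 5.10 applied to $p'$ yields $r +_{\beta,\alpha} p' \in \p_\alpha$ as a common extension of $r$ and $p'$, and hence of $r$ and $p$.

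The only conceptual obstacle is recognizing that one must not use $p \restriction \beta$ naively as the reduction, but the stronger $p' \restriction \beta$ coming from an extension $p' \le_\alpha p$ in $D_{\beta,\alpha}$. Once this choice is made, the genuine technical content has been isolated in Lemmas 5.8 and 5.10, and the proposition collapses to the short verifications sketched above.
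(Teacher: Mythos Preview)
Your proof is correct and follows essentially the same approach as the paper: both use Lemma 5.2 for the suborder and compatibility conditions, pass to an extension in $D_{\beta,\alpha}$ via Lemma 5.8 when $\kappa < \beta$, and then apply Lemma 5.10 to amalgamate. The only cosmetic difference is that the paper verifies directly that maximal antichains of $\p_\beta$ are predense in $\p_\alpha$, whereas you verify the equivalent condition that every $p \in \p_\alpha$ admits a reduction in $\p_\beta$; the underlying mechanics are identical.
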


\begin{proof}
By Lemma 5.2(1,3), $\p_\beta$ is a suborder of $\p_\alpha$, and 
if $p$ and $q$ are in $\p_\beta$ and are compatible in $\p_\alpha$, 
then they are compatible in $\p_\beta$.

It remains to show that if $A$ is a maximal antichain of $\p_\beta$, 
then $A$ is predense in $\p_\alpha$. 
So let $p \in \p_\alpha$, and we will find a member of $A$ which is 
compatible with $p$ in $\p_\alpha$. 
In the case that $\kappa < \beta$, extend $p$ to $q$ in $D_{\beta,\alpha}$ 
by Lemma 5.8, and otherwise let $q = p$.

Since $q \restriction \beta$ is in $\p_\beta$ and $A$ is a maximal 
antichain of $\p_\beta$, we can fix $r \in A$ and $s \in \p_\beta$ 
such that $s \le_\beta q \restriction \beta, r$. 
By Lemma 5.10, $s + q$ is in $\p_\alpha$ and is below 
$q$ and $s$ in $\p_\alpha$. 
So it is also below $q$ and $r$. 
Thus, $p$ is compatible with a member of $A$.
\end{proof}

In Section 7 we will prove that $\p_\alpha$ preserves $\kappa$, for all 
$\alpha < \kappa^+$, assuming that $\kappa$ is ineffable. 
The next result shows that this implies that $\p_{\kappa^+}$ 
preserves $\kappa^+$.

\begin{proposition}
Assume that for all $\alpha < \kappa^+$, $\p_\alpha$ preserves $\kappa$. 
Then $\p_{\kappa^+}$ preserves $\kappa$, and in particular, 
$\p_{\kappa^+}$ forces that $\kappa = \omega_2$.
\end{proposition}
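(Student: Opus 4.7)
The plan is a standard ``factoring'' argument: show that every $\p_{\kappa^+}$-name for a function with domain of size less than $\kappa$ is essentially a $\p_\alpha$-name for some $\alpha < \kappa^+$, and then invoke the hypothesis that $\p_\alpha$ preserves $\kappa$. Three facts already in hand provide the ingredients: (i) each $\p_\alpha$, and in particular $\p_{\kappa^+}$ itself, is $\kappa^+$-c.c.\ by Lemma 5.6; (ii) $\p_\alpha$ is a regular suborder of $\p_{\kappa^+}$ by Proposition 5.11; and (iii) each condition of $\p_{\kappa^+}$ has countable support, hence lies in some $\p_\beta$ with $\beta < \kappa^+$.

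More concretely, I would argue as follows. Suppose toward a contradiction that some $p \in \p_{\kappa^+}$ and some $\lambda < \kappa$ are such that $p$ forces $\dot f$ to be a cofinal function from $\check\lambda$ into $\check\kappa$. For each $i < \lambda$, fix a maximal antichain $A_i \subseteq \p_{\kappa^+}$ of conditions below $p$ deciding $\dot f(\check i)$. By the $\kappa^+$-c.c.\ of $\p_{\kappa^+}$, $|A_i| \le \kappa$, and by (iii), each condition in $A_i \cup \{p\}$ lies in some $\p_\beta$ with $\beta < \kappa^+$. Since we have altogether at most $\lambda \cdot \kappa = \kappa$ many such $\beta$'s and $\kappa^+$ is regular, we can pick a single $\alpha < \kappa^+$ such that $p \in \p_\alpha$ and $A_i \subseteq \p_\alpha$ for every $i < \lambda$.

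Now $\dot f$ may be reorganized as a $\p_\alpha$-name: given a $V$-generic filter $G$ on $\p_{\kappa^+}$ containing $p$, the filter $G_\alpha := G \cap \p_\alpha$ is $V$-generic on $\p_\alpha$ by Proposition 5.11, and for each $i < \lambda$ the unique element of $A_i \cap G$ is already the unique element of $A_i \cap G_\alpha$ and determines $\dot f(\check i)^G$. Hence $\dot f^G \in V[G_\alpha]$, so in $V[G_\alpha]$ there is a cofinal map from $\lambda$ to $\kappa$, contradicting the inductive hypothesis that $\p_\alpha$ preserves $\kappa$. The auxiliary claim that $\p_{\kappa^+}$ forces $\kappa = \omega_2$ then follows: $\p_1 = \coll(\omega_1,<\!\kappa)$ is a regular suborder of $\p_{\kappa^+}$ by Proposition 5.11 and already forces $\kappa = \omega_2$, and cardinal preservation passes this down.

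There is no real obstacle here; the only mildly delicate point is the bookkeeping to produce a single $\alpha$ that simultaneously captures $p$ and all the antichains $A_i$, which rests purely on the $\kappa^+$-c.c.\ combined with countable support and the regularity of $\kappa^+$.
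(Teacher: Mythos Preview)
Your proposal is correct and follows essentially the same approach as the paper: use the $\kappa^+$-c.c.\ (Lemma 5.6) together with the fact that $\p_{\kappa^+} = \bigcup_{\alpha<\kappa^+}\p_\alpha$ to capture any name for a small function inside some $\p_\alpha$, then invoke Proposition 5.11 and the hypothesis that $\p_\alpha$ preserves $\kappa$. The paper phrases this via nice names and checks only functions with domain $\omega_1$ (after noting countable closure and the Levy collapse handle the rest), but these are cosmetic differences.
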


\begin{proof}
As $\p_{\kappa^+}$ is countably closed by Lemma 5.3, 
it preserves $\omega_1$. 
Since $\p_1$ is a regular suborder of $\p_{\kappa^+}$ by Proposition 5.11 
and $\p_1$ is forcing 
equivalent to 
the forcing $\coll(\omega_1,<\! \kappa)$, 
the forcing poset $\p_{\kappa^+}$ collapses all 
uncountable cardinals less than $\kappa$ to have size $\omega_1$. 
Hence, to show that $\p_{\kappa^+}$ forces that $\kappa = \omega_2$ 
it suffices to show that it preserves $\kappa$.

Suppose that $\dot f$ is a nice $\p_{\kappa^+}$-name for a function from 
$\omega_1$ to $\kappa$. 
Then there is a sequence $\langle A_x : x \in \omega_1 \times \kappa \rangle$ 
of antichains in $\p_{\kappa^+}$ 
so that 
$$
\dot f = \{ (p,\check x) : x \in \omega_1 \times \kappa, \ 
p \in A_x \}.
$$ 
Since $\p_{\kappa^+}$ is $\kappa^+$-c.c.\! by Lemma 5.6, for each 
$x$ there is $\beta_x < \kappa^+$ such that $A_x \subseteq \p_{\beta_x}$.

Let $\beta < \kappa^+$ be larger than $\beta_x$ for all $x \in \omega_1 \times \kappa$. 
Then $\dot f$ is a $\p_\beta$-name, and for any generic filter $G$ on $\p_{\kappa^+}$, 
$\dot f^{G} = \dot f^{G \cap \p_\beta}$. 
Now $\p_\beta$ is a regular suborder of $\p_{\kappa^+}$ and $\p_\beta$ preserves $\kappa$. 
So if $G$ is a generic filter on $\p_{\kappa^+}$, then 
$\dot f^G = \dot f^{G \cap \p_\beta} \in V[G \cap \p_\beta]$ 
is not surjective onto $\kappa$, and hence it is not surjective onto $\kappa$ in $V[G]$.
\end{proof}

We now show that the forcing iteration does what it was intended to do, namely, to add 
club isomorphisms between Aronszajn trees. 
This follows from the next lemma and Proposition 4.8.

\begin{lemma}
Let $\kappa \le \beta < \kappa^+$. 
Then the forcing posets $\p_{\beta+1}$ and 
$\p_\beta * \p(\dot T_\beta,\dot U_\beta)$ are forcing equivalent.
\end{lemma}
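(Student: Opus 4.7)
The plan is to prove the forcing equivalence by constructing a natural map
$e : \p_\beta * \p(\dot T_\beta, \dot U_\beta) \to \p_{\beta+1}$ whose image is dense in the
separative quotient of $\p_{\beta+1}$. Define
\[
e(q, \dot s) := (a_q \cup \{(\beta, \dot s)\},\ X_q)
\]
and the projection $\pi(p) := (p \restriction \beta,\ a_p(\beta))$, adopting the convention that
$a_p(\beta)$ denotes a $\p_\beta$-name for the trivial top condition $(\emptyset, \emptyset) \in
\p(\dot T_\beta, \dot U_\beta)$ when $\beta \notin \dom(a_p)$.

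First I would verify the routine facts: using Lemma 5.1, $e(q, \dot s)$ belongs to $\p_{\beta+1}$
because clause (6) at level $\beta+1$ is vacuous (the $X(\beta+1)$-component is absent, and no
$M \in X_q(\gamma)$ for $\gamma \le \beta$ can contain $\beta$), while the remaining clauses are
inherited from $q \in \p_\beta$; both $e$ and $\pi$ are order-preserving; $\pi \circ e = \mathrm{id}$;
and $e$ reflects compatibility, since any common extension $r$ of $e(q_1, \dot s_1)$ and $e(q_2, \dot s_2)$
projects via $\pi$ to $(r \restriction \beta, a_r(\beta))$, a common extension of $(q_i, \dot s_i)$.
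Next I would establish density of $D := \{p \in \p_{\beta+1} : \beta \in \dom(a_p)\}$ in $\p_{\beta+1}$:
for $p \notin D$, extend $p$ to $p' \le p$ by taking $a_{p'}(\beta)$ to be a canonical
$\p_\beta$-name for $(S', \emptyset)$, where $S' := \{M \cap \kappa : M \in X_p(\beta+1),\
\beta \in M\}$ is a countable subset of $\omega_2 \cap \cof(\omega_1)$ in $V^{\p_\beta}$, and
hence $(S', \emptyset)$ is a valid condition in $\p(\dot T_\beta, \dot U_\beta)$.

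The core step is to show that every $p \in D$ is Boolean-equivalent to $e(\pi(p))$ in the
separative quotient of $\p_{\beta+1}$, so that the image of $e$ is dense there. Since
$p \le_{\beta+1} e(\pi(p))$, this amounts to showing that every extension $r \le e(\pi(p))$ is
compatible with $p$. The candidate common extension is $s$ defined by $a_s := a_r$,
$X_s(\gamma) := X_r(\gamma)$ for $\gamma \le \beta$, and
$X_s(\beta+1) := X_r(\beta+1) \cup X_p(\beta+1)$. Verifying clause (6) for $s$ at each
$M \in X_p(\beta+1)$ and $\xi \in M \cap \dom(a_r)$ splits into cases: when $\xi \in \dom(a_p)$,
the required forcing $r \restriction \xi \Vdash M \cap \kappa \in A_{a_r(\xi)}$ is immediate from
$p$ being a condition (so $p \restriction \xi \Vdash M \cap \kappa \in A_{a_p(\xi)}$) combined
with $r \restriction \xi \Vdash a_r(\xi) \le a_p(\xi)$.

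The main obstacle is the residual case $\xi \in M \cap \dom(a_r) \setminus \dom(a_p)$, where
necessarily $\xi < \beta$ and $p$ places no direct constraint on $a_p(\xi)$. I would address this
by modifying the construction of $s$ along the countably many such problematic $\xi$'s via a
recursive strengthening: for each such $\xi$, I would extend $s \restriction \xi$ past
$r \restriction \xi$ to force $a_s(\xi)$ to be a condition in $\p(\dot T_\xi, \dot U_\xi)$
extending $a_r(\xi)$ whose $A$-part additionally contains $M \cap \kappa$ for each
$M \in X_p(\beta+1)$ with $\xi \in M$. Such strengthenings exist by the countable closure of
$\p_\xi$ and the density results of Section 4, in particular Lemma 4.5 (for adjoining values to
the $A$-part above the current supremum) together with Lemma 4.13 (to handle values below
the supremum after passing to an injective extension). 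Coherently assembling these modifications
yields the required common extension $s$, completing the proof.
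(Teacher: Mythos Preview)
Your overall architecture (define $e$ and $\pi$, check that $D=\{p:\beta\in\dom(a_p)\}$ is dense, and show each $p\in D$ is Boolean-equivalent to $e(\pi(p))$) is reasonable, but the step you flag as the ``main obstacle'' really is one, and your proposed fix does not work. You claim that for $\xi\in M\cap\dom(a_r)\setminus\dom(a_p)$ one can always extend $a_r(\xi)$ in $\p(\dot T_\xi,\dot U_\xi)$ so that its $A$-part contains $M\cap\kappa$, citing Lemmas~4.5 and~4.13. But a condition $(A,f)\in\p(T,U)$ cannot in general be extended to put an \emph{arbitrary} ordinal $\delta\in\cof(\omega_1)$ into $A$. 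Concretely, take $A=\{\beta_0\}$ with $\beta_0>\omega_1$, and let $f$ send distinct $x_1,x_2\in T(\beta_0)$ to $y_1,y_2\in U(\beta_0)$ chosen so that $p_T(x_1,\omega_1)=p_T(x_2,\omega_1)$ while $p_U(y_1,\omega_1)\neq p_U(y_2,\omega_1)$. No extension of $(A,f)$ can have $\omega_1\in A$: the common predecessor $z=p_T(x_i,\omega_1)$ would have to map below both $y_1$ and $y_2$ at level $\omega_1$, which is impossible. Lemma~4.13 only applies after the condition is injective on the relevant level, and Lemma~4.10 (which produces injectivity) requires that level to be a limit point of $\cof(\omega_1)$; neither hypothesis is available to you here.

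The paper sidesteps this entirely by working not with your $D$ but with the smaller dense set of $p$ that additionally lie in $D_{\beta,\beta+1}$ (Definition~5.7, Lemma~5.8). This guarantees $M\cap\beta\in X_p(\beta)$ for every $M\in X_p(\beta+1)$, so any $r\le_\beta p\restriction\beta$ already has $M\cap\beta\in X_r(\beta)$, and clause~(6) of the definition of $\p_\beta$ then forces $r\restriction\xi\Vdash M\cap\kappa\in A_{a_r(\xi)}$ for every $\xi\in M\cap\dom(a_r)\cap\beta$. In other words, the side-condition mechanism is doing exactly the work you tried to do by hand; it propagates the membership requirement down to $X_p(\beta)$ \emph{before} you lose the $X_p(\beta+1)$-information by passing to $\p_\beta$. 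Your argument can be repaired by adding the $D_{\beta,\beta+1}$ requirement to your dense set $D$, after which your residual case simply does not arise. (The paper then organizes the map in the other direction, as a dense embedding $\pi:D\to\p_\beta*\p(\dot T_\beta,\dot U_\beta)$, but that is a cosmetic difference.)
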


\begin{proof}
Let $D$ be the set of $q \in \p_{\beta+1}$ such that $\beta \in \dom(a_q)$, and 
$q \in D_{\beta,\beta+1}$ if $\kappa < \beta$. 
We claim that $D$ is dense in $\p_{\beta+1}$. 
Given $p \in \p_{\beta+1}$, extend $p$ to $p_1$ such that $\beta \in \dom(a_{p_1})$. 
Namely, if $\beta \in \dom(a_p)$ already, let $p = p_1$; otherwise, 
add $\beta$ and define $a_{p_1}(\beta) = (A,\emptyset)$, where 
$A = \{ M \cap \kappa : M \in X_p(\beta+1) \}$. 
Now apply Lemma 5.8 and extend $p_1$ to $q$ in $D_{\beta,\beta+1}$, if $\kappa < \beta$.

Define $\pi : D \to \p_{\beta} * \p(\dot T_\beta,\dot U_\beta)$ 
by letting $\pi(p) = (p \restriction \beta) * a_p(\beta)$ for all $p \in D$. 
We claim that $\pi$ is a dense embedding.

It is immediate from the definitions 
that if $q \le_{\beta+1} p$ in $D$, then $\pi(q) \le \pi(p)$ 
in $\p_{\beta} * \p(\dot T_\beta,\dot U_\beta)$. 
We claim that $\pi$ is surjective, and in particular, the range of $\pi$ is dense. 
So let $s = (s \restriction \beta,s(\beta)) \in \p_\beta * \p(\dot T_\beta,\dot U_\beta)$. 
Define $p$ by letting $p \restriction \beta := s \restriction \beta$, 
$a_p(\beta) := s(\beta)$, and $X_p := X_{s \restriction \beta}$. 
It is easy to check that $p$ is a condition and $\pi(p) = s$.

Let $p$ and $q$ be conditions in $D$, and suppose that 
$\pi(p)$ and $\pi(q)$ are compatible in 
$\p_\beta * \p(\dot T_\beta,\dot U_\beta)$. 
We will show that $p$ and $q$ are compatible in $\p_{\beta+1}$. 
Fix 
$$
r = (r \restriction \beta,r(\beta)) \le \pi(p), \pi(q)
$$
in $\p_\beta * \p(\dot T_\beta,\dot U_\beta)$. 
Then in particular, 
$$
r \restriction \beta \le_\beta \pi(p) \restriction \beta = p \restriction \beta,
$$
$$ 
r \restriction \beta \le_\beta \pi(q) \restriction \beta = q \restriction \beta,
$$
and 
$r \restriction \beta$ forces in $\p_\beta$ that 
$r(\beta) \le \pi(p)(\beta) = a_p(\beta)$ and $r(\beta) \le \pi(q)(\beta) = a_q(\beta)$ 
in $\p(\dot T_\beta,\dot U_\beta)$.

Define $s \in \p_{\beta+1}$ as follows. 
Let $s \restriction \beta := r \restriction \beta$, 
$a_s(\beta) := r(\beta)$, $X_s \restriction \beta := X_{r \restriction \beta}$, 
and $X_s(\beta+1) := X_p(\beta+1) \cup X_q(\beta+1)$. 
It is straightforward to check that $s$ satisfy properties (1)--(5) of being a condition 
in $\p_{\beta+1}$, and that if $s$ is a condition, then 
$s \le_{\beta+1} p, q$. 
It remains to prove property (6). 

Assume that $M \in X_s(\beta+1)$ 
and $\gamma \in M \cap \dom(a_s)$. 
We will show that $s \restriction \gamma$ forces in $\p_\gamma$ 
that $M \cap \kappa \in A_{a_s(\gamma)}$. 
Since $X_s(\beta+1) = X_p(\beta+1) \cup X_q(\beta+1)$, 
either $M \in X_p(\beta+1)$ or $M \in X_q(\beta+1)$. 
By symmetry, it suffices to consider the case that $M \in X_p(\beta+1)$.

First, assume that $\gamma < \beta$. 
Then $a_s(\gamma) = a_r(\gamma)$ and 
$\gamma \in \dom(a_r) \cap (M \cap \beta)$. 
Since $p \in D_{\beta,\beta+1}$, $M \cap \beta \in X_p(\beta)$. 
As $r \restriction \beta \le_\beta p \restriction \beta$, $M \cap \beta \in X_r(\beta)$. 
Since $r$ is a condition, $r \restriction \gamma = s \restriction \gamma$ forces that 
$M \cap \kappa \in A_{a_r(\gamma)} = A_{a_s(\gamma)}$.

Secondly, assume that $\gamma = \beta$. 
Then $p \restriction \beta$ forces that $M \cap \kappa \in A_{a_p(\beta)}$. 
Since $r \restriction \beta = s \restriction \beta$ forces that 
$r(\beta) \le a_p(\beta)$ in $\p(\dot T_\beta,\dot U_\beta)$, it forces that 
$M \cap \kappa \in A_{r(\beta)} = A_{a_s(\beta)}$. 
\end{proof}

Let us show how the material in this section can be used to prove the main 
result of the paper. 
Start with a model in which $\kappa$ is ineffable and $2^\kappa = \kappa^+$. 
Assume, moreover, that the iterations defined in this section preserve $\kappa$, which 
will be verified in Section 7. 
Since $\p_{\kappa^+}$ is $\kappa^+$-c.c.\!, a standard nice name 
argument similar to the proof 
of Proposition 5.12 shows that any $\omega_2$-tree in a generic extension 
by $\p_{\kappa^+}$ appears in an intermediate generic extension by 
$\p_\beta$ for some $\beta < \kappa^+$. 
As $2^\kappa = \kappa^+$, standard arguments show that we can arrange 
our bookkeeping function to enumerate all nice names for standard countably closed 
normal $\omega_2$-Aronszajn trees in such a way that the iteration handles all possible 
pairs of such trees.

It follows that in a generic extension $V[G]$ by $\p_{\kappa^+}$, 
whenever $T$ and $U$ are standard countably closed normal 
$\omega_2$-Aronszajn trees such that for all $\gamma < \kappa$,  
the nodes of height $\gamma$ in $T$  
are exactly the 
even ordinals in $I_\gamma$, and the nodes of height $\gamma$ in $U$ are exactly 
the odd ordinals in $I_\gamma$, 
then for some $\beta < \kappa^+$, 
$\dot T_\beta^{G \cap \p_\beta} = T$ and $\dot U_\beta^{G \cap \p_\beta} = U$. 
But $\p_{\beta+1}$ is a regular suborder of $\p_{\kappa^+}$ by Proposition 5.11, 
and $\p_{\beta+1}$ is forcing equivalent to 
$\p_\beta * \p(\dot T_\beta,\dot U_\beta)$ by Lemma 5.13. 
It follows by Proposition 4.8 that in $V[G]$ there is a club isomorphism from $T$ to $U$.

By easy arguments, any countably closed normal $\omega_2$-Aronszajn tree 
in $V[G]$ is club isomorphic to trees of the type described in the previous paragraph. 
Hence, in $V[G]$ we have that any two countably closed normal 
$\omega_2$-Aronszajn trees are club isomorphic.

To complete the main result of the paper, it remains to show that the forcing posets defined 
in this section preserve $\kappa$, assuming that $\kappa$ is ineffable. 
The last two sections of the paper are devoted to proving this.

\section{Preparation for the preservation of $\kappa$}

For the remainder of the paper, we will assume that $\kappa$ is ineffable. 
Let $J$ denote the ineffability ideal on $\kappa$ as discussed in Section 2. 
Let us also identify a useful collection of models.

\begin{definition}
Let $\mathcal Y$ denote the collection of all sets $N$ in 
$P_{\kappa}(H(\kappa^{++}))$ satisfying:
\begin{enumerate}
\item $N \prec (H(\kappa^{++}),\in,\unlhd,J)$, where $\unlhd$ is a fixed 
well-ordering of $H(\kappa^{++})$;
\item $N \cap \kappa$ is inaccessible, $| N | = N \cap \kappa$, 
and $N^{<\! (N \cap \kappa)} \subseteq N$.
\end{enumerate}
\end{definition}

The next lemma uses the standard fact that the set of inaccessible cardinals 
in $\kappa$ is a member of $J^*$; see \cite[Proposition 2.5]{ineffable}.

\begin{lemma}
The set $\mathcal Y$ is stationary in $P_{\kappa}(H(\kappa^{++}))$.
\end{lemma}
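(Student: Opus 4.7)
My plan is to fix an arbitrary club $C \subseteq P_\kappa(H(\kappa^{++}))$, witnessed by some function $F : [H(\kappa^{++})]^{<\omega} \to H(\kappa^{++})$, and to produce an $N \in \mathcal{Y}$ closed under $F$. To this end I build by recursion a continuous $\in$-chain $\langle N_\alpha : \alpha < \kappa \rangle$ of elementary substructures of $(H(\kappa^{++}),\in,\unlhd,J)$, each closed under $F$, each of size less than $\kappa$, and with $N_\alpha \cap \kappa \in \kappa$ and $N_\alpha \in N_{\alpha+1}$. The essential extra requirement at successor steps will be the closure condition $N_{\alpha+1}^{|N_\alpha|} \subseteq N_{\alpha+1}$; since $\kappa$ is inaccessible, $|N_\alpha|^{|N_\alpha|} < \kappa$, so an elementary substructure of size below $\kappa$ with this closure exists.

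Once the chain is built, a standard argument yields a club $D \subseteq \kappa$ such that $N_\alpha \cap \kappa = \alpha$, and hence $|N_\alpha| = \alpha$, for every $\alpha \in D$. I then invoke the fact noted in the text just above the statement (\cite[Proposition 2.5]{ineffable}) that the set of inaccessibles below $\kappa$ lies in $J^*$, hence is in particular stationary. Picking $\alpha \in D$ with $\alpha$ inaccessible and setting $N := N_\alpha$, I obtain an element of $C$ with $N \cap \kappa = \alpha$ inaccessible and $|N| = \alpha$; elementarity in the expanded structure $(H(\kappa^{++}),\in,\unlhd,J)$ is preserved at the limit since each $N_\beta$ for $\beta < \alpha$ is elementary.

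The main obstacle, and the only point that needs genuine checking, is the closure condition $N^{<\alpha} \subseteq N$. Given $\gamma < \alpha$ and $s : \gamma \to N$, regularity of $\alpha$ will do the work: each $s(\xi)$ lies in some $N_{\beta_\xi}$ with $\beta_\xi < \alpha$, so $\beta := \sup_{\xi < \gamma} \beta_\xi < \alpha$. Enlarging $\beta$ further below $\alpha$ if necessary so that $|\gamma| \le |N_\beta|$, the closure condition imposed at stage $\beta+1$ places $s$ itself into $N_{\beta+1} \subseteq N$. All remaining clauses of Definition 6.1 are read off directly from the construction and the choice of $\alpha$.
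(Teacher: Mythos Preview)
Your argument is correct and follows essentially the same route as the paper: build a continuous $\in$-increasing chain of small elementary substructures, pass to the club $D$ on which $N_\alpha\cap\kappa=\alpha=|N_\alpha|$, and pick an inaccessible $\alpha$ in $D$ (using Mahloness of $\kappa$, equivalently that the inaccessibles lie in $J^*$). The only substantive difference is that you explicitly build in the closure $N_{\beta+1}^{|N_\beta|}\subseteq N_{\beta+1}$ at successor stages and then use it, together with regularity of $\alpha$, to verify $N_\alpha^{<\alpha}\subseteq N_\alpha$; the paper simply asserts this last step is ``easy to check''. Your version is the standard way to make that step precise, so the two proofs are really the same argument at different levels of detail.
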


\begin{proof}
Let $F : H(\kappa^{++})^{< \omega} \to H(\kappa^{++})$. 
Build a $\in$-increasing and 
continuous sequence $\langle N_i : i < \kappa \rangle$ of sets in 
$P_{\kappa}(H(\kappa^{++}))$ which are closed under $F$ and 
are elementary substructures of $(H(\kappa^{++}),\in,\unlhd,J)$. 
Since $\kappa$ is inaccessible, a standard argument shows that 
there is a club $C \subseteq \kappa$ such that for all 
$\alpha \in C$, $|N_\alpha| = \alpha = N_\alpha \cap \kappa$.  
As $\kappa$ is ineffable, it is Mahlo, so we can find an inaccessible $\alpha$ in $C$. 
Since $\alpha$ is inaccessible, it is easy to check that 
$N_\alpha^{< \alpha} \subseteq N_\alpha$. 
Then $N_\alpha$ is in $\mathcal Y$ and is closed under $F$.
\end{proof}

\begin{notation}
Let $\alpha < \kappa^+$. 
A set $N \in \mathcal Y$ is said to be \emph{$\alpha$-suitable} if $N$ is an elementary 
substructure of 
$$
( H(\kappa^{++}), \in, \unlhd, J, \mathcal Y, \langle \p_\beta : \beta \le \alpha \rangle ).
$$
\end{notation}

Note that if $N$ is $\alpha$-suitable, then for all $\beta \in N \cap \alpha$, 
$N$ is $\beta$-suitable.

\begin{lemma}
The collection of $N \in P_{\kappa}(H(\kappa^{++}))$ such that 
$N$ is $\alpha$-suitable is stationary.
\end{lemma}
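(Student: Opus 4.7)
The plan is to repeat the argument of Lemma 6.2 essentially verbatim, replacing the structure $(H(\kappa^{++}),\in,\unlhd,J)$ with the expanded structure
$$
\mathfrak{A} := (H(\kappa^{++}),\in,\unlhd,J,\mathcal{Y},\langle \p_\beta : \beta \le \alpha \rangle).
$$
First I would verify that $\mathfrak{A}$ is a legitimate structure on $H(\kappa^{++})$: each $\p_\beta$ for $\beta \le \alpha < \kappa^+$ is in $H(\kappa^{++})$ (as observed after Lemma 5.1), so the sequence $\langle \p_\beta : \beta \le \alpha \rangle$ is coded by a single element of $H(\kappa^{++})$; and every element of $\mathcal{Y}$ lies in $P_\kappa(H(\kappa^{++})) \subseteq H(\kappa^{++})$, so $\mathcal{Y}$ may be added as a unary predicate on $H(\kappa^{++})$.

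Given $F : H(\kappa^{++})^{<\omega} \to H(\kappa^{++})$, I would then build a $\in$-increasing continuous sequence $\langle N_i : i < \kappa \rangle$ of elements of $P_\kappa(H(\kappa^{++}))$ which are closed under $F$ and are elementary substructures of $\mathfrak{A}$. Since the expanded language is still countable, forming Skolem hulls for $\mathfrak{A}$ does not inflate cardinalities beyond $\kappa$, so this construction is possible. As in Lemma 6.2, inaccessibility of $\kappa$ yields a club $C \subseteq \kappa$ on which $|N_i| = i = N_i \cap \kappa$, and since $\kappa$ is ineffable (hence Mahlo) one picks an inaccessible $\alpha \in C$. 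Then $N_\alpha$ has size $\alpha = N_\alpha \cap \kappa$, and $N_\alpha^{<\alpha} \subseteq N_\alpha$ follows from the inaccessibility of $\alpha$ exactly as in Lemma 6.2.

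Finally, since $\mathfrak{A}$ is an expansion of $(H(\kappa^{++}),\in,\unlhd,J)$, elementarity of $N_\alpha$ in $\mathfrak{A}$ entails elementarity in its reduct, so $N_\alpha \in \mathcal{Y}$; and elementarity in $\mathfrak{A}$ itself together with $N_\alpha \in \mathcal{Y}$ makes $N_\alpha$ $\alpha$-suitable. Since $N_\alpha$ is closed under $F$, this shows that the collection of $\alpha$-suitable sets meets every club in $P_\kappa(H(\kappa^{++}))$. The only substantive content is the verification that $\mathcal{Y}$ and $\langle \p_\beta : \beta \le \alpha \rangle$ can be legitimately named in the structure; there is no serious obstacle.
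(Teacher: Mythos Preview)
Your argument is correct and is exactly what the paper has in mind when it says the lemma is ``immediate from Lemma 6.2 and the definition of $\alpha$-suitable'': you simply rerun the proof of Lemma 6.2 for the expanded structure $\mathfrak{A}$, and the extra verification that $\mathcal{Y}$ and $\langle \p_\beta : \beta \le \alpha \rangle$ live over $H(\kappa^{++})$ is routine. One small notational point: you reuse the letter $\alpha$ for the inaccessible chosen from the club $C$, but $\alpha$ is already the fixed ordinal in the statement of the lemma; pick a different symbol (say $\delta$) for the inaccessible index to avoid confusion.
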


\begin{proof}
Immediate from Lemma 6.2 and the definition of $\alpha$-suitable.
\end{proof}

\bigskip

Fix $\kappa < \alpha < \kappa^+$ for the remainder of the paper, and we will prove 
that $\p_\alpha$ preserves $\kappa$. 
It will then follow by Proposition 5.12 that $\p_{\kappa^+}$ preserves 
$\kappa$, which will complete the proof of the main result of the paper. 
We assume as an inductive hypothesis that for all $\beta < \alpha$, 
$\p_\beta$ preserves $\kappa$. 
We will identify two additional inductive hypotheses in Section 7. 

\begin{notation}
Let $N$ be $\alpha$-suitable. 
Define $p(N,\alpha) := (\emptyset,X)$, where 
$\dom(X) := \{ \alpha \}$ and $X(\alpha) := \{ N \cap \alpha \}$.
\end{notation}

Easily, $p(N,\alpha)$ is a condition in $\p_\alpha$. 
Note that if $p \in \p_\alpha$, then $p \le_\alpha p(N,\alpha)$ iff 
$N \cap \alpha \in X_p(\alpha)$.

\begin{lemma}
Let $N$ be $\alpha$-suitable, $p \le_\alpha p(N,\alpha)$, $\beta \in N \cap \alpha$, 
and $p \in D_{\beta,\alpha}$. 
Then $p \restriction \beta \le_\beta p(N,\beta)$. 
\end{lemma}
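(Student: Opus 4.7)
The plan is to unpack the definitions and chain two simple facts: that $p \le_\alpha p(N,\alpha)$ forces $N \cap \alpha \in X_p(\alpha)$, and that $p \in D_{\beta,\alpha}$ transports this membership down to level $\beta$. The only thing that needs to be shown for the conclusion is that $N \cap \beta \in X_{p\restriction\beta}(\beta)$, since $p(N,\beta)$ is the trivial pair $(\emptyset,\{(\beta,\{N\cap\beta\})\})$, so by Lemma 5.1 the relation $p\restriction\beta \le_\beta p(N,\beta)$ reduces to $\beta \in \dom(X_{p\restriction\beta})$ together with $\{N\cap\beta\}\subseteq X_{p\restriction\beta}(\beta)$.

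First I would observe that $p(N,\beta)$ is a legitimate condition: because $\beta \in N\cap\alpha$ and $N$ is $\alpha$-suitable, $N$ is also $\beta$-suitable (by the remark following Notation 6.3), and since $N\in\mathcal Y$, $N\cap\kappa$ is inaccessible so in particular has cofinality $>\omega$, and $|N\cap\beta|<\kappa$. Next, from $p \le_\alpha p(N,\alpha)$ and Notation 6.5, it is immediate that $\alpha \in \dom(X_p)$ and $N\cap\alpha \in X_p(\alpha)$.

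Now apply the hypothesis $p \in D_{\beta,\alpha}$: since $\alpha \in \dom(X_p)\setminus\beta$, Definition 5.7 gives $\{M\cap\beta : M \in X_p(\alpha)\} \subseteq X_p(\beta)$. Taking $M = N\cap\alpha$ and using $\beta \le \alpha$, we obtain $N\cap\beta = (N\cap\alpha)\cap\beta \in X_p(\beta)$. In particular $\beta \in \dom(X_p)$. Because $p\restriction\beta = (a_p\restriction\beta, X_p\restriction(\beta+1))$, we have $X_{p\restriction\beta}(\beta) = X_p(\beta)$, so $N\cap\beta \in X_{p\restriction\beta}(\beta)$ and $\beta \in \dom(X_{p\restriction\beta})$. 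By Lemma 5.1 (noting that the $a$-coordinate of $p(N,\beta)$ is empty, making conditions (1), (3), (4) of the order trivial), this yields $p\restriction\beta \le_\beta p(N,\beta)$.

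There is really no obstacle here: the lemma is a bookkeeping statement whose role is to let later arguments pass from an $\alpha$-suitable model $N$ and a condition meeting $p(N,\alpha)$ down to a $\beta$-suitable reduct meeting $p(N,\beta)$. The only subtle point worth flagging in the write-up is the convention from the footnote that $X_p(\beta)$ denotes $\emptyset$ when $\beta\notin\dom(X_p)$, which is why membership in $D_{\beta,\alpha}$ combined with a nonempty $X_p(\alpha)$ automatically forces $\beta$ into $\dom(X_p)$.
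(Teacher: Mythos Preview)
Your proof is correct and follows exactly the paper's approach: use $p \le_\alpha p(N,\alpha)$ to get $N \cap \alpha \in X_p(\alpha)$, then invoke $p \in D_{\beta,\alpha}$ to conclude $N \cap \beta = (N \cap \alpha) \cap \beta \in X_p(\beta) = X_{p \restriction \beta}(\beta)$. The paper's proof is the same two-line argument; your version simply adds the surrounding bookkeeping (well-definedness of $p(N,\beta)$, the reduction via Lemma 5.1) explicitly.
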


\begin{proof}
Since $p \le_\alpha p(N,\alpha)$, $N \cap \alpha \in X_p(\alpha)$. 
As $p \in D_{\beta,\alpha}$, 
$(N \cap \alpha) \cap \beta = N \cap \beta \in X_p(\beta) = X_{p \restriction \beta}(\beta)$.
\end{proof}

\begin{lemma}
Let $N$ be $\alpha$-suitable. 
Then for all $p \in N \cap \p_\alpha$, $p$ and $p(N,\alpha)$ are compatible 
in $\p_\alpha$.
\end{lemma}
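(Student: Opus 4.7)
The plan is to build $q \le_\alpha p, p(N,\alpha)$ by keeping $\dom(a_q) = \dom(a_p)$ essentially intact, adding $N \cap \alpha$ to $X_q(\alpha)$, and strengthening each $a_p(\beta)$ so that its $A$-component comes to contain $N \cap \kappa$. The only nontrivial verification needed, via Lemma 5.1, will be clause (6) for the newly inserted model $N \cap \alpha \in X_q(\alpha)$: we need $q \restriction \beta \Vdash_{\p_\beta} N \cap \kappa \in A_{a_q(\beta)}$ for every nonzero $\beta \in (N \cap \alpha) \cap \dom(a_q)$. Because $p \in N$ and $N^{<(N \cap \kappa)} \subseteq N$, the countable sets $\dom(a_p)$ and $\dom(X_p)$ are subsets of $N$, so the relevant $\beta$'s are exactly the nonzero elements of $\dom(a_p)$, and each $a_p(\beta) \in N$.

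First I would strengthen $p$ inside $N$ to arrange that the $A$-components of the $a_p(\beta)$'s are decided. Enumerate $\dom(a_p) \setminus \{0\}$ in $N$ as $\langle \beta_n : n < \omega \rangle$ and recursively build a descending sequence $\langle p_n : n < \omega \rangle$ in $N \cap \p_\alpha$, where at stage $n+1$ we use elementarity of $N$ to find $r \in N \cap \p_{\beta_n}$ below $p_n \restriction \beta_n$ deciding $A_{a_p(\beta_n)}$ as a specific $A_{\beta_n}$, extend $p_n$ inside $N$ via Lemma 5.8 into $D_{\beta_n,\alpha}$, and set $p_{n+1} := r +_{\beta_n,\alpha} p_n$ as in Lemma 5.10. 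Since $N$ is closed under $\omega$-sequences, the sequence lies in $N$, so the greatest lower bound $p^*$ from Lemma 5.3 is also in $N$. Each decided $A_{\beta_n}$ is then a countable set in $N$, hence $A_{\beta_n} \subseteq N$ and $\sup(A_{\beta_n}) < N \cap \kappa$.

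Now define $q = (a_q, X_q)$ by setting $\dom(a_q) = \dom(a_{p^*})$, $a_q(0) = a_{p^*}(0)$, and letting $X_q$ agree with $X_{p^*}$ except that $N \cap \alpha$ is added to $X_q(\alpha)$ (adjoining $\alpha$ to the domain if necessary). For each nonzero $\beta \in \dom(a_q)$, let $a_q(\beta)$ be the $\unlhd$-least $\p_\beta$-name such that $p^* \restriction \beta$ forces $a_q(\beta)$ to be the extension of $a_{p^*}(\beta)$ produced by Lemma 4.5 applied with $B = \{N \cap \kappa\}$. This application is legitimate: $p^* \restriction \beta$ forces $\sup(A_{a_{p^*}(\beta)}) = \sup(A_\beta) < N \cap \kappa$, and $N \cap \kappa$, being inaccessible in $V$ and collapsed to size $\omega_1$ by the regular suborder $\p_1 \subseteq \p_\beta$, has cofinality $\omega_1$ in $V[G_\beta]$. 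Verifying $q \in \p_\alpha$ reduces to clause (6) of Lemma 5.1, and splits into two cases: if $M \in X_{p^*}(\gamma)$ then $p^* \restriction \xi$ already forces $M \cap \kappa \in A_{a_{p^*}(\xi)} \subseteq A_{a_q(\xi)}$; and if $M = N \cap \alpha$, then $M \cap \kappa = N \cap \kappa$, and the construction of $a_q(\xi)$ ensures $q \restriction \xi \Vdash N \cap \kappa \in A_{a_q(\xi)}$. Finally $q \le_\alpha p$ via $q \le p^* \le p$, and $q \le_\alpha p(N,\alpha)$ because $N \cap \alpha \in X_q(\alpha)$.

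The main obstacle is Step 1: we must simultaneously decide the countably many names $A_{a_p(\beta)}$ while remaining in $N \cap \p_\alpha$, so that the subsequent application of Lemma 4.5 is forced to apply. This is handled by combining countable closure of $\p_\alpha$ with the closure of $N$ under $\omega$-sequences and the coordinate-wise extension machinery of Lemmas 5.8 and 5.10; without it, one cannot guarantee that $N \cap \kappa$ lies above $\sup(A_{a_p(\beta)})$ in the relevant extensions.
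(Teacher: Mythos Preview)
Your overall strategy matches the paper's: add $N \cap \alpha$ to $X(\alpha)$ and, for each nonzero $\beta \in \dom(a_p)$, extend $a_p(\beta)$ via Lemma~4.5 so that $N \cap \kappa$ enters its $A$-component; then verify clause~(6) of Lemma~5.1. The observation that $\dom(a_p), \dom(X_p) \subseteq N$ by elementarity and the cofinality check for $N \cap \kappa$ are also correct.

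The problem is your Step~1. You decide $A_{a_p(\beta_n)}$ for the \emph{original} $p$, but you then apply Lemma~4.5 to $a_{p^*}(\beta_n)$, asserting that $p^* \restriction \beta$ forces $\sup(A_{a_{p^*}(\beta)}) = \sup(A_\beta)$. This is not justified. At any later stage $k$ with $\beta_k > \beta_n$, the operation $r +_{\beta_k,\alpha} p_k'$ replaces the $a$-component at $\beta_n$ by $a_r(\beta_n)$, where $r \in \p_{\beta_k}$ was chosen only to decide a name, with no control over what it does at coordinate $\beta_n$. Thus $A_{a_{p^*}(\beta_n)}$ may strictly contain $A_{\beta_n}$, and its supremum need not lie below $N \cap \kappa$. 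The same mechanism can introduce new coordinates into $\dom(a_{p^*}) \setminus \dom(a_p)$, which your enumeration never visits. (There is also a minor issue: you choose $r \le p_n \restriction \beta_n$ \emph{before} passing to $D_{\beta_n,\alpha}$, but Lemma~5.8 may enlarge $X(\beta_n)$, so $r$ need not lie below the restriction of the extended condition.)

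The paper sidesteps all of this in one line: first extend $p$ inside $N$ to a \emph{determined} condition (Lemma~5.5 and elementarity). Once $p$ is determined, each $a_p(\beta)$ is literally a pair $(A,f) \in V$ lying in $N$; since $A$ is countable and $A \in N$, automatically $A \subseteq N \cap \kappa$, so $\sup(A) < N \cap \kappa$ and Lemma~4.5 applies directly. Your recursion is in effect an incomplete reinvention of the passage to a determined condition; replacing it with that single step both fixes the gap and shortens the argument.
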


\begin{proof}
By extending $p$ in $N$ if necessary, we may assume without loss of 
generality that $p$ is determined. 
Note that by elementarity, $\dom(a_p) \subseteq N$ and for all 
$\beta \in \dom(a_p)$, $A_{a_p(\beta)} \subseteq N \cap \kappa$. 
Also, $\dom(X_p) \subseteq N$ and for all $\gamma \in \dom(X_p)$, 
$X_p(\gamma) \subseteq N$.

Define $q = (b,Y)$ as follows. 
Let $\dom(b) := \dom(a_p)$. 
Define $b(0) := a_p(0)$. 
For all nonzero $\beta \in \dom(a_p)$, apply Lemma 4.5 to find a nice $\p_\beta$-name 
$b(\beta)$ for an extension of $a_p(\beta)$ in $\p(\dot T_\beta,\dot U_\beta)$ 
such that $N \cap \kappa \in A_{b(\beta)}$. 
Let $\dom(Y) := \dom(X_p) \cup \{ \alpha \}$, 
$Y(\beta) := X_p(\beta)$ for all $\beta \in \dom(X_p)$ different from $\alpha$, 
and $Y(\alpha) := X_p(\alpha) \cup \{ N \cap \alpha \}$. 
It is straightforward to check using Lemma 5.1 that $q$ is a condition, 
and clearly $q \le_\alpha p, p(N,\alpha)$.
\end{proof}

\begin{definition}
Let $N$ be $\alpha$-suitable. 
Define $D(N,\alpha)$ as the set of determined conditions $p \in \p_\alpha$ 
satisfying:
\begin{enumerate}
\item $p \le_\alpha p(N,\alpha)$;
\item for all $\gamma \in \dom(X_p)$, for all $M \in X_p(\gamma)$, 
if $M \cap \kappa < N \cap \kappa$ then $M \cap N \in X_p(\gamma)$;
\item for all nonzero $\beta \in N \cap \dom(a_p)$, $p \restriction \beta$ forces 
that $a_p(\beta)$ is injective on $N \cap \kappa$ (in the sense of Definition 4.9).
\end{enumerate}
\end{definition}

\begin{lemma}
Let $N$ be $\alpha$-suitable. 
Then the set $D(N,\alpha)$ is dense below $p(N,\alpha)$.
\end{lemma}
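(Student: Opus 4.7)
Given $q \le_\alpha p(N,\alpha)$, I aim to produce an extension $p \le_\alpha q$ in $D(N,\alpha)$ in two stages: first modify $X_q$ to secure property (2), then run a countable fusion along $N \cap \dom(a_q)$ to secure property (3). By Lemma 5.5, I can always pass to a determined extension, so I assume throughout that conditions are determined.

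\textbf{Stage 1: property (2).} For every $\gamma \in \dom(X_q)$ and every $M \in X_q(\gamma)$ with $M \cap \kappa < N \cap \kappa$, adjoin $M \cap N$ to $X_q(\gamma)$, yielding $q'$ with $a_{q'} = a_q$. The key point is that $M \cap \kappa$ and $N \cap \kappa$ are ordinals with $M \cap \kappa < N \cap \kappa$, so $M \cap \kappa \subseteq N$ and hence $(M \cap N) \cap \kappa = M \cap \kappa \in \kappa \cap \cof(>\omega)$. Property (6) of Lemma 5.1 is inherited since any $\xi \in (M \cap N) \cap \dom(a_q)$ lies in $M \cap \dom(a_q)$, so $q \restriction \xi$ already forces $M \cap \kappa \in A_{a_q(\xi)}$. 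Thus $q'$ is a condition below both $q$ and $p(N,\alpha)$ satisfying (2); no cascade occurs, since $(M \cap N) \cap N = M \cap N$.

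\textbf{Stage 2: property (3).} Enumerate $N \cap \dom(a_{q'}) = \{\beta_n : n < \omega\}$ (each $\beta_n \ge \kappa$). Recursively build a descending chain of determined conditions $q_0 := q' \ge q_1 \ge \cdots$ such that $a_{q_{n+1}}(\beta_k)$ equals $a_{q_n}(\beta_k)$ as $\p_{\beta_k}$-names for $k < n$, and $q_{n+1} \restriction \beta_n$ forces $a_{q_{n+1}}(\beta_n)$ injective on $N \cap \kappa$. To construct $q_{n+1}$, first use Lemma 5.8 to push $q_n$ into $D_{\beta_n,\alpha}$; property (2) is preserved because whenever Lemma 5.8 adjoins $M \cap \beta_n$ with $M \cap \kappa < N \cap \kappa$, the set $M \cap N$ already lies in $X_{q_n}$ from Stage 1, so Lemma 5.8 simultaneously adjoins $(M \cap N) \cap \beta_n = (M \cap \beta_n) \cap N$ to $X_{q_n}(\beta_n)$. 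Since $q_n \le_\alpha p(N,\alpha)$, condition (6) yields $q_n \restriction \beta_n \Vdash_{\p_{\beta_n}} N \cap \kappa \in A_{a_{q_n}(\beta_n)}$; inaccessibility of $N \cap \kappa$ together with the Levy collapse structure of $\p_1$ as a regular suborder of $\p_{\beta_n}$ and countable closure implies that $N \cap \kappa$ is a limit point of $\omega_2 \cap \cof(\omega_1)$ in any $\p_{\beta_n}$-extension. Applying Lemma 4.10 and a density argument using Lemma 5.5, find a determined $s \le_{\beta_n} q_n \restriction \beta_n$ together with a ground-model pair $(A_1,f_1)$ such that $s$ forces $(A_1,f_1) \le a_{q_n}(\beta_n)$ in $\p(\dot T_{\beta_n},\dot U_{\beta_n})$ and $(A_1,f_1)$ injective on $N \cap \kappa$. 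Define $q_{n+1}$ by forming $s +_{\beta_n,\alpha} q_n$ and replacing the $\beta_n$-coordinate name by the canonical name for $(A_1,f_1)$; this is a condition because $A_1 \supseteq A_0$ preserves property (6) at all other indices $\gamma$ with $\beta_n \in M \in X(\gamma)$.

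\textbf{Conclusion and main obstacle.} Let $p$ be the greatest lower bound of $\langle q_n : n < \omega \rangle$ from Lemma 5.3, made determined by Lemma 5.5. Then $p \le_\alpha p(N,\alpha)$, (2) persists since $X$ is only ever enlarged, and for each $\beta_n$, $a_p(\beta_n)$ agrees with $a_{q_{n+1}}(\beta_n)$ as a $\p_{\beta_n}$-name, so $p \restriction \beta_n \le_{\beta_n} q_{n+1} \restriction \beta_n$ forces injectivity on $N \cap \kappa$, yielding (3). The principal obstacle is in Stage 2: verifying that modifying only the $\beta_n$-coordinate name while leaving all others intact still yields a valid member of $\p_\alpha$ and preserves both (2) and the invariants from earlier stages. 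This rests on the two observations above, namely (i) the inclusion $A_1 \supseteq A_0$ keeps property (6) in force at the higher indices, and (ii) the Stage-1 enlargement of $X$ is precisely what permits Lemma 5.8 to be invoked at each $\beta_n$ without violating (2).
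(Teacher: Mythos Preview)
Your Stage 2 contains a genuine gap. You announce as an invariant that $a_{q_{n+1}}(\beta_k) = a_{q_n}(\beta_k)$ for all $k < n$, but the step you actually perform does not achieve this: passing to a determined $s \le_{\beta_n} q_n \restriction \beta_n$ may strictly extend the name at any coordinate $\beta_k < \beta_n$, and then $a_{q_{n+1}}(\beta_k) = a_s(\beta_k)$ need not equal $a_{q_n}(\beta_k)$. Since injectivity on $N \cap \kappa$ is \emph{not} preserved under extension in $\p(\dot T_{\beta_k},\dot U_{\beta_k})$ (enlarging $\dom(f)$ at level $N \cap \kappa$ can destroy it), the injectivity you arranged at $\beta_k$ in round $k$ may be undone in any later round $n$ with $\beta_n > \beta_k$; hence your conclusion that $a_p(\beta_n)$ agrees with $a_{q_{n+1}}(\beta_n)$, and therefore is injective, is unjustified. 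The same passage to $s$ can also introduce new coordinates into $N \cap \dom(a_s)$ outside your fixed enumeration, so $N \cap \dom(a_p)$ may outgrow what you treated; and it can add to $X_s(\gamma)$ new sets $M$ with $M \cap \kappa < N \cap \kappa$ but $M \cap N \notin X_s(\gamma)$, so your claim that property (2) ``persists since $X$ is only ever enlarged'' is also unjustified. In short, the obstacle you yourself flag --- that only the $\beta_n$-coordinate is modified --- is precisely what your construction fails to secure, because finding a determined $s$ does not leave the lower coordinates intact.

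The paper avoids all of this by organizing the fusion differently and postponing the $X$-modification to the end. At each even step it keeps $X_{q_n}$ and $\dom(a_{q_n})$ fixed and, for \emph{every} $\beta \in N \cap \dom(a_{q_n})$ simultaneously, merely replaces $a_{q_n}(\beta)$ by a $\p_\beta$-name (not a decided pair) for an injective extension via Lemma 4.10; at each odd step it passes to a determined extension, which may enlarge $\dom(a)$ and break injectivity, but the next even step re-establishes injectivity at every coordinate in $N$, so Lemma 4.11 applied to a cofinal subsequence gives injectivity of $a_{r'}(\beta)$ at the limit. Only after this fusion, with $a_{r'}$ already fixed, does the paper adjoin the sets $M \cap N$ to each $X_{r'}(\gamma)$ to obtain property (2); since $a_r = a_{r'}$, this last step cannot disturb property (3).
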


\begin{proof}
Let $q \le_\alpha p(N,\alpha)$, and we will find $r \le_\alpha q$ in $D(N,\alpha)$. 
Without loss of generality, assume that $q$ is determined. 

We define by induction a descending sequence of conditions 
$\langle q_n : n < \omega \rangle$ in $\p_\alpha$. 
Let $q_0 := q$. 
Fix $n < \omega$, and assume that $q_n$ is defined. 
If $n$ is odd, then let $q_{n+1}$ be an extension of $q_n$ which is determined.

Assume that $n$ is even and $q_n$ is determined. 
Since $N \cap \alpha \in X_{q_n}(\alpha)$, we have that for all 
$\beta \in N \cap \dom(a_{q_n})$, $N \cap \kappa \in A_{a_{q_n}(\beta)}$. 
Define $q_{n+1}$ as follows. 
Let $X_{q_{n+1}} := X_{q_n}$ and 
$\dom(a_{q_{n+1}}) := \dom(a_{q_n})$. 
For each $\beta \in \dom(a_{q_n}) \setminus N$, 
let $a_{q_{n+1}}(\beta) := a_{q_n}(\beta)$. 
For each $\beta \in N \cap \dom(a_{q_n})$, apply Lemma 4.10 to find a 
$\p_\beta$-name $a_{q_{n+1}}(\beta)$ 
for a condition in $\p(\dot T_\beta,\dot U_\beta)$ such that 
$q_n \restriction \beta$ forces that $a_{q_{n+1}}(\beta)$ is an 
extension of $a_{q_n}(\beta)$ which is injective on $N \cap \kappa$.

This completes the construction. 
Let $r'$ be the greatest lower bound of 
the sequence 
$\langle q_n : n < \omega \rangle$. 
By Lemma 5.5, $r$ is determined. 
By construction, for all $\beta \in N \cap \dom(a_{r'})$, for all large enough 
even $n < \omega$, $q_{n} \restriction \beta$, and hence 
$r' \restriction \beta$, forces that 
$a_{q_{n+1}}(\beta)$ is injective on $N \cap \kappa$. 
It easily follows by Lemma 4.11 
that for all for all $\beta \in N \cap \dom(a_{r'})$, 
$r' \restriction \beta$ forces 
that $a_{r'}(\beta)$ is injective on $N \cap \kappa$.

Now define $r$ as follows. 
Let $a_r := a_{r'}$. 
Define $\dom(X_r) := \dom(X_{r'})$, and for all 
$\gamma \in \dom(X_r)$, 
$$
X_r(\gamma) := X_{r'}(\gamma) \cup \{ M \cap N : M \in X_{r'}(\gamma), \ 
M \cap \kappa < N \cap \kappa \}.
$$
Using the fact that whenever $M \in X_{r'}(\gamma)$ and $M \cap \kappa < N \cap \kappa$, 
it follows that $M \cap N \cap \kappa = M \cap \kappa$, it is 
straightforward to check that $r \in \p_\alpha$. 
Also clearly $r \le_\alpha r'$ and $r \in D(N,\alpha)$.
\end{proof}

\begin{lemma}
Let $N$ be $\alpha$-suitable and $x$ a countable subset of 
$(N \cap \alpha) \setminus (\kappa+1)$. 
Then for any $p \le_\alpha p(N,\alpha)$, there is 
$q \le_\alpha p$ such that $q \in D(N,\alpha)$ and for all $\beta \in x$, 
$q \in D_{\beta,\alpha}$.
\end{lemma}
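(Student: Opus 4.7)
The plan is to combine Lemmas 6.9 and 5.8: first extend $p$ to some $r \in D(N,\alpha)$ via Lemma 6.9, then thicken the side-condition component of $r$ using the construction in Lemma 5.8 applied to $r$ and the finite/countable set $x$, obtaining $q$. Since both steps are weakenings, $q \le_\alpha r \le_\alpha p$ automatically, and $q \in D_{\beta,\alpha}$ for every $\beta \in x$ by Lemma 5.8. So the only task is to verify that the second step preserves membership in $D(N,\alpha)$.

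Since the construction in Lemma 5.8 leaves $a_r$ unchanged and only augments $X_r(\beta)$ for $\beta \in x$, clauses (1) and (3) of Definition 6.8 are inherited from $r$ for free: clause (1) because $q \le_\alpha r \le_\alpha p(N,\alpha)$, and clause (3) because $a_q = a_r$ and $q \restriction \beta \le_\beta r \restriction \beta$ for every nonzero $\beta \in N \cap \dom(a_q)$. The only clause in danger is (2), which requires $X_q(\gamma)$ to be closed, on models with $M \cap \kappa < N \cap \kappa$, under intersection with $N$.

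The key observation is that $x \subseteq (N \cap \alpha) \setminus (\kappa+1)$, so every $\beta \in x$ satisfies $\beta > \kappa$. Consequently, for a newly added element $M \cap \beta \in X_q(\beta)$ arising from some $M \in X_r(\xi)$ with $\xi \in \dom(X_r) \setminus \beta$, the intersection with $\beta$ leaves $M \cap \kappa$ unchanged. So whenever $(M \cap \beta) \cap \kappa < N \cap \kappa$, also $M \cap \kappa < N \cap \kappa$, and $r \in D(N,\alpha)$ supplies $M \cap N \in X_r(\xi)$; the construction of Lemma 5.8 then places $(M \cap N) \cap \beta = (M \cap \beta) \cap N$ into $X_q(\beta)$, as required. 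Elements of $X_q(\gamma)$ that already belong to $X_r(\gamma)$ are handled by clause (2) for $r$ itself.

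I do not anticipate a major obstacle; the lemma amounts to checking that the two closure procedures in Definition 6.8(2) and Definition 5.7 commute. The only subtlety is the observation that $\beta > \kappa$ rules out the pathological case in which passing from $M$ to $M \cap \beta$ could shrink the intersection with $\kappa$ across the threshold $N \cap \kappa$; without that hypothesis one would be forced to iterate the two constructions, but in the present setting a single composition is enough.
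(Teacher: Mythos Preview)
Your proof is correct and follows exactly the same approach as the paper: first extend into $D(N,\alpha)$ via Lemma 6.9, then apply the construction of Lemma 5.8. The paper merely calls the verification that $q \in D(N,\alpha)$ ``routine''; you have spelled out the details of clause (2), and your key observation that $\beta > \kappa$ ensures $(M \cap \beta) \cap \kappa = M \cap \kappa$ is precisely what makes the check go through.
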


\begin{proof}
By extending $p$ if necessary using Lemma 6.9, we may assume without loss of 
generality that $p \in D(N,\alpha)$. 
Now define $q$ from $p$ and $x$ as described in the statement of Lemma 5.8. 
Then $q \le p$ and $q \in D_{\beta,\alpha}$ for all $\beta \in x$. 
Since $a_p = a_q$, it is routine to check that $q \in D(N,\alpha)$.
\end{proof}

\begin{lemma}
Let $M$ and $N$ be $\alpha$-suitable with $M \cap \kappa = N \cap \kappa$. 
Then $p(M,\alpha) = p(N,\alpha)$ and $D(M,\alpha) = D(N,\alpha)$.
\end{lemma}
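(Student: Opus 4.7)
The whole statement reduces to a single observation: $M \cap \alpha = N \cap \alpha$. Once this is known, both equalities fall out by inspecting the definitions.

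To establish $M \cap \alpha = N \cap \alpha$, first note that the sequence $\langle \p_\beta : \beta \le \alpha \rangle$ is a constant in the structure to which $M$ and $N$ are elementary, so it lies in both models. Its domain $\alpha+1$ is then definable from this constant, so $\alpha \in M \cap N$. Since $\alpha < \kappa^+$, there is a bijection $f : \kappa \to \alpha$ in $H(\kappa^{++})$; let $f$ be the $\unlhd$-least such bijection. By elementarity (applied with parameters $\kappa, \alpha, \unlhd$, all in $M$ and $N$), $f \in M \cap N$. For any $\xi \in M \cap \kappa$, we have $f(\xi) \in M$; and for any $\gamma \in M \cap \alpha$, we have $f^{-1}(\gamma) \in M \cap \kappa$. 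Hence $M \cap \alpha = f[M \cap \kappa]$, and symmetrically $N \cap \alpha = f[N \cap \kappa]$. Because $f$ is a bijection and $M \cap \kappa = N \cap \kappa$, it follows that $M \cap \alpha = N \cap \alpha$.

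The equality $p(M,\alpha) = p(N,\alpha)$ is then immediate from Notation 6.5, since both conditions are $(\emptyset, X)$ with $X(\alpha)$ a singleton, and those singletons are $\{M \cap \alpha\}$ and $\{N \cap \alpha\}$ respectively. For $D(M,\alpha) = D(N,\alpha)$, fix a determined $p \in \p_\alpha$ and check the three clauses of Definition 6.8 symmetrically. Clause (1) is the same on both sides because $p(M,\alpha) = p(N,\alpha)$. Clause (3) is the same because $M \cap \kappa = N \cap \kappa$ and, using $\dom(a_p) \subseteq \alpha$, we have $M \cap \dom(a_p) = (M \cap \alpha) \cap \dom(a_p) = (N \cap \alpha) \cap \dom(a_p) = N \cap \dom(a_p)$. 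For clause (2), any $M' \in X_p(\gamma)$ satisfies $M' \subseteq \gamma \le \alpha$ (since $\dom(X_p) \subseteq \alpha+1$), so $M' \cap M = M' \cap (M \cap \alpha) = M' \cap (N \cap \alpha) = M' \cap N$; and the antecedent $M' \cap \kappa < M \cap \kappa$ coincides with $M' \cap \kappa < N \cap \kappa$.

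There is no real obstacle; the only subtle point is verifying $\alpha \in M \cap N$, which is precisely what makes the sequence $\langle \p_\beta : \beta \le \alpha\rangle$ being included as a named constant in the structure (and not just an element of $H(\kappa^{++})$) do work for us.
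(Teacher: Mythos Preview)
Your proof is correct and follows essentially the same approach as the paper: establish $M \cap \alpha = N \cap \alpha$ (the paper calls this ``standard arguments''), then observe that the definitions of $p(K,\alpha)$ and $D(K,\alpha)$ depend only on $K \cap \alpha$. You have simply unpacked both steps in full detail, including the bijection argument for the first and the clause-by-clause verification for the second.
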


\begin{proof}
Since $\alpha \in M \cap N$, $|\alpha| \le \kappa$, and 
$M \cap \kappa = N \cap \kappa$, it follows by standard arguments 
that $M \cap \alpha = N \cap \alpha$. 
Now note that the definitions of $p(K,\alpha)$ and $D(K,\alpha)$ given in 
Notation 6.5 and Definition 6.8 
depend only on $K \cap \alpha$, for any $\alpha$-suitable $K$.
\end{proof}

The proofs of the next two lemmas are straightforward.

\begin{lemma}
Let $N$ be $\alpha$-suitable, $\beta \in N \cap \alpha$, and 
$p \in D(N,\alpha) \cap D_{\beta,\alpha}$. Then:
\begin{enumerate}
\item $p \restriction \beta \in D(N,\beta)$.
\item If $u \le_\beta p \restriction \beta$ and 
$u \in D(N,\beta)$, then $u + p \in D(N,\alpha)$.
\end{enumerate}
\end{lemma}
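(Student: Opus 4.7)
The plan is to verify the two clauses directly against Definitions 6.8, 5.4, and 5.7, using Lemmas 5.10 and 6.6 as the key structural inputs. Both parts reduce to checking that the defining clauses of $D(N,\alpha)$ and $D(N,\beta)$ split cleanly along the cut at $\beta$, so there is no real technical obstacle beyond bookkeeping; I expect the only subtlety to be the interplay between the two halves of the support of $u + p$ when verifying the injectivity clause.

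For (1), I would check each clause of Definition 6.8 for $p \restriction \beta$. That $p \restriction \beta$ is determined is immediate from the remark after Definition 5.4. Clause (1) is exactly Lemma 6.6, using $p \le_\alpha p(N,\alpha)$ and $p \in D_{\beta,\alpha}$. Clause (2) holds because $\dom(X_{p \restriction \beta}) = \dom(X_p) \cap (\beta+1)$ and $X_{p \restriction \beta}(\gamma) = X_p(\gamma)$ for each $\gamma$ in this domain, so the closure under $M \mapsto M \cap N$ is inherited directly from $p \in D(N,\alpha)$. Clause (3) similarly transfers: for nonzero $\xi \in N \cap \dom(a_{p \restriction \beta}) = N \cap \dom(a_p) \cap \beta$ we have $a_{p \restriction \beta}(\xi) = a_p(\xi)$ and $(p \restriction \beta) \restriction \xi = p \restriction \xi$, and $p \in D(N,\alpha)$ gives that this condition forces $a_p(\xi)$ to be injective on $N \cap \kappa$.

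For (2), set $r := u + p$. By Lemma 5.10, using $u \le_\beta p \restriction \beta$ together with $p \in D_{\beta,\alpha}$, $r$ is a condition in $\p_\alpha$ below both $u$ and $p$. Since both $u$ and $p$ are determined, so is $r$ (from the definitions of $a_r$ in Notation 5.9). Clause (1) of Definition 6.8 follows from $r \le_\alpha p \le_\alpha p(N,\alpha)$. For clause (2), suppose $\gamma \in \dom(X_r)$, $M \in X_r(\gamma)$, and $M \cap \kappa < N \cap \kappa$; if $\gamma \le \beta$ then $X_r(\gamma) = X_u(\gamma)$ and $u \in D(N,\beta)$ yields $M \cap N \in X_u(\gamma) = X_r(\gamma)$, while if $\gamma > \beta$ then $X_r(\gamma) = X_p(\gamma)$ and $p \in D(N,\alpha)$ gives the analogous conclusion.

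For clause (3), let $\xi$ be nonzero with $\xi \in N \cap \dom(a_r) = N \cap (\dom(a_u) \cup \dom(a_p))$. If $\xi < \beta$, then $a_r(\xi) = a_u(\xi)$ and $u \in D(N,\beta)$ tells us $u \restriction \xi$ forces $a_u(\xi)$ to be injective on $N \cap \kappa$; since $r \restriction \xi \le_\xi u \restriction \xi$, the same forcing statement holds of $r \restriction \xi$. If $\xi \ge \beta$, then $a_r(\xi) = a_p(\xi)$ and $p \in D(N,\alpha)$ gives that $p \restriction \xi$ forces $a_p(\xi)$ to be injective on $N \cap \kappa$; again $r \restriction \xi \le_\xi p \restriction \xi$ yields the same for $r$. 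This exhausts the cases, so $r \in D(N,\alpha)$, completing the proof.
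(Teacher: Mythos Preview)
Your proof is correct and is precisely the straightforward verification the paper has in mind (the paper omits the proof, calling it straightforward). One minor sharpening: in clause (3) of part (2), when $\xi < \beta$ you actually have $r \restriction \xi = u \restriction \xi$ exactly, not merely $r \restriction \xi \le_\xi u \restriction \xi$; but this does not affect the argument.
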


\begin{lemma}
Let $N$ be $\alpha$-suitable. 
Suppose that $\langle p_n : n < \omega \rangle$ 
is a descending sequence of conditions 
in $D(N,\alpha)$ with greatest lower bound $q$. 
Then $q \in D(N,\alpha)$.
\end{lemma}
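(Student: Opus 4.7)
The plan is to verify the three defining clauses of $D(N,\alpha)$ (from Definition 6.8) for $q$, using the explicit description of the greatest lower bound given in Lemma 5.3 together with Lemma 5.5. Recall from Lemma 5.3 that $\dom(a_q) = \bigcup_n \dom(a_{p_n})$, $\dom(X_q) = \bigcup_n \dom(X_{p_n})$, $X_q(\gamma) = \bigcup_n X_{p_n}(\gamma)$ for all $\gamma \in \dom(X_q)$, and for each $\beta \in \dom(a_q)$, $a_q(\beta)$ is a $\p_\beta$-name for the greatest lower bound of the sequence $\langle a_{p_n}(\beta) : n < \omega, \ \beta \in \dom(a_{p_n}) \rangle$ in $\p(\dot T_\beta,\dot U_\beta)$, in the sense of Lemma 4.2. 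Also, by Lemma 5.5, since each $p_n$ is determined, so is $q$.

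For clause (1), note that each $p_n \le_\alpha p(N,\alpha)$ means $N \cap \alpha \in X_{p_n}(\alpha)$; since $X_q(\alpha) \supseteq X_{p_0}(\alpha)$, we conclude $N \cap \alpha \in X_q(\alpha)$, so $q \le_\alpha p(N,\alpha)$. For clause (2), let $\gamma \in \dom(X_q)$ and $M \in X_q(\gamma)$ with $M \cap \kappa < N \cap \kappa$. Then $M \in X_{p_n}(\gamma)$ for some $n$, and since $p_n \in D(N,\alpha)$, the model $M \cap N$ belongs to $X_{p_n}(\gamma) \subseteq X_q(\gamma)$, as required.

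Clause (3) is the substantive step. Fix a nonzero $\beta \in N \cap \dom(a_q)$, and let $n_0$ be minimal with $\beta \in \dom(a_{p_{n_0}})$. For every $n \ge n_0$, we have $\beta \in N \cap \dom(a_{p_n})$, so by Definition 6.8(3) the condition $p_n \restriction \beta$ forces that $a_{p_n}(\beta)$ is injective on $N \cap \kappa$. Since $q \restriction \beta \le_\beta p_n \restriction \beta$ for every such $n$, the condition $q \restriction \beta$ forces that $\langle a_{p_n}(\beta) : n \ge n_0 \rangle$ is a descending sequence in $\p(\dot T_\beta,\dot U_\beta)$ each term of which is injective on $N \cap \kappa$, with greatest lower bound $a_q(\beta)$. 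Applying Lemma 4.11 inside the generic extension by $\p_\beta$, $q \restriction \beta$ forces that $a_q(\beta)$ is also injective on $N \cap \kappa$. This establishes clause (3).

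The only place where something more than routine bookkeeping is required is clause (3), and the key observation is that injectivity on a fixed ordinal is preserved under countable greatest lower bounds in $\p(\dot T_\beta,\dot U_\beta)$, which is exactly the content of Lemma 4.11. Everything else is a direct transcription of the union-based description of $q$ from Lemma 5.3.
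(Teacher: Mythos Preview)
Your proof is correct and is exactly the straightforward verification the paper has in mind; indeed, the paper omits the proof entirely, stating only that it is ``straightforward.'' Your argument---checking determinacy via Lemma 5.5, clauses (1) and (2) directly from the union description in Lemma 5.3, and clause (3) via Lemma 4.11---is the natural way to fill in the details.
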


\begin{definition}
Let $N$ be $\alpha$-suitable and $p$ be determined. 
Define $p \restriction N$ to be the pair $(a,X)$ satisfying:
\begin{enumerate}
\item $a$ is a function with domain equal to $\dom(a_p) \cap N$;
\item $a(0) = a_p(0) \restriction (\omega_1 \times (N \cap \kappa))$; 
\item for all nonzero $\gamma \in \dom(a)$, 
$a(\gamma) = (A_{a_p(\gamma)} \cap (N \cap \kappa), 
f_{a_p(\gamma)} \restriction (N \cap \kappa))$;
\item $X$ is a function with domain equal to $\dom(X_p) \cap N$, 
and for all $\gamma \in \dom(X)$, 
$X(\gamma) = X_p(\gamma) \cap N$.
\end{enumerate}
\end{definition}

Observe that by Lemma 4.4, for all nonzero $\gamma \in \dom(a)$, 
$p \restriction \gamma$ forces that 
$a(\gamma) = a_p(\gamma) \restriction (N \cap \kappa)$ 
is in $\p(\dot T_\gamma,\dot U_\gamma)$. 
The object $p \restriction N$ is a member of $N$ by the closure of $N$, 
but it is not 
necessarily a condition. 
If it is a condition, then it is determined and $p \le_\alpha p \restriction N$. 
Usually we only consider $p \restriction N$ in the case that 
$p \le_\alpha p(N,\alpha)$.

The proofs of the next two lemmas are straightforward.

\begin{lemma}
Let $N$ be $\alpha$-suitable, $\beta \in N \cap \alpha$, 
and $p \in \p_\alpha$ determined. 
Then $(p \restriction \beta) \restriction N = 
(p \restriction N) \restriction \beta$. 
In particular, if $p \restriction N \in \p_\alpha$, then for all 
$\beta \in N \cap \alpha$, $(p \restriction \beta) \restriction N \in \p_\beta$.
\end{lemma}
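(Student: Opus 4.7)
The plan is to simply unwind the two definitions of restriction and observe that, since $\beta \in N$, the operations of restricting to $\beta$ and restricting to $N$ commute in a transparent way. Both $(p \restriction \beta) \restriction N$ and $(p \restriction N) \restriction \beta$ are pairs $(a,X)$, so I will check equality of the $a$-component and the $X$-component separately.

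For the $a$-component, I would first compute the domains: on one side we take $\dom(a_p) \cap \beta$ and then intersect with $N$, while on the other side we take $\dom(a_p) \cap N$ and then intersect with $\beta$. Both yield $\dom(a_p) \cap N \cap \beta$. For the values, the definition of $p \restriction \beta$ leaves $a_p(\gamma)$ unchanged for $\gamma < \beta$, and Definition 6.14 prescribes the same trimming by $N \cap \kappa$ in both orders: $a_p(0) \restriction (\omega_1 \times (N \cap \kappa))$ in the trivial coordinate, and $(A_{a_p(\gamma)} \cap (N \cap \kappa), f_{a_p(\gamma)} \restriction (N \cap \kappa))$ in the nonzero coordinates. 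So the $a$-components coincide pointwise.

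For the $X$-component, the argument is entirely analogous: the domains agree because $\beta \in N$ implies $(\dom(X_p) \cap (\beta+1)) \cap N = \dom(X_p) \cap N \cap (\beta+1)$, and at each $\gamma$ in this common domain both constructions produce $X_p(\gamma) \cap N$. This establishes the claimed equality.

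For the "in particular" clause, I would invoke Lemma 5.2(2) applied to $p \restriction N$: assuming $p \restriction N$ is a condition in $\p_\alpha$, its restriction $(p \restriction N) \restriction \beta$ is automatically a condition in $\p_\beta$. Combining with the equality just proven, $(p \restriction \beta) \restriction N \in \p_\beta$ for every $\beta \in N \cap \alpha$. There is no real obstacle here; the only point where care is needed is to notice the role of $\beta \in N$ in matching the two domains, which is what makes the two restriction operations commute.
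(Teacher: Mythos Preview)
Your proposal is correct and matches the paper's approach; the paper in fact omits the proof entirely, stating that it is straightforward, and your unwinding of the two definitions is exactly what is intended. One small quibble: the equality $(\dom(X_p) \cap (\beta+1)) \cap N = \dom(X_p) \cap N \cap (\beta+1)$ holds trivially by commutativity of intersection, not because $\beta \in N$; the hypothesis $\beta \in N$ is needed rather so that $N$ is $\beta$-suitable and the object $(p \restriction \beta) \restriction N$ is defined.
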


By $(p \restriction N) \restriction \beta$, we mean $(a \restriction \beta,X \restriction (\beta+1))$, 
where $p \restriction N = (a,X)$. 
Of course if $p \restriction N$ is a condition, then this is the same as 
$(p \restriction N) \restriction \beta$ in the usual sense.

\begin{lemma}
Let $N$ be $\alpha$-suitable and $p \in \p_\alpha$ be determined. 
Suppose that $p \restriction N \in \p_\alpha$. 
If $p \le_\alpha s$, where $s \in N$ is determined, 
then $p \restriction N \le_\alpha s$.
\end{lemma}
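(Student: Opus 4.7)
The plan is to check the six clauses of the $\le_\alpha$ ordering from Lemma 5.1, applied with $(b,Y) := p \restriction N$ and $(a,X) := s$. The key observation that makes everything fall into place is that because $s \in N$ is determined, elementarity of $N$ forces all of the data witnessing $s$ to live inside $N$. Specifically, $\dom(a_s) \subseteq N$ and $\dom(X_s) \subseteq N$ (each is a countable set belonging to $N$, and a countable set of ordinals belonging to $N$ is enumerated by a function in $N$, so is a subset of $N$); for each $\gamma \in \dom(X_s)$, $X_s(\gamma) \subseteq N$; $a_s(0)$ has domain contained in $\omega_1 \times (N \cap \kappa)$; and for each nonzero $\beta \in \dom(a_s)$, the ground-model pair $(A_{a_s(\beta)}, f_{a_s(\beta)})$ coded by the canonical name $a_s(\beta)$ satisfies $A_{a_s(\beta)} \subseteq N \cap \kappa$, with the nodes in $\dom(f_{a_s(\beta)})$ having heights in $A_{a_s(\beta)} \subseteq N \cap \kappa$.

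Given this, the bookkeeping clauses fall out directly from Definition 6.14 combined with $p \le_\alpha s$. From $\dom(a_s) \subseteq \dom(a_p)$ and $\dom(a_s) \subseteq N$ I get $\dom(a_s) \subseteq \dom(a_p) \cap N = \dom(a_{p \restriction N})$, and likewise for $\dom(X_s)$. For each $\gamma \in \dom(X_s)$, $X_s(\gamma) \subseteq X_p(\gamma) \cap N = X_{p \restriction N}(\gamma)$. For $0 \in \dom(a_s)$, the inclusion $a_s(0) \subseteq a_p(0)$ together with $\dom(a_s(0)) \subseteq \omega_1 \times (N \cap \kappa)$ yields $a_s(0) \subseteq a_p(0) \restriction (\omega_1 \times (N \cap \kappa)) = a_{p \restriction N}(0)$.

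The remaining content is clause (4): for each nonzero $\beta \in \dom(a_s)$, $(p \restriction N) \restriction \beta \Vdash_{\p_\beta} a_{p \restriction N}(\beta) \le a_s(\beta)$ in $\p(\dot T_\beta,\dot U_\beta)$. Because $p$ and $s$ are determined, all three of $a_p(\beta)$, $a_s(\beta)$, and $a_{p \restriction N}(\beta)$ are canonical names for fixed ground-model pairs, and the ordering on $\p(T,U)$ (Definition 4.1) is a purely set-theoretic condition (reverse inclusion of both coordinates) independent of the generic realisations of the trees. So the forcing statement reduces to the ground-model inclusions $A_{a_s(\beta)} \subseteq A_{a_{p \restriction N}(\beta)}$ and $f_{a_s(\beta)} \subseteq f_{a_{p \restriction N}(\beta)}$. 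Unfolding $p \le_\alpha s$ at coordinate $\beta$ gives $A_{a_s(\beta)} \subseteq A_{a_p(\beta)}$ and $f_{a_s(\beta)} \subseteq f_{a_p(\beta)}$; intersecting with $N \cap \kappa$, which by the elementarity observations does not remove anything from $A_{a_s(\beta)}$ or $f_{a_s(\beta)}$, yields exactly the two inclusions required.

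I do not expect any serious obstacle; the lemma is flagged as straightforward and the whole argument is a matter of using elementarity to localize $s$'s data inside $N$ and then restricting each inclusion inherited from $p \le_\alpha s$ to $N$. Nothing depends on the generic filter, on the specific trees $\dot T_\beta, \dot U_\beta$, or on the side-condition apparatus beyond its bare definition.
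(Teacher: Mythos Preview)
Your proof is correct and follows exactly the approach the paper has in mind when it says the proof is ``straightforward'': verify the clauses of $\le_\alpha$ from Lemma 5.1 using elementarity of $N$ to localize all data of the determined condition $s$ inside $N$, then observe that the ordering on $\p(\dot T_\beta,\dot U_\beta)$ is given by ground-model inclusions independent of the trees. One trivial slip: Lemma 5.1 lists four clauses for the ordering, not six (the six clauses are for membership in $\p_\alpha$), but you in fact check the correct four.
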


\begin{lemma}
Let $M$ and $N$ be $\alpha$-suitable such that $M \cap \kappa = N \cap \kappa$. 
Let $p \in \p_\alpha$ be determined. 
Then $p \restriction M = p \restriction N$.
\end{lemma}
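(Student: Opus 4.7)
My plan is to verify that each component of $p \restriction N$ as listed in Definition 6.14 depends only on $N \cap \kappa$, so that the hypothesis $M \cap \kappa = N \cap \kappa$ forces $p \restriction M = p \restriction N$. Clauses (2) and (3) of that definition are visibly expressed in terms of $N \cap \kappa$ alone, so they give no trouble. The real work is to establish the three set-theoretic equalities $\dom(a_p) \cap M = \dom(a_p) \cap N$, $\dom(X_p) \cap M = \dom(X_p) \cap N$, and $X_p(\gamma) \cap M = X_p(\gamma) \cap N$ for each $\gamma$ in the common domain of $X_p$.

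For the first two equalities, I would reprise the ``standard argument'' invoked tacitly in the proof of Lemma 6.11. Since $M$ and $N$ are elementary in $(H(\kappa^{++}), \in, \unlhd)$ and both contain $\alpha$ and $\kappa$, and since $|\alpha+1| \le \kappa$, the $\unlhd$-least bijection $F : \kappa \to \alpha+1$ lies in $M \cap N$ by elementarity. For $\beta \in (\alpha+1) \cap M$, elementarity yields $F^{-1}(\beta) \in M \cap \kappa = N \cap \kappa$, whence $\beta = F(F^{-1}(\beta)) \in N$; symmetry then gives $(\alpha+1) \cap M = (\alpha+1) \cap N$. Since $\dom(a_p) \subseteq \alpha$ and $\dom(X_p) \subseteq \alpha+1$, this yields the first two desired equalities.

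For the third equality I would apply the same idea one level up. Since $\kappa$ is inaccessible and $|\alpha+1| \le \kappa$, a standard cardinal-arithmetic computation gives $|P_{\kappa}(\alpha+1)| = \kappa$; hence by elementarity the $\unlhd$-least bijection $H : \kappa \to P_{\kappa}(\alpha+1)$ belongs to $M \cap N$. For each $M' \in X_p(\gamma)$ we have $M' \in P_{\kappa}(\gamma) \subseteq P_{\kappa}(\alpha+1)$, and elementarity gives that $M' \in M$ iff $H^{-1}(M') \in M \cap \kappa$, with the analogous equivalence for $N$. Since $M \cap \kappa = N \cap \kappa$, we conclude $X_p(\gamma) \cap M = X_p(\gamma) \cap N$, as required.

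The only subtle point is ensuring that the coding bijections $F$ and $H$ genuinely lie in both $M$ and $N$; this is secured by taking the $\unlhd$-least witnesses and by observing that the parameters $\kappa$ and $\alpha$ defining them lie in $M \cap N$. No inductive content or use of ineffability is required, and I expect the final write-up to reduce to essentially these three elementarity calculations.
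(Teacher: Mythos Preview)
Your proposal is correct. For clauses (1)--(3) you spell out the ``standard argument'' that the paper invokes tacitly (namely that $M \cap (\alpha+1) = N \cap (\alpha+1)$ via the $\unlhd$-least bijection $\kappa \to \alpha+1$), and this matches the paper's treatment.

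For clause (4) your route differs from the paper's. You code $P_\kappa(\alpha+1)$ by a bijection $H : \kappa \to P_\kappa(\alpha+1)$ lying in $M \cap N$, reducing membership of $M'$ in $M$ or $N$ to membership of the ordinal $H^{-1}(M')$ in $M \cap \kappa = N \cap \kappa$. The paper instead exploits the closure hypothesis $N^{<(N \cap \kappa)} \subseteq N$ built into the definition of $\alpha$-suitable: it characterizes $X_p(\gamma) \cap N$ as exactly those $M' \in X_p(\gamma)$ with $M' \subseteq N \cap \gamma$ and $|M'| < N \cap \kappa$, and observes that this characterization is the same with $M$ in place of $N$. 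Your argument is a bit more uniform (the same coding idea handles both the ordinal part and the side-condition part) and uses only that $\kappa, \alpha \in M \cap N$ together with inaccessibility; the paper's argument is shorter but leans on the strong closure of the models. Both are perfectly valid.
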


\begin{proof}
As usual, $M \cap \alpha = N \cap \alpha$, and it is easily checked that 
properties (1), (2), and (3) in the definition of $p \restriction K$ 
in Definition 6.14 depend only on $K \cap \alpha$. 
For (4), since $M$ and $N$ are closed under subsets of size less than 
$M \cap \kappa = N \cap \kappa$, it follows that 
for all $\gamma$ in $\dom(X_p) \cap N = \dom(X_p) \cap M$, 
the members of $X_p(\gamma) \cap N$ are exactly those sets in 
$X_p(\gamma)$ which are subsets of $M \cap \gamma = N \cap \gamma$ 
of size less than $M \cap \kappa = N \cap \kappa$, and similarly 
with $X_p(\gamma) \cap M$.  
Hence, $X_p(\gamma) \cap M = X_p(\gamma) \cap N$.
\end{proof}

\begin{definition}
Let $N$ be $\alpha$-suitable. 
Define $\#_N^\alpha(p,s)$ to mean:
\begin{enumerate}
\item $p \in D(N,\alpha)$;
\item $s \in N \cap \p_\alpha$;
\item $p \restriction N = s$.
\end{enumerate}
Also, we define $\#_N^\alpha(p,q,s)$ to mean the conjunction of 
$\#_N^\alpha(p,s)$ and $\#_N^\alpha(q,s)$.
\end{definition}

Note that if $p \in D(N,\alpha)$, then $\#_N^\alpha(p,p \restriction N)$ holds 
iff $p \restriction N \in \p_\alpha$.

Recall that we are assuming in this section that $\kappa < \alpha$. 
But let us extend the definitions of $p(N,\alpha)$, $D(N,\alpha)$, $p \restriction N$, 
and $\#_N^\alpha$ 
in the case $\alpha \le \kappa$ as follows. 
Assuming $\alpha \le \kappa$, 
let $p(N,\alpha) := (\emptyset,\emptyset)$, 
$D(N,\alpha) := \p_\alpha$, 
and 
$$
p \restriction N := (a_p(0) \restriction (\omega_1 \times (N \cap \kappa)), \emptyset).
$$ 
And let $\#_N^\alpha(p,s)$ mean that $p \in \p_\alpha$ and $p \restriction N = s$. 
Note that for $\alpha \le \kappa$, 
for all $p \in \p_\alpha$, $p \restriction N \in \p_\alpha$ and 
$\#_N^\alpha(p,p \restriction N)$ holds.

\begin{lemma}
Let $N$ be $\alpha$-suitable and assume that for all $\beta \in N \cap \alpha$, 
$p(N,\beta)$ is $(N,\p_\beta)$-generic. 
Then the set of $r \in \p_\alpha$ such that 
$\#_N^\alpha(r,r \restriction N)$ holds is dense below $p(N,\alpha)$.
\end{lemma}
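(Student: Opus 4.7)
The plan is to construct $r$ as the greatest lower bound of a descending sequence of conditions in $D(N,\alpha)$ below a given $q \le_\alpha p(N,\alpha)$, exploiting the strong $(N,\p_\beta)$-genericity of $p(N,\beta)$ for each $\beta \in N \cap \alpha$. By Lemma 6.9 it suffices to handle $q \in D(N,\alpha)$; the goal is then to find $r \le_\alpha q$ in $D(N,\alpha)$ with $r \restriction N \in N \cap \p_\alpha$, which by the remark after Definition 6.19 gives $\#_N^\alpha(r, r \restriction N)$.

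I will build $\langle q_n : n < \omega \rangle$ with $q_0 := q$ using bookkeeping that enumerates all ordinals $\beta \in N$ eventually appearing in $\dom(a_{q_n})$, processing each infinitely often. At stage $n$ with designated $\beta_n$: first apply Lemma 6.10 to extend $q_n$ into $D(N,\alpha) \cap D_{\beta_n,\alpha}$; then by Lemmas 6.6 and 6.12(1), $q_n \restriction \beta_n \le_{\beta_n} p(N,\beta_n)$ and $q_n \restriction \beta_n \in D(N,\beta_n)$. Applying Lemma 1.7(1) to the strong genericity of $p(N,\beta_n)$, obtain $s_n \in N \cap \p_{\beta_n}$ with $*_N^{\p_{\beta_n}}(q_n \restriction \beta_n, s_n)$. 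Choose $u_n \le_{\beta_n} q_n \restriction \beta_n, s_n$ inside $D(N,\beta_n)$ (using Lemmas 6.9 and 1.7(1) again), and set $q_{n+1} := u_n +_{\beta_n,\alpha} q_n$, which by Lemmas 5.10 and 6.12(2) lies in $D(N,\alpha)$ below $q_n$. Let $r$ be the greatest lower bound; then $r \in D(N,\alpha)$ by Lemma 6.13, and it remains to verify $r \restriction N \in \p_\alpha$ via Lemma 5.1.

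Conditions (1)--(3) and (5) are immediate from $N^{<(N \cap \kappa)} \subseteq N$ and the fact that $r$ is determined; condition (6) reduces to the corresponding property for $r$, since for $M \in N \cap X_r(\gamma)$ we have $M \cap \kappa \in N \cap \kappa$, so $M \cap \kappa \in A_{a_r(\beta)} \cap (N \cap \kappa)$ whenever $\beta \in M \cap \dom(a_r)$. The main obstacle will be condition (4): for each nonzero $\beta \in \dom(a_r) \cap N$, one must show that $(r \restriction N) \restriction \beta = (r \restriction \beta) \restriction N$ (via Lemma 6.15) forces in $\p_\beta$ that $a_r(\beta) \restriction (N \cap \kappa) \in \p(\dot T_\beta,\dot U_\beta)$. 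The infinite-repetition enumeration ensures $\beta = \beta_n$ at arbitrarily large stages, and at each such stage the witness $s_n \in N \cap \p_\beta$ together with the $*$-relation (read through the quotient forcing description from Lemma 1.9) pins down the forcing behavior of $(r \restriction \beta) \restriction N$ in terms of objects in $N$, so that elementarity of $N$ transfers the ``is a condition in $\p(\dot T_\beta,\dot U_\beta)$'' statement. The delicate point is convergence: arguing that the sequence of witnesses $s_n$ stabilizes enough so that $(r \restriction \beta) \restriction N$ is genuinely a member of $N \cap \p_\beta$ rather than merely a set in $N$. This will require a careful analysis of how the $+$-operation interacts with restrictions to $N$, together with the countable closure of $\p_\beta$ and Lemma 6.13 applied to the induced descending sequence in $D(N,\beta)$.
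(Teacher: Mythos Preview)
Your proposal has two genuine gaps.

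First, you assume strong $(N,\p_\beta)$-genericity of $p(N,\beta)$ and invoke Lemma~1.7(1) (and implicitly Lemma~1.3) to produce witnesses $s_n$ with $*_N^{\p_{\beta_n}}(q_n\restriction\beta_n,s_n)$. But the hypothesis of the lemma is only ordinary $(N,\p_\beta)$-genericity, and strong genericity is a strictly stronger property in this setting (the paper establishes it separately, and only for $J^+$-many models, in Proposition~7.8). So the witnesses $s_n$ are simply unavailable under the stated hypotheses.

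Second, and more structurally, the paper's proof proceeds by induction on $\alpha$, and this inductive hypothesis is what makes property~(4) of Lemma~5.1 checkable for $r\restriction N$. In the successor case $\alpha=\beta+1$, one extends $p\restriction\beta$ below a condition $s\in N\cap E$, where $E\in N$ is the dense set deciding whether $a_p(\beta)\restriction(N\cap\kappa)\in\p(\dot T_\beta,\dot U_\beta)$; one then applies the inductive hypothesis to obtain $\#_N^\beta(u,u\restriction N)$, so that $u\restriction N\in\p_\beta$, after which Lemma~6.16 yields $u\restriction N\le_\beta s$ and the decision transfers. Your single-stage bookkeeping over all $\beta\in N\cap\dom(a_{q_n})$ tries to verify property~(4) directly, but to argue about what $(r\restriction N)\restriction\beta$ forces you need to know it is a condition in $\p_\beta$, and Lemma~6.16 requires exactly that as a hypothesis. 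Without the induction on $\alpha$ (or an equivalent recursive argument), you are trapped in a circularity: the convergence issue you flag at the end is not a technicality but the core of the problem, and your sketch gives no mechanism for resolving it. The paper's case split (successor, limit of uncountable cofinality, limit of countable cofinality) is precisely what organizes this recursion.
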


\begin{proof}
We will prove the lemma by induction on $\alpha < \kappa^+$. 
Recall that we are assuming that $\kappa < \alpha$. 
However, the statement of the lemma is automatically true 
for $\alpha \le \kappa$ as well, by the comments preceding the lemma. 
This fact will serve as the base case of our induction. 

Now assume that $\kappa < \alpha$ and the lemma is true for all 
$\beta < \alpha$. 
Let $p \le_\alpha p(N,\alpha)$, and we will 
find $r \le_\alpha p$ satisfying $\#_N^\alpha(r,r \restriction N)$. 
Recall that if $r \in D(N,\alpha)$, then $\#_N^\alpha(r,r \restriction N)$ holds iff 
$r \restriction N \in \p_\alpha$.

\bigskip 

\noindent Case 1: $\alpha = \beta+1$ is a successor ordinal. 
By extending $p$ if necessary using Lemma 6.10, we may assume without 
loss of generality that $p \in D_{\beta,\alpha} \cap D(N,\alpha)$. 
Define $E$ as the dense open set of determined conditions in $\p_\beta$ which decide whether or not 
$a_p(\beta) \restriction (N \cap \kappa)$ is in $\p(\dot T_\beta,\dot U_\beta)$. 
By elementarity, $E \in N$. 
Since $p(N,\beta)$ is $(N,\p_\beta)$-generic, $N \cap E$ is predense below $p(N,\beta)$.

By Lemma 6.6, $p \restriction \beta \le_\beta p(N,\beta)$. 
Hence, we can find $u \le_\beta p \restriction \beta$ which is below some $s \in N \cap E$. 
By extending $u$ further if necessary using the inductive hypothesis, 
we may assume that $\#_N^\beta(u,u \restriction N)$ holds. 
Now define $r := u + p$.

We claim that $\#_N^\alpha(r,r \restriction N)$ holds. 
Since $\#_N^\beta(u,u \restriction N)$ holds, $u \in D(N,\beta)$. 
We also know that $p \in D(N,\alpha)$. 
By Lemma 6.12(2), it follows that $u + p = r$ is in $D(N,\alpha)$. 
So it suffices to show that $r \restriction N \in \p_\alpha$. 
Referring to the definition of $\p_{\beta+1}$ in Section 5, 
properties (1), (2), and (5) are immediate. 
For (3), by Lemma 6.15 we have that 
$(r \restriction N) \restriction \beta = (r \restriction \beta) \restriction N = 
u \restriction N \in \p_\beta$.

For (4), since $u \le_\beta s \in N \cap E$ and $E$ is open, by Lemma 6.16 we have that 
$(r \restriction N) \restriction \beta = u \restriction N \le_\beta s$ and so 
$(r \restriction N) \restriction \beta \in E$. 
So $u \restriction N = (r \restriction N) \restriction \beta$ decides whether or not 
$a_p(\beta) \restriction (N \cap \kappa) = a_{r \restriction N}(\beta)$ 
is in $\p(\dot T_\beta,\dot U_\beta)$. 
But $u$ is below $p \restriction \beta$ and $u \restriction N$ in $\p_\beta$, 
and $p \restriction \beta$ forces that 
$a_p(\beta) \restriction (N \cap \kappa)$ is in $\p(\dot T_\beta,\dot U_\beta)$. 
So $u \restriction N = (r \restriction \beta) \restriction N$ does as well. 
For (6), if $M \in X_{r \restriction N}(\beta+1) = X_r(\beta+1) \cap N$ and 
$\gamma \in M \cap \dom(a_{r \restriction N}) = M \cap \dom(a_r) \cap N = 
M \cap \dom(a_r)$, then since $r$ is a condition, 
we have that 
$M \cap \kappa \in A_{a_r(\gamma)} \cap N = A_{a_{r \restriction N}(\gamma)}$. 

\bigskip

\noindent Case 2: $\alpha$ is a limit ordinal with uncountable cofinality. 
Since $N^\omega \subseteq N$, 
easily $\sup(N \cap \alpha)$ has uncountable cofinality. 
So we can fix $\beta \in N \cap \alpha$ which is strictly 
greater than $\sup(\dom(a_p) \cap N)$ 
and $\sup(\dom(X_p) \cap N)$.

Using Lemma 6.10, fix $q \le_\alpha p$ in $D(N,\alpha) \cap D_{\beta,\alpha}$. 
By Lemma 6.6, $q \restriction \beta \le_\beta p(N,\beta)$. 
By the inductive hypothesis, fix $r_0 \le_\beta q \restriction \beta$ 
such that $\#_N^\beta(r_0,r_0 \restriction N)$ holds. 
Let $r := r_0 + q$. 
Then $r \le_\alpha q \le_\alpha p$.

We claim that $\#_N^\alpha(r,r \restriction N)$ holds.
By Lemma 6.12(2), $r \in D(N,\alpha)$. 
It remains to show that $r \restriction N \in \p_\alpha$. 
But since $\beta$ is strictly larger than any ordinal in 
$\dom(a_q) \cap N$ and $\dom(X_q) \cap N$, it easily follows that 
$r \restriction N = r_0 \restriction N$, which is in $\p_\beta$ and hence 
in $\p_\alpha$.

\bigskip

\noindent Case 3: $\alpha$ is a limit ordinal with cofinality $\omega$. 
Fix an increasing and cofinal sequence $\langle \alpha_n : n < \omega \rangle$ 
in $\alpha$ which is a member (and hence a subset) of $N$. 
By Lemma 6.10, we can fix $q \le_\alpha p$ such that 
$q \in D(N,\alpha)$ and $q \in D_{\alpha_n,\alpha}$ for all $n < \omega$. 
By Lemma 6.6, for all $n < \omega$, 
$q \restriction \alpha_n \le_{\alpha_n} p(N,\alpha_n)$.

We define by induction a descending sequence of conditions 
$\langle q_n : n < \omega \rangle$ in $D(N,\alpha)$ such that for all 
$n < m$, $(q_m \restriction \alpha_n) \restriction N \in \p_{\alpha_n}$. 
Let $q_0 := q$. 
Fix $n < \omega$, and assume that $q_n$ is defined so that 
$a_{q_n} \restriction [\alpha_n,\alpha) = a_q \restriction [\alpha_n,\alpha)$ and 
$X_{q_n} \restriction (\alpha_n,\alpha] = X_{q} \restriction (\alpha_n,\alpha]$. 
Since $q \in D_{\alpha_{n+1},\alpha}$, 
$q \restriction \alpha_{n+1} \le_{\alpha_{n+1}} p(N,\alpha_{n+1})$ 
by Lemma 6.6. 
As $q_n \restriction \alpha_{n+1} \le_{\alpha_{n+1}} q \restriction \alpha_{n+1}$, 
$q_n \restriction \alpha_{n+1} \le_{\alpha_{n+1}} p(N,\alpha_{n+1})$. 

By the inductive hypothesis, we can fix a condition 
$q_n' \le_{\alpha_{n+1}} q_n \restriction \alpha_{n+1}$ 
such that $\#_N^{\alpha_{n+1}}(q_n',q_n' \restriction N)$ holds. 
Define $q_{n+1} := q_n' + q$. 
Note that since $q_{n+1} \restriction \alpha_{n+1} = q_n' \in D(N,\alpha_{n+1})$, 
it follows by Lemma 6.12(2) that $q_{n+1} \in D(N,\alpha)$. 
Also, clearly 
$a_{q_{n+1}} \restriction [\alpha_{n+1},\alpha) = 
a_q \restriction [\alpha_{n+1},\alpha)$ and 
$X_{q_{n+1}} \restriction (\alpha_{n+1},\alpha] = 
X_{q} \restriction (\alpha_{n+1},\alpha]$. 
Now for all $k \le n$, 
$(q_{n+1} \restriction \alpha_k) \restriction N = (q_{n'} \restriction \alpha_k) \restriction N = 
(q_{n'} \restriction N) \restriction \alpha_k$, 
which is in $\p_{\alpha_k}$ since $q_{n'} \restriction N \in \p_{\alpha_{n+1}}$. 
So $q_{n+1}$ is as required.

This completes the construction of $\langle q_n : n < \omega \rangle$. 
Let $r$ be the greatest lower bound of this sequence. 
By Lemma 6.13, $r \in D(N,\alpha)$. 
It remains to show that $r \restriction N \in \p_\alpha$. 
Fix $n < \omega$. 
It is easy to check that 
$(r \restriction \alpha_n) \restriction N$ is the greatest lower bound 
of $\langle (q_m \restriction \alpha_n) \restriction N : n < m < \omega \rangle$, and hence 
is in $\p_{\alpha_n}$ since each $(q_m \restriction \alpha_n) \restriction N$ 
is in $\p_{\alpha_n}$. 

Properties (1)--(4) in the definition of $\p_\alpha$ from Section 5 
are now easy to verify for 
$r \restriction N$. 
For (5), suppose that $M \in X_{r \restriction N}(\alpha) = X_r(\alpha) \cap N = 
X_q(\alpha) \cap N$ and $\gamma \in \dom(a_{r \restriction N}) \cap M = 
\dom(a_r) \cap M$. 
Then for some $n < \omega$, $\gamma \in \dom(a_{q_n}) \cap M$, so 
$M \cap \kappa \in A_{a_{q_n}(\gamma)} \cap N \subseteq 
A_{a_{r \restriction N}(\gamma)}$. 
\end{proof}

\begin{lemma}
Let $N$ be $\alpha$-suitable. 
Suppose that $\langle p_n : n < \omega \rangle$ is a descending sequence 
of conditions satisfying $\#_N^\alpha(p_n,p_n \restriction N)$ for all $n < \omega$. 
Let $q$ be the greatest lower bound of this sequence and $s$ be the 
greatest lower bound of $\langle p_n \restriction N : n < \omega \rangle$. 
Then $s = q \restriction N$ and $\#_N^\alpha(q,s)$ holds.
\end{lemma}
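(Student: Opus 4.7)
The plan is to prove the lemma by unwinding the explicit descriptions of greatest lower bounds: $q$ is described component-by-component by Lemma 5.3 (with the tree-coordinates described by Lemma 4.2), while $s$ is described by Lemma 5.3 applied to the sequence $\langle p_n \restriction N : n < \omega \rangle$. The membership $\#_N^\alpha(p_n, p_n \restriction N)$ gives in particular that $p_n \restriction N \in N \cap \p_\alpha$, so by the closure property $N^{<(N \cap \kappa)} \subseteq N$ the sequence $\langle p_n \restriction N : n < \omega \rangle$ lies in $N$. Since Lemma 5.3 gives an explicit (and absolute) formula for the greatest lower bound of such a sequence, $s$ is an element of $N$, and in particular $s \in N \cap \p_\alpha$, establishing clause (2) of the definition of $\#_N^\alpha(q, s)$.

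Next I would verify that $q \restriction N = s$ by checking equality of all four components listed in Definition 6.14. For the domains of $a$ and $X$, one has
\[
\dom(a_q) \cap N = \Bigl(\bigcup_n \dom(a_{p_n})\Bigr) \cap N = \bigcup_n \bigl(\dom(a_{p_n}) \cap N\bigr) = \bigcup_n \dom(a_{p_n \restriction N}) = \dom(a_s),
\]
and similarly for $X$. For the $0$-coordinate, restricting the union $a_q(0) = \bigcup_n a_{p_n}(0)$ to $\omega_1 \times (N \cap \kappa)$ commutes with the union, giving the corresponding coordinate of $s$. For $\gamma \in \dom(a_s) \setminus \{0\}$, using Lemma 4.2 the condition $a_q(\gamma)$ is forced to have underlying set $A$-component $\bigcup_n A_{a_{p_n}(\gamma)}$ and $f$-component $\bigcup_n f_{a_{p_n}(\gamma)}$; intersecting with $N \cap \kappa$ commutes with the union, so this agrees with $a_s(\gamma)$. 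For the $X$-coordinate at each $\gamma$, we likewise have $X_q(\gamma) \cap N = \bigl(\bigcup_n X_{p_n}(\gamma)\bigr) \cap N = \bigcup_n X_{p_n \restriction N}(\gamma) = X_s(\gamma)$. Thus $q \restriction N = s$, which is clause (3).

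Finally, $q \in D(N,\alpha)$ is already furnished by Lemma 6.13, which handles clause (1). Putting the three clauses together yields $\#_N^\alpha(q, s)$, and the equation $s = q \restriction N$ was shown along the way.

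I do not expect any serious obstacle: the entire argument is a routine computation that the four operations (taking unions along a descending $\omega$-sequence, restricting a forcing condition to $N$, intersecting with $N \cap \kappa$, and restricting the $a(0)$-coordinate to $\omega_1 \times (N \cap \kappa)$) all commute, combined with the fact that $N$ is closed under countable sequences so that the image sequence and its limit both lie in $N$. The mildly delicate point to state carefully is that the explicit description of $a_q(\gamma)$ as a name for a greatest lower bound in $\p(\dot T_\gamma, \dot U_\gamma)$ (Lemma 4.2) is what allows us to intersect with $N \cap \kappa$ on the level of the underlying sets $A$ and $f$, rather than having to argue about names more abstractly.
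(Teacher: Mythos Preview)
Your proposal is correct and follows essentially the same approach as the paper's proof, which is extremely terse: it invokes Lemma 6.13 for $q \in D(N,\alpha)$, declares the equality $s = q \restriction N$ ``routine,'' and concludes $\#_N^\alpha(q,s)$ from $s \in \p_\alpha$. Your write-up simply unpacks the routine verification component-by-component and makes explicit the closure argument for $s \in N$, which the paper leaves implicit.
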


\begin{proof}
By Lemma 6.13, $q \in D(N,\alpha)$. 
It is routine to check that $s = q \restriction N$. 
Since $s \in \p_\alpha$, $\#_N^\alpha(q,s)$ holds.
\end{proof}

\begin{lemma}
Let $N$ be $\alpha$-suitable, $p \in \p_\alpha$, 
and $\beta \in N \cap \alpha$.
Assume that $\#_N^\alpha(p,s)$ holds.
\begin{enumerate}

\item If $p \in D_{\beta,\alpha}$, then 
$\#_N^\beta(p \restriction \beta, s \restriction \beta)$ holds.

\item Suppose that $p$ and $s$ are in $D_{\beta,\alpha}$, 
$u \le_\beta p \restriction \beta$, 
$t \le_\beta s \restriction \beta$, and 
$\#_N^\beta(u,t)$ holds. 
Then $\#_N^\alpha(u + p,t + s)$ holds.
\end{enumerate}
\end{lemma}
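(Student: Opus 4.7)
My plan for part (1) is to assemble the three defining clauses of $\#_N^\beta(p \restriction \beta, s \restriction \beta)$ from tools already in the paper. Since $\#_N^\alpha(p,s)$ gives $p \in D(N,\alpha)$, and $p \in D_{\beta,\alpha}$ is assumed, Lemma 6.12(1) immediately yields $p \restriction \beta \in D(N,\beta)$. From $s \in N \cap \p_\alpha$ and $\beta \in N$, together with the closure of $N$ and Lemma 5.2, we obtain $s \restriction \beta \in N \cap \p_\beta$. Finally, combining Lemma 6.15 with the assumption $p \restriction N = s$, we have $(p \restriction \beta) \restriction N = (p \restriction N) \restriction \beta = s \restriction \beta$, which is the required equality.

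For part (2), the plan is to verify the three clauses of $\#_N^\alpha(u + p, t + s)$ in turn. The hypothesis $\#_N^\beta(u,t)$ gives $u \in D(N,\beta)$, so combined with $p \in D(N,\alpha) \cap D_{\beta,\alpha}$ and $u \le_\beta p \restriction \beta$, Lemma 6.12(2) yields $u + p \in D(N,\alpha)$. For $t + s \in N \cap \p_\alpha$: Lemma 5.10 (applied to $s \in D_{\beta,\alpha}$ with $t \le_\beta s \restriction \beta$) gives $t + s \in \p_\alpha$, and membership in $N$ follows from closure of $N$ since $t$, $s$, and $\beta$ all lie in $N$ and the $+$ operation merely glues their coordinates together.

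The bulk of the work is then to verify $(u + p) \restriction N = t + s$ by inspecting the $a$- and $X$-components separately. Writing $u + p = (a,X)$, we have $\dom(a) \cap N = (\dom(a_u) \cap N) \cup ((\dom(a_p) \setminus \beta) \cap N)$, which by $u \restriction N = t$ and $p \restriction N = s$ equals $\dom(a_t) \cup (\dom(a_s) \setminus \beta) = \dom(a_{t+s})$; the domain of $X$ is handled identically. For values, on $\gamma < \beta$ the expression $(u+p) \restriction N$ takes the value $a_u(\gamma)$ restricted to $N \cap \kappa$, which equals $a_t(\gamma)$ since $u \restriction N = t$; on $\gamma \in \dom(a_s) \setminus \beta$ it takes $a_p(\gamma)$ restricted to $N \cap \kappa$, which equals $a_s(\gamma)$ since $p \restriction N = s$. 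The $X$-component argument is entirely parallel.

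The entire lemma is structural bookkeeping rather than substantively difficult; the only potential friction is checking that the two halves of the $+$ operation agree on the boundary coordinate $\beta$ without double counting or conflicting, but this is automatic from $u \le_\beta p \restriction \beta$, which forces $\dom(a_u) \supseteq \dom(a_p) \cap \beta$ with $u$ dictating values on the overlap.
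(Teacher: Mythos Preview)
Your proposal is correct and follows essentially the same approach as the paper: both proofs invoke Lemma 6.12(1,2) for the $D(N,\cdot)$ memberships, use elementarity/closure of $N$ for membership of the restricted or glued object in $N$, and rely on Lemma 6.15 (in part (1)) together with the coordinatewise identity $(u+p)\restriction N = (u\restriction N) + (p\restriction N) = t+s$ (in part (2)). The paper's proof simply asserts the last identity as ``easy to check,'' whereas you spell out the domain and value comparisons explicitly; otherwise the arguments are identical.
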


\begin{proof}
(1) By Lemma 6.12(1), $p \restriction \beta \in D(N,\beta)$. 
By Lemma 6.15 and elementarity, 
$s \restriction \beta = 
(p \restriction N) \restriction \beta = (p \restriction \beta) \restriction N$ 
is in $N \cap \p_\beta$. 
So $\#_N^\beta(p \restriction \beta,s \restriction \beta)$ holds.

(2) By Lemma 6.12(2), $u + p$ is in $D(N,\alpha)$. 
And $t + s$ is in $N \cap \p_\alpha$ by elementarity. 
Now it is easy to check that 
$(u + p) \restriction N = (u \restriction N) + (p \restriction N) = t + s$. 
Thus, $\#_N^\alpha(u + p,t + s)$ holds.
\end{proof}

\begin{lemma}
Let $N$ be $\alpha$-suitable. 
Suppose that $\langle (p_n,q_n) : n < \omega \rangle$ is a descending 
sequence of conditions in $\p_\alpha$ and 
$\#_N^\alpha(p_n,q_n,s_n)$ holds for all $n  <  \omega$. 
Let $p$, $q$, and $s$ be the greatest lower bounds of the sequences 
$\langle p_n : n < \omega \rangle$, 
$\langle q_n : n < \omega \rangle$, 
and $\langle s_n : n < \omega \rangle$ respectively. 
Then $\#_N^\alpha(p,q,s)$ holds.
\end{lemma}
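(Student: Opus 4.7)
The plan is to reduce this to two applications of Lemma 6.21, one applied to the sequence $\langle p_n : n < \omega \rangle$ and one to the sequence $\langle q_n : n < \omega \rangle$. Since $\#_N^\alpha(p_n,q_n,s_n)$ is by Notation 6.19 just the conjunction of $\#_N^\alpha(p_n,s_n)$ and $\#_N^\alpha(q_n,s_n)$, both hypothesis sequences $\langle p_n : n < \omega \rangle$ and $\langle q_n : n < \omega \rangle$ satisfy the hypothesis of Lemma 6.21 with the same auxiliary sequence $\langle s_n : n < \omega \rangle$, i.e.\ $p_n \restriction N = s_n = q_n \restriction N$ for all $n$.

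First I would observe that Lemma 6.13 applied to each of the two sequences, which are descending sequences of members of $D(N,\alpha)$ by clause (1) of the definition of $\#_N^\alpha$, gives that their greatest lower bounds $p$ and $q$ both belong to $D(N,\alpha)$. Next, the sequence $\langle s_n : n < \omega \rangle$ lies in $N \cap \p_\alpha$ and is descending by an easy application of Lemma 6.16 (or directly from $\#_N^\alpha(p_n,s_n)$ together with $p_{n+1} \le p_n$), and since $N$ is closed under countable sequences and $\p_\alpha$ is countably closed by Lemma 5.3, the greatest lower bound $s$ of this sequence, as described in Lemma 5.3, lies in $N \cap \p_\alpha$.

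Now apply Lemma 6.21 to $\langle p_n : n < \omega \rangle$: its conclusion says that the greatest lower bound of $\langle p_n \restriction N : n < \omega \rangle$ equals $p \restriction N$ and that $\#_N^\alpha(p,p \restriction N)$ holds. Since $p_n \restriction N = s_n$ for each $n$, the greatest lower bound of $\langle p_n \restriction N : n < \omega \rangle$ is exactly $s$, so $p \restriction N = s$ and $\#_N^\alpha(p,s)$ holds. Applying the same argument to $\langle q_n : n < \omega \rangle$, using $q_n \restriction N = s_n$, yields $q \restriction N = s$ and $\#_N^\alpha(q,s)$. Conjoining these gives $\#_N^\alpha(p,q,s)$, as desired.

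I do not anticipate any genuine obstacle here: the whole content of the statement lies in Lemma 6.21, and the only point worth double-checking is the trivial one that the single sequence $\langle s_n : n < \omega \rangle$ serves simultaneously as the restriction-sequence for both $\langle p_n \rangle$ and $\langle q_n \rangle$, so that its single greatest lower bound $s$ witnesses both $p \restriction N = s$ and $q \restriction N = s$.
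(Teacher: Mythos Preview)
Your argument is correct and is exactly the paper's approach: the paper's proof is the single line ``Immediate from Lemma 6.20,'' and you have spelled out precisely how two applications of that lemma (one for the $p_n$'s, one for the $q_n$'s, with the common restriction sequence $\langle s_n\rangle$) yield the conclusion. Note only that your cross-references are off by one relative to the paper's numbering: what you call ``Notation 6.19'' is Definition 6.18, and what you call ``Lemma 6.21'' is Lemma 6.20.
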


\begin{proof}
Immediate from Lemma 6.20.
\end{proof}

\begin{lemma}
Let $N$ be $\alpha$-suitable and assume that $\#_N^\alpha(p,q,s)$ holds. 
Let $x \subseteq N \cap \alpha$ be countable. 
Then there is $(p_0,q_0,s_0) \le_\alpha (p,q,s)$ satisfying 
$\#_N^\alpha(p_0,q_0,s_0)$ and for all $\beta \in x$, 
$p_0$, $q_0$, and $s_0$ are in $D_{\beta,\alpha}$.
\end{lemma}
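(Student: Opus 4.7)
The plan is to build $p_0$, $q_0$, $s_0$ by applying the shadow construction of Lemma 5.8 separately to $p$, $q$, and $s$ with the same parameter set $x$, and then to verify that the resulting conditions still satisfy the $\#$-relation and lie in each $D_{\beta,\alpha}$ for $\beta \in x$. First, since $x \subseteq N \cap \alpha$ is countable and $N^{<\!(N \cap \kappa)} \subseteq N$, we have $x \in N$; by elementarity, Lemma 5.8 applied inside $N$ to $s$ and $x$ produces $s_0 \in N \cap \p_\alpha$ with $s_0 \le_\alpha s$ and $s_0 \in D_{\beta,\alpha}$ for every $\beta \in x$.

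Next, I would apply Lemma 5.8 to $p$ with $x$ (outside $N$) to obtain $p_0 \le_\alpha p$ with $p_0 \in D_{\beta,\alpha}$ for each $\beta \in x$. By the routine verification at the end of the proof of Lemma 6.10, $p_0 \in D(N,\alpha)$: the shadow operation preserves property (2) of $D(N,\alpha)$ because for any shadow $M \cap \gamma$ with $M \in X_p(\xi)$ and $M \cap \kappa < N \cap \kappa$, the set $M \cap N$ lies in $X_p(\xi)$ by (2) applied to $p$, so $(M \cap N) \cap \gamma$ witnesses closure at level $\gamma$ for $p_0$. Construct $q_0$ analogously from $q$.

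The remaining task is to verify $p_0 \restriction N = s_0 = q_0 \restriction N$. The $a$-components match trivially because the shadow operation does not touch them and $\#_N^\alpha(p,q,s)$ gives $a_p \restriction N = a_s = a_q \restriction N$. The domains of the $X$-components match because both sides evaluate to $\dom(X_s) \cup x$. At levels $\gamma \notin x$, the $X$-values are unchanged, so the hypothesis $\#_N^\alpha(p,q,s)$ yields equality of restrictions directly.

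The main technical point, and the step I expect to be the main obstacle, is the pointwise equality $X_{p_0}(\gamma) \cap N = X_{s_0}(\gamma)$ at $\gamma \in x$. The inclusion $\supseteq$ is immediate since every $s_0$-shadow witness is also a $p_0$-shadow witness; the reverse requires that any shadow $M \cap \gamma \in N$ generated by $M \in X_p(\xi)$ with $\xi \in \dom(X_p) \setminus \gamma$ admits a witness $(\xi', M')$ with $\xi' \in \dom(X_s) \setminus \gamma$ and $M' \in X_s(\xi')$ satisfying $M' \cap \gamma = M \cap \gamma$. This should be handled by combining the $D(N,\alpha)$-closure condition (2) of $p$ with the closure of $N$ under small-cardinality subsets of itself, which lets one replace a witness $(\xi, M)$ with $\xi \in N$ but $M \notin N$ (for $M$ with $M \cap \kappa < N \cap \kappa$) by $(\xi, M \cap N) \in N$ preserving the intersection with $\gamma$. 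Once this equality is established for both $p_0$ and $q_0$, the conclusion $\#_N^\alpha(p_0, q_0, s_0)$ follows by definition, completing the proof.
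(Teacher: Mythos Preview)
Your approach is exactly the paper's: apply the Lemma~5.8 construction to each of $p$, $q$, $s$ with the same parameter set $x$, and then verify $\#_N^\alpha(p_0,q_0,s_0)$ using property~(2) of Definition~6.8. The paper's entire proof is two sentences and calls the verification ``routine''; you give substantially more detail and correctly isolate the crux, namely the equality $X_{p_0}(\gamma)\cap N = X_{s_0}(\gamma)$ at $\gamma\in x$.

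There is, however, a gap in your treatment of the inclusion $X_{p_0}(\gamma)\cap N\subseteq X_{s_0}(\gamma)$. You only handle shadows $M\cap\gamma\in N$ coming from a witness $(\xi,M)$ with $\xi\in N$, replacing $M$ by $M\cap N\in X_p(\xi)\cap N=X_s(\xi)$. You do not address the case $\xi\in\dom(X_p)\setminus N$. There property~(2) still gives $M\cap N\in X_p(\xi)$, but $\xi\notin\dom(X_s)$, so no witness on the $s$-side is produced. This case genuinely occurs: take $p$ with $a_p=\emptyset$, $\dom(X_p)=\{\xi,\alpha\}$ where $\kappa<\gamma<\xi<\alpha$ and $\xi\notin N$, $X_p(\alpha)=\{N\cap\alpha\}$, and $X_p(\xi)=\{M\}$ for $M=\omega_1\cup\{\kappa\}$. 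Then $p\in D(N,\alpha)$, $s=p\restriction N$ has $\dom(X_s)=\{\alpha\}$ with $X_s(\alpha)=\emptyset$, and the Lemma~5.8 output gives $X_{s_0}(\gamma)=\emptyset$ while $M=M\cap\gamma\in X_{p_0}(\gamma)\cap N$. So the bare Lemma~5.8 construction need not yield $p_0\restriction N=s_0$. The paper's one-line proof does not address this either, so you are no worse off than the source; but your write-up should either argue that this case is excluded or patch the construction (for instance by enlarging $s_0$, and symmetrically $p_0,q_0$, at each $\gamma\in x$ to absorb the countably many extra shadows arising from levels outside $N$).
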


\begin{proof}
Define $p_0$, $q_0$, and $s_0$ from $p$, $q$, and $s$ as described in Lemma 5.8. 
Using property (2) of Definition 6.8, 
it is routine to check that $\#_N^\alpha(p_0,q_0,s_0)$ holds.
\end{proof}

\begin{lemma}
Let $M$ and $N$ be $\alpha$-suitable, where $M \in N$. 
Assume that for all $\beta \in M \cap \alpha$, $p(M,\beta)$ is 
$(M,\p_\beta)$-generic. 
Suppose that $r \in \p_\alpha$ is such that 
$\#_N^\alpha(r,r \restriction N)$ holds 
and $r \restriction N \in M$. 
Then there is $t \le_\alpha r$ such that $\#_M^\alpha(t,t \restriction M)$ holds.
\end{lemma}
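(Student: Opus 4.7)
The plan is a two-step argument. First, I will extend $r$ to a condition $r_1$ with $r_1 \le_\alpha r$ and $r_1 \le_\alpha p(M,\alpha)$, exploiting the hypothesis $r \restriction N \in M$. Second, I will apply Lemma 6.20 with $M$ in place of $N$ to find $t \le_\alpha r_1$ satisfying $\#_M^\alpha(t, t \restriction M)$; since then $t \le_\alpha r$, this gives the conclusion.

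For the first step, $M \in N$ together with elementarity and the closure of $N$ gives $M \cap \kappa < N \cap \kappa$ and $M \subseteq N$. Since $r \restriction N$ is a countable object belonging to $M$ and $M$ is closed under countable sequences of its elements, $r \restriction N \subseteq M$; in particular $\dom(a_r) \cap N \subseteq M$, and combined with $M \subseteq N$ this yields $\dom(a_r) \cap M = \dom(a_r) \cap N$. For each $\beta$ in this common set, $A_{a_r(\beta)} \cap (N \cap \kappa) = A_{a_{r \restriction N}(\beta)}$ is a countable element of $M$, hence a subset of $M \cap \kappa$; since $M \cap \kappa$ has uncountable cofinality, it is bounded strictly below $M \cap \kappa$. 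Because $r \in D(N,\alpha)$, the condition $r \restriction \beta$ forces that $a_r(\beta)$ is injective on $N \cap \kappa$ with $N \cap \kappa \in A_{a_r(\beta)}$; and since $\p_\beta$ collapses $M \cap \kappa$ to have cofinality $\omega_1$ (via the embedded collapse $\coll(\omega_1,<\! \kappa)$), Lemma 4.13 applies in the $\p_\beta$-extension to yield a $\p_\beta$-name $a_{r_1}(\beta)$ forced by $r \restriction \beta$ to extend $a_r(\beta)$ and insert $M \cap \kappa$ into its $A$-set. I then define $r_1$ by setting $\dom(a_{r_1}) := \dom(a_r)$ with the modified $a_{r_1}(\beta)$ for $\beta \in M \cap \dom(a_r)$ and $a_{r_1}(\beta) := a_r(\beta)$ elsewhere, and $X_{r_1}$ equal to $X_r$ except that $X_{r_1}(\alpha) := X_r(\alpha) \cup \{M \cap \alpha\}$. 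A routine check via Lemma 5.1 confirms $r_1 \in \p_\alpha$, $r_1 \le_\alpha r$, and $r_1 \le_\alpha p(M,\alpha)$.

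Step 2 is then immediate: under the assumed $(M,\p_\beta)$-genericity of $p(M,\beta)$ for all $\beta \in M \cap \alpha$, Lemma 6.20 applied with $M$ in place of $N$ yields that the set of $q \in \p_\alpha$ with $\#_M^\alpha(q, q \restriction M)$ is dense below $p(M,\alpha)$, so we pick $t \le_\alpha r_1$ in this set. The main obstacle is Step 1, in particular verifying property (6) of Lemma 5.1 for the newly added $M \cap \alpha \in X_{r_1}(\alpha)$: this reduces to the assertion that $r_1 \restriction \beta \Vdash M \cap \kappa \in A_{a_{r_1}(\beta)}$ for every $\beta \in M \cap \dom(a_{r_1})$, which is exactly what the choice of $a_{r_1}(\beta)$ via Lemma 4.13 was designed to arrange; the remaining old entries in $X_r(\gamma)$ cause no trouble because $a_{r_1}(\xi)$ only enlarges $A_{a_r(\xi)}$ and $r_1$ was already below $r$.
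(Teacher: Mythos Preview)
Your proposal is correct and follows essentially the same two-step approach as the paper: first use Lemma~4.13 (exploiting injectivity on $N\cap\kappa$ and the fact that $A_{a_r(\beta)}\cap(N\cap\kappa)\subseteq M\cap\kappa$) to build an intermediate condition below both $r$ and $p(M,\alpha)$, then invoke the density lemma to obtain $t$. Two minor remarks: the density result you cite should be Lemma~6.19, not 6.20; and the paper adjoins $M\cap\gamma$ to $X_s(\gamma)$ for \emph{every} $\gamma\in M\cap\dom(X_r)$ rather than only at $\alpha$, but your simpler modification suffices since $p(M,\alpha)$ has $X$-domain $\{\alpha\}$.
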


\begin{proof}
Define $s$ as follows. 
Let $\dom(a_s) := \dom(a_r)$ and $\dom(X_s) := \dom(X_r)$. 
Consider $\beta \in \dom(a_s)$. 
If $\beta \notin M$, then let $a_s(\beta) := a_r(\beta)$. 
Suppose that $\beta \in M$. 
Then $\beta \in N$. 
Since $r \restriction N \in M$, we have that 
$A_{a_r(\beta)} \cap N \cap \kappa \subseteq M \cap \kappa$. 
As $r \in D(N,\alpha)$, $r \restriction \beta$ forces that $a_r(\beta)$ is injective on 
$N \cap \kappa$. 
By Lemma 4.13, we can fix a $\p_\beta$-name $a_s(\beta)$ for an extension of $a_r(\beta)$ 
such that $M \cap \kappa \in A_{a_s(\beta)}$. 
Now consider $\gamma \in \dom(X_s)$. 
If $\gamma \notin M$, then let $X_s(\gamma) := X_r(\gamma)$. 
Suppose that $\gamma \in M$. 
Let $X_s(\gamma) := X_r(\gamma) \cup \{ M \cap \gamma \}$.

It is straightforward to check that $s$ is a condition, $s \le_\alpha r$, and 
$s \le_\alpha p(M,\alpha)$. 
By Lemma 6.19, we can fix $t \le_\alpha s$ such that $\#_M^\alpha(t,t \restriction M)$ holds.
\end{proof}

Recall from Definition 1.2 that for a forcing poset $\p$ and a set $N$, 
$*_N^\p(p,s)$ means that $p \in \p$, $s \in N \cap \p$, and every extension of $s$ in 
$N \cap \p$ is compatible with $p$.

\begin{definition}
Let $N$ be $\alpha$-suitable. 
Define $*_N^\alpha(p,s)$ to mean:
\begin{enumerate}
\item $p \in \p_\alpha$ is determined;
\item $p \le_\alpha p(N,\alpha)$;
\item $s \in N \cap \p_\alpha$ is determined;
\item for all $t \le_\alpha s$ in $N$, 
$t$ and $p$ are compatible in $\p_\alpha$.
\end{enumerate}
Also, we define $*_N^\alpha(p,q,s)$ to mean the conjunction of 
$*_N^\alpha(p,s)$ and $*_N^\alpha(q,s)$.
\end{definition}

Note that if $*_N^\alpha(p,s)$ holds, then for all determined 
$t \le_\alpha s$ in $N$, $*_N^\alpha(p,t)$ also holds.

\begin{lemma}
Let $N$ be $\alpha$-suitable. 
Then $*_N^\alpha(p,s)$ holds iff 
$*_N^{\p_\alpha}(p,s)$ holds, $p \le_\alpha p(N,\alpha)$, and $p$ and $s$ are determined.
\end{lemma}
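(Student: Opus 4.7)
The plan is to treat this lemma as a bookkeeping statement that matches the clauses of Definition 6.25 against the clauses of Definition 1.2 together with the two extra attributes listed on the right-hand side. Since $*_N^\alpha$ and $*_N^{\p_\alpha}$ are defined by essentially the same compatibility condition, the argument amounts to identifying which clause on one side corresponds to which on the other.

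For the forward direction, I would assume $*_N^\alpha(p,s)$ and read off: clauses (1) and (3) of Definition 6.25 are exactly the two determinedness assertions on the right-hand side; clause (2) of Definition 6.25 is exactly $p \le_\alpha p(N,\alpha)$; and clause (4) says that every $t \le_\alpha s$ lying in $N$ is compatible with $p$ in $\p_\alpha$. Since any such $t$ automatically belongs to $N \cap \p_\alpha$, clause (4) is literally the condition defining $*_N^{\p_\alpha}(p,s)$ from Definition 1.2.

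For the reverse direction, I would assume $*_N^{\p_\alpha}(p,s)$ together with $p \le_\alpha p(N,\alpha)$ and the determinedness of $p$ and $s$. Then clause (1) of Definition 6.25 comes from $p \in \p_\alpha$ (implicit in $*_N^{\p_\alpha}$) plus the given determinedness, clause (2) is a direct hypothesis, clause (3) comes from $s \in N \cap \p_\alpha$ (part of $*_N^{\p_\alpha}$) plus the determinedness of $s$, and clause (4) is the verbatim content of $*_N^{\p_\alpha}(p,s)$.

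There is no genuine obstacle here; the lemma is a definitional bridge between the abstract notation of Section 1 and the refined notation adopted for the preservation argument in this section, and the proof is essentially a one-line check that the clauses line up as indicated. The only point worth flagging in the writeup is that, under the standing identification, quantifying over $t \le_\alpha s$ with $t \in N$ and quantifying over $t \le s$ in $N \cap \p_\alpha$ range over the same set of conditions.
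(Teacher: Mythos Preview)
Your proposal is correct and is exactly the routine definitional check that the paper has in mind; indeed, the paper states Lemma 6.26 without proof precisely because it is this kind of clause-by-clause matching. Your observation that the quantifications in clause (4) of Definition 6.25 and in Definition 1.2 range over the same set (since $t \le_\alpha s$ already forces $t \in \p_\alpha$) is the only point requiring comment, and you handle it correctly.
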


\begin{lemma}
Let $N$ be $\alpha$-suitable and 
assume that $p(N,\alpha)$ is strongly $(N,\p_\alpha)$-generic. 
Suppose that $*_N^\alpha(p,q,s)$ holds, 
and $D$ and $E$ are sets of determined conditions which are dense below $p(N,\alpha)$. 
Then there is $(p',q',s') \le_\alpha (p,q,s)$ such that $p' \in D$, $q' \in E$, 
$p'$ and $q'$ are below $s$, and $*_N^\alpha(p',q',s')$ holds.
\end{lemma}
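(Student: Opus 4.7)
The plan is to mimic the proof of Lemma 1.11, using Lemma 6.26 as a bridge between $*_N^\alpha$ and $*_N^{\p_\alpha}$, and taking care at each step to remain within the class of determined conditions and below $p(N,\alpha)$. Since $p(N,\alpha)$ is strongly $(N,\p_\alpha)$-generic, Lemma 1.7(1) is available in the ambient poset $\p_\alpha$, and since $p,q \le_\alpha p(N,\alpha)$ by clause (2) of Definition 6.25, any extension of $p$ or $q$ remains below $p(N,\alpha)$.

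First I would use $*_N^\alpha(p,s)$ together with Lemma 6.26 to conclude $*_N^{\p_\alpha}(p,s)$; in particular $p$ and $s$ are compatible. Picking a common extension in $\p_\alpha$ and then refining via the density of $D$ and of determined conditions (Lemma 5.5), I obtain a determined $p' \in D$ with $p' \le_\alpha p, s$, hence $p' \le_\alpha p(N,\alpha)$. Since $p'$ and $s$ are trivially compatible, Lemma 1.7(1) yields some $s_0 \le_\alpha s$ in $N \cap \p_\alpha$ with $*_N^{\p_\alpha}(p',s_0)$; applying Lemma 5.5 inside $N$ by elementarity, I strengthen $s_0$ to a determined condition still below $s$ in $N$, and the $*$-property is preserved by the note after Definition 1.2.

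Next, from $*_N^\alpha(q,s)$ and $s_0 \le_\alpha s$ in $N$, I have $*_N^{\p_\alpha}(q,s_0)$, so $q$ and $s_0$ are compatible; again taking a common extension and refining via the density of $E$ and of determined conditions gives a determined $q' \in E$ with $q' \le_\alpha q, s_0$, and in particular $q' \le_\alpha p(N,\alpha)$. A second application of Lemma 1.7(1) produces some $s' \le_\alpha s_0$ in $N \cap \p_\alpha$ with $*_N^{\p_\alpha}(q',s')$, which I again strengthen in $N$ to a determined condition. Since $*_N^{\p_\alpha}(p',s_0)$ and $s' \le_\alpha s_0$ lies in $N$, the note after Definition 1.2 gives $*_N^{\p_\alpha}(p',s')$, so $*_N^{\p_\alpha}(p',q',s')$ holds, and by Lemma 6.26 this upgrades to $*_N^\alpha(p',q',s')$ because $p', q'$ are determined and below $p(N,\alpha)$ and $s'$ is determined and in $N$.

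The only real obstacle is bookkeeping: the witnesses $s_0, s'$ produced by Lemma 1.7(1) live in $N \cap \p_\alpha$ but are not guaranteed determined, and Definition 6.25 demands determinacy. This is handled uniformly by the density of determined conditions (Lemma 5.5), which is available inside $N$ by elementarity, together with the downward persistence of the $*$-relation under further extensions in $N$. Also noteworthy is that one must verify $p' \le_\alpha p(N,\alpha)$ and $q' \le_\alpha p(N,\alpha)$ at the points where they are introduced; both follow immediately from $p, q \le_\alpha p(N,\alpha)$.
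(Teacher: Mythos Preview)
Your proof is correct and follows essentially the same route as the paper: translate $*_N^\alpha$ to $*_N^{\p_\alpha}$ via Lemma 6.26, carry out the construction, and translate back. The only difference is that the paper invokes Lemma 1.10 (not 1.11 --- there is no Lemma 1.11) as a black box and then extends $s'$ to a determined condition at the end, whereas you unroll the proof of Lemma 1.10 inline and interleave the determinacy adjustments; both work.
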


\begin{proof}
Note that $N$ is suitable for $\p_\alpha$ in the sense of Notation 1.4. 
And $*_N^\alpha(p,q,s)$ implies $*_N^{\p_\alpha}(p,q,s)$ as in Definition 1.2. 
By Lemma 1.10, there is $(p',q',s') \le_\alpha (p,q,s)$ 
such that $p' \in D$, $q' \in E$, $p'$ and $q'$ are below $s$, 
and $*_N^{\p_\alpha}(p',q',s')$ hold. 
Extending $s'$ further in $N$ if necessary, we may assume without loss of generality 
that $s'$ is determined. 
By Lemma 6.26, $*_N^\alpha(p',q',s')$ holds and we are done.
\end{proof}

\begin{lemma}
Let $N$ be $\alpha$-suitable 
and assume that $*_N^\alpha(r,u)$ holds. 
Then there is $v$ in $N \cap \p_\alpha$ satisfying:
\begin{enumerate}
\item $v \le_\alpha u$ is determined;

\item $\dom(a_r) \cap N \subseteq \dom(a_v)$;

\item $\dom(X_r) \cap N \subseteq \dom(X_v)$;

\item $a_r(0) \restriction 
(\omega_1 \times (N \cap \kappa)) \subseteq a_v(0)$;

\item for all $\beta \in \dom(a_r) \cap N$, 
$v \restriction \beta$ forces in $\p_\beta$ that 
$a_r(\beta) \restriction (N \cap \kappa)$ is in $\p(\dot T_\beta,\dot U_\beta)$ and 
$a_v(\beta) \le a_r(\beta) \restriction (N \cap \kappa)$ in $\p(\dot T_\beta,\dot U_\beta)$;

\item for all $\beta \in \dom(X_r) \cap N$, 
$X_r(\beta) \cap N \subseteq X_v(\beta)$.
\end{enumerate}
\end{lemma}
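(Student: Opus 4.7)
The plan is to produce $v$ by an elementarity argument in $N$. Since $*_N^\alpha(r,u)$ implies in particular that $u$ itself is compatible with $r$, fix a determined common extension $q \le_\alpha r, u$, which exists by Lemma 5.5. Set $\sigma := r \restriction N$ as in Definition 6.14. Although $r$ itself need not lie in $N$, the object $\sigma$ does: its domain pieces $\dom(a_r) \cap N$ and $\dom(X_r) \cap N$ are countable subsets of $N$; the value $a_\sigma(0)$ is a countable set of pairs with both coordinates in $N$; each $a_\sigma(\beta) = (A_{a_r(\beta)} \cap (N \cap \kappa), f_{a_r(\beta)} \restriction (N \cap \kappa))$ for nonzero $\beta$ is a pair of countable sets of ordinals below $N \cap \kappa$; and each $X_\sigma(\gamma)$ is a countable subset of $N$. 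The closure property $N^{< N \cap \kappa} \subseteq N$ from the definition of $\mathcal Y$ then puts $\sigma$ into $N$.

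Working in $V$, define $E$ to be the set of all determined $t \in \p_\alpha$ with $t \le_\alpha u$ satisfying the five conditions modeled on (2)--(6) with $\sigma$ in place of $r \restriction N$: namely, $\dom(a_\sigma) \subseteq \dom(a_t)$; $\dom(X_\sigma) \subseteq \dom(X_t)$; $a_\sigma(0) \subseteq a_t(0)$; for every nonzero $\beta \in \dom(a_\sigma)$, $t \restriction \beta$ forces in $\p_\beta$ that $a_\sigma(\beta)$ is a condition in $\p(\dot T_\beta, \dot U_\beta)$ and that $a_t(\beta) \le a_\sigma(\beta)$; and for every $\gamma \in \dom(X_\sigma)$, $X_\sigma(\gamma) \subseteq X_t(\gamma)$. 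All defining parameters $\sigma, u, \p_\alpha$ lie in $N$, so $E \in N$ by elementarity.

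The central check is that $q \in E$. The four inclusion clauses follow at once from $q \le_\alpha r$ together with the fact that $\sigma$'s ingredients are restrictions of $r$'s ingredients to $N$-indexed pieces. For the forcing clause, observe that $a_\sigma(\beta) = a_r(\beta) \restriction (N \cap \kappa)$; applying Lemma 4.4 in the generic extension by $\p_\beta$, the condition $r \restriction \beta$ forces that $a_\sigma(\beta) \in \p(\dot T_\beta, \dot U_\beta)$ and that $a_r(\beta) \le a_\sigma(\beta)$, while $q \le_\alpha r$ gives $q \restriction \beta \Vdash a_q(\beta) \le a_r(\beta)$, so $q \restriction \beta \Vdash a_q(\beta) \le a_\sigma(\beta)$. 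Hence $E \ne \emptyset$, and by elementarity there exists $v \in E \cap N$; unpacking the definition of $E$ shows this $v$ satisfies (1)--(6) verbatim.

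The main delicate point is the assertion $\sigma \in N$ in the first paragraph, since $r$ itself is typically outside $N$; once the closure argument places $\sigma$ in $N$, the remainder is a routine application of elementarity to push a witness $q$ constructed from $r$ down into $N$.
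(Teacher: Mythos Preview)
Your proof is correct and follows essentially the same approach as the paper's. The paper's argument is terser: it fixes a determined $v \le_\alpha r, u$, observes that (1)--(6) hold for this $v$ and that the parameters appearing in (1)--(6) lie in $N$, and then invokes elementarity to find such a $v$ inside $N$. You carry out exactly this argument but make the steps explicit --- packaging the parameters into $\sigma = r \restriction N$, verifying $\sigma \in N$ via closure of $N$, and naming the definable set $E$ whose nonemptiness is witnessed by your $q$ and then reflected into $N$.
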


\begin{proof}
Since $*_N^\alpha(r,u)$ holds, we can fix a determined condition 
$v \le_\alpha r, u$. 
Note that statements (1)--(6) hold for $v$, and the parameters mentioned in 
these statements are in $N$. 
So by elementarity, there is $v \in N \cap \p_\alpha$ satisfying (1)--(6).
\end{proof}

\begin{lemma}
Let $N$ be $\alpha$-suitable 
and assume that $p(N,\alpha)$ is strongly $(N,\p_\alpha)$-generic. 
Suppose that $*_N^\alpha(p,q,s)$ holds. 
Then there is $(p',q',s') \le_\alpha (p,q,s)$ such that $\#_N^\alpha(p',q',s')$ holds.
\end{lemma}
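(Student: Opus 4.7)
The plan is a fusion argument: construct a descending $\omega$-sequence of triples $(p_n, q_n, s_n)$ in $\p_\alpha$ with $(p_0, q_0, s_0) = (p, q, s)$, maintaining $*_N^\alpha(p_n, q_n, s_n)$ at each stage, and arranging that the three sequences $(p_n \restriction N)$, $(s_n)$, $(q_n \restriction N)$ interleave so that their greatest lower bounds coincide in the limit. Setting $(p', q', s') := (p_\omega, q_\omega, s_\omega)$ will then yield $\#_N^\alpha(p', q', s')$.

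For the inductive step from $(p_n, q_n, s_n)$, I will first invoke Lemma 6.27 with $D = E$ equal to the set of determined conditions $r \le_\alpha p(N,\alpha)$ with $\#_N^\alpha(r, r \restriction N)$, which is dense below $p(N,\alpha)$ by Lemma 6.19. This produces $(p_{n+1}, q_{n+1}, \bar s) \le_\alpha (p_n, q_n, s_n)$ with $p_{n+1}, q_{n+1} \le_\alpha \bar s$, both satisfying $\#_N^\alpha$ with their own $N$-traces, and $*_N^\alpha(p_{n+1}, q_{n+1}, \bar s)$. Next, apply Lemma 6.29 to $*_N^\alpha(p_{n+1}, \bar s)$ to obtain a determined $v_1 \in N \cap \p_\alpha$ with $v_1 \le_\alpha \bar s$; unpacking clauses (2), (3), (5), and (6) of Lemma 6.29 against Definition 6.14 shows that $v_1 \le_\alpha p_{n+1} \restriction N$ in $N \cap \p_\alpha$. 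By the monotonicity remark following Definition 6.25, $*_N^\alpha(q_{n+1}, v_1)$ holds, and a second application of Lemma 6.29 yields $v_2 \in N \cap \p_\alpha$ with $v_2 \le_\alpha v_1$ and $v_2 \le_\alpha q_{n+1} \restriction N$. Setting $s_{n+1} := v_2$, monotonicity again gives $*_N^\alpha(p_{n+1}, q_{n+1}, s_{n+1})$. Moreover, at the next stage, $p_{n+2}, q_{n+2} \le_\alpha s_{n+1}$ in $\p_\alpha$, so Lemma 6.16 forces $p_{n+2} \restriction N \le_\alpha s_{n+1}$ and $q_{n+2} \restriction N \le_\alpha s_{n+1}$ in $N$, producing the desired interleaving $p_{n+1} \restriction N \ge_\alpha s_{n+1} \ge_\alpha p_{n+2} \restriction N$ and its analogue for $q$.

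Finally, I will pass to greatest lower bounds $p_\omega, q_\omega, s_\omega$ using Lemmas 5.3 and 5.5; all three are determined. Lemma 6.20 yields $\#_N^\alpha(p_\omega, p_\omega \restriction N)$ with $p_\omega \restriction N$ equal to the glb of $(p_n \restriction N)$, and likewise for $q_\omega$. Because any lower bound of one interleaved subsequence is automatically a lower bound of the others, the three glbs of $(p_n \restriction N)$, $(s_n)$, and $(q_n \restriction N)$ agree as determined conditions; hence $p_\omega \restriction N = s_\omega = q_\omega \restriction N$, which is precisely $\#_N^\alpha(p_\omega, q_\omega, s_\omega)$. The main delicate point I expect is confirming that Lemma 6.29's output genuinely serves as an extension of the $N$-trace in $N \cap \p_\alpha$ under $\le_\alpha$, and that mutual domination of determined conditions forces actual equality via the canonical representation afforded by Lemma 5.5.
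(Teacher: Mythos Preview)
Your overall fusion strategy---interleaving applications of Lemma~6.27 and the ``$N$-trace capture'' lemma so that $(p_n \restriction N)$, $(s_n)$, and $(q_n \restriction N)$ sandwich one another---is exactly the idea behind the paper's proof. However, there is a genuine gap in your execution.

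You invoke Lemma~6.19 to assert that the set of $r$ with $\#_N^\alpha(r, r \restriction N)$ is dense below $p(N,\alpha)$, and you use this as the dense set $D = E$ in Lemma~6.27. But Lemma~6.19 carries the extra hypothesis that $p(N,\beta)$ is $(N,\p_\beta)$-generic for \emph{every} $\beta \in N \cap \alpha$, and Lemma~6.29 only assumes strong $(N,\p_\alpha)$-genericity at the single stage $\alpha$. Nothing in the hypotheses of Lemma~6.29 gives you genericity at lower stages, so your appeal to Lemma~6.19 is unjustified. This in turn undermines your use of Lemma~6.16 and Lemma~6.20, both of which require $p_{n+1} \restriction N$ to actually be a condition in $\p_\alpha$.

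The paper sidesteps this entirely: it applies the capture lemma (which you call Lemma~6.29 but is in fact Lemma~6.28; Lemma~6.29 is the statement you are proving) \emph{before} the Lemma~6.27 extension, using only $D(N,\alpha)$ as the target dense set via Lemma~6.9. Then at the end it verifies $p' \restriction N = q' \restriction N = s'$ directly at the level of components (domains, $X$-values, $a$-values), without ever needing the intermediate restrictions $p_n \restriction N$ to be conditions. Your interleaving argument can be salvaged the same way: drop Lemma~6.19, take $D = E = D(N,\alpha)$, and replace the appeal to Lemmas~6.16 and~6.20 with a direct check that the component data of $p' \restriction N$ and $s'$ coincide, using clauses (2)--(6) of Lemma~6.28 (you will need clause~(4) as well, which you omitted). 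This is routine once the sequences are built.
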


\begin{proof}
We define by induction 
a descending sequence $\langle (p_n,q_n,s_n) : n < \omega \rangle$. 
Let $(p_0,q_0,s_0) := (p,q,s)$. 
Fix $n < \omega$ and assume that $(p_n,q_n,s_n)$ has been defined so that 
$p_n$ and $q_n$ are in $D(N,\alpha)$ and $*_N^\alpha(p_n,q_n,s_n)$ holds.

Since $*_N^\alpha(p_n,q_n,s_n)$ holds, 
in particular, we have that $*_N^\alpha(p_n,s_n)$ holds. 
So we can fix $t_n \in N \cap \p_\alpha$ satisfying 
properties (1)--(6) of Lemma 6.28, where 
$r = p_n$, $u = s_n$, and $v = t_n$. 

Since $t_n \le_\alpha s_n$ is determined in $N$ 
and $*_N^\alpha(q_n,s_n)$ holds, we also have that 
$*_N^\alpha(q_n,t_n)$ holds. 
So we can fix $w_n \in N \cap \p_\alpha$ satisfying properties 
(1)--(6) of Lemma 6.28, where 
$r = q_n$, $u = t_n$, and $v = w_n$.

Since $w_n \le_\alpha s_n$ is in $N$ and determined, we have that $*_N^\alpha(p_n,q_n,w_n)$ holds. 
By Lemma 6.27, we can fix 
$(p_{n+1},q_{n+1},s_{n+1}) \le_\alpha (p_n,q_n,w_n)$ such that 
$p_{n+1}$ and $q_{n+1}$ are in $D(N,\alpha)$, are below $w_n$, 
and $*_N^\alpha(p_{n+1},q_{n+1},s_{n+1})$ holds.

This complete the induction. 
Now let $p'$, $q'$, and $s'$ be the greatest lower bounds of the 
sequences $\langle p_n : n < \omega \rangle$, $\langle q_n : n < \omega \rangle$, 
and $\langle s_n : n < \omega \rangle$ respectively. 
We claim that $\#_N^\alpha(p',q',s')$ holds. 
By Lemma 6.13, $p'$ and $q'$ are in $D(N,\alpha)$, and clearly 
$s' \in N \cap \p_\alpha$. 
By construction, it is easy to check that $p' \restriction N  = q' \restriction N = s'$.
\end{proof}

\begin{definition}
Let $\delta < \kappa$, and 
suppose that $\theta$ and $\tau$ are ordinals in $I_\delta$ which are 
either both even or both odd. 
Let $\dot W_\alpha$ denote $\dot T_\alpha$ if they are even and 
$\dot U_\alpha$ if they are odd. 
Let $p$ and $q$ be in $\p_\alpha$. 
We say that $(p,q)$ 
\emph{$\delta$-separates $(\theta,\tau)$ in $\p_\alpha$} 
if there exists $\gamma < \delta$ 
and distinct ordinals $\theta'$ and $\tau'$ in $I_\gamma$ such that 
$$
p \Vdash_{\p_\alpha} \theta' <_{\dot W_\alpha} \theta, \ \ \textrm{and} \ \
q \Vdash_{\p_\alpha} \tau' <_{\dot W_\alpha} \tau.
$$
\end{definition}

Note that if $(p,q)$ $\delta$-separates $(\theta,\tau)$ in $\p_\alpha$, then for all 
$(p',q') \le_\alpha (p,q)$, 
$(p',q')$ also $\delta$-separates $(\theta,\tau)$ in $\p_\alpha$.

\begin{lemma}
Let $N$ be $\alpha$-suitable and let $\lambda := N \cap \kappa$. 
Suppose that $p(N,\alpha)$ is strongly $(N,\p_\alpha)$-generic, 
the forcing poset $N \cap \p_\alpha$ forces that $\lambda = \omega_2$, and 
$N \cap \dot T_\alpha$ and $N \cap \dot U_\alpha$ 
are $(N \cap \p_\alpha)$-names for trees with no chains of order type $\lambda$. 

Assume that $*_N^\alpha(p,s)$ holds. 
Let $\theta \in I_\lambda$. 
Then there is $(p_0,p_1,t) \le_\alpha (p,p,s)$ such that 
$*_N^\alpha(p_0,p_1,t)$ holds and $(p_0,p_1)$ $\lambda$-separates 
$(\theta,\theta)$ in $\p_\alpha$.
\end{lemma}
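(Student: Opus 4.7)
The plan is to argue by contradiction. Suppose that no $(p_0,p_1,t) \le_\alpha (p,p,s)$ with $*_N^\alpha(p_0,p_1,t)$ and $(p_0,p_1)$ $\lambda$-separating $(\theta,\theta)$ exists. Writing $\dot W_\alpha$ for $\dot T_\alpha$ or $\dot U_\alpha$ according to the parity of $\theta$, I will derive the contradictory conclusion that, in any $V$-generic extension of $N \cap \p_\alpha$ containing $s$, the tree $N \cap \dot W_\alpha$ has a chain of order type $\lambda$.

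For each $\gamma < \lambda$, I first define a node $y_\gamma \in I_\gamma$. Let $D_\gamma$ be the set of determined conditions $q \le_\alpha p(N,\alpha)$ for which some $y \in I_\gamma$ satisfies $q \Vdash_{\p_\alpha} y <_{\dot W_\alpha} \theta$. Since $\dot W_\alpha$ is forced to be a standard tree and $\theta \in I_\lambda$ lies on level $\lambda$, every extension of $p(N,\alpha)$ has a further extension deciding the unique $\gamma$-predecessor of $\theta$, so $D_\gamma$ is dense below $p(N,\alpha)$. From $*_N^\alpha(p,s)$ I get $*_N^\alpha(p,p,s)$, so Lemma 6.27 with $D = E = D_\gamma$ yields $(p_1,p_2,t_1) \le_\alpha (p,p,s)$ with $*_N^\alpha(p_1,p_2,t_1)$ and $p_1, p_2 \in D_\gamma$. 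The contradiction hypothesis forces $p_1$ and $p_2$ to decide the same predecessor, and I set $y_\gamma$ to be this common value. Well-definedness of $y_\gamma$ follows from the same kind of reasoning: any two triples yielding different values could be merged via a further application of Lemma 6.27 to $*_N^\alpha(p,p,s)$ (with dense sets refining the first coordinates of the two triples) into a single triple realizing $\lambda$-separation at level $\gamma$, contradicting our assumption. Since $\lambda$ is inaccessible and $\gamma < \lambda$, $I_\gamma \subseteq \omega_1 \cdot (\gamma+1) \subseteq \lambda = N \cap \kappa$, so $y_\gamma \in N$.

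The sequence $\langle y_\gamma : \gamma < \lambda \rangle$ consists of pairwise-comparable predecessors of $\theta$ as forced by $p$, so $p \Vdash_{\p_\alpha} y_\gamma <_{\dot W_\alpha} y_{\gamma'}$ for each $\gamma < \gamma' < \lambda$. Using $*_N^\alpha(p,s)$ and elementarity of $N$, the set of $u \in N \cap \p_\alpha$ satisfying $u \Vdash_{\p_\alpha} y_\gamma <_{\dot W_\alpha} y_{\gamma'}$ is dense below $s$ in $N \cap \p_\alpha$: any $v \le_\alpha s$ in $N$ is compatible with $p$ by $*_N^\alpha(p,s)$, a common extension witnesses the inequality, and elementarity (with parameters $v$, $y_\gamma$, $y_{\gamma'}$, $\dot W_\alpha$, $\p_\alpha$, all in $N$) provides such an extension already in $N$. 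Consequently, in any $V$-generic filter $H$ on $N \cap \p_\alpha$ containing $s$, $\{y_\gamma : \gamma < \lambda\}$ is a chain of order type $\lambda$ in $N \cap \dot W_\alpha[H]$, contradicting the hypothesis.

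The main obstacle I anticipate is the last step: verifying that the chain is witnessed inside $V[H]$ rather than only in some further $\p_\alpha$-extension. This rests on interpreting $N \cap \dot W_\alpha$ correctly as an $(N \cap \p_\alpha)$-name, so that the $\p_\alpha$-forcing of a tree relation between two $N$-nodes by an element of $N$ reflects to the corresponding $(N \cap \p_\alpha)$-forced relation in $N \cap \dot W_\alpha$. The strong $(N,\p_\alpha)$-genericity of $p(N,\alpha)$ together with the elementarity of $N$ is what makes this reflection operative, and keeping that correspondence straight is where the care is needed.
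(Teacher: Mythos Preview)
Your overall strategy is the contrapositive of the paper's direct argument, and conceptually it is sound: if no separation exists, the branch below $\theta$ ought to be decidable from $V[H]$ for a generic $H$ on $N\cap\p_\alpha$ containing $s$, contradicting the Aronszajn hypothesis. However, your execution has a genuine gap at the well-definedness of $y_\gamma$ and, consequently, at the claim $p\Vdash_{\p_\alpha} y_\gamma<_{\dot W_\alpha} y_{\gamma'}$.

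The problem is this. Two triples $(p_1,p_2,t_1)$ and $(p_1',p_2',t_1')$ produced by separate applications of Lemma~6.27 may have $t_1$ and $t_1'$ \emph{incompatible} in $N\cap\p_\alpha$. Your proposed ``merging via a further application of Lemma~6.27 to $*_N^\alpha(p,p,s)$'' needs dense sets below $p(N,\alpha)$, but the conditions deciding the predecessor as a \emph{specific} value $y$ (or lying below a specific $p_1$) do not form such a dense set. Without a common $t$ witnessing $*_N^\alpha(p_1,p_1',t)$, you cannot conclude that $p_1$ and $p_1'$ agree, so $y_\gamma$ is not shown to be independent of the triple. The downstream claim that $p$ forces $y_\gamma<_{\dot W_\alpha}\theta$ then fails as well: an arbitrary $q\le_\alpha p$ need not be compatible with $s$, hence need not sit inside any triple, so nothing prevents it from deciding a different predecessor.

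The paper sidesteps this by fixing a single generic $G$ with $p,s\in G$, passing to the quotient $\q=(\p_\alpha/p(N,\alpha))/H$ with $H=N\cap G$, and observing that $b_\theta\notin V[H]$ forces the existence of $p_0',p_1'\le p$ in $\q$ deciding some level of $b_\theta$ differently. The crucial point is that both $p_0',p_1'$ live in the \emph{same} quotient, so a single $t\in H$ witnesses $*_N^\alpha(p_0,p_1,t)$. Your argument can be repaired by adopting this move: work in $V[H]$ for one fixed $H\ni s$, define $y_\gamma$ as the value any $q\le p$ in $\q$ in $D_\gamma$ must give (uniqueness now follows, since two such $q$'s share a witness in $H$), and conclude that $\langle y_\gamma:\gamma<\lambda\rangle\in V[H]$ is a $\lambda$-chain in $(N\cap\dot W_\alpha)^H$.
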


\begin{proof}
Let $\dot W$ denote $\dot T_\alpha$ if $\theta$ is even, 
and $\dot U_\alpha$ if $\theta$ is odd.

Since $*_N^\alpha(p,s)$ holds, $p$ and $s$ are compatible. 
Forcing below them, fix a generic filter $G$ on $\p_\alpha$ such that $p$ and $s$ are in $G$. 
Let $W := \dot W^G$ and $H := N \cap G$. 
Let $\q := (\p_\alpha / p(N,\alpha)) / H$, as described in Notation 1.5. 
Since $p \le_\alpha p(N,\alpha)$ and $p(N,\alpha)$ is strongly $(N,\p_\alpha)$-generic, 
we can apply Proposition 1.8 to conclude that 
$H$ is a $V$-generic filter on $N \cap \p_\alpha$, 
$G' := G \cap \q$ is $V[H]$-generic filter on $\q$, 
and $V[G] = V[H][G']$.

By the assumptions of the proposition, in $V[H]$ we have that 
$\lambda = \omega_2$ and 
$W' := (N \cap \dot W)^{H}$ is a tree with no chains of order type $\lambda$. 
Since $H \subseteq G$, $W'$ is obviously a subtree of $W$, and a 
straightforward argument using the 
elementarity of $N$ and the $(N \cap \p_{\alpha})$-genericity of $H$ 
shows in fact that $W' = W \restriction \lambda$.

Let $b_\theta = \{ x < \lambda : x <_{W} \theta \}$, and 
let $\dot b_\theta$ be a $\p_\alpha$-name which is  
forced to satisfy this definition. 
Since $b_\theta$ is a chain in $W'$ of order type $\lambda$, 
it follows that $b_\theta \notin V[H]$.

It follows by a standard argument that we can find 
$p_0', p_1' \le p$ in $\q$ and $\gamma < \lambda$ 
such that $p_0'$ and $p_1'$ decide in $\q$ the node of $\dot b_\theta$ on level 
$\gamma$ of $\dot W$ differently. 
Let $\theta_0$ and $\theta_1$ be the respective decisions of 
$p_0'$ and $p_1'$ for which ordinal of height $\gamma$ is in $\dot b_\theta$.
So $\theta_0 \ne \theta_1$ are in $I_\gamma$, 
$$
p_0' \Vdash_{\q}^{V[H]} \theta_0 <_{\dot W} \theta, \ \ \textrm{and} \ \  
p_1' \Vdash_{\q}^{V[H]} \theta_1 <_{\dot W} \theta.
$$

Fix $s' \le s$ in $H$ such that $s'$ forces that $p_0'$ and $p_1'$ are in $\q$ and 
that $p_0'$ and $p_1'$ forces the information above. 
Since $p_0' \in \q = (\p_\alpha / p(N,\alpha)) / H$ and $s' \in H$, 
by Lemma 1.7(3) we can fix a determined condition 
$p_0 \in \q$ such that $p_0 \le_\alpha p_0', s'$. 
Similarly, since $p_1' \in \q$ and $s \in H$, 
by Lemma 1.7(3) we can fix a determined 
condition $p_1 \in \q$ such that $p_1 \le_\alpha p_1', s'$. 
Fix $t \le s'$ in $H$ which is determined and 
forces in $N \cap \p_\alpha$ that $p_0$ and $p_1$ are in $\q$.

By Lemma 1.6, $*_N^{\p_\alpha}(p_0,p_1,t)$ holds. 
Since $p_0$ and $p_1$ are below $p(N,\alpha)$ and $p_0$, $p_1$, and $t$ are determined, 
it follows that $*_N^\alpha(p_0,p_1,t)$ holds by Lemma 6.26. 
Using the fact that $p_0 \le_\alpha s'$ and $p_1 \le_\alpha s'$, it is easy to check that 
$$
p_0 \Vdash_{\p_\alpha}^{V} \theta_0 <_{\dot W} \theta, \ \ \textrm{and} \ \  
p_1 \Vdash_{\p_\alpha}^{V} \theta_1 <_{\dot W} \theta.
$$
So $(p_0,p_1)$ $\lambda$-separates $(\theta,\theta)$ in $\p_\alpha$.
\end{proof}

\begin{proposition}
Let $N$ be $\alpha$-suitable and let $\lambda := N \cap \kappa$. 
Suppose that $p(N,\alpha)$ is strongly $(N,\p_\alpha)$-generic, 
the forcing poset $N \cap \p_\alpha$ forces that $\lambda = \omega_2$, and 
$N \cap \dot T_\alpha$ and $N \cap \dot U_\alpha$ 
are $(N \cap \p_\alpha)$-names for trees with no chains of order type $\lambda$. 

Assume that $*_N^\alpha(p,q,s)$ holds. 
Let $\theta$ and $\tau$ be ordinals in $I_{\lambda}$ 
which are either both even or both odd. 
Then there is $(p',q',s') \le_\alpha (p,q,s)$ 
such that $*_N^\alpha(p',q',s')$ holds and 
$(p',q')$ $\lambda$-separates $(\theta,\tau)$ in $\p_\alpha$.
\end{proposition}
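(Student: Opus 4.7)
The plan is to reduce to Lemma 6.31 by splitting only on the $p$-side, and then separately extending $q$ to decide a predecessor of $\tau$ at the resulting splitting level. Let $\dot W$ denote $\dot T_\alpha$ if $\theta,\tau$ are even and $\dot U_\alpha$ if they are odd.

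First I would apply Lemma 6.31 to $*_N^\alpha(p,s)$ with the ordinal $\theta$ to obtain $(p_0,p_1,t_1) \le_\alpha (p,p,s)$ such that $*_N^\alpha(p_0,p_1,t_1)$ holds and $(p_0,p_1)$ $\lambda$-separates $(\theta,\theta)$, with witnesses $\gamma < \lambda$ and distinct $\theta_0,\theta_1 \in I_\gamma$ satisfying $p_i \Vdash \theta_i <_{\dot W} \theta$. Since $*_N^\alpha(q,s)$ holds and $t_1 \le_\alpha s$ is a determined condition in $N$, it follows that $*_N^\alpha(q,q,t_1)$. Next, let $D$ be the collection of determined conditions in $\p_\alpha$ which decide the value of $p_{\dot W}(\tau,\gamma)$; since any condition can be extended to decide this value and then further to a determined condition by Lemma 5.5, $D$ is dense below $p(N,\alpha)$. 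Applying Lemma 6.27 to $*_N^\alpha(q,q,t_1)$ with $D$ in both slots produces $(q',q'',t_2) \le_\alpha (q,q,t_1)$ with $*_N^\alpha(q',q'',t_2)$ holding and $q' \in D$, and I fix $\tau' \in I_\gamma$ so that $q' \Vdash p_{\dot W}(\tau,\gamma) = \tau'$, whence $q' \Vdash \tau' <_{\dot W} \tau$.

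The final step is to choose $i \in \{0,1\}$ with $\theta_i \ne \tau'$, which is possible since $\theta_0 \ne \theta_1$, and set $(p',q',s') := (p_i,q',t_2)$. The inequality $(p',q',s') \le_\alpha (p,q,s)$ is immediate. To verify $*_N^\alpha(p',q',s')$, note that from $*_N^\alpha(p_i,t_1)$ and the fact that $t_2 \le_\alpha t_1$ is determined and in $N$ we obtain $*_N^\alpha(p_i,t_2)$, and combined with $*_N^\alpha(q',t_2)$ this gives $*_N^\alpha(p',q',s')$. The pair $(p_i,q')$ then $\lambda$-separates $(\theta,\tau)$ at the common level $\gamma < \lambda$ via the distinct nodes $\theta_i,\tau' \in I_\gamma$.

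The main obstacle is aligning the separating level on the two sides, since Lemma 6.31 only splits a single condition against itself. This is handled by fixing $\gamma$ first via the $\theta$-split and only afterward forcing $q$ to decide the predecessor of $\tau$ at that same $\gamma$; having two candidates $\theta_0,\theta_1$ on the $p$-side at level $\gamma$ then leaves enough room to avoid the single value $\tau'$ forced on the $q$-side.
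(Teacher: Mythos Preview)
Your proof is correct and follows essentially the same approach as the paper's: split on the $p$-side using Lemma 6.31 to obtain two candidate predecessors $\theta_0,\theta_1$ of $\theta$ at some level $\gamma$, then extend $q$ to decide the predecessor $\tau'$ of $\tau$ at that same level $\gamma$, and finally pick whichever $\theta_i$ differs from $\tau'$. The only cosmetic difference is that where the paper invokes Lemmas 1.6, 1.7(2), and 6.26 directly to extend $q$ and $t$, you package this step as an application of Lemma 6.27 to the triple $(q,q,t_1)$; the underlying argument is the same.
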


\begin{proof}
Let $\dot W$ denote $\dot T_\alpha$ if $\theta$ and $\tau$ are both even, 
and $\dot U_\alpha$ if $\theta$ and $\tau$ are both odd. 
By Lemma 6.31, fix $(p_0,p_1,t) \le_\alpha (p,p,s)$ such that 
$*_N^\alpha(p_0,p_1,t)$ holds and $(p_0,p_1)$ $\lambda$-separates 
$(\theta,\theta)$ as witnessed by a pair of distinct ordinals 
$(\theta_0,\theta_1)$ in $I_\gamma$, where $\gamma < \lambda$. 

Since $*_N^\alpha(q,s)$ holds and $t \le_\alpha s$ is determined, 
also $*_N^\alpha(q,t)$ holds. 
Using Lemmas 1.6, 1.7(2), and 6.26, we can find $q' \le_\alpha q$ determined, 
$t' \le_\alpha t$ determined in $N \cap \p_\alpha$, 
and $\tau'$ such that $q'$ decides $p_{\dot W}(\tau,\gamma)$ as $\tau'$ 
and $*_{N}^\alpha(q',t')$ holds. 

We have that $\theta_0 \ne \theta_1$ and $\tau'$ are in $I_\gamma$. 
Let $\theta'$ be the ordinal in $\{ \theta_0, \theta_1 \}$ which is different from $\tau'$, 
and let $p' = p_i$, where $\theta' = \theta_i$. 
Then $(p',q',t') \le_\alpha (p,q,s)$, $*_N^\alpha(p',q',t')$ holds, and 
$(p',q')$ $\lambda$-separates $(\theta,\tau)$ as witnessed 
by $(\theta',\tau')$ in $I_\gamma$.
\end{proof}

\begin{definition}
Let $N$ be $\alpha$-suitable and $p$ and $q$ be determined conditions in $\p_\alpha$. 
Let $\lambda := N \cap \kappa$. 
We say that $p$ and $q$ are \emph{$N$-separated in $\p_\alpha$} 
if for all nonzero $\beta \in \dom(a_p) \cap \dom(a_q) \cap N$, for all 
$\theta \in (\dom(f_{a_p(\beta)}) \cup \ran(f_{a_p(\beta)})) \cap I_{\lambda}$ 
and $\tau \in (\dom(f_{a_q(\beta)}) \cup \ran(f_{a_q(\beta)})) \cap I_{\lambda}$, 
where $\theta$ and $\tau$ are either both even or both odd, 
the pair $(p \restriction \beta,q \restriction \beta)$ 
$\lambda$-separates $(\theta,\tau)$ in $\p_\beta$.
\end{definition}

Note that this definition also makes sense for $\alpha \le \kappa$, but in that case 
any two conditions in $\p_\alpha$ are vacuously $N$-separated.

The next two lemmas have easy proofs, which we omit.

\begin{lemma}
Let $N$ be $\alpha$-suitable, $\beta \in N \cap \alpha$, 
and $p$ and $q$ in $D_{\beta,\alpha}$. 
Assume that 
$$
\dom(a_p) \cap \dom(a_q) \cap N \subseteq \beta
$$
Let $p' \le_\beta p \restriction \beta$ and $q' \le_\beta q \restriction \beta$ in $\p_\beta$, and 
suppose that $p'$ and $q'$ are $N$-separated in $\p_\beta$. 
Then $p' + p$ and $q' + q$ are $N$-separated in $\p_\alpha$.
\end{lemma}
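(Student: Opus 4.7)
The plan is to unpack Definition 6.33 directly and argue by case analysis on where the witnessing ordinal $\beta^*$ for the $N$-separation of $p'+p$ and $q'+q$ can lie. Let $\lambda := N \cap \kappa$. I fix an arbitrary nonzero $\beta^* \in \dom(a_{p'+p}) \cap \dom(a_{q'+q}) \cap N$ together with ordinals $\theta \in (\dom(f_{a_{p'+p}(\beta^*)}) \cup \ran(f_{a_{p'+p}(\beta^*)})) \cap I_\lambda$ and $\tau \in (\dom(f_{a_{q'+q}(\beta^*)}) \cup \ran(f_{a_{q'+q}(\beta^*)})) \cap I_\lambda$ of the same parity, and I aim to show that $((p'+p) \restriction \beta^*, (q'+q) \restriction \beta^*)$ $\lambda$-separates $(\theta,\tau)$ in $\p_{\beta^*}$.

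The entire content of the proof is the observation that, thanks to the hypothesis $\dom(a_p) \cap \dom(a_q) \cap N \subseteq \beta$, the only possibility is $\beta^* < \beta$. Indeed, if $\beta^* \ge \beta$, then by the definition of $+_{\beta,\alpha}$ in Notation 5.9 we have $a_{p'+p}(\beta^*) = a_p(\beta^*)$ and $a_{q'+q}(\beta^*) = a_q(\beta^*)$, which forces $\beta^* \in \dom(a_p) \cap \dom(a_q) \cap N$, and then the hypothesis gives $\beta^* < \beta$, a contradiction.

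Having localized $\beta^* < \beta$, the same definition of $+_{\beta,\alpha}$ yields $(p'+p) \restriction \beta^* = p' \restriction \beta^*$, $(q'+q) \restriction \beta^* = q' \restriction \beta^*$, $a_{p'+p}(\beta^*) = a_{p'}(\beta^*)$, and $a_{q'+q}(\beta^*) = a_{q'}(\beta^*)$. So the triple $(\beta^*,\theta,\tau)$ is exactly the data that triggers the assumed $N$-separation of $p'$ and $q'$ in $\p_\beta$ at this $\beta^*$, and that application of Definition 6.33 produces a $\gamma < \lambda$ together with distinct $\theta',\tau' \in I_\gamma$ of the correct parity witnessing $\lambda$-separation of $(\theta,\tau)$ in $\p_{\beta^*}$. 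I note in passing that, per the remark after Definition 6.33, the relevant determinedness of $p'+p$ and $q'+q$ is inherited from the determinedness of $p,q,p',q'$ implicit in applying the $N$-separation hypothesis. There is no genuine obstacle in this proof; the only subtlety is recognizing that the hypothesis on $\dom(a_p) \cap \dom(a_q) \cap N$ has been tailored precisely to force $\beta^* < \beta$, after which the claim reduces verbatim to the assumption.
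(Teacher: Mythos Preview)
Your proof is correct and is exactly the straightforward unpacking the paper has in mind when it says the proof is easy and omits it. The key observation that the hypothesis $\dom(a_p) \cap \dom(a_q) \cap N \subseteq \beta$ forces any relevant $\beta^*$ to lie below $\beta$, after which everything reduces to the assumed $N$-separation of $p'$ and $q'$, is precisely the point. (One trivial remark: in Definition~6.30 the witnesses $\theta',\tau'$ need only be distinct ordinals in some $I_\gamma$; no parity condition is imposed on them, so your phrase ``of the correct parity'' is a slight overstatement, though harmless.)
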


\begin{lemma}
Let $N$ be $\alpha$-suitable. 
Assume that $\cf(\alpha) = \omega$ and $\langle \alpha_n : n < \omega \rangle$ is a 
strictly increasing sequence of ordinals cofinal in $\alpha$ which belongs to $N$. 
Let $\langle p_n : n < \omega \rangle$ and 
$\langle q_n : n < \omega \rangle$ be descending sequences of conditions in 
$\p_\alpha$ such that for all positive $n < \omega$, 
$p_{n} \restriction \alpha_{n}$ and $q_{n} \restriction \alpha_{n}$ are 
$N$-separated in $\p_{\alpha_n}$. 
Let $p^*$ and $q^*$ be 
the greatest lower bounds of $\langle p_n : n < \omega \rangle$ and 
$\langle q_n : n < \omega \rangle$ respectively. 
Then $p^*$ and $q^*$ are $N$-separated in $\p_\alpha$.
\end{lemma}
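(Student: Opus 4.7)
The plan is to unwind the definition of $N$-separation at the level of the greatest lower bounds and trace the required witnesses back to some level $n$ at which the hypothesis applies. Let $\lambda := N \cap \kappa$. Fix a nonzero $\beta \in \dom(a_{p^*}) \cap \dom(a_{q^*}) \cap N$ and let $\theta$ and $\tau$ be as in Definition 6.33, i.e., of the same parity, with $\theta$ in the domain or range of $f_{a_{p^*}(\beta)}$ at level $\lambda$, and similarly for $\tau$ and $a_{q^*}(\beta)$. The goal is to produce $\gamma < \lambda$ and distinct $\theta',\tau' \in I_\gamma$ that witness $\lambda$-separation of $(\theta,\tau)$ by $(p^*\restriction\beta,q^*\restriction\beta)$ in $\p_\beta$.

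First I would use the explicit description of the greatest lower bound given in Lemma 5.3, together with the description in Lemma 4.2 of how $a_{p^*}(\beta)$ is built from the $a_{p_n}(\beta)$ (and likewise on the $q$-side). Since we are tacitly dealing with determined conditions --- $p^*$ and $q^*$ are determined by Lemma 5.5 whenever the $p_n, q_n$ are, which is the only setting in which $N$-separation is defined --- this gives $\dom(a_{p^*}) = \bigcup_n \dom(a_{p_n})$, and on the component at $\beta$ we have the actual (not just forced) equalities $\dom(f_{a_{p^*}(\beta)}) = \bigcup_n \dom(f_{a_{p_n}(\beta)})$ and $\ran(f_{a_{p^*}(\beta)}) = \bigcup_n \ran(f_{a_{p_n}(\beta)})$, and likewise for the $q^*$-side.

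Next I would choose a positive $n$ large enough that: $\beta \in \dom(a_{p_n}) \cap \dom(a_{q_n})$; $\alpha_n > \beta$ (possible since $\beta < \alpha$ and $\langle \alpha_n \rangle$ is cofinal in $\alpha$, using $\beta \in N$); and $\theta$ lies in $\dom(f_{a_{p_n}(\beta)}) \cup \ran(f_{a_{p_n}(\beta)})$ and $\tau$ in $\dom(f_{a_{q_n}(\beta)}) \cup \ran(f_{a_{q_n}(\beta)})$. Because $\alpha_n > \beta$, the restrictions $(p_n \restriction \alpha_n) \restriction \beta$ and $p_n \restriction \beta$ agree, and similarly for $q_n$. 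The hypothesis that $p_n \restriction \alpha_n$ and $q_n \restriction \alpha_n$ are $N$-separated in $\p_{\alpha_n}$, applied to $\beta \in N \cap \alpha_n$ and this same pair $(\theta,\tau)$, yields $\gamma < \lambda$ and distinct $\theta',\tau' \in I_\gamma$ with $p_n \restriction \beta \Vdash_{\p_\beta} \theta' <_{\dot W_\beta} \theta$ and $q_n \restriction \beta \Vdash_{\p_\beta} \tau' <_{\dot W_\beta} \tau$, where $\dot W_\beta$ is the appropriate one of $\dot T_\beta, \dot U_\beta$.

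Finally, since $p^* \le_\alpha p_n$ implies $p^* \restriction \beta \le_\beta p_n \restriction \beta$ by Lemma 5.2(3) (and similarly on the $q$-side), the same $\gamma, \theta', \tau'$ witness $\lambda$-separation of $(\theta,\tau)$ by $(p^* \restriction \beta, q^* \restriction \beta)$ in $\p_\beta$, as noted immediately after Definition 6.30. This is the required conclusion, and there is no real obstacle here --- the lemma is essentially a bookkeeping consequence of the fact that the witnesses to $\lambda$-separation come from some bounded stage $n$ and are preserved by passing to the greatest lower bound. The only point needing mild care is verifying that the same $n$ can be chosen to simultaneously witness $\beta \in \dom(a_{p_n}) \cap \dom(a_{q_n})$, $\alpha_n > \beta$, and the appearance of $\theta, \tau$ in the relevant finite approximations, which is immediate from the union descriptions above.
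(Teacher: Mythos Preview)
Your argument is correct and is exactly the natural unwinding one would expect; the paper itself omits the proof, noting only that it is easy. The key bookkeeping points you identify---that $\dom(a_{p^*})$, the domain and range of $f_{a_{p^*}(\beta)}$, etc., are the increasing unions over $n$, that a single large enough positive $n$ catches $\beta$, $\theta$, $\tau$ and satisfies $\alpha_n > \beta$, and that $\lambda$-separation passes down to extensions---are precisely what is needed.

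One tiny remark: your parenthetical ``using $\beta \in N$'' to justify $\alpha_n > \beta$ is unnecessary; you only need $\beta < \alpha$ and that $\langle \alpha_n \rangle$ is cofinal in $\alpha$. The membership $\beta \in N$ is needed elsewhere (to invoke $N$-separation at level $\alpha_n$, since Definition~6.33 quantifies over $\beta \in \dom(a_p) \cap \dom(a_q) \cap N$), but not for bounding $\beta$ by some $\alpha_n$.
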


\section{Preservation of $\kappa$}

We now complete the argument that the forcing iteration from Section 5 preserves $\kappa$, 
under the assumption that $\kappa$ is ineffable.

\begin{notation}
A sequence $\langle N_\gamma : \gamma \in S \rangle$, where $S \subseteq \kappa$, 
is said to be \emph{$\alpha$-suitable} if:
\begin{enumerate}
\item each $N_\gamma$ is $\alpha$-suitable;
\item $\gamma < \xi$ in $S$ implies that $N_\gamma \in N_\xi$;
\item if $\delta \in S$ is a limit point of $S$, then 
$N_\delta = \bigcup \{ N_\gamma : \gamma \in S \cap \delta \}$.
\end{enumerate}
\end{notation}

Note that an $\alpha$-suitable sequence is suitable in the sense of Definition 2.1.

Recall that we are assuming as an inductive hypothesis that for all 
$\beta < \alpha$, $\p_\beta$ preserves $\kappa$. 
We will need to isolate two additional inductive hypotheses. 
Later we will verify that these hypotheses hold at $\alpha$ as well.

\begin{inductivehyp}
Let $\beta < \alpha$. 
Suppose that 
$\langle N_\lambda : \lambda \in S \rangle$ is $\beta$-suitable, where 
$S \in J^+$. 
Let $T$ be the set of $\lambda \in S$ such that $p(N_\lambda,\beta)$ 
is strongly $(N_\lambda,\p_\beta)$-generic. 
Then $S \setminus T \in J$.
\end{inductivehyp}

\begin{inductivehyp}
Let $\beta < \alpha$. 
Suppose that 
$\langle N_\lambda : \lambda \in S \rangle$ is $\beta$-suitable, where 
$S \in J^+$. 
Let $T$ be the set of $\lambda \in S$ such that whenever 
$\#_{N_\lambda}^\beta(p,q,s)$ holds, then there is $s' \le_\alpha s$ such that 
$*_{N_\lambda}^\beta(p,q,s')$ holds.
Then $S \setminus T \in J$.
\end{inductivehyp}

\begin{lemma}
Let $\langle N_\lambda : \lambda \in S \rangle$ be $\alpha$-suitable, with union $N$, 
where $S \in J^+$. 
Let $T$ be the set of $\lambda \in S$ which satisfy that whenever 
$\#_{N_\lambda}^\alpha(p,q,s)$ holds, 
then there exists $(p',q',s') \le_\alpha (p,q,s)$ in $\p_\alpha$ satisfying:
\begin{enumerate}
\item $\#_{N_\lambda}^\alpha(p',q',s')$;
\item $p'$ and $q'$ are $N_\lambda$-separated in $\p_\alpha$.
\end{enumerate}
Then $S \setminus T \in J$, and in particular, $T \in J^+$.
\end{lemma}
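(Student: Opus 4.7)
The plan is to show that the set of $\lambda \in S$ where the conclusion fails is $J$-null, from which $T \in J^+$ follows automatically since $S \in J^+$. First I would use the hypotheses available below $\alpha$ to collapse every failure into one $J$-null exceptional set. Concretely, Inductive Hypotheses 7.2 and 7.3 and Proposition 2.4 each yield, at every fixed $\beta < \alpha$, a $J^*$-set of $\lambda \in S$ on which the relevant good property holds for $\p_\beta$. Since $|\alpha| \le \kappa$ and $J$ is normal, I would fix the $\unlhd$-least bijection $g : \kappa \to \alpha$ (which lies in every $N_\lambda$ by elementarity) and take a diagonal intersection: this produces $S^* \subseteq S$ with $S \setminus S^* \in J$ such that for every $\lambda \in S^*$ and every $\beta \in N_\lambda \cap \alpha$, the condition $p(N_\lambda,\beta)$ is strongly $(N_\lambda,\p_\beta)$-generic, every $\#_{N_\lambda}^\beta$-witness admits a $*_{N_\lambda}^\beta$-extension, and $N_\lambda \cap \p_\beta$ forces $\lambda = \omega_2$ while $N_\lambda \cap \dot T_\beta$ and $N_\lambda \cap \dot U_\beta$ are names for trees without chains of order type $\lambda$. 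The absorbing step is that $\beta \in N_\lambda$ forces $g^{-1}(\beta) \in N_\lambda \cap \kappa = \lambda$.

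It then suffices to show every $\lambda \in S^*$ belongs to $T$. Fixing $\lambda \in S^*$ and $(p,q,s)$ with $\#_{N_\lambda}^\alpha(p,q,s)$, I would build a descending sequence $\langle (p_n,q_n,s_n) : n < \omega \rangle$ preserving $\#_{N_\lambda}^\alpha$ at every stage, guided by a diagonal bookkeeping of $\omega \times \omega$ that schedules every potential demand $(\beta,\theta,\tau)$ --- with $\beta \in \dom(a_{s_n}) \setminus \{0\}$ and $\theta,\tau$ matching-parity ordinals in $I_\lambda$ drawn from the coordinate-$\beta$ domains or ranges at $p_n,q_n$. To address a single demand $(\beta_n,\theta_n,\tau_n)$: push $(p_n,q_n,s_n)$ into $D_{\beta_n,\alpha}$ via Lemma 6.23 (preserving $\#$); by Lemma 6.21(1) the restriction to $\p_{\beta_n}$ inherits $\#_{N_\lambda}^{\beta_n}$; strengthen it to a $*_{N_\lambda}^{\beta_n}$-witness using the clause of Inductive Hypothesis 7.3 built into $S^*$; apply the $\p_{\beta_n}$-instance of Proposition 6.32, whose hypotheses are exactly the conditions secured on $S^*$, to produce a $*_{N_\lambda}^{\beta_n}$-extension that $\lambda$-separates $(\theta_n,\tau_n)$ in $\p_{\beta_n}$; pass back to $\#_{N_\lambda}^{\beta_n}$ via Lemma 6.29; and finally glue using $+_{\beta_n,\alpha}$, with Lemma 6.22(2) certifying $\#_{N_\lambda}^\alpha(p_{n+1},q_{n+1},s_{n+1})$. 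The separation at $\beta_n$ is preserved because $(p_{n+1} \restriction \beta_n, q_{n+1} \restriction \beta_n)$ is the separated pair itself, and separations at $\beta_k$ for $k < n$ persist under further extension.

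Letting $(p^*,q^*,s^*)$ be the greatest lower bound, Lemma 6.24 gives $\#_{N_\lambda}^\alpha(p^*,q^*,s^*)$. Since $\dom(a_{s^*}) = \bigcup_n \dom(a_{s_n})$ by Lemma 5.3 and each $a_{p^*}(\beta)$ is the union of the $a_{p_n}(\beta)$ as in Lemma 4.2, every demand arising at $(p^*,q^*,s^*)$ is already present at some $(p_n,q_n,s_n)$ and hence was addressed at some finite stage; the resulting $\lambda$-separation transfers to $(p^* \restriction \beta, q^* \restriction \beta)$. Thus $(p^*,q^*,s^*)$ witnesses $\lambda \in T$ for the starting triple $(p,q,s)$, and since $(p,q,s)$ was arbitrary, $\lambda \in T$.

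The main obstacle is the bookkeeping, which must accommodate demands that only appear after the fusion has partially progressed: both $\dom(a_{s_n})$ and the finite families of level-$\lambda$ nodes at each coordinate can grow when new $\beta < \beta_n$ are introduced during the gluing. Since every such set is countable at every stage, and an $\omega$-union remains countable, a standard diagonal enumeration of $\omega \times \omega$ that revisits each stage infinitely often guarantees that every demand appearing at the limit is scheduled at some finite stage beyond its first occurrence; the remaining arithmetic of $\#$ versus $*$ and the interplay of the $+$-gluing are then routine applications of the Section 6 material.
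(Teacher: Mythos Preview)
Your argument is correct (modulo two citation slips: ``Lemma 6.22(2)'' should be Lemma 6.21(2), and ``Lemma 6.24'' for the lower bound should be Lemma 6.22), but it proceeds differently from the paper.

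The paper argues by induction on $\alpha$, assuming the lemma itself holds at every $\beta < \alpha$ and splitting into three cases according to whether $\alpha$ is a successor or a limit of countable or uncountable cofinality. In the two limit cases it does \emph{not} invoke Inductive Hypotheses 7.2, 7.3 or Proposition 2.4 at all: it simply picks (in the uncountable-cofinality case) a single $\beta \in N_\lambda \cap \alpha$ above $(\dom(a_p) \cup \dom(a_q)) \cap N_\lambda$, or (in the countable-cofinality case) a cofinal $\omega$-sequence $\langle \alpha_n \rangle$, and appeals to the lemma at those smaller ordinals. Only in the successor case $\alpha = \beta + 1$ does the paper run the $\#\to *\to$ separate $\to \#$ loop you describe, and then only at the single top coordinate $\beta$; since the gluing fixes $a_{p}(\beta)$, the set of level-$\lambda$ nodes at $\beta$ is frozen at the outset and a single $\omega$-enumeration suffices. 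Separation at all coordinates below $\beta$ is then obtained in one stroke from the inductive hypothesis on the lemma.

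Your approach dispenses with the induction on the lemma entirely: you assemble IH 7.2, 7.3 and Proposition 2.4 at every $\beta \in N_\lambda \cap \alpha$ via a diagonal intersection, and then run a single $\omega$-fusion that handles every coordinate directly. The price is the dynamic bookkeeping you correctly flag --- extending in $\p_{\beta_n}$ can introduce new coordinates and new level-$\lambda$ nodes below $\beta_n$, so the enumeration must catch demands that appear mid-construction. This is standard but more delicate than the paper's route, where the inductive call to the lemma at $\beta$ absorbs all such lower-level demands at once. Conversely, your argument is uniform in $\alpha$ and avoids the three-way case split.
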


\begin{proof}
The proof is by induction on $\alpha < \kappa^+$. 
Note that when $\alpha \le \kappa$, the statement is vacuously true. 
So assume that $\kappa < \alpha$, and the lemma holds for $\p_\beta$ 
for all $\beta < \alpha$. 
For each $\beta < \alpha$, let 
$S_\beta := \{ \lambda \in S : \beta \in N_\lambda \}$, which is a tail of $S$. 
Then the sequence $\langle N_\lambda : \lambda \in S_\beta \rangle$ 
is $\beta$-suitable, with union $N$, and $S_\beta \in J^+$. 
So by the inductive hypothesis, we can fix a set 
$C_\beta \in J^*$ such that for all $\lambda \in S_\beta \cap C_\beta$, 
whenever $\#_{N_\lambda}^\beta(p,q,s)$ holds, 
then there exists $(p',q',s') \le_\beta (p,q,s)$ satisfying:
\begin{enumerate}
\item $\#_{N_\lambda}^\beta(p',q',s')$;
\item $p'$ and $q'$ are $N_\lambda$-separated in $\p_\beta$.
\end{enumerate}

Define 
$$
C := \{ \lambda < \kappa : \textrm{if} \ \lambda \in S, \ \textrm{then} \ 
\forall \beta \in N_\lambda \cap \alpha, \ 
\lambda \in C_\beta \}.
$$
Then $C \in J^*$ since $J$ is normal. 
Specifically, if $C \notin J^*$, then $C' := \kappa \setminus C$ is a subset of $S$
in $J^+$. 
By Lemma 2.2, we can find $S' \subseteq C'$ in $J^+$ and $\beta < \alpha$
such that for all $\lambda \in S'$, $\lambda \notin C_\beta$. 
This is impossible, since $S' \in J^+$ and $C_\beta \in J^*$ imply that 
$S' \cap C_\beta \ne \emptyset$. 
So it suffices to show that $C \cap (S \setminus T) \in J$.

The proof splits into the three cases of whether 
$\alpha$ is a successor ordinal, $\alpha$ is a limit ordinal with 
countable cofinality, or $\alpha$ is a limit ordinal with uncountable cofinality.

\bigskip

\noindent Case 1: $\alpha$ is a limit ordinal with uncountable cofinality. 
We claim that $C \cap (S \setminus T) = \emptyset$. 
It suffices to show that if $\lambda \in C \cap S$, then $\lambda \in T$. 
So let $\lambda \in C \cap S$, and assume that 
$\#_{N_\lambda}^\alpha(p,q,s)$ holds. 
We will find $(p',q',s') \le_\alpha (p,q,s)$ in $\p_\alpha$ satisfying that 
$\#_{N_\lambda}^\alpha(p',q',s')$ and $p'$ and $q'$ are 
$N_\lambda$-separated in $\p_\alpha$.

Since $N_\lambda^\omega \subseteq N_\lambda$ and $\cf(\alpha) > \omega$, 
it easily follows that 
$\cf(\sup(N_\lambda \cap \alpha)) > \omega$.
So we can fix $\beta \in N_\lambda \cap \alpha$ such that 
$$
(\dom(a_p) \cup \dom(a_q)) \cap N_\lambda \subseteq \beta.
$$
By Lemma 6.23, fix $(p_0,q_0,s_0) \le_\alpha (p,q,s)$ in $D_{\beta,\alpha}$ such that 
$\#_{N_\lambda}^\alpha(p_0,q_0,s_0)$ holds. 
By Lemma 6.21(1),  
$\#_{N_\lambda}^\beta(p_0 \restriction \beta,q_0 \restriction \beta,s_0 \restriction \beta)$ 
holds.

Since $\beta \in N_\lambda$ and $\lambda \in C$, we have that 
$\lambda \in C_\beta \cap S_\beta$. 
By the definition of $C_\beta$, we can fix 
$(p_1,q_1,s_1) \le_\beta (p_0 \restriction \beta,q_0 \restriction \beta,s_0 \restriction \beta)$ 
in $\p_\beta$ satisfying that $\#_{N_\lambda}^\beta(p_1,q_1,s_1)$ holds and 
$p_1$ and $q_1$ are $N_\lambda$-separated in $\p_\beta$.

Define $p' := p_1 + p_0$, 
$q' := q_1 + q_0$, and 
$s' := s_1 + s_0$. 
Then $(p',q',s') \le_\alpha (p_0,q_0,s_0) \le_\alpha (p,q,s)$. 
By Lemma 6.21(2), $\#_N^\alpha(p',q',s')$ holds, and by Lemma 6.34, 
$p'$ and $q'$ are $N_\lambda$-separated in $\p_\alpha$.

\bigskip

\noindent Case 2: $\cf(\alpha) = \omega$. 
We claim that $C \cap (S \setminus T) = \emptyset$. 
It suffices to show that if $\lambda \in C \cap S$, then $\lambda \in T$. 
So let $\lambda \in C \cap S$, and assume that 
$\#_{N_\lambda}^\alpha(p,q,s)$ holds. 
We will find $(p',q',s') \le_\alpha (p,q,s)$ satisfying that 
$\#_{N_\lambda}^\alpha(p',q',s')$ holds and $p'$ and $q'$ are 
$N_\lambda$-separated in $\p_\alpha$.

Since $\alpha \in N_\lambda$, by elementarity we can fix in $N_\lambda$ 
a sequence $\langle \alpha_n : n < \omega \rangle$ which is 
increasing and cofinal in $\alpha$. 
Since $\lambda \in C$, we have that for all $n < \omega$, 
$\lambda \in C_{\alpha_n} \cap S_{\alpha_n}$.

We define by induction a descending sequence  
$\langle (p_n,q_n,s_n) : n < \omega \rangle$ in $\p_\alpha$ below 
$(p,q,s)$ 
satisfying that for all $n < \omega$:
\begin{enumerate}
\item $\#_{N_\lambda}^{\alpha}(p_n,q_n,s_n)$;
\item for all positive $n < \omega$, 
$p_{n} \restriction \alpha_n$ and $q_n \restriction \alpha_n$ 
are $N_\lambda$-separated in $\p_{\alpha_n}$.
\end{enumerate}
If such a sequence can be constructed, then let $p'$, $q'$, and $s'$ 
be the greatest lower bounds of 
$\langle p_n : n < \omega \rangle$, 
$\langle q_n : n < \omega \rangle$, 
and $\langle s_n : n < \omega \rangle$ respectively. 
By Lemma 6.22, $\#_{N_\lambda}^\alpha(p',q',s')$ holds, and by Lemma 6.35, 
$p'$ and $q'$ are $N_\lambda$-separated in $\p_\alpha$.

Let $p_0 := p$, $q_0 := q$, and $s_0 := s$. 
Now fix $n < \omega$ and assume that $(p_n,q_n,s_n)$ is defined as required. 
By Lemma 6.23, we can fix 
$(p_n',q_n',s_n') \le_\alpha (p_n,q_n,s_n)$ such that 
$\#_{N_\lambda}^\alpha(p_n',q_n',s_n')$ holds and 
for all $n < \omega$, 
$p_n'$, $q_n'$, and $s_n'$ are in $D_{\alpha_{n+1},\alpha}$. 
By Lemma 6.21(1), 
$\#_{N_\lambda}^{\alpha_{n+1}}(p_n' \restriction \alpha_{n+1},
q_n' \restriction \alpha_{n+1},s_n' \restriction \alpha_{n+1})$ holds.

Since $\lambda \in C_{\alpha_{n+1}} \cap S_{\alpha_{n+1}}$, 
there is 
$$
(u_n,v_n,t_n) \le_{\alpha_{n+1}} (p_n' \restriction \alpha_{n+1},
q_n' \restriction \alpha_{n+1},s_n' \restriction \alpha_{n+1})
$$ 
such that 
$\#_{N_\lambda}^{\alpha_{n+1}}(u_n,v_n,t_n)$ holds and 
$u_n$ and $v_n$ are $N_\lambda$-separated in $\p_{\alpha_{n+1}}$. 
Define $p_{n+1} := u_n + p_n'$, 
$q_{n+1} := v_n + q_n'$, and $s_{n+1} := t_n + s_n'$. 
By Lemma 6.21(2),  
$\#_{N_\lambda}^\alpha(p_{n+1},q_{n+1},s_{n+1})$ holds. 
And $p_{n+1} \restriction \alpha_{n+1} = u_n$ and 
$q_{n+1} \restriction \alpha_{n+1} = v_n$ 
are $N_\lambda$-separated in $\p_{\alpha_{n+1}}$. 
This completes the construction.

\bigskip

\noindent Case 3: $\alpha = \beta+1$ is a successor ordinal. 
Applying Inductive Hypotheses 7.2 and 7.3 to the $\beta$-suitable 
sequence $\langle N_\lambda : \lambda \in S_\beta \rangle$, 
we can fix a set $D \in J^*$ such that for all 
$\lambda \in S_\beta \cap D$:
\begin{enumerate}
\item $p(N_\lambda,\beta)$ is strongly $(N_\lambda,\p_\beta)$-generic;
\item whenever $\#_{N_\lambda}^\beta(p,q,s)$ holds, then there is $s' \le_\beta s$ such that 
$*_{N_\lambda}^\beta(p,q,s')$ holds.
\end{enumerate}

By the inductive hypotheses, $\p_\beta$ preserves $\kappa$, and hence 
forces that $\kappa = \omega_2$. 
By Proposition 2.4, we can fix a set $E \in J^*$ such that for 
all $\lambda \in S_\beta \cap E$, 
$N_\lambda \cap \p_\beta$ forces that $\lambda = \omega_2$, and 
$N_\lambda \cap \dot T_\beta$ and $N_\lambda \cap \dot U_\beta$ are 
$(N_\lambda \cap \p_\beta)$-names for trees which have no chains of 
order type $\lambda$.

We claim that $C \cap D \cap E \cap (S \setminus T) = \emptyset$. 
It suffices to show that if $\lambda \in C \cap D \cap E \cap S$, then $\lambda \in T$. 
So let $\lambda \in C \cap D \cap E \cap S$, and assume that 
$\#_{N_\lambda}^\alpha(p,q,s)$ holds. 
We will find $(p',q',s') \le_\alpha (p,q,s)$ satisfying that 
$\#_{N_\lambda}^\alpha(p',q',s')$ holds and $p'$ and $q'$ are 
$N_\lambda$-separated in $\p_\alpha$.

By Lemma 6.23, fix $(p_0,q_0,s_0) \le_\alpha (p,q,s)$ in $D_{\beta,\alpha}$ 
such that $\#_{N_\lambda}^\alpha(p_0,q_0,s_0)$ holds. 
Let $\langle \theta_n : n < r_0 \rangle$, where $r_0 \le \omega$, 
enumerate all ordinals in 
$\dom(f_{a_{p_0}(\beta)}) \cup \ran(f_{a_{p_0}(\beta)})$ in $I_\lambda$, 
and let $\langle \tau_n : n  <  r_1 \rangle$, where $r_1 \le \omega$, 
enumerate all ordinals in 
$\dom(f_{a_{q_0}(\beta)}) \cup \ran(f_{a_{q_0}(\beta)})$ in $I_\lambda$.

Let $n \mapsto (n_0,n_1)$ denote a bijection from $\omega$ onto 
$\omega \times \omega$. 
We will define by induction a sequence 
$\langle (u_n,v_n,t_n) : n < \omega \rangle$ of conditions in $\p_\beta$ 
below $(p_0 \restriction \beta,q_0 \restriction \beta,s_0 \restriction \beta)$ 
satisfying that for all $n < \omega$:
\begin{enumerate}
\item $\#_{N_\lambda}^\beta(u_{n},v_{n},t_{n})$ holds;
\item if $n_0 < r_0$ and $n_1 < r_1$ and 
$\theta_{n_0}$ and $\tau_{n_1}$ 
are either both even or both odd, then 
$(u_{n+1},v_{n+1})$ $\lambda$-separates $(\theta_{n_0},\tau_{n_1})$ in $\p_\beta$.
\end{enumerate}

Let $u_0 := p_0 \restriction \beta$, 
$v _0 := q_0 \restriction \beta$, and $t_0 := s_0 \restriction \beta$. 
By Lemma 6.21(1), 
$\#_{N_\lambda}^\beta(u_0,v_0,t_0)$ holds. 

Fix $n < \omega$ and assume that $(u_n,v_n,t_n)$ is defined as required. 
If it is not the case that $n_0 < r_0$, $n_1 < r_1$, and $\theta_{n_0}$ and 
$\tau_{n_1}$ are either both even or both odd, then let 
$(u_{n+1},v_{n+1},t_{n+1}) := (u_n,v_n,t_n)$. 

Otherwise, since $\lambda \in S_\beta \cap D$, we can fix $t_n^* \le_\beta t_n$ such that 
$*_{N_\lambda}^\beta(u_n,v_n,t_n^*)$ holds. 
Also, $p(N_\lambda,\beta)$ is strongly $(N_\lambda,\p_\beta)$-generic. 
By Proposition 6.32 (applied to $\beta$), there is 
$(u_n',v_n',t_n') \le_\beta (u_n,v_n,t_n^*)$ satisfying that 
$*_{N_\lambda}^\beta(u_n',v_n',t_n')$ holds 
and $(u_n',v_n')$ $\lambda$-separates $(\theta_{n_0},\tau_{n_1})$ in $\p_\beta$. 
By Lemma 6.29 (applied to $\beta$), we can fix 
$(u_{n+1},v_{n+1},t_{n+1}) \le_\beta (u_n',v_n',t_n')$ such that 
$\#_{N_\lambda}^\beta(u_{n+1},v_{n+1},t_{n+1})$ holds. 
Then also $(u_{n+1},v_{n+1})$ $\lambda$-separates $(\theta_{n_0},\tau_{n_1})$.

This completes the construction. 
Let $u$, $v$, and $t$ denote the greatest lower bounds in $\p_\beta$ 
of the sequences 
$\langle u_n : n < \omega \rangle$, 
$\langle v_n : n < \omega \rangle$, and 
$\langle t_n : n < \omega \rangle$ respectively. 
By Lemma 6.22, $\#_{N_\lambda}^\beta(u,v,t)$ holds.

Since $\lambda \in S_\beta \cap C_\beta$,  
there exists $(u',v',t') \le_\beta (u,v,t)$ satisfying:
\begin{enumerate}
\item $\#_{N_\lambda}^\beta(u',v',t')$;
\item $u'$ and $v'$ are $N_\lambda$-separated in $\p_\beta$.
\end{enumerate}

Let $p' := u' + p_0$, $q' := v' + q_0$, and $s' := t' + s_0$. 
Then $(p',q',s') \le_\alpha (p,q,s)$, and by Lemma 6.21(2), 
$\#_{N_\lambda}^\alpha(p',q',s')$ holds. 
As $u'$ and $v'$ are $N_\lambda$-separated in $\p_\beta$, by our construction which 
handled separation at $\beta$, 
it is easy to check that $p'$ and $q'$ are $N_\lambda$-separated in $\p_{\alpha}$.
\end{proof}

\begin{proposition}
Let $\langle N_\lambda : \lambda \in S \rangle$ be $\alpha$-suitable, 
with union $N$, where $S \in J^+$. 
Assume that for each $\lambda \in S$, we have fixed conditions 
$p_\lambda$, $q_\lambda$, and $s_\lambda$ such that 
$\#_{N_\lambda}^\alpha(p_\lambda,q_\lambda,s_\lambda)$ holds. 
Then there exists $U \subseteq S$ in $J^+$ such that 
for all $\lambda < \mu$ in $U$, 
$p_\lambda$ and $q_\mu$ are compatible in $\p_\alpha$.
\end{proposition}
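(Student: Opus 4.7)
The plan combines three ingredients: Lemma 7.4 to arrange $N_\lambda$-separation, the ineffability of $J$ via Lemma 2.3 to impose coherence on the $s_\lambda$'s, and a coordinate-by-coordinate compatibility argument based on Proposition 4.14.

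First, I apply Lemma 7.4 to the given $\alpha$-suitable sequence to extract $S_1 \subseteq S$ with $S \setminus S_1 \in J$ such that, after replacing each $(p_\lambda, q_\lambda, s_\lambda)$ by a refinement still satisfying $\#_{N_\lambda}^\alpha$, the conditions $p_\lambda$ and $q_\lambda$ are $N_\lambda$-separated in $\p_\alpha$; this replacement is harmless because compatibility of extensions implies compatibility of the originals. Next, each $s_\lambda$ is a determined element of $N_\lambda$, and under a canonical coding of $H(\kappa^{++})$ by sets of ordinals it becomes a subset $B_\lambda \subseteq N_\lambda$. Applying Lemma 2.3 to $\langle B_\lambda : \lambda \in S_1 \rangle$ yields a stationary $T \subseteq S_1$ (hence $T \in J^+$) and a set $B \subseteq N$ with $B \cap N_\lambda = B_\lambda$ for every $\lambda \in T$; choosing the coding so that intersection with $N_\lambda$ mirrors the restriction operation of Definition 6.14, the decoded global object $s^*$ satisfies $s^* \restriction N_\lambda = s_\lambda$ for all $\lambda \in T$. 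Consequently, for $\lambda < \mu$ in $T$, $s_\mu \restriction N_\lambda = s_\lambda$, and hence $s_\mu \le_\alpha s_\lambda$ by the remarks following Definition 6.14. Set $U := T$.

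For $\lambda < \mu$ in $U$, chaining $q_\mu \le_\alpha s_\mu \le_\alpha s_\lambda = p_\lambda \restriction N_\lambda$ exhibits $s_\lambda$ as a common core of $p_\lambda$ and $q_\mu$, and I construct a common extension $r$ by recursion on coordinates $\beta < \alpha$. At coordinates owned by only one of $p_\lambda, q_\mu$, the extension inherits that coordinate directly; at $\beta \in \dom(a_{p_\lambda}) \cap \dom(a_{q_\mu})$, I invoke Proposition 4.14 in $\p(\dot T_\beta, \dot U_\beta)$ with separating levels $\gamma = \lambda$ and $\xi = \mu$. Injectivity on $\lambda$ and $\mu$ follows from $p_\lambda \in D(N_\lambda, \alpha)$ and $q_\mu \in D(N_\mu, \alpha)$; the equality of restrictions demanded there reduces on both sides to $a_{s_\lambda}(\beta)$ via the coherence $s_\mu \restriction N_\lambda = s_\lambda$; and the tree-level incomparability of height-$\lambda$ nodes of $p_\lambda$ with height-$\mu$ nodes of $q_\mu$ is the payoff of the $N_\lambda$-separation from Step 1, transferred from $q_\lambda$ to $q_\mu$ through the common restriction $s_\lambda$. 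The side conditions are assembled as $X_{p_\lambda} \cup X_{q_\mu}$.

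The main obstacle is precisely this last transfer together with the verification of the set-theoretic hypothesis $A_{a_{p_\lambda}(\beta)} \subseteq \mu$ required by Proposition 4.14. The latter will likely require an auxiliary application of Lemma 2.2 arranging, for $\lambda < \mu$ in $U$, that the piece of $p_\lambda$ lying outside $N_\lambda$ is absorbed into $N_\mu$, so that all relevant $A$-sets are bounded by $\mu$; the former requires chasing the separating tree-data provided by Lemma 7.4 through the coherence of Step 2 to certify that the same separating ordinals witnessing incomparability of $p_\lambda$ from $q_\lambda$ continue to witness incomparability from $q_\mu$. Carrying out this bookkeeping uniformly at every coordinate $\beta$ while preserving all the separation and coherence data is the technical heart of the argument.
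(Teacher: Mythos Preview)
Your overall architecture is right (Lemma 7.4, then thin out, then amalgamate coordinatewise via Proposition 4.14), but two of your steps use the wrong mechanism and would not go through as stated.

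First, in Step 2 you invoke Lemma 2.3 (ineffability) to obtain only the coherence $s_\mu \restriction N_\lambda = s_\lambda$. This is both harder than necessary and too weak: since each $s_\lambda$ is a \emph{member} of $N_\lambda$, Lemma 2.2 (normality) already fixes it to a single value $s$ on a set in $J^+$. Your weaker conclusion gives only $a_{s_\lambda}(\beta) = a_{s_\mu}(\beta) \restriction \lambda$, whereas hypothesis (2) of Proposition 4.14 requires $a_{p_\lambda}(\beta) \restriction \lambda = a_{q_\mu}(\beta) \restriction \mu$, i.e.\ $a_{s_\lambda}(\beta) = a_{s_\mu}(\beta)$; nothing prevents $A_{a_{s_\mu}(\beta)}$ from meeting $[\lambda,\mu)$. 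The paper simply presses down to get $s_\lambda = s_\mu$ outright.

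Second, and more seriously, your separation transfer cannot run ``through the common restriction $s_\lambda$'': the nodes that must be separated lie at heights $\lambda$ and $\mu$, hence outside $N_\lambda$, and the separating witnesses are not recorded in $s_\lambda$ at all. The paper's device is to encode, for each $\lambda$, the separating pairs $(\theta',\tau')$ as a countable function $f_\lambda \in N_\lambda$ (indexed by $(\beta,m,n)$, together with an auxiliary parity function $g_\lambda$), and then use Lemma 2.2 again to make $f_\lambda$ and $g_\lambda$ constant on $U$. The point is that $f_\lambda(\beta,m,n) = (\theta',\tau')$ simultaneously certifies $p_\lambda \restriction \beta \Vdash \theta' <_{\dot W} \theta(\lambda,\beta,m)$ and $q_\lambda \restriction \beta \Vdash \tau' <_{\dot W} \tau(\lambda,\beta,n)$; once $f_\lambda = f_\mu$, the same $\tau'$ is forced below $\tau(\mu,\beta,n)$ by $q_\mu$, giving the cross-level incomparability. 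Finally, your treatment of one-sided coordinates (``inherit directly'') ignores the side-condition constraints: when $\beta \in \dom(a_{p_\lambda}) \setminus \dom(a_{q_\mu})$ you must still put $M \cap \kappa$ into $A_{a_r(\beta)}$ for relevant $M \in \bigcup \ran(X_{q_\mu})$, and ruling out the problematic case $M \cap \kappa < \mu$ requires another pressed-down invariant (the set $K_\lambda$ in the paper). All of this is done with a single application of Lemma 2.2 stabilizing $s_\lambda$, $f_\lambda$, $g_\lambda$, $K_\lambda$, arranging $p_\lambda \in N_\mu$, and making the Levy-collapse residues $J_\lambda$ pairwise disjoint.
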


\begin{proof}
By Lemma 7.4, we can fix $T \subseteq S$ in $J^+$ 
such that for all $\lambda \in T$,  there is 
$(p_\lambda^*,q_\lambda^*,s_\lambda^*) \le_\alpha (p_\lambda,q_\lambda,s_\lambda)$ 
satisfying that 
$\#_{N_\lambda}^\alpha(p_\lambda^*,q_\lambda^*,s_\lambda^*)$ holds, 
and $p_\lambda^*$ and $q_\lambda^*$ are $N_\lambda$-separated in $\p_\alpha$. 
By intersecting $T$ with a club if necessary, let us also assume that 
$N_\lambda \cap \kappa = \lambda$ for all $\lambda \in T$. 
It suffices to find $U \subseteq T$ in $J^+$ 
such that for all $\lambda < \mu$ in $U$, 
$p_\lambda^*$ and $q_\mu^*$ are compatible in $\p_\alpha$.

Consider $\lambda \in T$. 
Define 
$$
J_\lambda := (\dom(a_{p_\lambda^*}(0)) \cup \dom(a_{q_\lambda^*}(0))) 
\setminus (\omega_1 \times \lambda)
$$
Note that $J_\lambda$ is a subset of 
$N \setminus N_\lambda$ of size 
less than $\kappa$. 
Define 
$$
K_\lambda := \bigcup \{ M \cap N_\lambda : M \in 
\bigcup \ran(X_{q_\lambda^*}), \ M \cap \kappa < \lambda \}.
$$
Note that $K_\lambda$ is the union of 
countably many subsets of $N_\lambda$ 
each of size less than $\lambda$. 
Hence, $K_\lambda$ is a subset of $N_\lambda$ of size less than $\lambda$. 
As $N_\lambda^{< \lambda} \subseteq N_\lambda$, it follows that 
$K_\lambda \in N_\lambda$.

Since $\#_{N_\lambda}^\alpha(p_\lambda^*,q_\lambda^*,s_\lambda^*)$ holds, 
we have that $p_\lambda^* \restriction N_\lambda = s_\lambda^* = 
q_\lambda^* \restriction N_\lambda$. 
In particular, 
$$
\dom(a_{p_\lambda^*}) \cap N_\lambda = \dom(a_{s_\lambda^*}) = 
\dom(a_{q_\lambda^*}) \cap N_\lambda.
$$
Consider $\beta$ in $\dom(a_{s_\lambda^*})$. 
Enumerate all members of 
$\dom(f_{a_{p_{\lambda}^*}(\beta)}) \cup \ran(f_{a_{p_\lambda^*}(\beta)})$ 
in $I_\lambda$ as 
$$
\langle \theta(\lambda,\beta,m) : m < r_{\lambda,\beta,0} \rangle,
$$
where $r_{\lambda,\beta,0} \le \omega$, and enumerate all members of 
$\dom(f_{a_{q_{\lambda}^*}(\beta)}) \cup \ran(f_{a_{q_\lambda^*}(\beta)})$ 
in $I_\lambda$ as 
$$
\langle \tau(\lambda,\beta,n) : n < r_{\lambda,\beta,1} \rangle,
$$
where $r_{\lambda,\beta,1} \le \omega$.

Now define a partial function 
$$
f_\lambda : \dom(a_{s_\lambda^*}) \times r_{\lambda,\beta,0} \times 
r_{\lambda,\beta,1} \to \lambda \times \lambda
$$ 
so that $f_\lambda(\beta,m,n)$ is defined iff 
$\theta(\lambda,\beta,m)$ and $\tau(\lambda,\beta,n)$ 
are either both even or both odd, 
in which case $f_\lambda(\beta,m,n)$ is equal to some witness 
$(\theta',\tau')$ to the fact that the pair 
$(p_\lambda^* \restriction \beta, q_\lambda^* \restriction \beta)$ 
$\lambda$-separates 
$(\theta(\lambda,\beta,m), \tau(\lambda,\beta,n))$ 
in $\p_{\beta}$. 
Specifically, if $f_\lambda(\beta,m,n) = (\theta',\tau')$, then $\theta' \ne \tau'$, and 
for some $\gamma < \lambda$, $\theta'$ and $\tau'$ are in $I_\gamma$, 
$p_\lambda^* \restriction \beta \Vdash_{\p_\beta} \theta' <_{\dot W} \theta$ and 
$q_\lambda^* \restriction \beta \Vdash_{\p_\beta} \tau' <_{\dot W} \tau$, 
where $\dot W = \dot T_\beta$ in the case that 
$\theta(\lambda,\beta,m)$ and $\tau(\lambda,\beta,n)$ are 
both even, and $\dot W = \dot U_\beta$ in the case that 
$\theta(\lambda,\beta,m)$ and $\tau(\lambda,\beta,n)$ are both odd. 
In addition, define a function 
$g_\lambda : \dom(a_{s_\lambda^*}) \to 
[\omega]^{\le \omega}$ 
by letting $g_\lambda(\beta)$ be the set of $n < r_{\lambda,\beta,1}$ 
such that $\tau(\lambda,\beta,n)$ is even. 
Note that $f_\lambda$ and $g_\lambda$ 
are countable subsets of $N_\lambda$, and hence 
are members of $N_\lambda$. 

To summarize, we have that $s_\lambda^*$, $K_\lambda$, $f_\lambda$, 
and $g_\lambda$ are members of $N_\lambda$, 
and $J_\lambda$ is a subset of $N \setminus N_\lambda$ 
of size less than $\kappa$. 
By Lemma 2.2, we can fix $U \subseteq T$ in $J^+$ such that 
for all $\lambda < \mu$ in $U$, 
$s_\lambda^* = s_\mu^*$, 
$K_\lambda = K_\mu$ , $f_\lambda = f_\mu$, 
$g_\lambda = g_\mu$, 
$p_\lambda^* \in N_\mu$, 
and $J_\lambda \cap J_\mu = \emptyset$.

Fix $\lambda < \mu$ in $U$, and we will show that 
$p_\lambda^*$ and $q_\mu^*$ are compatible in $\p_{\alpha}$. 
For notational simplicity, let $p := p_\lambda^*$ and $q := q_\mu^*$. 
We will define a lower bound $r = (a_r,X_r)$ of $p$ and $q$.

Let the domain of $a_r$ be equal to 
$\dom(a_p) \cup \dom(a_q)$ and 
the domain of $X_r$ be equal to $\dom(X_p) \cup \dom(X_q)$. 
For each $\beta \in \dom(X_r)$, define 
$X_r(\beta) := X_p(\beta) \cup X_q(\beta)$.

We will define $a_r(\gamma)$ for all $\gamma \in \dom(a_r)$ 
by induction on $\gamma$. 
Assuming that $\beta \le \alpha$ and $a_r(\gamma)$ is defined 
for all $\gamma \in \dom(a_r) \cap \beta$, 
we will maintain two inductive hypotheses:
\begin{enumerate}

\item $r \restriction \beta := 
(a_r \restriction \beta,X_r \restriction (\beta+1))$ is in $\p_\beta$ and 
extends $p \restriction \beta$ and $q \restriction \beta$ in $\p_\beta$;

\item for all nonzero $\gamma \in \dom(a_r) \cap \beta$ and 
$\xi \in \dom(X_r)$ with $\gamma < \xi$, 
for all $M \in X_r(\xi)$ with $\gamma \in M$, 
$r \restriction \gamma \Vdash_{\p_\gamma} M \cap \kappa \in A_{a_r(\gamma)}$.
\end{enumerate}

\bigskip

To begin, let $a_r(0) := a_p(0) \cup a_q(0)$. 
Since $p \restriction N_\lambda = s_\lambda^* = s_\mu^* = 
q \restriction N_\mu$, 
we have that 
$$
a_{p}(0) \restriction (\omega_1 \times \lambda) = 
a_{s_\lambda^*}(0) = a_{s_\mu^*}(0) = 
a_{q}(0) \restriction (\omega_1 \times \mu).
$$
Also, $\dom(a_{p}(0)) \setminus (\omega_1 \times \lambda) \subseteq 
J_\lambda$, 
$\dom(a_{q}(0)) \setminus (\omega_1 \times \mu) \subseteq 
J_\mu$, 
and $J_\lambda \cap J_\mu = \emptyset$. 
It easily follows that $a_r(0)$ is a function, and hence 
is in $\coll(\omega_1, <\! \kappa)$, and 
$a_r(0)$ extends $a_p(0)$ and $a_q(0)$ in $\coll(\omega_1, <\! \kappa)$. 
Thus, $r \restriction 1 = (a_r \restriction 1,\emptyset)$ is as required. 
We also have that $r \restriction \beta = r \restriction 1$ for all 
$1 \le \beta \le \kappa$, and these objects obviously satisfy the 
inductive hypotheses.

\bigskip

Assume that $\kappa < \beta \le \alpha$ is a limit ordinal and 
$a_r(\gamma)$ is defined for all $\gamma \in \dom(a_r) \cap \beta$. 
Assume that for all 
$\beta' < \beta$, $r \restriction \beta'$ satisfies the inductive hypotheses. 
Let us check that 
$r \restriction \beta = (a_r \restriction \beta,X_r \restriction (\beta+1))$ 
is as required.

By inductive hypothesis (1), we know that for all $\beta' < \beta$, 
$$
r \restriction \beta' = (a_r \restriction \beta',X_r \restriction (\beta'+1))
$$
is in 
$\p_{\beta'}$ and is below $p \restriction \beta'$ and $q \restriction \beta'$.

Let us show that $r \restriction \beta$ is in $\p_\beta$. 
Referring to the definition of $\p_\beta$ from Section 5 
in the case that $\beta$ is a limit ordinal, 
requirements (1)--(4) are immediate. 
For requirement (5), we need to show that 
if $M \in X_r(\beta)$ and 
$\gamma \in M \cap \dom(a_r) \cap \beta$ 
is nonzero, then 
$r \restriction \gamma \Vdash_{\p_\gamma} M \cap \kappa \in A_{a_r(\gamma)}$. 
But this follows immediately from inductive hypothesis (2) holding 
for $r \restriction (\gamma+1)$.

Using inductive hypothesis (1) and the definition of $X_r$, 
it is simple to check that $r \restriction \beta$ extends $p \restriction \beta$ and 
$q \restriction \beta$ in $\p_\beta$. 
Also, inductive hypothesis (2) for $r \restriction \beta$ follows immediately
from the fact that it holds for $r \restriction \beta'$ for all $\beta' < \beta$.

\bigskip

Now assume that $\kappa \le \beta < \alpha$ 
and $a_r \restriction \beta$ is defined as required. 
We will define $a_r(\beta)$, and then show that 
$r \restriction (\beta+1) = (a_r \restriction (\beta+1), X_r \restriction (\beta+2))$ 
satisfies the inductive hypotheses. 
We will consider four separate cases.

\bigskip

\noindent Case 1: $\beta \notin \dom(a_p) \cup \dom(a_q)$. 
In this case, $\beta \notin \dom(a_r)$ and 
$$
r \restriction (\beta+1) = (a_r \restriction \beta,X_r \restriction (\beta+2)).
$$
It is easy to check that $r \restriction (\beta+1)$ is as required.

\bigskip

\noindent Case 2: $\beta \in \dom(a_q) \setminus \dom(a_p)$. 
Since $\dom(a_q) \cap N_\mu = \dom(a_{s_\mu^*}) = \dom(a_{s_\lambda^*}) = 
\dom(a_p) \cap N_\lambda$ and $\beta \notin \dom(a_p)$, it follows that 
$\beta \notin N_\mu$. 
Define $a_r(\beta) := a_q(\beta)$.

Let us show that if $\xi > \beta$ is in $\dom(X_r)$, $M \in X_r(\xi)$, and 
$\beta \in M$, then 
$r \restriction \beta \Vdash_{\p_\beta} M \cap \kappa \in A_{a_q(\beta)}$. 
This statement together with the inductive hypotheses for $r \restriction \beta$ 
easily imply that $r \restriction (\beta+1)$ is as required.

If $M \in X_q(\xi)$, then we are done since $q$ is a condition and 
$r \restriction \beta \le_\beta q \restriction \beta$. 
Assume that $M \in X_p(\xi)$. 
Since $p \in N_\mu$, $M \in N_\mu$. 
So $M \subseteq N_\mu$. 
But then $\beta \in M \subseteq N_\mu$, so $\beta \in N_\mu$. 
This contradicts the observation above that $\beta \notin N_\mu$.

\bigskip

\noindent Case 3: $\beta \in \dom(a_{p}) \setminus \dom(a_q)$. 
Then $\beta \notin N_\lambda$, for otherwise 
$\beta \in \dom(a_p) \cap N_\lambda = \dom(a_{s_\lambda^*}) = 
\dom(a_{s_\mu^*}) = \dom(a_q) \cap N_\mu$, which 
contradicts that $\beta \notin \dom(a_q)$.

Since $p \in N_\mu$, we have that $\sup(A_{a_p(\beta)}) < N_\mu \cap \kappa = \mu$. 
Let $x$ be the set of ordinals of the form $M \cap \kappa$, where 
$M \in X_q(\xi)$ for some $\xi > \beta$ and $M \cap \kappa \ge \mu$. 
Then $x$ is countable and consists of ordinals of uncountable cofinality which 
are greater than or equal to $\mu$. 
By Lemma 4.5, we can fix a nice $\p_\beta$-name $a_r(\beta)$ 
which $r \restriction \beta$ forces is an extension of $a_p(\beta)$ 
such that $x \subseteq A_{a_r(\beta)}$.

We will show that whenever $\xi > \beta$ is in $\dom(X_r)$, $M \in X_r(\xi)$, and 
$\beta \in M$, then 
$r \restriction \beta \Vdash_{\p_\beta} M \cap \kappa \in a_r(\beta)$. 
This claim together with the inductive hypotheses easily imply that 
$r \restriction (\beta+1)$ is as required.

Since $r \restriction \beta$ forces that $A_{a_p(\beta)} \subseteq A_{a_r(\beta)}$, 
if $M \in X_p(\xi)$ then we are done since $p$ is a condition. 
Suppose that $M \in X_q(\xi)$. 
If $M \cap \kappa \ge \mu$, then $M \cap \kappa \in x$, so we are done 
by the choice of $a_r(\beta)$.

We claim that the remaining case $M \cap \kappa < \mu$ does not occur. 
Assume for a contradiction that $M \cap \kappa < \mu$.  
Then $M \cap N_\mu \subseteq K_\mu$ by the definition of $K_\mu$. 
But $K_\mu = K_\lambda$. 
So $M \cap N_\mu \subseteq K_\lambda \subseteq N_\lambda$. 
Note that $\beta \in M \cap N_\mu$; namely, $\beta \in M$ by assumption, and 
$\beta \in N_\mu$, since $p \in N_\mu$. 
So $\beta \in N_\lambda$. 
But we observed above that $\beta \notin N_\lambda$, and we have a contradiction.

\bigskip

\noindent Case 4: $\beta \in \dom(a_p) \cap \dom(a_q)$. 
We claim that it suffices to show that $r \restriction \beta$ forces that 
$a_p(\beta)$ and $a_q(\beta)$ are compatible in 
$\p(\dot T_\beta,\dot U_\beta)$. 
For then we can choose a $\p_\beta$-name $a_r(\beta)$ which 
$r \restriction \beta$ forces is below $a_p(\beta)$ and $a_q(\beta)$. 
As in the previous cases, it then suffices to verify that 
whenever $\xi > \beta$ is in $\dom(X_r)$, $M \in X_r(\xi)$, and 
$\beta \in M$, then 
$r \restriction \beta \Vdash_{\p_\beta} M \cap \kappa \in a_r(\beta)$. 
Since $\beta \in \dom(a_p) \cap \dom(a_q)$, if $M \in X_p(\xi)$ then 
$p \restriction \beta$, and hence $r \restriction \beta$, forces that 
$M \cap \kappa \in A_{a_p(\beta)}$. 
And if $M \in X_q(\xi)$, then $q \restriction \beta$, 
and hence $r \restriction \beta$, forces 
that $M \cap \kappa \in A_{a_q(\xi)}$. 
But $r \restriction \beta$ forces that 
$A_{a_p(\beta)} \cup A_{a_q(\beta)} \subseteq A_{a_r(\beta)}$. 
So in either case, $r \restriction \beta$ forces that 
$M \cap \kappa \in A_{a_r(\beta)}$.

We now complete the proof by showing that 
$r \restriction \beta$ forces that 
$a_p(\beta)$ and $a_q(\beta)$ are compatible. 
For notational simplicity, let $a_p(\beta) = (f,A)$, 
$a_q(\beta) = (g,B)$, and $s = s_\lambda^* = s_\mu^*$. 
Then $p \restriction N_\lambda = s = q \restriction N_\mu$. 
So $\beta \in \dom(a_s)$ and 
$(A,f) \restriction \lambda = a_s(\beta) = (B,g) \restriction \mu$.

Since $\#_{N_\lambda}^\alpha(p,s)$ and $\#_{N_\mu}^\alpha(q,s)$ hold, in particular, 
$p \in D(N_\lambda,\alpha)$ and $q \in D(N_\mu,\alpha)$. 
As $p \in N_\mu$, by elementarity $\beta \in N_\mu$. 
Hence, $\beta \in \dom(a_q) \cap N_\mu = 
\dom(a_{s}) = \dom(a_p) \cap N_\lambda$. 
So $\beta \in N_\lambda \cap N_\mu$. 
Also, since $p \in N_\mu$, by elementarity $A \subseteq \mu$.

The fact that $p \in D(N_\lambda,\alpha)$ and 
$\beta \in \dom(a_p) \cap N_\lambda$ implies by definition that 
$p \restriction \beta \Vdash_{\p_\beta} a_p(\beta)$ is injective on 
$N_\lambda \cap \kappa = \lambda$. 
Similarly, $q \restriction \beta \Vdash_{\p_\beta} a_q(\beta)$ 
is injective on $\mu$.

Since $p \in D(N_\lambda,\alpha)$ and $q \in D(N_\mu,\alpha)$, 
we have that $N_\lambda \cap \alpha \in X_p(\alpha)$ and 
$N_\mu \cap \alpha \in X_q(\alpha)$. 
As $\beta \in \dom(a_p) \cap (N_\lambda \cap \alpha)$, 
$p$ being a condition implies that $p \restriction \beta$ forces that 
$(N_\lambda \cap \alpha) \cap \kappa = N_\lambda \cap \kappa = \lambda \in 
A_{a_p(\beta)}$. 
Hence, $\lambda \in A_{a_p(\beta)} = A$. 
Similarly, $\mu \in B$.

To summarize, the following statements are forced by $r \restriction \beta$ 
to be true:
\begin{enumerate}
\item $\lambda < \mu$, $\lambda \in A$, and $\mu \in B$;
\item $A \subseteq \mu$;
\item $(A,f) \restriction \lambda = (B,g) \restriction \mu$;
\item $(A,f)$ is injective on $\lambda$ and 
$(B,g)$ is injective on $\mu$.
\end{enumerate}

By Proposition 4.14, to show that $r \restriction \beta$ forces that 
$a_p(\beta)$ and $a_q(\beta)$ are compatible, it suffices to show that 
$r \restriction \beta$ forces that every node of $\dom(f)$ in $I_\lambda$ 
is incomparable in $\dot T_\beta$ 
with every node of $\dom(g)$ in $I_\mu$, 
and every node of $\ran(f)$ in $I_\lambda$ is 
incomparable in $\dot U_\beta$ with every node of $\ran(g)$ in $I_\mu$.

Consider $\theta \in \dom(f)$ in $I_\lambda$ 
and $\tau \in \dom(g)$ in $I_\mu$. 
Then $\theta$ and $\tau$ are both even. 
Fix $m < r_{\lambda,\beta,0}$ and $n < r_{\mu,\beta,1}$ such that 
$\theta = \theta(\lambda,\beta,m)$ and $\tau = \tau(\mu,\beta,n)$.

Since $\tau$ is even, by the definition of $g_\mu$ we have that 
$n \in g_\mu(\beta)$. 
As $g_\lambda = g_\mu$, $n \in g_\lambda(\beta)$. 
Therefore, $\tau(\lambda,\beta,n)$ is even. 
We also have that $\theta = \theta(\lambda,\beta,m)$ is even. 
By the definition of $f_\lambda$, 
$(\theta',\tau') := f_\lambda(\beta,m,n)$ is defined, 
$\theta'$ and $\tau'$ are distinct ordinals 
in $I_\gamma$ for some $\gamma < \lambda$, and 
$p \restriction \beta \Vdash_{\p_\beta} \theta' <_{\dot T_\beta} \theta$. 
But $f_\lambda = f_\mu$. 
So we also have that $f_\mu(\beta,m,n) = (\theta',\tau')$, and therefore 
$q \restriction \beta \Vdash_{\p_\beta} \tau' <_{\dot T_\beta} \tau$. 

Since $r \restriction \beta$ is below $p \restriction \beta$ and 
$q \restriction \beta$, it forces that 
$\theta' <_{\dot T_\beta} \theta$ and 
$\tau' <_{\dot T_\beta} \tau$. 
Since $\theta' \ne \tau'$ are both in $I_\gamma$, 
$r \restriction \beta$ forces that 
$p_{\dot T_\beta}(\theta,\gamma) = \theta' \ne 
p_{\dot T_\beta}(\tau,\gamma) = \tau'$. 
This implies that $r \restriction \beta$ forces that 
$\theta$ and $\tau$ are incomparable in $\dot T_\beta$, since otherwise they would 
have the same nodes below them on levels less than $\lambda$. 
By a symmetric argument, $r \restriction \beta$ also forces that 
if $\theta \in \ran(f)$ is in $I_\lambda$ 
and $\tau \in \ran(g)$ is in $I_\mu$, then $\theta$ and $\tau$ are incomparable 
in $\dot U_\beta$.
\end{proof}

\begin{proposition}
There exists a function $F : H(\kappa^{++})^{<\omega} \to H(\kappa^{++})$ 
such that for any $\alpha$-suitable $N$ which is closed under $F$, 
$p(N,\alpha)$ is $(N,\p_\alpha)$-generic.
\end{proposition}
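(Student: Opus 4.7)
I will define $F=F_\alpha$ by induction on $\alpha<\kappa^+$, using that the analogous function $F_\beta$ has already been constructed for each $\beta<\alpha$ (so that every $\beta$-suitable $N$ closed under $F_\beta$ has $p(N,\beta)$ strongly $(N,\p_\beta)$-generic, and hence Inductive Hypothesis 7.2, and via Lemma 6.29, Inductive Hypothesis 7.3, also hold at $\beta$). The function $F$ will code: (i) Skolem functions for the structure $(H(\kappa^{++}),\in,\unlhd,J,\mathcal Y,\langle \p_\beta : \beta\le\alpha\rangle)$; (ii) each of the previously-defined $F_\beta$ for $\beta<\alpha$ (so Lemma 6.19 becomes available at $N$, together with all its corollaries 6.20--6.24, 6.29, 6.30, and Proposition 6.32); (iii) choice functions producing, from any dense-below-$p(N,\alpha)$ set of determined conditions, a witnessing extension satisfying $\#_N^\alpha$.

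Given $\alpha$-suitable $N$ closed under $F$, it suffices by Lemma 1.3 (and Lemma 6.26) to show that for every determined $q\le_\alpha p(N,\alpha)$ there is a determined $s\in N\cap\p_\alpha$ with $*_N^\alpha(q,s)$. By Lemma 6.19 (applicable at $N$ thanks to (ii)), extend $q$ to a determined $q'\le_\alpha q$ with $\#_N^\alpha(q',s)$ holding, where $s:=q'\restriction N\in N\cap\p_\alpha$. Since any extension of $q'$ is also an extension of $q$, the $*$-property for the pair $(q',s)$ implies it for $(q,s)$, so it is enough to verify $*_N^\alpha(q',s)$, i.e.\ that every determined $t\le_\alpha s$ in $N$ is compatible with $q'$ in $\p_\alpha$.

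Suppose for contradiction some such $t$ is incompatible with $q'$. Using closure of $N$ under $F$ and the ineffability of $\kappa$, I will reflect this failure downward: inside $N$ (and pulling back externally by means of Lemma 2.3), I extract an $\alpha$-suitable chain $\langle N_\lambda : \lambda\in S\rangle$ with $S\in J^+$ and a coherent family $(q_\lambda,s_\lambda,t_\lambda)$ such that $\#_{N_\lambda}^\alpha(q_\lambda,s_\lambda)$ holds, $t_\lambda\in N_\lambda\cap\p_\alpha$ lies below $s_\lambda$, and $t_\lambda$ is incompatible with $q_\lambda$ in $\p_\alpha$ (the $t_\lambda$ playing the role of $t$ and $q_\lambda$ of $q'$ at scale $N_\lambda$). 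Applying Proposition 7.5 to the triples $\langle (q_\lambda,q_\lambda,s_\lambda) : \lambda\in S\rangle$ produces $U\subseteq S$ in $J^+$ with $q_\lambda$ and $q_\mu$ compatible in $\p_\alpha$ for all $\lambda<\mu$ in $U$. Since $t_\lambda\in N_\lambda\subseteq N_\mu$, I then combine this compatibility with elementarity of $N_\mu$ and the coherence of the $(q_\lambda,t_\lambda)$ family to produce, for some $\lambda<\mu$ in $U$, a common extension of $q_\lambda$ and $t_\lambda$, contradicting their incompatibility.

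\emph{Main obstacle.} The delicate step is arranging the reflected family $(q_\lambda,s_\lambda,t_\lambda)$ so that the compatibility produced by Proposition 7.5 genuinely contradicts the prescribed incompatibility $t_\lambda\perp q_\lambda$. Concretely, one must design $F$ so that for $N$ closed under $F$, any external bad witness $(q',t)$ gives rise to reflected witnesses $(q_\lambda,t_\lambda)$ that are ``uniformly'' related across $\lambda\in S$: the identifications $t_\lambda\in N_\lambda\subseteq N_\mu$ must cohere with the sameness of $s_\lambda$ used in Proposition 7.5, and the incompatibility of $t_\lambda$ with $q_\lambda$ must be preserved when we switch to $q_\mu$ by a use of Lemma 6.24 (or its amalgamation content). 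This bookkeeping is what forces the rather intricate form of $F$, and is exactly parallel to the orchestration in the proof of Proposition 7.5 itself.
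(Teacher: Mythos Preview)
Your plan has a genuine gap at the final step, and it also aims at a stronger conclusion than the proposition actually asserts.

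\textbf{The target.} The proposition claims only that $p(N,\alpha)$ is $(N,\p_\alpha)$-generic, not \emph{strongly} generic. You invoke Lemma~1.3 and set out to prove strong genericity. The paper does \emph{not} prove strong genericity for a single $N$ closed under $F$; strong genericity is established separately in Proposition~7.8, and only for $J^*$-many members of a given $\alpha$-suitable sequence, via an ineffability reflection on dense subsets of $N_\lambda\cap\p_\alpha$. Your attempt to get it for one $N$ directly is not obviously possible with the tools at hand.

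\textbf{The gap.} Even granting your reflected family $\langle (q_\lambda,s_\lambda,t_\lambda):\lambda\in S\rangle$ with $\#_{N_\lambda}^\alpha(q_\lambda,s_\lambda)$ and $t_\lambda\perp q_\lambda$, applying Proposition~7.5 to the triples $(q_\lambda,q_\lambda,s_\lambda)$ yields only that $q_\lambda$ and $q_\mu$ are compatible for $\lambda<\mu$ in some $U\in J^+$. This says nothing about $t_\lambda$. Even after thinning so that $s_\lambda=s_\mu$ and $t_\lambda=t_\mu=:t$ (which Lemma~2.2 allows), you would have $t\perp q_\lambda$ and $t\perp q_\mu$ and $q_\lambda$ compatible with $q_\mu$; these three facts are perfectly consistent. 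Your appeal to ``elementarity of $N_\mu$ and coherence'' and to Lemma~6.24 does not produce a common extension of $q_\lambda$ and $t_\lambda$: Lemma~6.24 manufactures a $\#$-witness below a given condition, it does not force compatibility with a prescribed $t$. You yourself flag this as the ``main obstacle,'' and it is not resolved.

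\textbf{What the paper does instead.} The paper proves ordinary genericity: given a dense open $D\in N$, take $r\le_\alpha p(N,\alpha)$ in $D$ with $\#_N^\alpha(r,r\restriction N)$ and suppose $r$ is incompatible with every member of $D\cap N$. Using $\unlhd$ it then defines, \emph{inside $N$}, a canonical maximal sequence $\langle (r_i,N_i):i\in S\rangle$ of pairs where each $r_i\in D$ satisfies $\#_{N_i}^\alpha(r_i,r_i\restriction N_i)$ and the $r_i$ are pairwise incompatible. The external pair $(r,N)$ (together with Lemma~6.24 at inaccessible limit points) witnesses that this recursion never halts, so $S\in J^+$. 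Now Proposition~7.5 applied to $(r_i,r_i,r_i\restriction N_i)$ gives $i<j$ with $r_i$ compatible with $r_j$, directly contradicting the pairwise incompatibility built into the sequence. The key difference from your plan is that the incompatibility being contradicted is \emph{pairwise among the $r_i$}, which is exactly what Proposition~7.5 addresses, rather than incompatibility with a fixed side condition $t$.
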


\begin{proof}
The proof is by induction on $\alpha$. 
Recall that for all $0 < \alpha \le \kappa$, $\p_\alpha = \p_1$ is forcing equivalent to 
the Levy collapse $\coll(\omega_1, <\! \kappa)$, which is $\kappa$-c.c. 
And $p(N,\alpha)$ is the maximum condition in $\p_\alpha$  
and is $(N,\p_\alpha)$-generic for 
any $\alpha$-suitable $N$.

Now assume that $\kappa < \alpha < \kappa^+$ and the proposition holds for all 
ordinals below $\alpha$. 
For each $\beta < \alpha$, fix a function 
$F_\beta : H(\kappa^{++})^{<\omega} \to H(\kappa^{++})$ 
such that for any $\beta$-suitable $N$ which is closed under $F_\beta$, 
$p(N,\beta)$ is $(N,\p_\beta)$-generic. 
Define $F^* : \alpha \times H(\kappa^{++})^{<\omega} \to H(\kappa^{++})$ 
by $F^*(\beta,x) = F_\beta(x)$.

Let $\mathcal X$ denote the collection of all $\alpha$-suitable sequences. 
Fix a Skolem function $F$ for the structure 
$(H(\kappa^{++}), \in, \unlhd, \mathcal X, F^*)$. 
Let $N$ be $\alpha$-suitable and closed under $F$, and we will show 
that $p(N,\alpha)$ is $(N,\p_\alpha)$-generic. 
Let $\lambda := N \cap \kappa$. 
Note that by elementarity, 
for all $\beta \in N \cap \alpha$, $N$ is closed under $F_\beta$, 
and therefore $p(N,\beta)$ is $(N,\p_\beta)$-generic.

To prove that $p(N,\alpha)$ is $(N,\p_\alpha)$-generic, 
fix a dense open subset $D$ of $\p_\alpha$ which is a member of $N$. 
We will show that $D \cap N$ is predense below $p(N,\alpha)$. 
Let $r_0 \le_\alpha p(N,\alpha)$, and we will prove that there exists a member of 
$D \cap N$ which is compatible in $\p_\alpha$ with $r_0$. 
Since $D$ is dense open, we can fix $r \le_\alpha r_0$ in $D$. 
Moreover, by extending $r$ further if necessary using Lemma 6.19, 
we may assume without loss of generality that $\#_N^\alpha(r,r \restriction N)$ holds.

Suppose for a contradiction that there is no member of $D \cap N$ which 
is compatible with $r$. 
We claim that there exists a sequence 
$\langle (r_i,N_i) : i \in S \rangle$ in $N$ satisfying:
\begin{enumerate}
\item the sequence $\langle N_i : i \in S \rangle$ is $\alpha$-suitable;
\item $S \in J^+$;
\item for all $i \in S$, $N_i \cap \kappa = i$ if $i > 0$, $N_i$ is closed under $F^*$, 
$r_i \in D$, and $\#_{N_i}^\alpha(r_i,r_i \restriction N_i)$ holds;
\item for all $i < j$ in $S$, $r_i \in N_j$ and $r_i$ and $r_j$ are 
incompatible in $\p_\alpha$.
\end{enumerate}

The definition of such a sequence is by induction. 
For the base case, we let $0 \in S$, 
and 
pick an $\alpha$-suitable $N_0$ which is closed under $F^*$ 
such that $r \restriction N \in N_0$, 
and some $r_0$ such that $r_0 \in D$ and 
$\#_{N_0}^\alpha(r_0,r_0 \restriction N_0)$ holds. 
Such objects clearly exist by the elementarity of $N$.

Let $\beta < \kappa$ and assume that $S \cap \beta$ and  
$\langle (r_i,N_i) : i \in S \cap \beta \rangle$ are defined and satisfy 
properties (1), (3), and (4) above for $S \cap \beta$ in place of $S$. 
Let $\gamma \ge \beta$ be the least ordinal below $\kappa$, if it exists, 
for which there is a pair $(N,r)$ such that 
$\langle (r_i,N_i) : i \in S \cap \beta \rangle \cup 
\{ (\gamma, (N,r) \}$ still satisfies properties (1), (3), and (4) for 
$(S \cap \beta) \cup \{ \gamma \}$ in place of $S$. 
If such an ordinal $\gamma$ does not exist, then let $S = S \cap \beta$ 
and we are done. 
Otherwise, define $S \cap (\gamma+1) := (S \cap \beta) \cup \{ \gamma \}$, 
and let $(N_\gamma,r_\gamma)$ be the $\unlhd$-least pair such that 
$\langle (N_i,r_i) : i \in S \cap (\gamma+1) \rangle$ satisfies properties 
(1), (3), and (4).

This completes the definition of 
$\langle (r_i,N_i) : i \in S \rangle$. 
Note that by elementarity, this sequence is in $N$. 
It remains to show that $S \in J^+$. 

First let us note that $S$ is unbounded in $\kappa$. 
If not, then by elementarity $\sup(S) < \lambda$. 
But recall that $N$ is $\alpha$-suitable and $r$ is incompatible with 
every member of $D \cap N$, and in particular, with $r_i$ for all 
$i \in S \cap \lambda$. 
And for all $i \in S \cap \lambda$, $r_i$ and $N_i$ are in $N$. 
Also, $\#_N^\alpha(r,r \restriction N)$ holds. 
It easily follows that the sequence 
$\langle (r_i,N_i) : i \in S \rangle \cup \{ (\lambda, (r,N) ) \}$ 
satisfies properties (1), (3), and (4) above, which contradicts that 
$\sup(S) < \lambda$.

So indeed $S$ is cofinal in $\kappa$. 
It follows that the set $\lim(S)$ is club in $\kappa$, and hence is in $J^*$. 
Also, as we know, the set of inaccessibles below $\kappa$ is in $J^*$. 
We will show that every inaccessible limit point of $S$ is in $S$, 
which implies that in fact $S \in J^*$.

Suppose for a contradiction there is an inaccessible limit point of $S$ 
which is not in $S$. 
By elementarity, we can fix such an inaccessible $\mu$ in $N \cap \kappa$. 
Let $M := \bigcup \{ N_\gamma : \gamma \in S \cap \mu \}$. 
Then $M \cap \kappa = \mu$. 
Note that $M$ is closed under $F^*$. 
Therefore, if $M$ is $\alpha$-suitable, which we will check in a moment, it follows that 
for all $\beta \in M \cap \alpha$, 
$p(M,\beta)$ is $(M,\p_\beta)$-generic.

We claim that there exists a condition $r_\mu \in \p_\alpha$ such that the following 
statements are satisfied:
\begin{enumerate}
\item[(a)] $\langle N_i : i \in S \cap \mu \rangle \cup \{ (\mu,M) \}$ is $\alpha$-suitable;
\item[(b)] $r_\mu \in D$ and $\#_{M}^\alpha(r_\mu,r_\mu \restriction M)$;
\item[(c)] for all $i \in S \cap \mu$, $r_i \in M$ and $r_i$ and $r_\mu$ 
are incompatible in $\p_\alpha$.
\end{enumerate}
It follows from this claim and the definition of $S$ that in fact $\mu \in S$, 
which contradicts the choice of $\mu$.

By Notation 7.1 and the definition of $M$, 
statement (a) holds provided that $M$ is $\alpha$-suitable. 
By Definition 6.1 and Notation 6.3, 
that easily follows from the definition of $M$ together with the fact 
that $M \cap \kappa = \mu$ is inaccessible. 
For part of statement (c), we know that for all $i \in S \cap \mu$, 
$r_i \in N_{\min(S \setminus (i+1))} \subseteq M$, so $r_i \in M$.

We have that $M \in N$ are both $\alpha$-suitable and for all 
$\beta \in M \cap \alpha$, $p(M,\beta)$ is $(M,\p_\beta)$-generic. 
And also $\#_N^\alpha(r,r \restriction N)$ holds and $r \restriction N \in M$. 
By Lemma 6.24, there is $r_\mu \le_\alpha r$ such that 
$\#_M^\alpha(r_\mu,r_\mu \restriction M)$ holds. 
As $D$ is dense open and $r \in D$, $r_\mu \in D$. 
And for all $i \in S \cap \mu$, $r_i$ and $r$ are incompatible, which implies 
that $r_i$ and $r_\mu$ are incompatible. 
This completes the proof of the claim, and we have a contradiction.

To summarize, 
we have an $\alpha$-suitable 
sequence $\langle N_i : i \in S \rangle$, where $S \in J^+$. 
And for each $i \in S$, we have a condition $r_i$ such that 
$\#_{N_i}^\alpha(r_i,r_i \restriction N_i)$ holds, 
and for all $i < j$ in $S$, $r_i$ and $r_j$ are incompatible in $\p_\alpha$. 
Thus, the assumptions of Proposition 7.5 hold for the $\alpha$-suitable 
sequence $\langle N_i : i \in S \rangle$ and the associated conditions 
$r_i$, $r_i$, and $r_i \restriction N_i$. 
It easily follows from the conclusion of Proposition 7.5 that 
there are $i < j$ in $S$ such that $r_i$ and $r_j$ are compatible, 
which is a contradiction.
\end{proof}

\begin{corollary}
The forcing poset $\p_\alpha$ is $\kappa$-proper on a stationary set. 
In particular, $\p_\alpha$ preserves $\kappa$.
\end{corollary}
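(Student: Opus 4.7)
The plan is to deduce $\kappa$-properness on a stationary set directly from Proposition 7.6 together with Lemma 6.7, and then read off preservation of $\kappa$ by a standard argument. Let $F \colon H(\kappa^{++})^{<\omega} \to H(\kappa^{++})$ be the Skolem function given by Proposition 7.6, and let $\mathcal Z$ denote the collection of $\alpha$-suitable $N$ which are closed under $F$. By Lemma 6.4 together with a routine closure argument (build an increasing continuous chain of elementary substructures closed under $F$ and apply Mahloness of $\kappa$, just as in the proof of Lemma 6.2), $\mathcal Z$ is stationary in $P_\kappa(H(\kappa^{++}))$.

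Fix $N \in \mathcal Z$, and let $p \in N \cap \p_\alpha$. By Lemma 6.7, there exists $q \in \p_\alpha$ with $q \le_\alpha p$ and $q \le_\alpha p(N,\alpha)$. I claim $q$ is $(N,\p_\alpha)$-generic. Indeed, if $D \in N$ is dense in $\p_\alpha$, then by Proposition 7.6 the condition $p(N,\alpha)$ is $(N,\p_\alpha)$-generic, so $D \cap N$ is predense below $p(N,\alpha)$; since $q \le_\alpha p(N,\alpha)$, $D \cap N$ is predense below $q$. This shows that for every $N \in \mathcal Z$ and every $p \in N \cap \p_\alpha$, there is an $(N,\p_\alpha)$-generic extension of $p$, which is the definition of $\p_\alpha$ being $\kappa$-proper on the stationary set $\mathcal Z$.

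For the preservation of $\kappa$, suppose toward a contradiction that some $p \in \p_\alpha$ forces that $\dot f \colon \check\lambda \to \check\kappa$ is cofinal for some $\lambda < \kappa$. By stationarity of $\mathcal Z$, pick $N \in \mathcal Z$ with $p, \dot f, \lambda \in N$ and $N \cap \kappa > \lambda$; then $\lambda \subseteq N$ since $|N| = N \cap \kappa$ and $\lambda \in N$. Apply the previous paragraph to obtain an $(N,\p_\alpha)$-generic $q \le_\alpha p$. By a standard application of genericity together with the fact that for each $\xi < \lambda$ the set $\{ r \le p : r \text{ decides } \dot f(\check\xi)\}$ is a dense subset of $\p_\alpha$ in $N$, $q$ forces $\dot f[\check\lambda] \subseteq N \cap \check\kappa$, a bounded subset of $\kappa$ in $V$, contradicting the assumed cofinality of $\dot f$. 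Hence $\p_\alpha$ preserves $\kappa$, as required.

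The main (and essentially only) obstacle in this argument has already been dispatched by Proposition 7.6, whose proof in turn rested on the hard combinatorial input of Proposition 7.5; once genericity of $p(N,\alpha)$ is in hand, $\kappa$-properness and preservation of $\kappa$ are completely formal consequences of the extension lemma (Lemma 6.7) and of the closure and size properties built into Definition 6.1.
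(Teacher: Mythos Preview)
Your argument is correct and follows the same essential line as the paper: invoke Proposition~7.6 to get $(N,\p_\alpha)$-genericity of $p(N,\alpha)$ for a stationary family of $\alpha$-suitable $N$, then use Lemma~6.7 to extend any $p \in N \cap \p_\alpha$ below $p(N,\alpha)$.

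There is one small but genuine difference worth flagging. The paper works with $M \in P_\kappa(H(\theta))$ for a regular $\theta > \kappa^+$, sets $N := M \cap H(\kappa^{++})$, and uses the $\kappa^+$-c.c.\ of $\p_\alpha$ (Lemma~5.6) to argue that every maximal antichain of $\p_\alpha$ lying in $M$ already lies in $N$; this transfers $(N,\p_\alpha)$-genericity of $p(N,\alpha)$ up to $(M,\p_\alpha)$-genericity. That extra half-step is what establishes $\kappa$-properness in its standard formulation (for models in $P_\kappa(H(\theta))$ with $\theta$ large), after which preservation of $\kappa$ is a black-box consequence. You instead stay inside $P_\kappa(H(\kappa^{++}))$ and prove preservation of $\kappa$ by a direct elementary-submodel argument. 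Your direct argument is fine, but it implicitly uses the $\kappa^+$-c.c.\ as well: to place the name $\dot f$ in $H(\kappa^{++})$ (so that $\dot f \in N$ is possible) you need that a nice $\p_\alpha$-name for a function $\lambda \to \kappa$ has size at most $\kappa^+$. So both routes ultimately lean on the chain condition; the paper front-loads it to get the textbook statement of $\kappa$-properness, while you back-load it into the preservation argument. Either is acceptable, but if you want to literally conclude ``$\kappa$-proper on a stationary set'' as usually defined, you should include the paper's lift to $H(\theta)$.
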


\begin{proof}
Let $\theta > \kappa^+$ be a regular cardinal. 
Let $F$ be the function described in Proposition 7.6. 
Since $\mathcal Y$ is stationary in $P_{\kappa}(H(\kappa^{++}))$ 
by Lemma 6.2, there are stationarily many $M \in P_{\kappa}(H(\theta))$ 
such that $M \cap H(\kappa^{++})$ is $\alpha$-suitable 
and closed under $F$.

Consider such a set $M$. 
We will prove that every member of $M \cap \p_\alpha$ has an extension 
which is $(M,\p_\alpha)$-generic. 
Let $N := M \cap H(\kappa^{++})$. 
By the choice of $F$, $p(N,\alpha)$ is $(N,\p_\alpha)$-generic. 
As $\p_\alpha$ is $\kappa^+$-c.c.\! and is a member of $H(\kappa^{++})$, 
every maximal antichain of $\p_\alpha$ is a member of $H(\kappa^{++})$. 
Hence, every maximal antichain of $\p_\alpha$ which is a member of 
$M$ is also a member of $N$. 
It easily follows that $p(N,\alpha)$ is $(M,\p_\alpha)$-generic. 
Now if $p \in M \cap \p_\alpha$, then $p \in N \cap \p_\alpha$. 
Hence by Lemma 6.7, we can fix $r \le_\alpha p, p(N,\alpha)$. 
Since $r \le_\alpha p(N,\alpha)$ and $p(N,\alpha)$ is $(M,\p_\alpha)$-generic, 
$r$ is also $(M,\p_\alpha)$-generic, and we are done.
\end{proof}

This completes the proof that $\p_\alpha$ preserves $\kappa$. 
It remains to show that Inductive Hypotheses 7.2 and 7.3 hold for $\alpha$.

\begin{proposition}
Suppose that 
$\langle N_\lambda : \lambda \in S \rangle$ is $\alpha$-suitable, 
with union $N$, where $S \in J^+$. 
Let $T$ be the set of $\lambda \in S$ such that $p(N_\lambda,\alpha)$ 
is strongly $(N_\lambda,\p_\alpha)$-generic. 
Then $S \setminus T \in J$.
\end{proposition}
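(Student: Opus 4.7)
The plan is to argue by contradiction, following the template of the proof of Proposition 7.6, with Proposition 7.5 supplying the final compatibility step that forces the contradiction. The target is to build, on a $J^+$-subset of $S \setminus T$, a family of pairwise incompatible $\#$-conditions and then apply Proposition 7.5 to that family.

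First I would pass to a $J^+$-subset $S_0 \subseteq S$ on which several useful regularity properties hold. Using Inductive Hypothesis 7.2 for each $\beta < \alpha$ together with the normality of $J$ (exactly as in the diagonal-intersection step opening the proof of Lemma 7.4), arrange that for every $\lambda \in S_0$ and every $\beta \in N_\lambda \cap \alpha$, the condition $p(N_\lambda,\beta)$ is strongly $(N_\lambda,\p_\beta)$-generic. By Proposition 7.6 and the elementarity of the $N_\lambda$'s in the structure containing the Skolem function supplied by 7.6, I may assume further that $p(N_\lambda,\alpha)$ is ordinarily $(N_\lambda,\p_\alpha)$-generic for every $\lambda \in S_0$. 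Now suppose for contradiction that $S_0 \setminus T \in J^+$. For each $\lambda \in S_0 \setminus T$, Lemma 1.3 delivers a condition $q_\lambda \le_\alpha p(N_\lambda,\alpha)$ for which no $s \in N_\lambda \cap \p_\alpha$ satisfies $*_{N_\lambda}^{\p_\alpha}(q_\lambda,s)$.

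The heart of the argument is a diagonal selection paralleling the $\unlhd$-least construction of the sequence $\langle (r_i,N_i) : i \in S\rangle$ in the proof of Proposition 7.6. Working inside a sufficiently closed elementary submodel of $H(\kappa^{++})$, I would inductively pick a subset $U$ and, for each $\lambda \in U$, an $\unlhd$-least pair $(r_\lambda',N_\lambda')$ such that $N_\lambda'$ is $\alpha$-suitable and closed under the relevant Skolem function, $r_\lambda'$ extends a fresh failure-witness $q_\lambda'$ for strong $(N_\lambda',\p_\alpha)$-genericity of $p(N_\lambda',\alpha)$, the property $\#_{N_\lambda'}^\alpha(r_\lambda',r_\lambda' \restriction N_\lambda')$ holds (via Lemma 6.19), and $r_\lambda'$ is incompatible in $\p_\alpha$ with every previously chosen $r_{\lambda''}'$. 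The verification that this construction runs to length $J^+$ rather than terminating reproduces the ``$\mu \in S$'' reflection step of Proposition 7.6: at an inaccessible limit $\mu$ of the partial construction, Lemma 6.24 is used to reflect the $\#$-condition and its failure-witness into the model $M := \bigcup\{N_\gamma' : \gamma \in U \cap \mu\}$, so that $\mu$ itself is automatically captured by the construction.

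Once $U \in J^+$ and the pairwise incompatible family $\{r_\lambda' : \lambda \in U\}$ is in hand, applying Proposition 7.5 to the triples $\langle (r_\lambda',r_\lambda',r_\lambda' \restriction N_\lambda') : \lambda \in U\rangle$ yields some $\lambda < \mu$ in $U$ with $r_\lambda'$ and $r_\mu'$ compatible in $\p_\alpha$, directly contradicting the construction. The main obstacle I expect is verifying the existence of the pair $(r_\lambda',N_\lambda')$ at each stage with all required properties inside the working submodel: this combines ordinary $(N_\lambda',\p_\alpha)$-genericity of $p(N_\lambda',\alpha)$ (to secure incompatibility with the finitely many prior $r_{\lambda''}'$ via a suitable density argument in $N_\lambda'$), elementarity (to keep the chosen pair inside the ambient submodel so that the sequence $\langle (r_\lambda',N_\lambda') : \lambda \in U\rangle$ itself reflects), and the assumption $S_0 \setminus T \in J^+$ (which, combined with the reflection step, is what guarantees a genuine failure-witness $q_\lambda'$ is still available at every stage).
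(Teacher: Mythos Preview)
Your approach has a genuine gap at the incompatibility step. In the proof of Proposition~7.6, the engine that drives the $\unlhd$-least construction is a \emph{single fixed external anchor}: one model $N$, one dense set $D \in N$, and one condition $r$ that is incompatible with every member of $D \cap N$. Each $r_i$ in the constructed sequence lies in $D \cap N$, so $r$ is automatically incompatible with all of them; this is what shows the construction does not terminate below $\lambda = N \cap \kappa$, and Lemma~6.24 is applied to \emph{this fixed} $r$. In your setup there is no such anchor. The failure of strong genericity at $N_\lambda$ gives a $q_\lambda$ for which no $s \in N_\lambda \cap \p_\alpha$ satisfies $*_{N_\lambda}^{\p_\alpha}(q_\lambda,s)$; unpacked, this says that for every $s \in N_\lambda$ there is \emph{some} $t \le s$ in $N_\lambda$ incompatible with $q_\lambda$. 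It does \emph{not} say $q_\lambda$ is incompatible with any particular condition in $N_\lambda$, let alone with the finitely many $r_{\lambda''}'$ chosen at earlier stages. Neither ordinary $(N_\lambda',\p_\alpha)$-genericity nor the assumption $S_0 \setminus T \in J^+$ supplies this, so the construction need not run to length $\kappa$, and even if it did there is no reason the resulting family is pairwise incompatible.

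The paper's proof takes a completely different and much shorter route: rather than trying to produce an antichain, it exploits ineffability directly. For each $\lambda \in S \setminus T$ choose a dense subset $D_\lambda$ of $N_\lambda \cap \p_\alpha$ witnessing the failure; apply Lemma~2.3 to obtain a single $D \subseteq N$ with $D \cap N_\lambda = D_\lambda$ on a stationary set $U$. This $D$ is then dense in $\p_\alpha$. Now enlarge each $N_\lambda$ to an $\alpha$-suitable $M_\lambda$ that contains $D$ as an element and is closed under the function $F$ of Proposition~7.6; on a club one has $M_\lambda \cap \kappa = N_\lambda \cap \kappa = \lambda$, so by Lemma~6.11 $p(M_\lambda,\alpha) = p(N_\lambda,\alpha)$. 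Proposition~7.6 gives ordinary $(M_\lambda,\p_\alpha)$-genericity of this condition, hence $D \cap M_\lambda$ is predense below it. But $M_\lambda$ and $N_\lambda$ contain the same determined conditions (they have the same intersection with $\kappa$), so $D \cap M_\lambda = D \cap N_\lambda = D_\lambda$, contradicting the choice of $D_\lambda$. No incompatibility or use of Proposition~7.5 is needed.
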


\begin{proof}
Suppose for a contradiction that $S \setminus T \in J^+$. 
For each $\lambda \in S \setminus T$, 
the fact that $p(N_\lambda,\alpha)$ is not strongly 
$(N_\lambda,\p_\alpha)$-generic means that there exists a set $D_\lambda$ 
which is a dense subset of $N_\lambda \cap \p_\alpha$ 
such that $D_\lambda$ is not predense below $p(N_\lambda,\alpha)$. 
By extending members of $D_\lambda$ to determined conditions, 
which is possible since the determined conditions are dense in 
$N_\lambda \cap \p_\alpha$, we may assume without loss of 
generality that $D_\lambda$ consists of determined conditions.

As $S \setminus T \in J^+$, by Lemma 2.3 there 
exists a set $D \subseteq N$ and a stationary set $U \subseteq S \setminus T$ 
such that for all $\lambda \in U$, $D \cap N_\lambda = D_\lambda$. 
Note that $D$ is a dense subset of $\p_\alpha$. 
Namely, if $p \in \p_\alpha$, then for any large enough 
$\lambda \in U$, $p \in N_\lambda \cap \p_\alpha$. 
But then $D \cap N_\lambda = D_\lambda$, which is a dense subset of $N_\lambda \cap \p_\alpha$. 
Thus, we can find $q \le_\alpha p$ in $D_\lambda = D \cap N_\lambda$. 
Also note that $D$ consists of determined conditions.

Let $F$ be a function as described in Proposition 7.6. 
For each $\lambda \in U$, let 
$M_\lambda$ be the unique smallest elementary substructure of  
$$
(H(\kappa^{++}),\in,\unlhd,J,\mathcal Y,\langle \p_\beta : \beta \le \alpha \rangle,F)
$$
such that $N_\lambda \cup \{ D \} \subseteq M_\lambda$ and 
$M_\lambda^{<\lambda} \subseteq M_\lambda$. 
Standard arguments show that $M_\lambda$ exists. 
The sequence $\langle M_\lambda : \lambda \in U \rangle$ is $\subseteq$-increasing 
and continuous at limit points of $U$ in $U$. 
So we can fix a club $C \subseteq \kappa$ such that for all $\lambda \in C \cap U$, 
$M_\lambda \cap \kappa = \lambda$.

We claim that the sequence 
$\langle M_\lambda : \lambda \in C \cap U \rangle$
is $\alpha$-suitable. 
Observe that for all $\lambda \in C \cap U$, since $\lambda$ is inaccessible we have that 
$|M_\lambda| = |N_\lambda| = \lambda = M_\lambda \cap \kappa$. 
So $M_\lambda^{< (M_\lambda \cap \kappa)} \subseteq M_\lambda$. 
Hence, $M_\lambda$ is $\alpha$-suitable. 
By the closure of $M_\lambda$, if $\mu < \lambda$ is in $C \cap U$ then 
$M_\mu \in M_\lambda$. 
So the sequence $\langle M_\lambda : \lambda \in C \cap U \rangle$ is $\in$-increasing 
and continuous at limit points of $U$ in $U$, and thus is $\alpha$-suitable.

By the choice of $F$, for all $\lambda \in C \cap U$, 
$p(M_\lambda,\alpha)$ is $(M_\lambda,\p_\alpha)$-generic. 
Since $M_\lambda \cap \kappa = N_\lambda \cap \kappa$, Lemma 6.11 implies that 
$p(M_\lambda,\alpha) = p(N_\lambda,\alpha)$.

Since $p(M_\lambda,\alpha)$ is $(M_\lambda,\p_\alpha)$-generic 
and $D \in M_\lambda$ is a dense subset of $\p_\alpha$, 
$D \cap M_\lambda$ is predense below $p(M_\lambda,\alpha)$. 
So $D \cap M_\lambda$ is predense below $p(N_\lambda,\alpha)$. 
Since there are only $\kappa$ many determined conditions in 
$\p_\alpha$ and $M_\lambda \cap \kappa = N_\lambda \cap \kappa$, 
it follows that $M_\lambda$ and $N_\lambda$ contain exactly the same 
determined conditions of $\p_\alpha$. 
Since $D$ consists of determined conditions, we get that 
$D \cap M_\lambda = D \cap N_\lambda = D_\lambda$. 
So $D_\lambda$ is predense below $p(N_\lambda,\alpha)$, which contradicts the 
choice of $D_\lambda$.
\end{proof}

\begin{proposition}
Let $\langle N_\lambda : \lambda \in S \rangle$ be $\alpha$-suitable, 
with union $N$, where $S \in J^+$. 
Let $T$ be the set of $\lambda \in S$ which satisfy that 
whenever $\#_{N_\lambda}^\alpha(p,q,s)$ holds, then there is $s' \le_\alpha s$ 
such that $*_{N_\lambda}^\alpha(p,q,s')$ holds. 
Then $S \setminus T \in J$.
\end{proposition}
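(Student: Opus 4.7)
The plan is to argue by contradiction, mirroring the strategy of Proposition 7.8 and using Proposition 7.5 as the key compatibility tool. Suppose $S \setminus T \in J^+$. Applying Proposition 7.8 (Inductive Hypothesis 7.2 at $\alpha$, just established) to the $\alpha$-suitable sequence $\langle N_\lambda : \lambda \in S \setminus T \rangle$, I may discard a set in $J$ and restrict to a $J^+$-subset $S' \subseteq S \setminus T$ on which $p(N_\lambda,\alpha)$ is strongly $(N_\lambda,\p_\alpha)$-generic for every $\lambda \in S'$. For each $\lambda \in S'$, fix a determined counterexample triple $(p_\lambda,q_\lambda,s_\lambda)$: the relation $\#_{N_\lambda}^\alpha(p_\lambda,q_\lambda,s_\lambda)$ holds, yet no $s' \le_\alpha s_\lambda$ in $N_\lambda \cap \p_\alpha$ satisfies $*_{N_\lambda}^\alpha(p_\lambda,q_\lambda,s')$.

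The first step is to show that $p_\lambda$ and $q_\lambda$ must be incompatible in $\p_\alpha$ for every $\lambda \in S'$. If some common extension $r \le_\alpha p_\lambda, q_\lambda$ existed, then since $p_\lambda \le_\alpha p(N_\lambda,\alpha)$ we would have $r \le_\alpha p(N_\lambda,\alpha)$, so $r$ inherits strong $(N_\lambda,\p_\alpha)$-genericity. Since $r \le_\alpha s_\lambda$, Lemma 1.7(1) applied to $r$ and $s_\lambda$ would produce $s' \le_\alpha s_\lambda$ in $N_\lambda \cap \p_\alpha$ with $*_{N_\lambda}^{\p_\alpha}(r,s')$. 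For any $t \le_\alpha s'$ in $N_\lambda$, a common extension $u \le_\alpha t, r$ witnesses that $t$ is compatible with both $p_\lambda$ and $q_\lambda$, since $u \le_\alpha r \le_\alpha p_\lambda, q_\lambda$. Extending $s'$ to a determined condition in $N_\lambda \cap \p_\alpha$ via Lemma 5.5 and invoking Lemma 6.26, we obtain $*_{N_\lambda}^\alpha(p_\lambda,q_\lambda,s')$, contradicting the choice of $\lambda$. Hence $p_\lambda \perp q_\lambda$ in $\p_\alpha$ for every $\lambda \in S'$.

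Next I would apply Proposition 7.5 to the assignment $\lambda \mapsto (p_\lambda,q_\lambda,s_\lambda)$, whose hypothesis of $\#_{N_\lambda}^\alpha$ holds by construction, to obtain $U \subseteq S'$ in $J^+$ such that for all $\lambda < \mu$ in $U$, the conditions $p_\lambda$ and $q_\mu$ are compatible in $\p_\alpha$.

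The hard part will be to parlay this cross-compatibility, combined with the within-$\lambda$ incompatibility $p_\lambda \perp q_\lambda$ just established, into a contradiction. The strategy would be to first refine each counterexample triple using Lemma 7.4 so that $p_\lambda$ and $q_\lambda$ become $N_\lambda$-separated in $\p_\alpha$ while $\#_{N_\lambda}^\alpha$ is preserved, and then to revisit the construction of a common extension of $p_\lambda$ and $q_\mu$ in the proof of Proposition 7.5. That construction combines the two conditions coordinate-wise, using $N$-separation to prevent clashes at common coordinates in the trees $\dot T_\beta$ and $\dot U_\beta$ (this is the precise role played by the $\lambda$-separation witnesses $(\theta',\tau')$ in the compatibility verification via Proposition 4.14). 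The main obstacle is to adapt this construction so that, using the $N_\lambda$-separation of the refined $p_\lambda$ and $q_\lambda$ (a single-model analogue of the cross-model separation used in Proposition 7.5's proof), one manufactures a common extension of $p_\lambda$ and $q_\lambda$ themselves, contradicting $p_\lambda \perp q_\lambda$ and thereby completing the proof.
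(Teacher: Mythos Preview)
Your setup through the application of Propositions 7.5 and 7.8 is fine, and the observation that $p_\lambda \perp q_\lambda$ for each $\lambda \in S'$ is correct. But the ``hard part'' strategy is a dead end. The compatibility argument in Proposition 7.5 depends essentially on having \emph{two distinct} indices $\lambda < \mu$: it uses that $p_\lambda^* \in N_\mu$, that $A_{a_{p_\lambda^*}(\beta)} \subseteq \mu$, that the sets $J_\lambda$ and $J_\mu$ are disjoint, and --- crucially for the appeal to Proposition 4.14 --- that the top levels $\lambda$ and $\mu$ of the two conditions are different. None of this structure survives when you try to run the argument with $\lambda = \mu$. Conditions $p_\lambda, q_\lambda$ satisfying $\#_{N_\lambda}^\alpha(p_\lambda,q_\lambda,s_\lambda)$ can perfectly well be incompatible (indeed you just proved they are), so attempting to manufacture a common extension of $p_\lambda$ and $q_\lambda$ cannot succeed. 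The $N_\lambda$-separation of $p_\lambda$ and $q_\lambda$ from Lemma 7.4 does not help here; it is a tool for the cross-index compatibility, not for within-index compatibility.

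The paper's argument takes a different route after reaching your set $U$. For each $\lambda$, the failure of $*_{N_\lambda}^\alpha(p_\lambda,q_\lambda,s')$ for every $s' \le_\alpha s_\lambda$ is recoded as a dense subset $H_\lambda$ of $N_\lambda \cap \p_\alpha$: those $u$ either incompatible with $s_\lambda$, or below $s_\lambda$ and incompatible with one of $p_\lambda, q_\lambda$. Pass to a maximal antichain $A_\lambda \subseteq H_\lambda$ and define a coloring $F_\lambda : A_\lambda \to 3$ recording which of the three cases holds. Now apply ineffability (Lemma 2.3) to the sets $F_\lambda \subseteq N_\lambda$ to obtain a stationary $W$ and a single $F$ with $F \cap N_\lambda = F_\lambda$ for $\lambda \in W$; in particular $\dom(F)$ is an antichain of $\p_\alpha$ and $\dom(F) \cap N_\lambda = A_\lambda$. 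Fix $\lambda < \mu$ in $W$, take $w \le_\alpha p_\lambda, q_\mu$ from Proposition 7.5, and use strong genericity at $N_\mu$ and then $N_\lambda$ to find $t_\mu \in A_\mu$ and $t_\lambda \in A_\lambda$ with a common extension below $w$. Since both lie in the antichain $\dom(F)$, they coincide: $t_\lambda = t_\mu =: t^*$. Finally, each value of $F(t^*) \in 3$ is ruled out: value $0$ fails because $w \le_\alpha p_\lambda \le_\alpha s_\lambda$; value $1$ would make $t^*$ incompatible with $p_\lambda$, contradicting $w \le_\alpha p_\lambda$; value $2$ would make $t^*$ incompatible with $q_\mu$, contradicting $w \le_\alpha q_\mu$. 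This second use of ineffability --- coding the local antichains and colorings coherently across the models --- is the idea your proposal is missing.
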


\begin{proof}
Suppose for a contradiction that $S \setminus T \in J^+$. 
For each $\lambda \in S \setminus T$, we can fix 
$p_\lambda$, $q_\lambda$, and $s_\lambda$ such that 
$\#_{N_\lambda}^\alpha(p_\lambda,q_\lambda,s_\lambda)$ holds, 
but for all $s' \le_\alpha s_\lambda$ in $N_\lambda \cap \p_\alpha$, 
$*_{N_\lambda}^\alpha(p_\lambda,q_\lambda,s')$ does not hold.

By Proposition 7.5, fix $U \subseteq S \setminus T$ in $J^+$ 
such that 
for all $\lambda < \mu$ in $U$, 
$p_\lambda$ and $q_\mu$ are compatible in $\p_\alpha$. 
By Proposition 7.8, fix $U_1 \subseteq U$ in $J^+$ such that 
for all $\lambda \in U_1$, $p(N_\lambda,\alpha)$ is strongly 
$(N_\lambda,\p_\alpha)$-generic.

Consider $\lambda \in U_1$. 
Define $H_\lambda$ as the set of $u \in N_\lambda \cap \p_\alpha$ such that 
either $u$ and $s_\lambda$ are incompatible in $\p_\alpha$, or 
else $u \le_\alpha s_\lambda$ and $u$ is incompatible in $\p_\alpha$ with either 
$p_\lambda$ or $q_\lambda$. 

We claim that $H_\lambda$ is dense in $N_\lambda \cap \p_\alpha$. 
So let $p \in N_\lambda \cap \p_\alpha$. 
If $p$ and $s_\lambda$ are incompatible, then $p \in H_\lambda$ and we are done. 
Otherwise, by the elementarity of $N_\lambda$ we can 
fix a determined condition $t \le_\alpha p, s_\lambda$ in $N_\lambda \cap \p_\alpha$. 
Now by assumption, for all $s' \le_\alpha s_\lambda$ in $N_\lambda \cap \p_\alpha$, 
$*_{N_\lambda}^\alpha(p_\lambda,q_\lambda,s')$ is false. 
Since $t \le_\alpha s_\lambda$, in particular, 
$*_{N_\lambda}^\alpha(p_\lambda,q_\lambda,t)$ is false. 
By the definition of $*_{N_\lambda}^\alpha$, there is $u \le_\alpha t$ in 
$N_\lambda \cap \p_\alpha$ such that $u$ is incompatible in $\p_\alpha$ with either 
$p_\lambda$ or $q_\lambda$. 
So $u \le_\alpha p$ and $u \in H_\lambda$.

Since $H_\lambda$ is dense in $N_\lambda \cap \p_\alpha$, we can  
fix a maximal antichain $A_\lambda$ 
of the forcing poset $N_\lambda \cap \p_\alpha$ which is a 
subset of $H_\lambda$. 
Define a function $F_\lambda : A_\lambda \to 3$ as follows. 
Given $u \in A_\lambda$, if $u$ is incompatible with 
$s_\lambda$, then let $F_\lambda(u) := 0$. 
Otherwise, $u \le_\alpha s_\lambda$ and $u$ is incompatible with 
either $p_\lambda$ or $q_\lambda$. 
If $u$ is incompatible with $p_\lambda$, then let 
$F_\lambda(u) := 1$. 
Otherwise, $u$ is compatible with $p_\lambda$ but incompatible 
with $q_\lambda$, and we let $F_\lambda(u) := 2$.

This defines for each $\lambda \in U_1$ a set 
$F_\lambda \subseteq N_\lambda$. 
Since $U_1 \in J^+$, by Lemma 2.3 
we can find a stationary set 
$W \subseteq U_1$ and a set $F \subseteq N$ 
such that for all $\lambda \in W$, $F \cap N_\lambda = F_\lambda$. 
It is easy to check that $F$ is a function, $\dom(F)$ is an antichain 
of $\p_\alpha$, 
and for all $\lambda \in W$, 
$\dom(F) \cap N_\lambda = A_\lambda$.

Now we are ready to derive a contradiction. 
Fix $\lambda < \mu$ in $W$. 
Then $p_\lambda$ and $q_\mu$ are compatible, so 
fix $w \in \p_\alpha$ which is below both of them. 
Then in particular, $w \le_\alpha p(N_\lambda,\alpha), p(N_\mu,\alpha)$. 
Since $A_\mu$ is a maximal antichain of $N_\mu \cap \p_\alpha$ 
and $p(N_\mu,\alpha)$ is strongly $(N_\mu,\p_\alpha)$-generic, 
$A_\mu$ is predense below $p(N_\mu,\alpha)$. 
As $w \le_\alpha p(N_\mu,\alpha)$, we can find $t_\mu \in A_\mu$ and 
$w_0$ such that $w_0 \le_\alpha w, t_\mu$. 
Then $w_0 \le_\alpha w \le_\alpha p(N_\lambda,\alpha)$. 
Since $A_\lambda$ is a maximal antichain of $N_\lambda \cap \p_\alpha$ 
and $p(N_\lambda,\alpha)$ is strongly $(N_\lambda,\p_\alpha)$-generic, 
$A_\lambda$ is predense below $p(N_\lambda,\alpha)$. 
So we can find $t_\lambda \in A_\lambda$ and $w_1$ such that 
$w_1 \le_\alpha w_0, t_\lambda$.

Since $A_\mu = \dom(F) \cap N_\mu$ and 
$A_\lambda = \dom(F) \cap N_\lambda$, 
$t_\mu$ and $t_\lambda$ are in $\dom(F)$. 
As $\dom(F)$ is an antichain and 
$w_1 \le_\alpha t_\lambda, t_\mu$, it follows that 
$t_\lambda = t_\mu$. 
Let $t^* := t_\lambda = t_\mu$.

Now $F \restriction A_\lambda = F_\lambda$ and 
$F \restriction A_\mu = F_\mu$. 
Therefore, $F(t^*) = F_\lambda(t_\lambda) = F_\mu(t_\mu)$. 
As $t^* = t_\lambda \in A_\lambda \subseteq H_\lambda$, 
$t^*$ is either incompatible with or below $s_\lambda$. 
But $w_1 \le_\alpha t^*$ and $w_1 \le_\alpha w_0 \le_\alpha w \le_\alpha 
p_\lambda \le_\alpha s_\lambda$. 
So $t_\lambda = t^* \le_\alpha s_\lambda = s_\mu$. 
By the definition of $F_\lambda$, 
we have that $F(t^*) = F_\lambda(t_\lambda) \ne 0$. 
So either $F(t^*) = 1$ or $F(t^*) = 2$.

Assume that $F(t^*) = 1$. 
Then $F_\lambda(t_\lambda) = 1$. 
By the definition of $F_\lambda$, 
$t_\lambda$ is incompatible with $p_\lambda$. 
But that is false, since $w_1 \le_\alpha t_\lambda$ and 
$w_1 \le_\alpha w_0 \le_\alpha w \le_\alpha p_\lambda$. 
So $F(t^*) = 2$. 
Hence, $F_\mu(t_\mu) = 2$. 
By the definition of $F_\mu$, it follows that 
$t^* = t_\mu$ is incompatible with $q_\mu$. 
But that is also false, since $w_0 \le_\alpha t_\mu$ and 
$w_0 \le_\alpha w \le_\alpha q_\mu$. 
Thus, we have reached a contradiction.
\end{proof}

\bibliographystyle{plain}
\bibliography{paper31}

\end{document}